\def\titlerunning#1{\gdef\titrun{#1}}
\def\author#1{\gdef\autrun{\def\and{\unskip, }#1}\gdef\@author{#1}}
\def\address#1{{\def\and{\\\hspace*{18pt}}\renewcommand{\thefootnote}{}%
\footnote {#1}}%
\markboth{\autrun}{\titrun}} \makeatother
\def\email#1{e-mail: #1}
\def\subjclass#1{{\renewcommand{\thefootnote}{}%
\footnote{\emph{Mathematics Subject Classification (2010):} #1}}}
\def\keywords#1{\par\medskip
\noindent\textbf{Keywords.} #1}
\newcommand{\K}{{\mathbb K}}
\newcommand{\F}{{\mathbb F}}
\newcommand{\N}{{\mathbb N}}
\newcommand{\R}{{\mathbb R}}
\newtheorem{theorem}{Theorem}[section]
\newtheorem{corollary}[theorem]{Corollary}
\newtheorem{remark}[theorem]{Remark}
\newtheorem{lemma}[theorem]{Lemma}
\newtheorem{proposition}[theorem]{Proposition}
\newtheorem{claim}[theorem]{Claim}
\numberwithin{equation}{section}
\begin{document}
\baselineskip=17pt

\titlerunning{***}

\title{The splitting  lemmas for nonsmooth functionals
on Hilbert spaces II. The case  at infinity\footnote{IN MEMORY OF
PROFESSOR SHUZHONG SHI (1939--2008)}}

\author{Guangcun Lu}

\date{January 25, 2014}

\maketitle

\address{F1. Lu: School of Mathematical Sciences, Beijing Normal University,
Laboratory of Mathematics  and Complex Systems,  Ministry of
  Education,    Beijing 100875, The People's Republic
 of China; \email{gclu@bnu.edu.cn}}

\subjclass{Primary~58E05, 49J52, 49J45}

\begin{abstract}
 We  generalize the Bartsch-Li's splitting lemma at
infinity for $C^2$-functionals in \cite{BaLi} and some later
variants of it to a class of continuously directional differentiable
functionals on Hilbert spaces.   Different from the previous flow
methods  our proof is to combine the ideas of the Morse-Palais lemma
due to Duc-Hung-Khai \cite{DHK} with some techniques from \cite{JM},
\cite{Skr}, \cite{Va1}. A simple application is also presented.

\keywords{Nonsmooth functional,
 splitting lemma at infinity, elliptic boundary value problems}
\end{abstract}


\section{Introduction and main results}\label{sec:1}

The Gromoll-Meyer's generalized Morse lemma (so called splitting
lemma) is one of key results in infinite dimensional Morse theory.
As a supplement of it, Thomas Bartsch and Shujie Li proved in 1997 a
splitting lemma at infinity (see \cite{BaLi} ) and used it to develop a
kind of Morse theory to study some variational problem without
compactness (\cite{BaLi}, \cite{HiLiWa} and \cite{Li}).   Recently,
Shaowei Chen and Shujie Li generalized it \cite{ChenLi1} (in a
Hilbert space frame) and \cite {ChenLi2} (in a Banach space frame).
These were successfully used by them in studying problems with
(strong) resonance. Their proof adopted the flow method as done for
the usual splitting lemma as in \cite{Ch}, \cite{MaWi}. So the functionals
are assumed to be at least $C^2$. Based on the proof ideas of the
Morse-Palais lemma due to Duc-Hung-Khai \cite{DHK} and some
techniques from \cite{JM}, \cite{Skr}, \cite{Va1} we find a new method to
establish  the splitting theorems for nonsmooth functionals on
Hilbert spaces in  \cite{Lu2}, \cite{Lu3}.  We shall follow the notations
therein.

Recall that a neighborhood of infinity in a Banach space $(X,
\|\cdot\|_X)$ is a set containing
 $\{u\in X\,|\, \|u\|_X>R\}$ for some $R>0$.
 A map $A$ from a neighborhood of infinity in $X$ to $X$
 is said to be {\it strictly Fr\'{e}chet differentiable  at $\infty$} if
there exists an operator $A'(\infty)\in\mathcal{ L}(X)$ such that
$$
\frac{\|A(x_1)-A(x_2)-A'(\infty)(x_1-x_2)\|_X}{\|x_1-x_2\|_X}\to 0.
$$
as $x_1\ne x_2$ and $(\|x_1\|_X, \|x_2\|_X)\to (\infty,\infty)$. (We
also say that $A$ has a strict Fr\'{e}chet derivative $A'(\infty)$.)
The  map $A$ is called {\it Fr\'{e}chet differentiable at $\infty$}
if $\|A(x)- A'(\infty)x\|_X=o(\|x\|_X)$ as $\|x\|_X\to\infty$. The
operator $A'(\infty)$ is called {\it Fr\'{e}chet derivative} of $A$
at $\infty$. ( Be careful not to confuse the two concepts! )

 Let $H$ be a Hilbert space with inner product
$(\cdot,\cdot)_H$ and the induced norm $\|\cdot\|$, and let $X$ be a
Banach space with norm $\|\cdot\|_X$, such that
\begin{enumerate}
\item[(S)]  $X\subset H$ is dense in $H$ and
 the inclusion $X\hookrightarrow H$ is continuous (and hence we
may assume $\|x\|\le \|x\|_X\;\forall x\in X$).
\end{enumerate}

In this paper for $R>0$ we  write
\begin{eqnarray*}
&&B_X(\infty, R):=\{x\in X\,|\,\|x\|_X> R\},\quad \bar B_X(\infty,
R):=\{x\in X\,|\,\|x\|_X\ge R\},\\
&&B_H(\infty, R):=\{x\in H\,|\,\|x\|> R\},\quad \bar B_H(\infty,
R):=\{x\in H\,|\,\|x\|\ge R\}.
\end{eqnarray*}

 Let $V_\infty$ be an open neighborhood of infinity in $H$.
 Then $V_\infty\cap X$ is  open
 in $X$, and also star-shaped with respect to infinity provided
 $V_\infty$ star-shaped with respect to infinity. For clearness we shall write $V_\infty\cap X$ as
 $V_\infty^X$ when it is equipped with the induced topology from
 $X$.

 Suppose that  a functional
$\mathcal{ L}:V_\infty\to\mathbb{R}$ satisfies the following
conditions:
\begin{enumerate}
\item[$({\rm F1_\infty})$] $\mathcal{ L}$ is continuous and continuously directional
differentiable on $V_\infty$.

\item[(${\rm F2_\infty}$)] There exists a continuous and continuously directional
differentiable map $A: V^X_\infty\to X$,  such that
$$
D\mathcal{ L}(x)(u)=(A(x), u)_H\quad\hbox{for all}\; x\in V_\infty\cap
X,\quad\hbox{for all}\; u\in X.
$$
(This actually implies that $\mathcal{ L}|_{V^X_\infty}\in
C^1(V^X_\infty, \R)$.)
\item[(${\rm F3_\infty}$)] There exists a map $B$ from
$(V_\infty\cap X)\cup\{\infty\}$ to the space $L_s(H)$ of bounded
self-adjoint linear operators of $H$ such that $B(\infty)(X)\subset
X$ and
$$
(DA(x)(u), v)_H=(B(x)u, v)_H\quad\hbox{for all}\; x\in V_\infty\cap
X,\quad\hbox{for all}\; u, v\in X.
$$
(This implies: $DA(x)=B(x)|_X$ for all $x\in V_\infty\cap X$, and
thus  $B(x)(X)\subset X\quad\hbox{for all}\; x\in (V_\infty\cap
X)\cup\{\infty\}$.)

\item[(${\rm C1_\infty}$)] Either $0\notin \sigma(B(\infty))$ or $0$
is  an isolated point of the spectrum $\sigma(B(\infty))$.
 \footnote{The claim is actually implied in the following condition (${\rm D}_\infty$)
by Proposition~B.2 in \cite{Lu2} and \cite{Lu3}. In order to state some
results without the condition (${\rm D}_\infty$) we still list it
here.}

\item[(${\rm C2_\infty}$)]  If $u\in H$ such that $B(\infty)(u)=v$ for some
$v\in X$, then $u\in X$.

\item[(${\rm D_\infty}$)] The map $B:(V_\infty\cap X)\cup\{\infty\}\to
L_s(H)$  has a decomposition
$$
B(x)=P(x)+ Q(x)\quad\hbox{for all}\; x\in (V_\infty\cap X)\cup\{\infty\},
$$
where $P(x):H\to H$ is a positive definite linear operator and
$Q(x):H\to H$ is a compact linear  operator with the following
properties:
\begin{enumerate}
\item[(${\rm D1_\infty}$)]  All eigenfunctions of the operator $B(\infty)$ that correspond
to negative eigenvalues belong to $X$;

\item[(${\rm D2_\infty}$)] For any sequence $\{x_k\}\subset
V_\infty\cap X$ with $\|x_k\|\to \infty$ it holds that
$\|P(x_k)u-P(\infty)u\|\to 0$ for any $u\in H$;

\item[(${\rm D3_\infty}$)] The  map $Q:(V_\infty\cap X)\cup\{\infty\}\to
L(H)$ is continuous at $\infty$ with respect to the topology induced
from $H$ on $V_\infty\cap X$, i.e. $\|Q(x)-Q(\infty)\|_{L(H)}\to 0$
as $x\in V_\infty\cap X$ and $\|x\|\to\infty$;

\item[(${\rm D4_\infty}$)] For any sequence $\{x_n\}\subset V_\infty\cap X$ with
$\|x_n\|\to \infty$ (as $n\to\infty$), there exist
 constants $C_0>0$ and $n_0>0$ such that
$$
(P(x_n)u, u)_H\ge C_0\|u\|^2\quad\hbox{for all}\; u\in H,\;\hbox{for all}\; n\ge n_0.
$$
\end{enumerate}
\end{enumerate}

As before let $H^0_\infty={\rm Ker}(B(\infty))$, which is contained
in $X$ by (${\rm C2_\infty}$). Then
$H_\infty^\pm:=(H^0_\infty)^\bot$ is equal to the range of
$B(\infty)$ by (${\rm C1_\infty}$). (See Proposition~B.2 in
\cite{Lu2} and \cite{Lu3}). Obverse that  $H_\infty^\pm$ splits as
$H^\pm_\infty=H^+_\infty\oplus H^-_\infty$, where $H^+_\infty$
(resp. $H^-_\infty$) is positive (resp. negative) definite subspace
of $B(\infty)$, that is, there exists some $a_\infty>0$ such that
\begin{equation}\label{e:1.1}
\left.\begin{array}{ll}
 &(B(\infty)u^+, u^+)_H\ge 2a_\infty\|u^+\|^2\quad\quad\hbox{for all}\; u\in
H^+_\infty,\\
&(B(\infty)u^-, u^-)_H\le -2a_\infty\|u^-\|^2\quad\;\hbox{for all}\; u\in
H^-_\infty.
\end{array}
\right\}
\end{equation}
 Write $X^\pm_\infty:=H^\pm_\infty\cap X$ and
$X^\ast_\infty:=H^\ast_\infty\cap X$, $\ast=+, -$. We get
topological direct sum decompositions $X=H^0_\infty\oplus
X^\pm_\infty$ and $X^\pm_\infty=X^+_\infty\oplus X^-_\infty$. In
addition, $H^0_\infty$ and $X^-_\infty$ have finite dimensions by
(${\rm D_\infty}$). ({\it Note}: As in the proof of
\cite[Lemma~2.13]{Lu2} or \cite[Lemma~3.1]{Lu3} the condition
$H^0_\infty\subset X$ is enough for the following
Lemmas~\ref{lem:2.2} and \ref{lem:2.3} because this implies that
$H^0_\infty\subset X$ is complete in both $H$ and $X$ and therefore
that $H$ and $X$  induce equivalent norms on $H^0_\infty$ in the
case). Let
$$
\nu_\infty:=\dim H^0_\infty\quad\hbox{and}\quad \mu_\infty:=\dim
H^-_\infty.
$$
They are called the {\it nullity and Morse index} of $\mathcal{ L}$
at infinity, respectively. Denote by $P^\ast_\infty$ the orthogonal
projections from $H$ onto $H^\ast_\infty$, $\ast=+, 0, -$.

As in the proof of \cite[Lemma~2.13]{Lu2} or \cite[Lemma~3.1]{Lu3}
we get that
$$B(\infty)|_{X^\pm_\infty}:
X^\pm_\infty\to X^\pm_\infty
$$
is a Banach space isomorphism.  Let
$$
C^\infty_1=\|(B(\infty)|_{X^\pm_\infty})^{-1}\|_{L(X^\pm_\infty,
X^\pm_\infty)}\quad\hbox{and}\quad C^\infty_2=\|I-P^0_\infty\|_{L(X,
X^\pm_\infty)}.
$$

We shall give our results in cases $\nu_\infty>0$ and
$\nu_\infty=0$, respectively. For the former case  we
further assume the following condition to be satisfied.
\begin{enumerate}
\item[(${\rm E_\infty}$)]  $M(A):=\lim_{R\to\infty}\sup\{\|(I-P^0_\infty)A(z)\|_X\,:
\;z\in H^0_\infty, \|z\|_X\ge R\}<\infty$, and there exist $R_1>0$,
$\kappa>1$ and
 $\rho_A\in (\frac{\kappa}{\kappa-1}C_1^\infty M(A),
\infty)$,  such that
\begin{eqnarray}
&&\|(I-P^0_\infty)A(z_1+x_1)- B(\infty)x_1
-(I-P^0_\infty)A(z_2+x_2)+
B(\infty)x_2\|_{X^\pm_\infty}\nonumber\\
&&\le \frac{1}{\kappa C_1^\infty}\|z_1+ x_1-z_2-x_2\|_X\label{e:1.2}
\end{eqnarray}
for all $x_i\in B_X(\theta, \rho_A)\cap X^\pm_\infty$ and $z_i\in
H^0_\infty$ with $\|z_i\|\ge R_1$, $i=1,2$.
Moreover, if (\ref{e:1.2}) holds with $\rho_A=\infty$ the assumption that $M(A)<\infty$ is not needed.
(Obverse that
(\ref{e:1.2}) is satisfied if
\begin{eqnarray}
&&\|A(z_1+x_1)
-A(z_2+x_2)- B(\infty)(x_1-x_2)
\|_{X}\nonumber\\
&&\le \frac{1}{\kappa C_1^\infty C_2^\infty}\|z_1+
x_1-z_2-x_2\|_X\label{e:1.3}
\end{eqnarray}
for all $x_i\in B_X(\theta, \rho_A)\cap X^\pm_\infty$ and $z_i\in
H^0_\infty$ with $\|z_i\|\ge R_1$, $i=1,2$.) \footnote{If $R_1>0$ is
large enough then $z+x\in V_\infty\cap X$ for any $z\in
B_{H^0_\infty}(\infty, R_1)$ and any $x\in X^\pm_\infty$.}
\end{enumerate}

Clearly, (${\rm E_\infty}$) is satisfied if the following assumption holds.

\begin{enumerate}
\item[(${\rm SE_\infty}$)]  $M(A):=\lim_{R\to\infty}\sup\{\|(I-P^0_\infty)A(z)\|_X\,:
\;z\in H^0_\infty, \|z\|_X\ge R\}<\infty$, and there exists
$\rho_A\in (C_1^\infty M(A), \infty)$ such that
$$
\frac{\|(I-P^0_\infty)A(z_1+x_1)- B(\infty)x_1
-(I-P^0_\infty)A(z_2+x_2)+ B(\infty)x_2\|_{X^\pm_\infty}}{\|z_1+
x_1-z_2-x_2\|_X}\to 0
$$
{uniformly} in $x_1, x_2\in B_X(\theta, \rho_A)\cap X^\pm_\infty$
 as $(z_1, z_2)\in H^0_\infty\times
H^0_\infty$ and $(\|z_1\|, \|z_2\|)\\
\to(\infty, \infty)$. (Note: $\rho_A>\frac{\kappa}{\kappa-1}C_1^\infty M(A)$
if $\kappa>1$ is large enough.)
Moreover, if this holds with $\rho_A=\infty$ the assumption that $M(A)<\infty$ is not needed.
\end{enumerate}

  Note: Since the norms $\|\cdot\|$ and
$\|\cdot\|_X$ are equivalent on $H^0_\infty$ and we have assumed
$\|u\|\le\|u\|_X\;\forall u\in X$, which implies
$\|z+x\|_X^2\ge\|z+x\|^2=\|z\|^2+\|x\|^2\ge\|z\|^2$ for any
$(z,x)\in H^0_\infty\times X^\pm_\infty$, if
$B(\infty)|_{X}\in L(X)$ and $A$ has the strict Fr\'{e}chet
derivative $B(\infty)|_{X}$ at $\infty$, it is easily proved that
(${\rm SE_\infty}$) holds for any $\rho_A\in (0,\infty]$.

The following assumption is slightly weaker than (${\rm E_\infty}$).

\begin{enumerate}
\item[(${\rm E'_\infty}$)]
$M(A):=\lim_{R\to\infty}\sup\{\|(I-P^0_\infty)A(z)\|_X:\;z\in
H^0_\infty, \|z\|_X\ge R\}<\infty$, and there exist $R_1>0$,
$\kappa>1$ and $\rho_A\in (\frac{\kappa}{\kappa-1}C_1^\infty M(A),
\infty)$  such that
\begin{eqnarray}
&&\|(I-P^0_\infty)A(z+x_1)- B(\infty)x_1
-(I-P^0_\infty)A(z+x_2)+
B(\infty)x_2\|_{X^\pm_\infty}\nonumber\\
&&\le \frac{1}{\kappa C_1^\infty}\| x_1-x_2\|_X\label{e:1.4}
\end{eqnarray}
 holds for all $x_i\in B_X(\theta,
\rho_A)\cap X^\pm_\infty$ and $z\in H^0_\infty$ with $\|z\|\ge R_1$.
Moreover, if (\ref{e:1.4}) holds with $\rho_A=\infty$ the assumption
that $M(A)<\infty$ is not needed.
 (Clearly, (\ref{e:1.4}) is satisfied if (\ref{e:1.3}) is
satisfied for all $x_i\in B_X(\theta, \rho_A)\cap X^\pm_\infty$ and
$z_1=z_2\in H^0_\infty$ with $\|z_i\|\ge R_1$.)
\end{enumerate}

As above  (${\rm E'_\infty}$) is satisfied under the following assumption.

\begin{enumerate}
\item[(${\rm SE'_\infty}$)]
$M(A):=\lim_{R\to\infty}\sup\{\|(I-P^0_\infty)A(z)\|_X:\;z\in
H^0_\infty, \|z\|_X\ge R\}<\infty$, and there exists $\rho_A\in
(C_1^\infty M(A), \infty)$  such that
$$
\frac{\|(I-P^0_\infty)A(z+ x_1)- B(\infty)x_1 -(I-P^0_\infty)A(z+
x_2)+ B(\infty)x_2\|_{X^\pm_\infty}}{\|x_1- x_2\|_X}\to 0
$$
uniformly in $x_1, x_2\in B_X(\theta, \rho_A)\cap X^\pm_\infty$ as
$z\in H^0_\infty$ and $\|z\|\to\infty$. (Note:
 $\rho_A>\frac{\kappa}{\kappa-1}C_1^\infty M(A)$ if $\kappa>1$ is large enough.)
 Moreover, if this holds with $\rho_A=\infty$
the assumption that $M(A)<\infty$ is not needed.
\end{enumerate}

{\bf [}  Note: If  $B(\infty)|_{X}\in L(X)$ and $A$ has the strict
Fr\'{e}chet derivative $B(\infty)|_{X}$ at $\infty$, then
 (${\rm SE'_\infty}$) holds for any $\rho_A\in
(0,\infty]$. {\bf ]}

We have the following splitting lemmas at infinity on Hilbert
spaces.

\begin{theorem}\label{th:1.1}
Under the above assumptions $({\rm S})$, $({\rm F1_\infty})$--$({\rm
F3_\infty})$ and $({\rm C1_\infty})$--$({\rm C2_\infty})$, $({\rm
D_\infty})$, also suppose that $\nu_\infty>0$ and $({\rm
E}'_\infty)$ is satisfied and  that
\begin{equation}\label{e:1.5}
\mathcal{ L}(u)=\frac{1}{2}(B(\infty)u,u)_H+
o(\|u\|^2)\quad\hbox{as}\quad \|u\|\to\infty. \footnote{This
condition is weaker than the assumption (${\rm A}_\infty$) in
\cite{BaLi}. See \S 3.1 below.}
\end{equation}
Then there exist a positive number $R$, a (unique) continuous map $
h^\infty: B_{H^0_\infty}(\infty, R)\to X^\pm_\infty$ (which takes
values in $\bar B_{X^\pm_\infty}(\theta, \rho_A)$ in the case
$M(A)<\infty$) satisfying
\begin{equation}\label{e:1.6}
 (I-P^0_\infty)A(z+ h^\infty(z))=0\quad\hbox{for all}\; z\in \bar B_{H^0_\infty}(\infty, R),
\end{equation}
and a homeomorphism $\Phi:  B_{H^0_\infty}(\infty, R)\oplus
H^\pm_\infty\to B_{H^0_\infty}(\infty, R)\oplus H^\pm_\infty$ of
form
\begin{equation}\label{e:1.7}
\Phi(z+ u^++ u^-)=z+ h^\infty(z)+\phi_z(u^++ u^-)
\end{equation}
with $\phi_z(u^++ u^-)\in H^\pm_\infty$ and
$\Phi(B_{H^0_\infty}(\infty, R)\oplus X^\pm_\infty)\subset X$, such
that
\begin{equation}\label{e:1.8}
\mathcal{ L}\circ\Phi(z+ u^++ u^-)=\|u^+\|^2-\|u^-\|^2+ \mathcal{
L}(z+ h^\infty(z))
\end{equation}
for all $(z, u^+ + u^-)\in B_{H^0_\infty}(\infty, R)\times
H^\pm_\infty$.  The homeomorphism $\Phi$ has also properties:
\begin{enumerate}
\item[{\rm (a)}] For each $z\in B_{H^0_\infty}(\infty, R)$, $\Phi(z,
\theta)=z+ h^\infty(z)$, and $\phi_z(u^++ u^-)\in H^-_\infty$ if and
only if $u^+=\theta$;

\item[{\rm (b)}] The restriction of $\Phi$ to $B_{H^0_\infty}(\infty,
R)\oplus H^-_\infty$ is a homeomorphism from $B_{H^0_\infty}(\infty,
R)\oplus H^-_\infty\subset X$ onto $\Phi(B_{H^0_\infty}(\infty,
R)\oplus H^-_\infty))\subset X$ even if the topologies on these two
sets  are chosen as the induced one by $X$.
\end{enumerate}
The map $h^\infty$ and the function
$\mathcal{ L}^\infty: B_{H^0_\infty}(\infty, R)\to\R, \; z\mapsto
\mathcal{ L}(z+ h^\infty(z))$
also satisfy:
\begin{enumerate}
\item[{\rm (i)}] $\lim_{\|z\|_X\to\infty}\|h^\infty(z)\|_X=0$ provided that
 $$
 \lim_{R\to\infty}\sup\{\|(I-P^0_\infty)A(z)\|_X:\;z\in H^0_\infty,
\|z\|_X\ge R\}=0;
$$
\item[{\rm (ii)}] If $A$ is $C^1$, then $h^\infty$ is $C^1$ and
$$
\hspace{11mm} dh^\infty(z)=-\bigl[(I-P^0_\infty)A'(z+
h^\infty(z))|_{X^\pm_\infty}\bigr]^{-1}(I-P^0_\infty)A'(z+
h^\infty(z))|_{H^0_\infty},
$$
moreover the function $\mathcal{ L}^\infty$ is  $C^{2}$ and
\begin{equation}\label{e:1.9}
\left.\begin{array}{ll} & d\mathcal{ L}^\infty(z_0)(z)=(A(z_0+
h^\infty(z_0)), z)_H,\\
&\quad \hbox{for all}\; z_0\in B_{H^0_\infty}(\infty, R),\; z\in H^0_\infty.
\end{array}\right\}
 \end{equation}

\item[{\rm (iii)}] If $\mathcal{ L}$
is  $C^2$ then $h^\infty$ is also $C^1$ as a map to $H^\pm_\infty$
(hence $X^\pm_\infty$).
\end{enumerate}
If  $({\rm E}'_\infty)$ is replaced by the slightly strong $({\rm
E}_\infty)$ {\rm (}and $\rho_A$ is given by (${\rm E}_\infty$){\rm
)} one has:
\begin{enumerate}
\item[{\rm (iv)}]  The map $h^\infty$ is Lipschitz, and has a strict Fr\'echet derivative zero at
$\infty$;

\item[{\rm (v)}]  $\mathcal{ L}^\infty$ is $C^{1}$ and (\ref{e:1.9}) holds;

\item[{\rm (vi)}] If $B(\infty)\in L(X)$ and $A$ has a strict Fr\'{e}chet derivative $B(\infty)|_X$
 at $\infty$, then $\mathcal{ L}^\infty$ is $C^{2-0}$
 and $d\mathcal{ L}^\infty$ has the strict  Fr\'{e}chet derivative zero at $\infty$.
  ({\rm In this case, as noted below (${\rm SE_\infty}$) we
 may choose $\rho_A$ above to be any positive number, but $R$ depends on
 this choice.})
\end{enumerate}
\end{theorem}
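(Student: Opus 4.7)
The plan is to adapt the critical-point splitting lemma of \cite{Lu2} to the asymptotic situation at infinity, replacing its local hypotheses by the asymptotic ones $({\rm E}'_\infty)$ (resp.\ $({\rm E}_\infty)$) and (\ref{e:1.5}). The argument breaks into three stages: construct $h^\infty$ by an implicit function argument, construct the fibrewise homeomorphisms $\phi_z$ by a Duc--Hung--Khai-style Morse--Palais argument, and read off the regularity claims.

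\textbf{Stage 1 (the map $h^\infty$).} Since $B(\infty)|_{X^\pm_\infty}\colon X^\pm_\infty\to X^\pm_\infty$ is an isomorphism with inverse of norm $C^\infty_1$, I would rewrite (\ref{e:1.6}) as the fixed-point equation $x=T_z(x)$ with
$$
T_z(x):=x-\bigl(B(\infty)|_{X^\pm_\infty}\bigr)^{-1}(I-P^0_\infty)A(z+x).
$$
Hypothesis $({\rm E}'_\infty)$ makes $T_z$ a Lipschitz contraction with constant $1/\kappa$ on $\bar B_{X^\pm_\infty}(\theta,\rho_A)$ once $\|z\|\ge R_1$, while the lower bound $\rho_A>\tfrac{\kappa}{\kappa-1}C^\infty_1 M(A)$ together with the asymptotic estimate $\|T_z(\theta)\|_X\le C^\infty_1 M(A)+o(1)$ forces $T_z$ to send that ball into itself for $\|z\|$ large. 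The parameter-dependent Banach fixed point theorem produces a unique continuous $h^\infty$ solving (\ref{e:1.6}); statement (i) follows from $\|h^\infty(z)\|_X\le \tfrac{\kappa}{\kappa-1}\|T_z(\theta)\|_X$, (ii) from the classical $C^1$ implicit function theorem once $A$ is $C^1$, and (iv) from the uniform Lipschitz bound (\ref{e:1.2}) available under the stronger $({\rm E}_\infty)$.

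\textbf{Stage 2 (the map $\phi_z$).} Write $\bar z:=z+h^\infty(z)$. Integrating $DA$ along straight segments and using $({\rm F3_\infty})$ gives, for $y\in X^\pm_\infty$,
$$
\mathcal{L}(\bar z+y)-\mathcal{L}(\bar z)-(A(\bar z),y)_H=(\mathfrak{B}_z(y)y,y)_H,\qquad \mathfrak{B}_z(y):=\int_0^1\!\!\int_0^1 s\,B(\bar z+tsy)\,dt\,ds\in L_s(H),
$$
and by (\ref{e:1.6}) the linear term vanishes on $X^\pm_\infty$, so the reduced functional $F_z(y)$ is quadratic-like in $y$. Using (\ref{e:1.5}), $({\rm D}_\infty)$ and $h^\infty(z)\to\theta$ one verifies that $2\mathfrak{B}_z(y)\to B(\infty)$ (in the sense appropriate to the $P+Q$ splitting) as $\|z\|\to\infty$ uniformly for $y$ in a small $H^\pm_\infty$-ball. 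Combined with the spectral gap (\ref{e:1.1}) and the decomposition $H^\pm_\infty=H^+_\infty\oplus H^-_\infty$, this permits a second contraction argument in an $L(H^\pm_\infty)$-valued unknown, in the style of \cite{DHK,JM,Skr,Va1}, producing a homeomorphism $\phi_z\colon H^\pm_\infty\to H^\pm_\infty$ with $\phi_z(H^-_\infty)=H^-_\infty$ and
$$
(\mathfrak{B}_z(\phi_z(u))\phi_z(u),\phi_z(u))_H=\|u^+\|^2-\|u^-\|^2.
$$
Joint continuity in $(z,u)$, together with $B(\infty)(X)\subset X$ and $({\rm C2_\infty})$ to keep the iteration inside the $X$-topology, then yields the homeomorphism $\Phi$ of the form (\ref{e:1.7}), the inclusion $\Phi(B_{H^0_\infty}(\infty,R)\oplus X^\pm_\infty)\subset X$, and parts (a)--(b).

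\textbf{Stage 3 and main obstacle.} Differentiating $\mathcal{L}^\infty(z)=\mathcal{L}(z+h^\infty(z))$ and using (\ref{e:1.6}) to annihilate the $h^\infty$-contribution yields (\ref{e:1.9}) and hence (v); (iii) comes from viewing $h^\infty$ as a $C^1$ map into $H^\pm_\infty$ under the $C^2$ assumption on $\mathcal{L}$, and (vi) from one further differentiation using that $A$ has strict Fr\'echet derivative $B(\infty)|_X$ at $\infty$. The \emph{main obstacle} I anticipate is Stage 2: the Duc--Hung--Khai contraction is classically local around a single critical point, whereas here it must be run \emph{uniformly} in $z$ throughout a neighbourhood of infinity. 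The smallness of $2\mathfrak{B}_z(y)-B(\infty)$ needed to close the contraction must be extracted jointly from (\ref{e:1.5}), $({\rm D}_\infty)$, and the convergence $h^\infty(z)\to\theta$ delivered by Stage 1, and arranging the estimates so that a single radius $R>0$ closes both contractions simultaneously is the most delicate bookkeeping in the whole argument.
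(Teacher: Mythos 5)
Your Stage~1 is essentially the paper's construction: the contraction
$T_z(x)=x-(B(\infty)|_{X^\pm_\infty})^{-1}(I-P^0_\infty)A(z+x)$ is exactly the
operator $S^\infty$ used in the paper's Lemmas~\ref{lem:2.2} and~\ref{lem:2.3}, and the
derivations of (i), (ii), (iv) follow the same lines. Stage~3 is also consistent
with how the paper treats regularity of $\mathcal{L}^\infty$.

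Stage~2 is where you depart from the paper, and there is a genuine gap. You
propose to rewrite the reduced functional as a quadratic form
$F_z(y)=(\mathfrak{B}_z(y)y,y)_H$ and run ``a second contraction argument in
an $L(H^\pm_\infty)$-valued unknown.'' That is the classical operator/square-root
route to the Morse--Palais lemma (Lang, Moser), not the Duc--Hung--Khai one.
Two things block it here. First, an operator-valued contraction needs
$y\mapsto\mathfrak{B}_z(y)$ to be continuous, and the putative fixed point to be
invertible, in the operator norm on $L(H^\pm_\infty)$; but under $({\rm F3_\infty})$
and $({\rm D_\infty})$ the map $B$ is only continuous in the strong topology on the
$P$-part and in norm on the compact $Q$-part. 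You cannot close that iteration
without adding regularity that the theorem does not assume --- the entire reason the
paper invokes the DHK machinery (Theorem~\ref{th:2.1}: no operator calculus, only
signs of directional derivatives) is to sidestep exactly this. Second, you quantify
the smallness $2\mathfrak{B}_z(y)\to B(\infty)$ only ``for $y$ in a small
$H^\pm_\infty$-ball,'' which would produce a local $\phi_z$ on a ball, i.e.\ at
best the content of Theorem~\ref{th:1.3}. Theorem~\ref{th:1.1} asserts a
homeomorphism of the \emph{whole} fibre $H^\pm_\infty$ onto itself, and the
additional hypothesis~(\ref{e:1.5}) is there precisely to globalize the
construction. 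The paper does this by showing, in Lemma~\ref{lem:2.7}(ii), that
under~(\ref{e:1.5}) the reduced functional $F^\infty(z,\cdot)$ is coercive in
$u^-$ and sub-quadratic in $u^+$, so the maximizer $\varphi_z\colon
H^+_\infty\to H^-_\infty$ exists globally (Lemma~\ref{lem:2.8}(ii)) and the
explicit map $\psi=\psi_1+\psi_2$ built from $\sqrt{F^\infty(z,u+\varphi_z(u))}$
and $\sqrt{F^\infty(z,u+\varphi_z(u))-F^\infty(z,u+v)}$ is a bijection of the
whole fibre (Lemmas~\ref{lem:2.11}--\ref{lem:2.13}). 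Your sketch does not
identify either coercivity or the intermediate-value argument for surjectivity.

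A further point you do not address: $\bar B_{H^0_\infty}(\infty,R_1)$ is
\emph{not} compact, so the parametrized DHK theorem (Theorem~\ref{th:2.1}) cannot
be quoted off the shelf; the paper must reprove its Steps~1, 6, 7 ab~initio. The
replacement for compactness is the limit computation of Lemma~\ref{lem:2.7}(i)
(using the $P+Q$ decomposition and dominated-convergence-type arguments along
sequences with $\|z_k\|\to\infty$), which supplies the contradictions that
compactness would otherwise deliver. This step is structurally essential and
missing from your proposal.
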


\begin{remark}\label{rm:1.2}
{\rm Similar conclusions to Remarks~2.2,2.3 in \cite{Lu2}, \cite{Lu3} also
hold. Namely, we only use Lemmas~\ref{lem:2.4} and \ref{lem:2.5} in
the proof of Lemma~\ref{lem:2.6}. Hence the condition (${\rm
D}_\infty$) can be replaced by the following
\begin{enumerate}
\item[(${\rm D}'_\infty$)] There exist a  subset $U_\infty\subset V_\infty$ of form
$U_\infty=\bar B_{H^0_\infty}(\infty, R')\oplus H^\pm_\infty$,
 a positive number $c_\infty$ and a function $\omega_\infty:
U_\infty\cap X\to [0, \infty)$ with the property that
$\omega_\infty(x)\to 0$ as $x\in U_\infty\cap X$ and $\|x\|\to
\infty$, such that
\begin{enumerate}
\item[(${\rm D}'_{\infty1}$)] the kernel $H^0_\infty$ and negative definite
subspace $H^-_\infty$ of $B(\infty)$ are finite dimensional
subspaces contained in $X$;
\item[(${\rm D}'_{\infty2}$)] $(B(x)v, v)_H\ge c_\infty\|v\|^2\quad\hbox{for all}\; v\in H^+_\infty, x\in U_\infty\cap X$;
\item[(${\rm D}'_{\infty3}$)] $|(B(x)u,v)_H-(B(\infty)u,v)_H|\le\omega_\infty(x)\|u\|\cdot\|v\|\quad\hbox{for all}\; u\in H,
 v\in H^-_\infty\oplus H^0_\infty$;
\item[(${\rm D}'_{\infty4}$)] $(B(x)u,u)_H\le-c_\infty\|u\|^2\quad\hbox{for all}\; u\in H^-_\infty, x\in U_\infty\cap X$.
\end{enumerate}
\end{enumerate}}
\end{remark}

In order to state our second result, for positive numbers $R$ and
$\delta$ we set
$$
C_{R, \delta}:= B_{H^0_\infty}(\infty, R)\oplus
B_{H^+_\infty}(\theta, \delta)\oplus B_{H^-_\infty}(\theta, \delta).
$$
(It is often identified with $B_{H^0_\infty}(\infty, R)\times
B_{H^+_\infty}(\theta, \delta)\times B_{H^-_\infty}(\theta,
\delta)$).

\begin{theorem}\label{th:1.3}
Under the above assumptions $({\rm S})$, $({\rm F1_\infty})$--$({\rm
F3_\infty})$ and $({\rm C1_\infty})$--$({\rm C2_\infty})$, $({\rm
D_\infty})$, also suppose that $\nu_\infty>0$  and $({\rm
E}'_\infty)$ is satisfied. Then  for any $r\in (0, \infty)$ there
exist positive numbers $R$, $\delta_r>0$ and  a (unique) continuous
map $h^\infty:B_{H^0_\infty}(\infty, R)\to X^\pm_\infty$ (which
takes values in $\bar B_{X^\pm_\infty}(\theta, \rho_A)$ in the case
$M(A)<\infty$ ) satisfying
\begin{equation}\label{e:1.10}
 (I-P^0_\infty)A(z+ h^\infty(z))=0\quad\forall z\in B_{H^0_\infty}(\infty, R),
 \end{equation}
an open set $V(R, r)$ in $H$ with $V(R, r)\subset \overline{C_{R,
r+\rho_A}}$, and a homeomorphism $\Phi: C_{R, \delta_r}\to V(R,r)$
of form
$$
\Phi(z+ u^++ u^-)=z+ h^\infty(z)+\phi_z(u^++ u^-)
$$
with $\phi_z(u^++ u^-)\in H^\pm_\infty$ and  $\Phi(C_{R,
\delta_r}\cap X)\subset X$,  such that
$$
\mathcal{ L}\circ\Phi(z, u^++ u^-)=\|u^+\|^2-\|u^-\|^2+ \mathcal{
L}(z+ h^\infty(z))
$$
for all $(z, u^+, u^-)\equiv z+ u^+ + u^-\in C_{R, \delta_r}$. The
homeomorphism $\Phi$  also possesses properties:
\begin{enumerate}
\item[{\rm (a)}] For each $z\in B_{H^0_\infty}(\infty, R)$,
$\Phi(z, \theta)=z+ h^\infty(z)$, $\phi_z(u^++ u^-)\in H^-_\infty$
if and only if $u^+=\theta$;

\item[{\rm (b)}] The restriction of $\Phi$ to $B_{H^0_\infty}(\infty,
R)\oplus B_{H^-_\infty}(\theta, \delta_r)$ is a homeomorphism
from\linebreak
 $B_{H^0_\infty}(\infty, R)\oplus
B_{H^-_\infty}(\theta, \delta_r)\subset X$ onto
$\Phi(B_{H^0_\infty}(\infty, R)\oplus B_{H^-_\infty}(\theta,
\delta_r))\subset X$ even if the topologies on these two sets
 are chosen as the induced one
by $X$.
\end{enumerate}

 The map $h^\infty$ and the function
$\mathcal{ L}^\infty: B_{H^0_\infty}(\infty, R)\to\R,\;  z\mapsto
\mathcal{ L}(z+ h^\infty(z))$
satisfy the conclusions {\rm (i)--(iii)} in Theorem~\ref{th:1.1}, and
also {\rm (iv)--(vi)} in Theorem~\ref{th:1.1} if $({\rm E_\infty})$
holds and $\rho_A$ is given by (${\rm E}_\infty$).
\end{theorem}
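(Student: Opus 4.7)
The plan is to parallel the proof of Theorem~\ref{th:1.1} but to omit the global extension step that depends on (\ref{e:1.5}); the splitting is built only on the local slab $C_{R,\delta_r}$. First I would construct the implicit function $h^\infty$. The equation $(I-P^0_\infty)A(z+x)=\theta$ is equivalent to $x=T_z(x)$, where
$$
T_z(x):=x-(B(\infty)|_{X^\pm_\infty})^{-1}(I-P^0_\infty)A(z+x).
$$
Under $({\rm E}'_\infty)$ the Lipschitz estimate (\ref{e:1.4}) and the definition of $C_1^\infty$ yield $\|T_z(x_1)-T_z(x_2)\|_X\le\kappa^{-1}\|x_1-x_2\|_X$, while $\|T_z(\theta)\|_X\le C_1^\infty\|(I-P^0_\infty)A(z)\|_X$. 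Together with $M(A)<\infty$ and $\rho_A>\frac{\kappa}{\kappa-1}C_1^\infty M(A)$ (or $\rho_A=\infty$), this shows that $T_z$ self-maps and contracts $\bar B_{X^\pm_\infty}(\theta,\rho_A)$ once $\|z\|$ is large enough. The Banach fixed-point theorem then furnishes a unique fixed point $h^\infty(z)\in\bar B_{X^\pm_\infty}(\theta,\rho_A)$ satisfying (\ref{e:1.10}), with continuous dependence on $z$ following in the usual way.

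Next I would carry out the parametrized quadratic normal form on $H^\pm_\infty$. For each $z\in B_{H^0_\infty}(\infty,R)$ put
$$
\tilde{\mathcal{L}}_z(v):=\mathcal{L}(z+h^\infty(z)+v)-\mathcal{L}(z+h^\infty(z)),\qquad v\in H^\pm_\infty.
$$
By (\ref{e:1.10}) the origin is a critical point of $\tilde{\mathcal{L}}_z$ on $H^\pm_\infty$, and its formal Hessian there is close to $B(\infty)|_{H^\pm_\infty}$ when $\|z\|$ is large, thanks to $({\rm D_\infty})$. Applying to the family $\{\tilde{\mathcal{L}}_z\}$, on $B_{H^+_\infty}(\theta,\delta_r)\oplus B_{H^-_\infty}(\theta,\delta_r)$, the splitting construction of \cite{Lu2}---the Morse-Palais scheme of Duc-Hung-Khai combined with the operator-equation technique from \cite{JM,Skr,Va1}---yields fiberwise homeomorphisms $\phi_z$ realizing the quadratic form $\|u^+\|^2-\|u^-\|^2$, and I set $\Phi(z+u^++u^-):=z+h^\infty(z)+\phi_z(u^++u^-)$. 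Choosing $\delta_r$ small relative to $r$ and to the Lipschitz bound of $\phi_z$ secures the inclusion $V(R,r)\subset\overline{C_{R,r+\rho_A}}$.

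The main obstacle is executing the above construction uniformly in the parameter $z\in B_{H^0_\infty}(\infty,R)$: the contraction constants and the admissible radius $\delta_r$ for the splitting of $\tilde{\mathcal{L}}_z$ at the origin must not degenerate as $z$ varies. This is supplied by $({\rm D3_\infty})$, which forces $\|B(z+h^\infty(z))-B(\infty)\|_{L(H)}\to 0$ as $\|z\|\to\infty$ (since $h^\infty(z)$ stays $X$-bounded), by the uniform coercivity $(P(x)u,u)_H\ge C_0\|u\|^2$ on $H^+_\infty$ from $({\rm D4_\infty})$, and by $({\rm D1_\infty})$-$({\rm D2_\infty})$ in the negative and kernel directions; the alternative hypothesis $({\rm D}'_\infty)$ of Remark~\ref{rm:1.2} works equally well. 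Joint continuity of $\Phi$ and the preservation of the $X$-topology on the branch $B_{H^0_\infty}(\infty,R)\oplus B_{H^-_\infty}(\theta,\delta_r)$ then follow because $H^0_\infty$ and $H^-_\infty$ are finite-dimensional, so $X$- and $H$-topologies coincide there.

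The regularity statements (i)-(vi) for $h^\infty$ and $\mathcal{L}^\infty$ are obtained exactly as in Theorem~\ref{th:1.1}: (i) from a tail estimate on the fixed-point iteration for $T_z$; (ii)-(iii) from the classical implicit function theorem applied to $(I-P^0_\infty)A(z+x)=\theta$ together with the identity $d\mathcal{L}^\infty(z_0)z=(A(z_0+h^\infty(z_0)),z)_H$ simplified via (\ref{e:1.10}); and (iv)-(vi) from the strict Fr\'echet differentiability supplied by $({\rm E_\infty})$, which gives $dh^\infty$ strict derivative zero at $\infty$ and promotes $\mathcal{L}^\infty$ to $C^1$ (respectively $C^{2-0}$ when $A$ has strict derivative $B(\infty)|_X$ at $\infty$).
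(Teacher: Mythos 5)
Your broad architecture matches the paper (Banach fixed point for $h^\infty$, a fiberwise Morse--Palais normal form for $F^\infty(z,u)=\mathcal{L}(z+h^\infty(z)+u)-\mathcal{L}(z+h^\infty(z))$, assembly into $\Phi$), but the paragraph that is supposed to deal with the ``main obstacle'' contains a false step and therefore does not actually supply the missing ingredient.

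You write that $({\rm D3_\infty})$ forces $\|B(z+h^\infty(z))-B(\infty)\|_{L(H)}\to 0$ as $\|z\|\to\infty$, and from this you infer uniform control of the contraction constants and the radius $\delta_r$ across the non-compact parameter set $B_{H^0_\infty}(\infty,R)$. This is not what $({\rm D_\infty})$ gives. Condition $({\rm D3_\infty})$ is norm convergence for the compact part $Q$ only, while $({\rm D2_\infty})$ gives only \emph{strong} (pointwise) convergence $\|P(x_k)u-P(\infty)u\|\to 0$ for each fixed $u\in H$; hence $B(x)=P(x)+Q(x)$ does not converge to $B(\infty)$ in operator norm. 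Because of exactly this, the paper cannot simply invoke the parameterized Morse--Palais lemma (Theorem~\ref{th:2.1}), which requires a compact parameter space $\Lambda$; $\bar B_{H^0_\infty}(\infty,R_1)$ is only locally compact. The substitute for compactness in the paper is the asymptotic evaluation Lemma~\ref{lem:2.7}: if $\|z_k\|\to\infty$ and $u_k\to u_0$ in $H$ then $F^\infty(z_k,u_k)\to\tfrac12(B(\infty)u_0,u_0)_H$. Its proof crucially combines the strong convergence of $P$ with the uniform boundedness principle and the norm convergence of $Q$, precisely because $B$ itself is not norm-convergent. This limit is then fed into a dichotomy argument (``$\{z_n\}$ bounded'' versus ``$\|z_n\|\to\infty$'') that appears in Lemma~\ref{lem:2.8}(i), Claims 2.12.1--2.12.2 of Lemma~\ref{lem:2.12}(i), and Lemma~\ref{lem:2.14}, each time to rule out a degeneracy escaping to infinity and thereby to produce a uniform $\varepsilon_r$, then $\epsilon_r$, then $\delta_r$.

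Without this mechanism your sketch leaves unproved the existence of a uniform $\delta_r>0$ (and of the maximizer $\varphi_z(u)$ for small $u$, and of the inclusion giving $V(R,r)\subset\overline{C_{R,r+\rho_A}}$) as $z$ ranges over the unbounded set $B_{H^0_\infty}(\infty,R)$. So the argument has a genuine gap: the key idea that makes Theorem~\ref{th:1.3} work---replacing compactness of the parameter set by the asymptotic formula $F^\infty(z_k,u_k)\to\tfrac12(B(\infty)u_0,u_0)_H$ along sequences going to infinity---is absent, and the purported replacement (norm convergence of $B(x)$) does not follow from the hypotheses.
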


In Theorems~\ref{th:1.1},~\ref{th:1.3}, if $\mathcal{ L}$ is $C^2$
and $D^2\mathcal{ L}(w)=B(\infty)+ o(1)$ as $\|w\|\to\infty$,   we
shall prove in Remark~\ref{rm:2.15} that $\Phi^{-1}$  is $C^1$
outside the submanifold of codimension $\mu_\infty$ if $R>0$ is
large enough.

\begin{remark}\label{rm:1.4}
{\rm Similar conclusions to Remarks~2.2,~2.3 in \cite{Lu2},\cite{Lu3} also
hold. By the note below Lemma~\ref{lem:2.5}, we can still get
Theorem~\ref{th:1.3} if we replace the condition (${\rm D}_\infty$)
by the following
\begin{enumerate}
\item[(${\rm D}''_\infty$)] There exist a subset of $X$ of form
$$
W_\infty=\bar B_{H^0_\infty}(\infty, R')\oplus (\bar B_H(\theta,
r')\cap X^\pm_\infty)
 \subset V_\infty\cap X,
 $$
  a positive number $c_\infty$ and a function $\omega_\infty:
W_\infty\to [0, \infty)$ with the property that $\omega_\infty(x)\to
0$ as $x\in W_\infty$ and $\|x\|\to \infty$, such that
\begin{enumerate}
\item[(${\rm D}''_{\infty1}$)] the kernel $H^0_\infty$ and negative definite
subspace $H^-_\infty$ of $B(\infty)$ are finite dimensional
subspaces contained in $X$;
\item[(${\rm D}''_{\infty2}$)] $(B(x)v, v)_H\ge c_\infty\|v\|^2\;\hbox{for all}\; v\in H^+_\infty, x\in W_\infty$;
\item[(${\rm D}''_{\infty3}$)] $|(B(x)u,v)_H-(B(\infty)u,v)_H|\le\omega_\infty(x)\|u\|\cdot\|v\|\;\hbox{for all}\; u\in H,
 v\in H^-_\infty\oplus H^0_\infty$;
\item[(${\rm D}''_{\infty4}$)] $(B(x)u,u)_H\le-c_\infty\|u\|^2\;\hbox{for all}\; u\in H^-_\infty,  x\in W_\infty$.
\end{enumerate}
\end{enumerate}}
\end{remark}

\begin{corollary}\label{cor:1.5}
Suppose that one of the following condition groups holds:
\begin{enumerate}
\item [{\rm (a)}]  $({\rm S})$,
$({\rm F1_\infty})$--$({\rm F3_\infty})$ and $({\rm
C1_\infty})$--$({\rm C2_\infty})$, $({\rm D_\infty})$ and $({\rm
E_\infty})$;

\item[{\rm (b)}] $({\rm S})$,
$({\rm F1_\infty})$--$({\rm F3_\infty})$ and $({\rm
C1_\infty})$--$({\rm C2_\infty})$, $({\rm D_\infty})$ and $({\rm
E'_\infty})$, and $A$ being $C^1$.
\end{enumerate}
Then each critical point $z$ of the function $\mathcal{
L}^\infty:B_{H^0_\infty}(\infty, R)\to \R$ gives a critical point of
$\mathcal{ L}$, $z+ h^\infty(z)$.
\end{corollary}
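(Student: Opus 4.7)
The strategy is to align two complementary vanishing statements supplied by Theorems~\ref{th:1.1} and~\ref{th:1.3}: the defining equation of $h^\infty$ annihilates the $X^\pm_\infty$-component of the gradient $A$ along its graph, while the criticality of $\mathcal{L}^\infty$ annihilates the $H^0_\infty$-component through formula (\ref{e:1.9}). Combining the two yields $A(z+ h^\infty(z))=0$, whence via $({\rm F2_\infty})$ the point $z+h^\infty(z)$ is critical for $\mathcal{L}$.

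First I would verify that the gradient formula
$$
d\mathcal{L}^\infty(z_0)(\zeta)=(A(z_0+h^\infty(z_0)),\zeta)_H,\qquad z_0\in B_{H^0_\infty}(\infty,R),\;\zeta\in H^0_\infty,
$$
is available in both hypothesis groups of the corollary. Under group~(i), condition $({\rm E_\infty})$ is in force, so conclusion~(v) of Theorem~\ref{th:1.1} (and of Theorem~\ref{th:1.3}) applies and $\mathcal{L}^\infty$ is $C^1$ with (\ref{e:1.9}). Under group~(ii), condition $({\rm E'_\infty})$ combined with $A\in C^1$ activates conclusion~(ii) of the same theorems and again yields (\ref{e:1.9}).

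Now, if $z\in B_{H^0_\infty}(\infty,R)$ satisfies $d\mathcal{L}^\infty(z)=0$, then substituting arbitrary $\zeta\in H^0_\infty$ into the displayed formula gives $(A(z+h^\infty(z)),\zeta)_H=0$, i.e.\ $P^0_\infty A(z+h^\infty(z))=0$. Combined with the identity $(I-P^0_\infty)A(z+h^\infty(z))=0$ provided by (\ref{e:1.6}) (or (\ref{e:1.10})), the two orthogonal components sum to zero, so $A(z+h^\infty(z))=0$ in $X$.

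Finally, since $z+h^\infty(z)\in V_\infty\cap X$ by construction of $h^\infty$, hypothesis $({\rm F2_\infty})$ gives
$$
D\mathcal{L}(z+h^\infty(z))(u)=(A(z+h^\infty(z)),u)_H=0\qquad\forall u\in X.
$$
Density of $X$ in $H$ together with continuity of the linear functional $(A(z+h^\infty(z)),\cdot)_H$ on $H$ extend this identity to all $u\in H$, so, in view of $({\rm F1_\infty})$, $z+h^\infty(z)$ is a critical point of $\mathcal{L}$. I do not expect any step to be delicate; the only bookkeeping is tracking which conclusion of Theorems~\ref{th:1.1}--\ref{th:1.3} supplies (\ref{e:1.9}) in each hypothesis group, and checking once more that the graph of $h^\infty$ sits inside $V_\infty\cap X$ (which is precisely how $R$ is chosen in the splitting theorems).
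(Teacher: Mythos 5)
Your proof is correct and follows the same route as the paper's: combine the vanishing of the $H^0_\infty$-component of $A(z+h^\infty(z))$ (from the criticality of $\mathcal{L}^\infty$ via (\ref{e:1.9})) with the vanishing of the $X^\pm_\infty$-component (from (\ref{e:1.6})/(\ref{e:1.10})), then use density of $X$ in $H$ and $({\rm F2_\infty})$. Your extra bookkeeping about which conclusion of Theorem~\ref{th:1.1} supplies (\ref{e:1.9}) under each hypothesis group is accurate and a welcome clarification.
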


\begin{proof}
Under the condition group (a) or (b),
$\mathcal{ L}^\infty$ is at least $C^1$. For a critical point $z$ of
it (\ref{e:1.9}) shows that $(A(z+ h^\infty(z)), z')_H=0$ for all
$z'\in H^0_\infty$, i.e.,
$$
(P^0_\infty A(z+ h^\infty(z)), u)_H=0\quad\hbox{for all}\; u\in H.
$$
This and (\ref{e:1.10}) imply $A(z+ h^\infty(z))=\theta$. Since $X$
is dense in $H$,  the desired claim follows from the condition
(${\rm F2}_\infty$).
\end{proof}


When $X=H$ Theorems~\ref{th:1.1},~\ref{th:1.3} have the following
corollaries, respectively.

\begin{corollary}\label{cor:1.6}
Let $V_\infty$ be a neighborhood of infinity in a Hilbert space $H$,
and let ${\cal L}:V_\infty\to\R$ be a $C^1$-functional. Suppose that
$\nabla{\cal L}:V_\infty\to H$ is  continuously directional
differentiable and that there exists a map $B$ from
$V_\infty\cup\{\infty\}$ to the space $L_s(H)$ of bounded
self-adjoint linear operators of $H$ such that
$$
(D\nabla{\cal L}(x)(u), v)_H=(B(x)u, v)_H\quad\hbox{for all}\; x\in
V_\infty,\quad\hbox{for all}\; u, v\in H.
$$
(So ${\cal L}$ has the G\^ateaux derivative  of second order ${\cal
L}''(x)=B(x)$ at $x\in V_\infty$.) Write ${\cal L}$ as
$$
{\cal L}(x)=\frac{1}{2}(B(\infty)x,x)_H+ g(x).
$$
($g$ has the G\^ateaux derivative  of second order
$g''(x)=B(x)-B(\infty)$ at $x\in V_\infty$.) Suppose
\begin{enumerate}
\item[{\rm (a)}] $g(x)=o(\|x\|^2)$ as $\|x\|\to\infty$;
\item[{\rm (b)}] $0\in\sigma(B(\infty))$ and $B(\infty)=P(\infty)+ Q(\infty)$, where
$P(\infty)\in{\cal L}_s(H)$ is positive definite and
$Q(\infty)\in{\cal L}_s(H)$ is compact;
\item[{\rm (c)}] For any sequence $\{x_n\}\subset V_\infty$ with $\|x_n\|\to
\infty$ (as $n\to\infty$), there exist
 constants $C_0>0$ and $n_0>0$ such that
$$
([B(x_n)-Q(\infty)]u, u)_H\ge C_0\|u\|^2\quad\hbox{for all}\; u\in
H,\;\hbox{for all}\; n\ge n_0.
$$
\item[{\rm (d)}]  $H^0_\infty:={\rm Ker}(B(\infty))\ne\{\theta\}$ and $H^\pm_\infty:=(H^0_\infty)^\bot$,
$C^\infty_1=\|(B(\infty)|_{H^\pm_\infty})^{-1}\|_{L(H^\pm_\infty)}$,
if $M(A):=\lim_{R\to\infty}\sup\{\|(I-P^0_\infty)A(z)\|:\;z\in
H^0_\infty, \|z\|\ge R\}<\infty$ with $A=\nabla{\cal L}$,  there
exist constants $R_1>0, \kappa>1$ and $\rho_A\in
(\frac{\kappa}{\kappa-1}C_1^\infty M(A), \infty)$ such that for all
$y\in B_{H^\pm_\infty}(\theta, \rho_A)$, $z\in \bar
B_{H^0_\infty}(\theta, R_1)$,
\begin{eqnarray*}
\|(I-P^0_\infty)[B(z+y)-B(\infty)]|_{H^\pm_\infty}\|_{L(H^\pm_\infty)}
\le \frac{1}{\kappa C_1^\infty }.
\end{eqnarray*}
Moreover, that $M(A)<\infty$ is not needed if  there exists a
constant $R_1>0, \kappa>1$ such that for all $y\in
H^\pm_\infty,\,z\in \bar B_{H^0_\infty}(\theta, R_1)$,
\begin{eqnarray*}
\|(I-P^0_\infty)[B(z+y)-B(\infty)]|_{H^\pm_\infty}\|_{L(H^\pm_\infty)}\le\frac{1}{\kappa
C_1^\infty }.
\end{eqnarray*}
\end{enumerate}
Then there exist a positive number $R\ge R_1$, a (unique) continuous
map $ h^\infty: B_{H^0_\infty}(\infty, R)\to H^\pm_\infty$
satisfying (\ref{e:1.6}) with $A=\nabla{\cal L}$, which takes values
in $\bar B_{H^\pm_\infty}(\theta,\rho_A)$ in the case $M(A)<\infty$,
and a homeomorphism $\Phi: B_{H^0_\infty}(\infty, R)\oplus
H^\pm_\infty\to B_{H^0_\infty}(\infty, R)\oplus H^\pm_\infty$  such
that
$$
\mathcal{ L}\circ\Phi(z+ u^++ u^-)=\|u^+\|^2-\|u^-\|^2+ \mathcal{
L}(z+ h^\infty(z))
$$
for all $(z, u^+ + u^-)\in B_{H^0_\infty}(\infty, R)\times
H^\pm_\infty$. Moreover, if ${\cal L}$ is $C^2$ then the map
$h^\infty$ is $C^1$ and the function $B_{H^0_\infty}(\infty,
R)\to\R, \; z\mapsto \mathcal{ L}^\infty(z):=\mathcal{ L}(z+
h^\infty(z))$ is $C^2$.
\end{corollary}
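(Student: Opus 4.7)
The strategy is to apply Theorem~\ref{th:1.1} with $X:=H$ and $A:=\nabla\mathcal{L}$; under this identification most hypotheses of the theorem either collapse automatically or reduce to short verifications. Conditions (S) and (${\rm C2_\infty}$) are vacuous. Since $\mathcal{L}\in C^1(V_\infty)$ and $\nabla\mathcal{L}$ is continuously directionally differentiable with derivative $B(x)$, the Riesz identity $D\mathcal{L}(x)(u)=(\nabla\mathcal{L}(x),u)_H$ supplies (${\rm F1_\infty}$)--(${\rm F3_\infty}$). The asymptotic expansion (\ref{e:1.5}) is the content of (i), while (${\rm C1_\infty}$) follows from (ii) by Weyl's theorem: $B(\infty)$ is a compact perturbation of the positive definite operator $P(\infty)$, so its essential spectrum lies in $[c,\infty)$ for some $c>0$; thus $0$ is at worst an isolated eigenvalue of finite multiplicity, which also accounts for the finite dimensionality of $H^0_\infty$ and $H^-_\infty$.

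The decomposition at infinity (${\rm D_\infty}$) I would realize by taking $P(x):=B(x)-Q(\infty)$ and $Q(x)\equiv Q(\infty)$. Then $Q$ is a constant compact operator, making (${\rm D3_\infty}$) trivial; (${\rm D1_\infty}$) is vacuous because $X=H$; and (${\rm D4_\infty}$) is exactly hypothesis (iii). For condition (${\rm E'_\infty}$) I would invoke the integral representation
\[
A(z+x_1)-A(z+x_2)-B(\infty)(x_1-x_2)=\int_0^1\!\bigl[B(z+x_2+t(x_1-x_2))-B(\infty)\bigr](x_1-x_2)\,dt
\]
coming from the continuous directional differentiability of $A$. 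Applying $I-P^0_\infty$ and using hypothesis (iv), together with convexity of $B_{H^\pm_\infty}(\theta,\rho_A)$ (which keeps the sample point in the domain where (iv) is valid), yields (\ref{e:1.4}) with constant $\frac{1}{\kappa C_1^\infty}$; the second clause of (iv) gives the variant with $\rho_A=\infty$.

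With these ingredients assembled, Theorem~\ref{th:1.1} produces the number $R$, the map $h^\infty$, and the homeomorphism $\Phi$ satisfying (\ref{e:1.6})--(\ref{e:1.8}) in the Hilbert setting $X=H$, and the final $C^2$-regularity statement of the corollary is conclusion (iii) of the theorem. The main obstacle is verifying (${\rm D2_\infty}$): hypothesis (iv) supplies only a uniform operator bound on $(I-P^0_\infty)(B(x)-B(\infty))|_{H^\pm_\infty}$, not a strong convergence $B(x_k)u\to B(\infty)u$ pointwise. I would resolve this by appealing to Remark~\ref{rm:1.2} (or Remark~\ref{rm:1.4}) and verifying the weaker replacement (${\rm D'_\infty}$) instead: parts (${\rm D'_{\infty 1}}$), (${\rm D'_{\infty 4}}$) reduce to finite dimensionality and (iii); (${\rm D'_{\infty 2}}$) follows because for $v\in H^+_\infty$ one has $P^0_\infty v=\theta$, so hypothesis (iv) gives
\[
(B(x)v,v)_H\ge (B(\infty)v,v)_H-\tfrac{1}{\kappa C_1^\infty}\|v\|^2\ge \bigl(2a_\infty-\tfrac{1}{\kappa C_1^\infty}\bigr)\|v\|^2,
\]
which is strictly positive once $a_\infty$ in (\ref{e:1.1}) is chosen maximally. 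The finite-rank continuity (${\rm D'_{\infty 3}}$) on the finite-dimensional subspace $H^0_\infty\oplus H^-_\infty$ is the only delicate point and is where the bulk of the technical work of the proof concentrates.
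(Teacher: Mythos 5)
Your approach is the same as the paper's: take $X=H$, $A=\nabla\mathcal{L}$, split $B(x)=P(x)+Q(x)$ with $P(x)=B(x)-Q(\infty)$ and $Q(x)\equiv Q(\infty)$, obtain $({\rm C1}_\infty)$ from a Weyl-type argument (the paper cites Propositions B.2--B.3 of \cite{Lu2} for this), get $({\rm D4}_\infty)$ from (iii), derive $({\rm E}'_\infty)$ from (iv) by the mean-value inequality, and invoke Theorem~\ref{th:1.1}. Where you add something is the final paragraph: you notice that with this choice of $P$, condition $({\rm D2}_\infty)$ asks for strong operator convergence $B(x_k)u\to B(\infty)u$, which is simply not a consequence of (i)--(iv). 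The paper's own proof sidesteps this: after checking $({\rm D4}_\infty)$ it writes ``Hence $({\rm D}_\infty)$ is satisfied'' without touching $({\rm D2}_\infty)$. So your concern is legitimate and points at a real sloppiness in the printed argument.

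Your attempted repair, however, does not go through either. Passing to $({\rm D}'_\infty)$ from Remark~\ref{rm:1.2} still requires $({\rm D}'_{\infty 3})$, which demands $\omega_\infty(x)\to 0$; but (iv) supplies only a \emph{uniform} bound $\le 1/(\kappa C_1^\infty)$ on $(I-P^0_\infty)[B(z+y)-B(\infty)]|_{H^\pm_\infty}$, never a vanishing rate. So $({\rm D}'_{\infty 3})$ is not ``delicate but provable'' --- it is genuinely unavailable from (i)--(iv). (There is also a small slip: $({\rm D}'_{\infty 4})$ does not come from (iii), which concerns positivity of $P(x)=B(x)-Q(\infty)$ on all of $H$; it comes from (iv) by exactly the argument you gave for $({\rm D}'_{\infty 2})$, applied to $v\in H^-_\infty$ with $P^0_\infty v=\theta$.) The actual way out, which neither you nor the paper spells out, is that under hypothesis (i) (i.e.\ (\ref{e:1.5})) the conditions $({\rm D2}_\infty)$ and $({\rm D}'_{\infty 3})$ are never used: the proof of Lemma~\ref{lem:2.6} invokes only Lemma~\ref{lem:2.5}(i),(iii), which are precisely $({\rm D}'_{\infty 2})$, $({\rm D}'_{\infty 4})$ and do follow from (iv); and the passage to Theorem~\ref{th:1.1} goes through Lemma~\ref{lem:2.7}(ii) (which needs only (\ref{e:1.5})), never the $({\rm D2}_\infty)$-dependent Lemma~\ref{lem:2.7}(i). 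So your skeleton is correct and your diagnosis of the weak spot is sharper than the paper's own write-up, but the proposed fix via $({\rm D}'_{\infty 3})$ leads into a dead end; what is needed instead is to track which parts of $({\rm D}_\infty)$ the Theorem~\ref{th:1.1} proof actually consumes.
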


\begin{proof}
By Propositions~B.2,B.3 in \cite{Lu2} or \cite{Lu3}, $0$ is an
isolated spectrum point of $B(\infty))$, and  $B(\infty)$ has the
finite dimensional kernel $H^0_\infty$ and negative definite
subspace $H^-_\infty$. For $x\in V_\infty$ let $P(x)=P(\infty)+
g''(x)=B(x)-Q(\infty)$ and $Q(x)\equiv Q(\infty)$. Then $B(x)=P(x)+
Q(x)$. The condition (iii) implies that (${\rm D4_\infty}$) is
satisfied. It follows that $P(x)$ is positive definite for each $x$
in a neighborhood of infinity in $H$. Hence (${\rm D_\infty}$) is
satisfied.

Next we shows that the condition (iv) implies (${\rm E'_\infty}$).
Since $g'(x)=A(x)-B(\infty)x$ with  $A=\nabla{\cal L}$, and
$g''(x)=B(x)-B(\infty)$ using the mean value theorem in inequality
form we deduce that
\begin{eqnarray*}
&&\frac{\|(I-P^0_\infty)A(z+ x_1)-B(\infty)x_1-(I-P^0_\infty)A(z+
x_2)+ B(\infty)x_2\|}{\|x_1-x_2\|}\\
&=&\frac{\|(I-P^0_\infty)g'(z+x_1)-(I-P^0_\infty)g'(z+x_2)\|}{\|x_1-x_2\|}\\
&\le& \sup_{t\in [0, 1]}\|(I-P^0_\infty)g''(z+ tx_1+
(1-t)x_2)|_{H^\pm_\infty}\|_{L(H^\pm_\infty)}\le\frac{1}{\kappa
C_1^\infty}
\end{eqnarray*}
for all $z\in \bar B_{H^0_\infty}(\infty, R_1)$ and $x_i\in
B_{H^\pm_\infty}(\theta,\rho_A)$, $i=1, 2$ and $x_1\ne x_2$.
Moreover, since $I-P^0_\infty\ne 0$,
$C^\infty_2=\|I-P^0_\infty\|_{L(H, H^\pm_\infty)}=1$. So the
condition (${\rm E'_\infty}$) holds. Corollary~\ref{cor:1.6}
immediately follows from Theorem~\ref{th:1.1}.
\end{proof}

In Corollary~\ref{cor:1.6}, if ${\cal L}$ is $C^2$ and $g''(x)=o(1)$
as $\|x\|\to\infty$ then the conditions (c)-(d) are satisfied
automatically. This almost leads to
 the splitting lemmas at infinity
first established by Thomas Bartsch and Shujie Li
\cite[p. 431]{BaLi}. See Section~\ref{sec:3.1} below for a detailed explanation. As
in the proof of Corollary~\ref{cor:1.6} Theorem~\ref{th:1.3} leads
to

\begin{corollary}\label{cor:1.7}
Under the assumptions (b)--(d) of Corollary~\ref{cor:1.6}, for any
$r\in (0, \infty)$ there exist positive numbers $R\ge R_1$,
$\delta_r>0$ and  a (unique) continuous map
$h^\infty:B_{H^0_\infty}(\infty, R)\to X^\pm_\infty$ (which takes
values in $\bar B_{X^\pm_\infty}(\theta, \rho_A)$ in the case
$M(A)<\infty$ ) satisfying (\ref{e:1.10}) with $A=\nabla{\cal L}$,
an open set $V(R, r)$ in $H$ with $V(R, r)\subset \overline{C_{R,
r+\rho_A}}$, and a homeomorphism $\Phi: C_{R, \delta_r}\to V(R,r)$
such that
$$
\mathcal{ L}\circ\Phi(z, u^++ u^-)=\|u^+\|^2-\|u^-\|^2+ \mathcal{
L}(z+ h^\infty(z))
$$
for all $(z, u^+, u^-)\equiv z+ u^+ + u^-\in C_{R, \delta_r}$.
 Moreover, if ${\cal L}$ is $C^2$ then the map
$h^\infty$ is $C^1$ and the function $ B_{H^0_\infty}(\infty, R)\ni
z\mapsto \mathcal{ L}(z+ h^\infty(z))\in\R$ is $C^2$.
\end{corollary}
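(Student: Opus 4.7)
The plan is to reduce Corollary~\ref{cor:1.7} to Theorem~\ref{th:1.3} by taking $X=H$ and verifying the remaining hypotheses, in exactly the same manner as Corollary~\ref{cor:1.6} was deduced from Theorem~\ref{th:1.1}. The main observation is that the data in (ii)--(iv) of Corollary~\ref{cor:1.6} is precisely what is needed by Theorem~\ref{th:1.3}; the only change from Corollary~\ref{cor:1.6} is the output, which is now a local splitting on a cylinder $C_{R,\delta_r}$ rather than on the full product $B_{H^0_\infty}(\infty,R)\oplus H^\pm_\infty$.

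First I would set up the framework. With $X=H$, condition $({\rm S})$ is trivial and $X^\pm_\infty=H^\pm_\infty$. Set $A=\nabla\mathcal{L}$. Conditions $({\rm F1_\infty})$--$({\rm F3_\infty})$ are immediate from the $C^1$ assumption on $\mathcal{L}$ and the existence of the self-adjoint family $B(x)$, while $({\rm C1_\infty})$--$({\rm C2_\infty})$ follow from (ii) via Propositions~B.2, B.3 of \cite{Lu2}, as already invoked for Corollary~\ref{cor:1.6}. For $({\rm D_\infty})$ I would use the same splitting $P(x):=B(x)-Q(\infty)$, $Q(x)\equiv Q(\infty)$; condition (iii) then forces $P(x)$ to be uniformly positive definite in a neighborhood of infinity, giving $({\rm D4_\infty})$ and, combined with (ii) and the finite dimensionality of $H^0_\infty,H^-_\infty$, also $({\rm D1_\infty})$--$({\rm D3_\infty})$.

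Next I would verify $({\rm E'_\infty})$ from (iv). Writing $\mathcal{L}(x)=\tfrac{1}{2}(B(\infty)x,x)_H+g(x)$, so that $A(x)-B(\infty)x=g'(x)$ and $g''(x)=B(x)-B(\infty)$, the mean value theorem in inequality form bounds
\[
\|(I-P^0_\infty)A(z+x_1)-B(\infty)x_1-(I-P^0_\infty)A(z+x_2)+B(\infty)x_2\|_{H^\pm_\infty}
\]
by $\|x_1-x_2\|$ times the supremum of $\|(I-P^0_\infty)[B(z+\xi)-B(\infty)]|_{H^\pm_\infty}\|_{L(H^\pm_\infty)}$ along the segment joining $x_1$ and $x_2$, and assumption (iv) controls this supremum by $1/(\kappa C_1^\infty)$. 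Since $X=H$ gives $C_2^\infty=\|I-P^0_\infty\|_{L(H,H^\pm_\infty)}=1$, inequality \eqref{e:1.4} holds with the same $\kappa$ and $\rho_A$ supplied by (iv), so $({\rm E'_\infty})$ is satisfied.

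Finally, $\nu_\infty=\dim H^0_\infty>0$ from (iv), so all hypotheses of Theorem~\ref{th:1.3} are in place; applying it produces, for each $r\in(0,\infty)$, the constants $R\ge R_1$ and $\delta_r>0$, the unique continuous map $h^\infty$ satisfying \eqref{e:1.10} with $A=\nabla\mathcal{L}$, the open set $V(R,r)\subset\overline{C_{R,r+\rho_A}}$, and the homeomorphism $\Phi:C_{R,\delta_r}\to V(R,r)$ with the asserted splitting identity. The $C^1$-regularity of $h^\infty$ and the $C^2$-regularity of $z\mapsto\mathcal{L}(z+h^\infty(z))$ under the additional assumption that $\mathcal{L}$ is $C^2$ are then inherited from conclusions (ii)--(iii) of Theorem~\ref{th:1.1}, which Theorem~\ref{th:1.3} carries over verbatim. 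There is no real obstacle here; the only mild care required is in matching the constants $\kappa$ and $\rho_A$ coming from (iv) with those appearing in $({\rm E'_\infty})$, which is performed exactly as in the proof of Corollary~\ref{cor:1.6}.
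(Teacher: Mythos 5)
Your proposal is correct and takes exactly the route the paper intends: the paper dispatches Corollary~\ref{cor:1.7} in a single line by remarking that the verifications done in the proof of Corollary~\ref{cor:1.6} (that $X=H$, $A=\nabla\mathcal{L}$, and conditions (ii)--(iv) yield $({\rm S})$, $({\rm F1_\infty})$--$({\rm F3_\infty})$, $({\rm C1_\infty})$--$({\rm C2_\infty})$, $({\rm D_\infty})$, and $({\rm E'_\infty})$ with $C_2^\infty=1$) apply verbatim, and then Theorem~\ref{th:1.3} is invoked in place of Theorem~\ref{th:1.1}. Your reconstruction of those verifications, the reading of $\nu_\infty>0$ from (iv), and the appeal to conclusions (ii)--(iii) of Theorem~\ref{th:1.1} (carried over by Theorem~\ref{th:1.3}) for the $C^1/C^2$ regularity all match the paper's intended argument.
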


This corollary generalizes not only a slightly different version of
Bartsch-Li splitting lemmas at infinity \cite{BaLi} given in
\cite[Proposition~3.3]{HiLiWa} but also Theorem~2.1 in \cite{ChenLi1}.
Moreover, we do not need the assumption (\ref{e:1.5}). See Section~3.2
below for a detailed explanation.

The premise of the assumptions $({\rm E}_\infty)$ and $({\rm
E}'_\infty)$) is $\nu_\infty>0$. When $\nu_\infty=0$ the proofs of
Theorems~\ref{th:1.1},~\ref{th:1.3} cannot be completed if no
further conditions are imposed. The following  may be viewed as a
corresponding version of them in the case $\nu_\infty=0$.

\begin{theorem}\label{th:1.8}
Under the above assumptions $({\rm S})$, $({\rm F1_\infty})$--$({\rm
F3_\infty})$ and $({\rm C1_\infty})$--$({\rm C2_\infty})$, $({\rm
D_\infty})$,  also suppose that $\nu_\infty=0$ and that there exist constants $R>0$ and
 $\lambda\in (0, a_\infty)$ such that
\begin{eqnarray}
|{\cal L}(u)-(B(\infty)u,u)/2|\le\lambda\|u\|^2\quad&&\hbox{for all}\; u\in \bar B_H(\infty, R),\label{e:1.11}\\
\|A(u)-B(\infty)u\|\le\lambda\|u\|\quad&&\hbox{for all}\; u\in \bar
B_H(\infty, R)\cap X.\label{e:1.12}
\end{eqnarray}
\begin{enumerate}
\item[{\rm (a)}] If $\mu^-_\infty=0$ then there exist a number  $\mathfrak{R}>0$ and a homeomorphism
$\phi$ from $B_H(\infty, \mathfrak{R})$ onto an open subset of $H$
to satisfy:
\begin{eqnarray*}
&&{\cal L}(\phi(u))=\|u\|^2\quad\hbox{for all}\; u\in B_H(\infty, \mathfrak{R}),\\
&&\frac{\|u\|}{\sqrt{2a_\infty}}\le\|\phi(u)\|\le\frac{1}{\sqrt{a_\infty-\lambda}}\|u\|
\quad\hbox{for all}\; u\in B_H(\infty, \mathfrak{R}).
\end{eqnarray*}

\item[{\rm (b)}] If $\mu^-_\infty>0$ then there exist a number $\mathfrak{R}>0$  and
a homeomorphism $\phi$ from $B_{H^+_\infty}(\infty,
\mathfrak{R})\oplus H^-_\infty$ onto an open subset of $H$ such that
for all $(u,v)\in B_{H^+_\infty}(\infty, \mathfrak{R})\times
H^-_\infty$,
\begin{eqnarray*}
&{\cal L}(\phi(u+v))=\|u\|^2-\|v\|^2, &\\
&\frac{\|u\|}{\sqrt{2\|B(\infty)\|}}\le\|P^+_\infty\circ\phi(u+v)\|\le\sqrt{a_\infty-\lambda}\|u\|,&\\
&P^-_\infty\circ\phi\bigl(B_{H^+_\infty}(\infty,
\mathfrak{R})\oplus H^-_\infty\bigr)=H^-_\infty,&
\end{eqnarray*}
where $P^+_\infty$ and $P^-_\infty$ are the orthogonal projections
onto $H^+_\infty$ and $H^-_\infty$, respectively.
\end{enumerate}
 \end{theorem}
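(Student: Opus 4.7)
The plan is to carry out an ``at-infinity'' version of the classical Morse--Palais lemma, following the ideas of \cite{DHK, JM, Skr, Va1}, thereby avoiding the flow method of Theorems~\ref{th:1.1},~\ref{th:1.3} (and the accompanying $C^2$ requirement). Since $\nu_\infty=0$, we have $H=H^+_\infty\oplus H^-_\infty$ with $H^-_\infty$ finite-dimensional, and (\ref{e:1.11})--(\ref{e:1.12}) quantify the smallness of $\mathcal{L}-\tfrac12(B(\infty)\cdot,\cdot)_H$ and of $A-B(\infty)\cdot$ by a constant $\lambda<a_\infty$.

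For part (i), where $\mu^-_\infty=0$ forces $H=H^+_\infty$ and $(B(\infty)w,w)_H\ge 2a_\infty\|w\|^2$, I would exploit positive-definiteness via a radial argument. For each $e\in X$ with $\|e\|=1$ and $t\ge R$, continuous directional differentiability gives $\tfrac{d}{dt}\mathcal{L}(te)=(A(te),e)_H$, and (\ref{e:1.12}) together with the lower bound on $B(\infty)$ yields
$$
\frac{d}{dt}\mathcal{L}(te) \;\ge\; (B(\infty)te,e)_H - \|A(te)-B(\infty)te\| \;\ge\; (2a_\infty-\lambda)t \;>\; 0.
$$
Extending by density (or using (\ref{e:1.11}) directly to pin down strict monotonicity), $t\mapsto\mathcal{L}(te)$ is strictly increasing and coercive on $[R,\infty)$ for every unit $e\in H$. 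For each $u$ with $\|u\|$ large, let $t(u)\ge R$ be the unique solution of $\mathcal{L}(t(u)u/\|u\|)=\|u\|^2$ and set $\phi(u):=t(u)u/\|u\|$. The inverse is given in closed form by $\phi^{-1}(w)=\sqrt{\mathcal{L}(w)}\,w/\|w\|$, which is well-defined because (\ref{e:1.11}) forces $\mathcal{L}(w)\ge(a_\infty-\lambda)\|w\|^2>0$; both maps are continuous and the image is open. The asserted norm estimates are read off from $\mathcal{L}(\phi(u))=\|u\|^2$ combined with (\ref{e:1.11}) and the spectral bounds on $B(\infty)$.

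For part (ii) I would reduce to (i) via a two-stage procedure. \emph{Stage~1 (implicit reduction on $H^-_\infty$).} Since $H^-_\infty$ is finite-dimensional and $-B(\infty)|_{H^-_\infty}$ is a positive-definite isomorphism, a degree-theoretic (or small-parameter fixed-point) argument in a sufficiently large ball of $H^-_\infty$ yields, for each $w^+\in H^+_\infty$ with $\|w^+\|$ large, a unique solution $\eta(w^+)\in H^-_\infty$ of $P^-_\infty A(w^++v)=\theta$; uniform boundedness and continuity of $\eta$ follow from the quantitative smallness in (\ref{e:1.12}). \emph{Stage~2 (normalization of the two factors).} The fundamental theorem of calculus together with $P^-_\infty A(w^++\eta(w^+))=\theta$ gives, for $v\in H^-_\infty$,
$$
\mathcal{L}(w^++v)-\mathcal{L}(w^++\eta(w^+)) \;=\; -\bigl(\mathcal{Q}_{w^+}(v-\eta(w^+)),\,v-\eta(w^+)\bigr)_H,
$$
where $\mathcal{Q}_{w^+}$ is a symmetric positive-definite operator on $H^-_\infty$ depending continuously on $w^+$ and tending to $-\tfrac12 B(\infty)|_{H^-_\infty}$ as $\|w^+\|\to\infty$, so a matrix square-root produces a continuously varying linear isomorphism $T_{w^+}$ with $(\mathcal{Q}_{w^+}v,v)_H=\|T_{w^+}v\|^2$. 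The reduced functional $\mathcal{L}_-(w^+):=\mathcal{L}(w^++\eta(w^+))$ on $H^+_\infty$ inherits estimates of the form (\ref{e:1.11})--(\ref{e:1.12}) with a slightly worse but still admissible constant $\lambda'<a_\infty$ (using uniform boundedness of $\eta$), so applying (i) to $\mathcal{L}_-$ yields a homeomorphism $\phi^+$ of $B_{H^+_\infty}(\infty,\mathfrak{R})$ onto an open subset of $H^+_\infty$ with $\mathcal{L}_-(\phi^+(u))=\|u\|^2$. Setting
$$
\phi(u+v) \;:=\; \phi^+(u) \,+\, \eta(\phi^+(u)) \,+\, T_{\phi^+(u)}^{-1}v
$$
and computing directly gives $\mathcal{L}(\phi(u+v))=\|u\|^2-\|v\|^2$, and the norm bounds fall out of the construction.

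The main obstacle I anticipate will be the continuity and uniformity estimates in Stage~2 of part~(ii): one must verify that $\mathcal{Q}_{w^+}$ and its square-root inverse $T_{w^+}^{-1}$ depend continuously on $w^+$ with bounds uniform as $\|w^+\|\to\infty$, and likewise that $\eta$ is continuous, since without these the formula for $\phi$ yields only a bijection rather than a homeomorphism onto an open set. The strict inequality $\lambda<a_\infty$ is the key quantitative input throughout, ensuring both the solvability of the implicit equation in Stage~1 and the uniform positive-definiteness of $\mathcal{Q}_{w^+}$ in Stage~2.
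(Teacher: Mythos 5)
Part (i) of your proposal is essentially the paper's argument: the paper defines $\psi(u)=\frac{\sqrt{\mathcal{L}(u)}}{\|u\|}u$ on $\bar B_H(\infty,R)$, proves injectivity from the radial monotonicity estimate $D\mathcal{L}(u)u\ge a_\infty\|u\|^2$ (derived from (\ref{e:1.1}) and (\ref{e:1.12}) exactly as you do), proves surjectivity onto $\bar B_H(\infty,\sqrt{2a_\infty}R)$ by the intermediate value theorem, and takes $\phi=\psi^{-1}$; your closed formula $\phi^{-1}(w)=\sqrt{\mathcal{L}(w)}\,w/\|w\|$ is precisely this $\psi$. That part is fine.

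Part (ii) has a genuine gap in Stage 2. You claim the representation
$$
\mathcal{L}(w^++v)-\mathcal{L}(w^++\eta(w^+)) \;=\; -\bigl(\mathcal{Q}_{w^+}(v-\eta(w^+)),\,v-\eta(w^+)\bigr)_H
$$
with a symmetric operator $\mathcal{Q}_{w^+}$ on $H^-_\infty$ that depends only on $w^+$. This is false for a general $\mathcal{L}$ of the class considered: the fundamental theorem of calculus gives
$$
\mathcal{L}(w^++v)-\mathcal{L}(w^++\eta(w^+))=\int_0^1\!\!\int_0^1 s\,\bigl(B\bigl(w^++\eta(w^+)+st(v-\eta(w^+))\bigr)(v-\eta(w^+)),\,v-\eta(w^+)\bigr)_H\,ds\,dt,
$$
so the would-be $\mathcal{Q}$ inevitably depends on $v$ as well, not just on $w^+$, unless $B(\cdot)$ is constant along the segment (i.e.\ $\mathcal{L}$ is exactly quadratic in the $H^-_\infty$-direction). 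Consequently the linear change of variable $v\mapsto T^{-1}_{\phi^+(u)}v$ cannot produce the identity $\mathcal{L}(\phi(u+v))=\|u\|^2-\|v\|^2$, and the whole Stage~2 construction breaks down. The obstacle is not merely the ``continuity and uniformity estimates'' you flag at the end — it is the representation itself. The paper avoids this entirely by using the Morse--Palais square-root normalization of Duc--Hung--Khai: after producing the maximizer $\varphi(u^+)$ of $v\mapsto\mathcal{L}(u^++v)$ (your $\eta$), the paper sets
$$
\psi_2(u+v)=\frac{\sqrt{\mathcal{L}(u+\varphi(u))-\mathcal{L}(u+v)}}{\|v-\varphi(u)\|}\,(v-\varphi(u)),
$$
a genuinely \emph{nonlinear} radial map in $v$, together with the radial $\psi_1$ in $u$, and then proves bijectivity and continuity of $\psi=\psi_1+\psi_2$ directly (via the analogues of Lemmas~\ref{lem:2.11}--\ref{lem:2.13}). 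That is the construction you would need to adopt; the linear-diagonalization shortcut does not work here.
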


\begin{corollary}\label{cor:1.9}
Under the above assumptions $({\rm S})$, $({\rm F1_\infty})$--$({\rm
F3_\infty})$ and $({\rm C1_\infty})$--$({\rm C2_\infty})$, $({\rm
D_\infty})$, let $\nu_\infty=0$, (\ref{e:1.5}) hold and
\begin{equation}\label{e:1.13}
\|A(u)-B(\infty)u\|=o(\|u\|)\quad\hbox{as}\quad u\in
X\;\hbox{and}\;\|u\|\to\infty.
\end{equation}
Then the conclusions in Theorem~\ref{th:1.8} hold with
$\lambda=a_\infty/2$ and some $\mathfrak{R}>0$.
 \end{corollary}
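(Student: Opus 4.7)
\noindent\textbf{Proof plan for Corollary~\ref{cor:1.9}.} The strategy is to verify that the Landau-$o$ hypotheses (\ref{e:1.5}) and (\ref{e:1.13}) imply the quantitative bounds (\ref{e:1.11}) and (\ref{e:1.12}) of Theorem~\ref{th:1.8} with the specific choice $\lambda = a_\infty/2$, after which the corollary follows by directly invoking Theorem~\ref{th:1.8}. All other hypotheses of Theorem~\ref{th:1.8} -- namely (S), $({\rm F1_\infty})$--$({\rm F3_\infty})$, $({\rm C1_\infty})$--$({\rm C2_\infty})$, $({\rm D_\infty})$ and $\nu_\infty = 0$ -- are assumed verbatim, so nothing needs to be checked for those.

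First I would unwrap (\ref{e:1.5}): by the definition of $o(\|u\|^2)$ as $\|u\|\to\infty$, for every $\varepsilon>0$ there exists $R(\varepsilon)>0$ such that $|\mathcal{L}(u) - \tfrac12(B(\infty)u,u)_H| \le \varepsilon\|u\|^2$ for all $u\in V_\infty$ with $\|u\|\ge R(\varepsilon)$. (The set $\{u:\|u\|\ge R\}$ lies in $V_\infty$ for $R$ large enough since $V_\infty$ is a neighbourhood of infinity in $H$.) Applying this with $\varepsilon = a_\infty/2$ furnishes a radius $R_1>0$ such that (\ref{e:1.11}) holds with $\lambda = a_\infty/2$ on $\bar B_H(\infty,R_1)$. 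Analogously, (\ref{e:1.13}) gives, for $\varepsilon = a_\infty/2$, some $R_2>0$ such that $\|A(u)-B(\infty)u\| \le (a_\infty/2)\|u\|$ for every $u\in X$ with $\|u\|\ge R_2$; this is exactly (\ref{e:1.12}) with $\lambda = a_\infty/2$ on $\bar B_H(\infty,R_2)\cap X$.

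Setting $\mathfrak{R}_0 := \max(R_1, R_2)$ and $\lambda := a_\infty/2 \in (0, a_\infty)$, hypotheses (\ref{e:1.11})--(\ref{e:1.12}) of Theorem~\ref{th:1.8} are satisfied with this $\lambda$ on $\bar B_H(\infty,\mathfrak{R}_0)$. Theorem~\ref{th:1.8}(i) then yields the homeomorphism $\phi$ and the estimates in case $\mu^-_\infty=0$, and Theorem~\ref{th:1.8}(ii) yields them in case $\mu^-_\infty>0$; in both cases the resulting radius $\mathfrak{R}$ is the one furnished by Theorem~\ref{th:1.8} (which may be larger than $\mathfrak{R}_0$ but will still only depend on the data). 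I do not expect any genuine obstacle here: the corollary is a ``soft'' reformulation whose only substantive content is the choice $\lambda = a_\infty/2$, which is permissible precisely because (\ref{e:1.5}) and (\ref{e:1.13}) give $o$-smallness rather than merely $O$-boundedness, so any positive $\lambda < a_\infty$ can be realized on a sufficiently large neighbourhood of infinity.
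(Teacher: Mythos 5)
Your proof is correct and is essentially the argument the paper tacitly intends (Corollary~\ref{cor:1.9} is stated without proof, precisely because it is this routine unwrapping of the $o$-notation): (\ref{e:1.5}) with $\varepsilon=a_\infty/2$ gives (\ref{e:1.11}), (\ref{e:1.13}) with $\varepsilon=a_\infty/2$ gives (\ref{e:1.12}), taking $R=\max(R_1,R_2)$ and $\lambda=a_\infty/2\in(0,a_\infty)$ puts one exactly in the setting of Theorem~\ref{th:1.8}.
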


 Perhaps, the condition (\ref{e:1.5}) (resp.
(\ref{e:1.11})) may be derived from (\ref{e:1.13}) (resp.
(\ref{e:1.12})). But the author does not know how to do.

One of main applications of the splitting lemmas at infinity is to
compute the critical group at infinity of ${\cal L}$, $C_\ast({\cal
L},\infty):=\lim_{\leftarrow}H_\ast(H, \{{\cal L}\le a\};\F)$ the
inverse limit of the system $\{H_\ast(H, {\cal L}^a)\to H_\ast(H,
{\cal L}^b)\,|\, -\infty<a\le b<\infty\}$, where the homomorphism
$H_\ast(H, {\cal L}^a)\to H_\ast(H, {\cal L}^b)$ is induced by the
inclusion $(H, {\cal L}^a))\hookrightarrow (H, {\cal L}^b))$.  In
the case $\nu_\infty=0$ and $\mu_\infty>0$ it follows from
(\ref{e:1.5}) that ${\cal L}$ is bounded from below on $H^+_\infty$
and that ${\cal L}(u)\to-\infty$ for $u\in H^-_\infty$ as
$\|u\|\to\infty$. By Proposition~3.8 of \cite{BaLi} we get that
$C_j({\cal L},\infty)=\delta_{kj}\F$ for $k=\mu_\infty=\dim
H^-_\infty$. If $\nu_\infty=\mu_\infty=0$ this also holds because
$C_\ast({\cal L},\infty)=H_\ast(H, \{\|u\|\ge R\};\F)$ for any
sufficiently large $R>0$.

For Theorems~\ref{th:1.1},~\ref{th:1.3} and~\ref{th:1.8} we can also
give a corresponding result with Theorem~2.25 of \cite{Lu2} or
Theorem~6.1 of \cite{Lu3}.

The proofs of Theorems~\ref{th:1.1},~\ref{th:1.3} and~\ref{th:1.8}
will be given in Section 2.  Some relations between these theorems
and previous ones will be discussed  in Section 3. In Section 4, as
a simple application we give a generalization of Theorem~5.2 in
\cite{BaLi}. It shows that our results may give better results even
if for $C^2$ functionals. Our theory can be used to deal with a
class of more general functionals of form $J(u)=\int_\Omega
F(x,u(x),\nabla u(x))dx$ (with lower smoothness than $C^2$ usually),
see \cite{Lu2}--\cite{Lu4}.

\section{Proofs of Main Theorems}\label{sec:2}
\setcounter{equation}{0}

 For reader's conveniences we here state  the following
parameterized version of Theorem 1.1 in \cite{DHK}. Its proof was
given in Appendix A of \cite{Lu2} and \cite{Lu3}.

\begin{theorem}\label{th:2.1}
Let $(H, \|\cdot\|)$ be a normed vector space and let $\Lambda$ be a
 compact topological space. Let $J:\Lambda\times B_H(\theta,
2\delta)\to\R$ be continuous, and for every $\lambda\in\Lambda$ the
function $J(\lambda, \cdot): B_H(\theta, 2\delta)\to\R$ is
continuously directional differentiable.
 Assume that there exist a closed
vector subspace $H^+$ and a finite-dimensional vector subspace $H^-$
of $H$ such that $H^+\oplus H^-$ is a direct sum decomposition of
$H$ and
\begin{enumerate}
\item[{\rm (a)}] $J(\lambda, \theta)=0$ and $D_2J(\lambda, \theta)=0$,
\item[{\rm (b)}] $[D_2J(\lambda, x+ y_2)-D_2J(\lambda, x+ y_1)](y_2-y_1)<0$ for any $(\lambda, x)\in\Lambda\times\bar
B_{H^+}(\theta,\delta)$, $y_1, y_2\in\bar B_{H^-}(\theta,\delta)$
and $y_1\ne y_2$,

\item[{\rm (c)}] $D_2J(\lambda, x+y)(x-y)>0$ for any $(\lambda, x, y)\in\Lambda\times\bar B_{H^+}(\theta,
\delta)\times\bar B_{H^-}(\theta,\delta)$ and $(x,y)\ne (\theta,
\theta)$,

\item[{\rm (d)}] $D_2J(\lambda, x)x>p(\|x\|)$ for any $(\lambda, x)\in\Lambda\times\bar
B_{H^+}(\theta,\delta)\setminus\{\theta\}$, where $p:(0, \delta]\to
(0, \infty)$ is a non-decreasing function.
\end{enumerate}
Then there exist a positive $\epsilon\in\R$, an open neighborhood
$U$ of $\Lambda\times\{\theta\}$ in $\Lambda\times H$ and a
homeomorphism
$$
\phi: \Lambda\times \bigl(B_{H^+}(\theta, \sqrt{p(\epsilon)/2})+
B_{H^-}(\theta, \sqrt{p(\epsilon)/2})\bigr)\to U
$$
such that
$$
J(\lambda, \phi(\lambda, x+ y))=\|x\|^2-\|y\|^2\quad\hbox{and}\quad
\phi(\lambda, x+ y)=(\lambda, \phi_\lambda(x+y))\in\Lambda\times H
$$
for all $(\lambda, x,y)\in \Lambda\times B_{H^+}(\theta,
\sqrt{p(\epsilon)/2})\times B_{H^-}(\theta, \sqrt{p(\epsilon)/2})$.
Moreover, for each $\lambda\in\Lambda$, $\phi_\lambda(0)=0$,
$\phi_\lambda(x+y)\in H^-$ if and only if $x=0$,  and $\phi$ is a
homoeomorphism from $\Lambda\times B_{H^-}(\theta,
\sqrt{p(\epsilon)/2})$ onto $U\cap (\Lambda\times H^-)$ according to
any topology on both induced by any norm on $H^-$.
\end{theorem}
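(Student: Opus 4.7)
The plan is to adapt the reduction strategy of Duc--Hung--Khai \cite{DHK} to the parameterized setting. First I would eliminate the finite-dimensional direction $H^-$ by an implicit function $y(\lambda, x) \in H^-$ solving $D_2 J(\lambda, x + y)|_{H^-} = \theta$; this produces a reduced functional on $\Lambda \times H^+$ that is positive definite and, by compactness of $\Lambda$, uniformly so in the parameter. Then I would apply a parameterized positive-definite Morse--Palais-type normalization separately in the $H^+$ and $H^-$ directions and glue the two pieces together.

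For the reduction step, fix $(\lambda, x) \in \Lambda \times \bar B_{H^+}(\theta, \delta)$ and look for $y \in \bar B_{H^-}(\theta, \delta)$ solving $D_2 J(\lambda, x + y)|_{H^-} = \theta$. Uniqueness is immediate from the strict monotonicity in (ii). For existence I would apply a Brouwer-degree argument on the finite-dimensional ball $\bar B_{H^-}(\theta, \delta)$, combining (iii) at $y \in \partial B_{H^-}(\theta, \delta)$ with (iv) to control the term involving $x$, so as to obtain a ``pointing inward'' boundary condition after possibly shrinking $\delta$. Continuity of $y(\cdot,\cdot)$ in both variables then follows from uniqueness, compactness of $\bar B_{H^-}(\theta, \delta)$ and $\Lambda$, and continuity of $D_2 J$. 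From (i) we get $y(\lambda, \theta) = \theta$. Define the reduced functional
\begin{equation*}
\tilde J(\lambda, x) := J(\lambda, x + y(\lambda, x)),
\end{equation*}
which is continuous with $\tilde J(\lambda, \theta) = 0$, and, by integrating the directional derivative along the segment from $\theta$ to $x$ and using (iv) together with the fact that $y(\lambda, x)$ kills the $H^-$-component of $D_2 J$, obeys $\tilde J(\lambda, x) > 0$ for $x \neq \theta$ with a quantitative lower bound controlled by the primitive of $p$.

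For the normalization and gluing, I would apply a parameterized positive-definite Morse lemma (a special case of Theorem~1.1 of \cite{DHK} with only a positive-definite direction) to $\tilde J$ on $H^+$, obtaining a continuous family of homeomorphisms $\phi^+_\lambda$ on a ball $B_{H^+}(\theta, \sqrt{p(\epsilon)/2})$ with $\tilde J(\lambda, \phi^+_\lambda(x)) = \|x\|^2$ and $\phi^+_\lambda(\theta) = \theta$. Similarly, for each $(\lambda, \tilde x) \in \Lambda \times \bar B_{H^+}(\theta, \delta)$, the function
\begin{equation*}
G(\lambda, \tilde x, y) := \tilde J(\lambda, \tilde x) - J(\lambda, \tilde x + y)
\end{equation*}
is nonnegative on $\bar B_{H^-}(\theta, \delta)$ (by the strict-maximum property of $y(\lambda, \tilde x)$), vanishes only at $y = y(\lambda, \tilde x)$, and is strictly convex in a neighborhood of that point by (ii); the corresponding positive-definite normalization, centered at the minimizer $y(\lambda, \tilde x)$, produces a continuous family $\phi^-_{\lambda, \tilde x}$ on $B_{H^-}(\theta, \sqrt{p(\epsilon)/2})$ with $G(\lambda, \tilde x, \phi^-_{\lambda, \tilde x}(y)) = \|y\|^2$ and $\phi^-_{\lambda, \tilde x}(\theta) = y(\lambda, \tilde x)$. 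Setting $\phi_\lambda(x + y) := \phi^+_\lambda(x) + \phi^-_{\lambda, \phi^+_\lambda(x)}(y)$ then yields
\begin{equation*}
J(\lambda, \phi_\lambda(x + y)) = \tilde J(\lambda, \phi^+_\lambda(x)) - G(\lambda, \phi^+_\lambda(x), \phi^-_{\lambda, \phi^+_\lambda(x)}(y)) = \|x\|^2 - \|y\|^2,
\end{equation*}
and the remaining properties ($\phi_\lambda(\theta) = \theta$; $\phi_\lambda(x + y) \in H^-$ if and only if $x = \theta$; the homeomorphism statement on $\Lambda \times B_{H^-}(\theta, \sqrt{p(\epsilon)/2})$) follow at once from the corresponding properties of the factor maps. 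The hard part will be establishing \emph{joint} continuity of $\phi^+_\lambda$ and $\phi^-_{\lambda, \tilde x}$ in all their parameters, and choosing the radii uniformly in $\lambda \in \Lambda$; this requires revisiting each step of the DHK construction (the auxiliary implicit zero, the normalizing flows, and their time-one maps) with explicit continuity control, and then invoking compactness of $\Lambda$ to upgrade the pointwise estimates in (ii)--(iv) to uniform ones.
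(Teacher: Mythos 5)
Your high-level plan --- eliminate $H^-$ by a reduction $y(\lambda,x)\in H^-$, normalize the resulting positive-definite reduced functional on $H^+$, normalize the strictly concave family $J(\lambda,\tilde x+\cdot)$ on $H^-$ about its maximizer, then glue --- is exactly what the paper does (following Duc--Hung--Khai and [Lu2, App.\ A]). In [Lu2] the two pieces are not produced by black-box invocations of a one-sided Morse lemma but by writing the normalizing map $\psi=\psi_1+\psi_2$ explicitly as a radial rescaling (cf.\ (\ref{e:2.22}) here), with $\psi_1(\lambda,u)=\sqrt{j(\lambda,u)}\,u/\|u\|$ and $\psi_2(\lambda,u+v)=\sqrt{j(\lambda,u)-J(\lambda,u+v)}\,(v-\varphi_\lambda(u))/\|v-\varphi_\lambda(u)\|$, and then proving that $(\lambda,u+v)\mapsto(\lambda,\psi(\lambda,u+v))$ is a homeomorphism; your $\phi^+_\lambda$ and $\phi^-_{\lambda,\tilde x}$ are precisely the inverses of these rescalings, so the content is the same. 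Two small divergences are worth flagging. First, you obtain $y(\lambda,x)$ by a Brouwer-degree argument on the equation $D_2J(\lambda,x+y)|_{H^-}=\theta$; note that $H$ is only a normed space, so $D_2J$ lives in $H^\ast$ and one has to choose an identification $(H^-)^\ast\cong H^-$ before Brouwer degree even applies. The paper instead gets $\varphi_\lambda(u)$ as the unique maximizer of the strictly concave function $v\mapsto J(\lambda,u+v)$ on the compact ball $\bar B_{H^-}(\theta,\delta)$, ruling out boundary maximizers via (ii)--(iii) and (i); this is both cleaner and avoids the dual-space identification entirely. Second, you refer to ``normalizing flows and their time-one maps'' as part of the DHK construction --- but DHK (and the paper) deliberately avoid flows, which is the whole point of the method; the maps are direct radial rescalings and their set-theoretic inverses, with continuity of the inverse proved by a compactness/uniqueness argument rather than ODE theory. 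Your actual construction does not use flows, so this is just a mischaracterization of the source, but it should be corrected since the paper emphasizes its avoidance of the flow method as a distinguishing feature.
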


\subsection{Proofs of Theorems~\ref{th:1.1},~\ref{th:1.3}}\label{sec:2.1}

Unlike the proof of \cite[Theorem~2.1]{Lu2,Lu3},
 we cannot directly apply Theorem~\ref{th:2.1} to the function
$F^\infty$ in (\ref{e:2.10}) because $\bar B_{H^0_\infty}(\infty,
R_1)$ is only locally compact. We must directly prove corresponding
conclusions with those in  Steps 1, 6, 7 of the proof of it given in
Appendix A of \cite{Lu2}, \cite{Lu3}. Moreover, in some steps we may prove
the same parts of Theorem~\ref{th:1.1} and Theorem~\ref{th:1.3} in a
unite way, in other steps we  must deal with those two cases,
respectively.

The following Lemma~\ref{lem:2.2} (resp. Lemma~\ref{lem:2.3}) is the
analogue of \cite[Lemma~2.13]{Lu2} or \cite[Lemma~3.1]{Lu3} under
the condition (${\rm E_\infty}$) (resp. (${\rm E'_\infty}$)).

\begin{lemma}\label{lem:2.2}
Under the above assumptions (${\rm S}$), $({\rm F1_\infty})$--$({\rm
F3_\infty})$ and $({\rm C1_\infty})$--$({\rm C2_\infty})$, and $({\rm
E_\infty})$  there exists  a unique map $h^\infty:\bar
B_{H^0_\infty}(\infty, R_1)\to \bar B_{X^\pm_\infty}(\theta, \rho_A)$ (by increasing $R_1>0$ if necessary),
 which is Lipschitz continuous,  such that
\begin{enumerate}
\item[{\rm (a)}] $(I-P^0_\infty)A(z+
h^\infty(z))=\theta$ for all $z\in \bar B_{H^0_\infty}(\infty,
R_1)$;
\item[{\rm (b)}] $h^\infty$
is strictly F-differentiable at infinity
 and $dh^\infty(\infty)=0$ under the assumption $({\rm
SE_\infty})$;

\item[{\rm (c)}] $\lim_{\|z\|_X\to\infty}\|h^\infty(z)\|_X=0$ provided
that $M(A)=0$ in the assumption $({\rm E}_\infty)$;

\item[{\rm (d)}]  the
function $\bar B_{H^0_\infty}(\infty, R_1)\to\R,\;z\mapsto \mathcal{
L}^\infty(z):=\mathcal{ L}(z+ h^\infty(z))$
 is  $C^{1}$ and
$$
d\mathcal{ L}^\infty(z_0)(z)=(A(z_0+ h^\infty(z_0)),
z)_H\quad\hbox{for all}\; z_0\in \bar B_{H^0_\infty}(\infty, R_1),\; z\in
H^0_\infty;
 $$

 \item[{\rm (e)}] If $P^0_\infty\circ A:X\to X^0_\infty$ has a  strict Fr\'{e}chet derivative $S\in L(X, X^0_\infty)$
 at infinity, (for instance this is true when $A$ is strictly
F-differentiable at infinity), then the function $\mathcal{
L}^\infty$
 is  $C^{2-0}$, and $d\mathcal{ L}^\infty$ has a strict
Fr\'{e}chet derivative zero provided $S=P^0_\infty\circ
B(\infty)|_{X}$;

\item[{\rm (f)}] If $A$ is $C^1$ the maps $h^\infty$ and $\mathcal{
L}^\infty$ are $C^1$ and $C^2$, respectively, and
$$
dh^\infty(z)=-\bigl[(I-P^0_\infty)A'(z+
h^\infty(z))|_{X^\pm_\infty}\bigr]^{-1}(I-P^0_\infty)A'(z+
h^\infty(z))|_{H^0_\infty};
$$
\item[{\rm (g)}] If $\mathcal{ L}$ is  $C^2$ then $h^\infty$ is also $C^1$ as a map to
$H^\pm_\infty$ (hence $X^\pm_\infty$).
\end{enumerate}
\end{lemma}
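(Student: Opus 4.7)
The plan is to recast (i) as a fixed-point problem for a contraction on $\bar B_{X^\pm_\infty}(\theta,\rho_A)$. Since $B(\infty)|_{X^\pm_\infty}\colon X^\pm_\infty\to X^\pm_\infty$ is a Banach space isomorphism (as recalled just before the statement of the lemma), the equation $(I-P^0_\infty)A(z+x)=\theta$ is equivalent to $x=T_z(x)$ with
$$T_z(x):= x-\bigl[B(\infty)|_{X^\pm_\infty}\bigr]^{-1}(I-P^0_\infty)A(z+x).$$
Writing $T_z(x_1)-T_z(x_2)$ as $[B(\infty)|_{X^\pm_\infty}]^{-1}$ applied to a difference of exactly the form appearing on the left of (\ref{e:1.2}) with $z_1=z_2=z$, condition $({\rm E_\infty})$ yields $\|T_z(x_1)-T_z(x_2)\|_X\le \kappa^{-1}\|x_1-x_2\|_X$, so $T_z$ is a $\kappa^{-1}$-contraction. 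To check invariance of the ball I estimate $\|T_z(\theta)\|_X\le C_1^\infty\|(I-P^0_\infty)A(z)\|_X$, which by the definition of $M(A)$ is at most $C_1^\infty M(A)+\varepsilon$ for arbitrary $\varepsilon>0$ once $\|z\|_X\ge R_1$ is large enough. The hypothesis $\rho_A>\frac{\kappa}{\kappa-1}C_1^\infty M(A)$ then gives $\|T_z(x)\|_X\le \kappa^{-1}\rho_A+C_1^\infty M(A)+\varepsilon\le\rho_A$ for suitable $\varepsilon$; in the case $\rho_A=\infty$ no invariance constraint is needed and $M(A)<\infty$ is dispensable. Banach's principle then produces the unique $h^\infty(z)$ satisfying (i).

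For the Lipschitz estimate I would write $h^\infty(z_1)-h^\infty(z_2)=T_{z_1}(h^\infty(z_1))-T_{z_2}(h^\infty(z_2))$ and apply (\ref{e:1.2}) with $x_i=h^\infty(z_i)$, obtaining
$$\|h^\infty(z_1)-h^\infty(z_2)\|_X\le \kappa^{-1}\bigl\|z_1+h^\infty(z_1)-z_2-h^\infty(z_2)\bigr\|_X,$$
from which absorbing the $h^\infty$-term on the left yields $\|h^\infty(z_1)-h^\infty(z_2)\|_X\le (\kappa-1)^{-1}\|z_1-z_2\|_X$. Replacing (\ref{e:1.2}) by the uniform little-oh statement in $({\rm SE_\infty})$ in the same manipulation gives (ii), since then the constant $\kappa^{-1}$ may be made arbitrarily small. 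Part (iii) follows from $\|h^\infty(z)\|_X\le \frac{\kappa}{\kappa-1}\|T_z(\theta)\|_X\le \frac{\kappa C_1^\infty}{\kappa-1}\|(I-P^0_\infty)A(z)\|_X\to 0$ when $M(A)=0$.

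For (iv) the key observation is that the identity $(I-P^0_\infty)A(z+h^\infty(z))=\theta$ forces $A(z+h^\infty(z))\in H^0_\infty$, so in a formal chain-rule expansion of $\mathcal{L}^\infty$ the contribution of any $h^\infty$-variation is annihilated because it lies in $H^\pm_\infty=(H^0_\infty)^\bot$. To make this rigorous without yet having differentiability of $h^\infty$, I would expand $\mathcal{L}^\infty(z_0+tz)-\mathcal{L}^\infty(z_0)$ via the mean value theorem together with continuous directional differentiability of $\mathcal{L}$ from $({\rm F1_\infty})$, $({\rm F2_\infty})$ and the Lipschitz continuity of $h^\infty$ established above, extracting the leading term $(A(z_0+h^\infty(z_0)),z)_H$ and bounding the remainder by $o(t)$. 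Continuity of $z_0\mapsto d\mathcal{L}^\infty(z_0)$ then follows from continuity of $A$ and $h^\infty$. Parts (v)--(vii) are obtained by combining (iv) with: the strict F-differentiability hypothesis on $P^0_\infty\circ A$ plus $dh^\infty(\infty)=0$ from (ii) for (v); the classical implicit function theorem applied to $F(z,x):=(I-P^0_\infty)A(z+x)$, whose partial in $x$ at $(z,h^\infty(z))$ is the isomorphism $(I-P^0_\infty)A'(z+h^\infty(z))|_{X^\pm_\infty}$, yielding (vi) and the claimed formula for $dh^\infty$; and the same IFT at the $H^\pm_\infty$-level for (vii) when $\mathcal{L}$ is $C^2$.

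I expect the main obstacle to be the verification in (iv), which forces one to juggle the $X$-topology, in which the contraction and Lipschitz bounds live, and the $H$-topology, in which $\mathcal{L}$ is continuously directionally differentiable. The bridge is hypothesis (S) together with the finite-dimensionality of $H^0_\infty$, on which the $H$- and $X$-norms are equivalent; this is exactly the ``$H^0_\infty\subset X$ suffices'' remark made just before the statement. The rest is bookkeeping closely parallel to the proof of \cite[Lemma~2.13]{Lu2}, with ``at a critical point'' replaced throughout by ``at infinity''.
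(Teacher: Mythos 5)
Your proposal matches the paper's own proof essentially step for step: there too part (i) is recast as a Banach fixed-point problem for the map $S^\infty(z,x)=x-(B(\infty)|_{X^\pm_\infty})^{-1}(I-P^0_\infty)A(z+x)$ (your $T_z$), the Lipschitz constant $(\kappa-1)^{-1}$ is extracted by the same absorption, (ii) is obtained by re-running the contraction estimate with the small constant supplied by $({\rm SE_\infty})$, and (iv) is proved through the mean value theorem expansion exploiting that $A(z_0+h^\infty(z_0))\in H^0_\infty$ is orthogonal to the $H^\pm_\infty$-increments. Two small remarks. For (iii) you use the a priori bound $\|h^\infty(z)\|_X\le\frac{\kappa}{\kappa-1}\|T_z(\theta)\|_X$ for a fixed point of a $\kappa^{-1}$-contraction, which is a touch cleaner than the paper's argument that $T_z$ maps a small closed ball into itself once $\|z\|_X$ is large; both are elementary and both work. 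For (v), drop the appeal to $dh^\infty(\infty)=0$ from (ii): that conclusion requires $({\rm SE_\infty})$, which is not among the hypotheses of (v). The paper's proof of (v) uses only the Lipschitz bound on $h^\infty$ from (i) together with the strict $F$-derivative estimate for $P^0_\infty\circ A$, and the strict-Fr\'echet-derivative-zero conclusion then comes for free because $P^0_\infty B(\infty)\equiv 0$ on $H=H^0_\infty\oplus H^\pm_\infty$, so the $S$-term vanishes identically without any rate control on $h^\infty$.
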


\begin{proof}
{\rm (a)}  Consider the map $S^\infty: \bar B_{H^0_\infty}(\infty, R_1)\times \bar
B_{X^\pm_\infty}(\theta, \rho_A)\to X^\pm_\infty$,
\begin{eqnarray*}
(z,x)\mapsto
-(B(\infty)|_{X^\pm_\infty})^{-1}(I-P^0_\infty)A(z+x)+ x.
\end{eqnarray*}
 Let $z_1, z_2\in \bar B_{H^0_\infty}(\infty, R_1)$, and let
$x_1, x_2\in \bar B_{X^\pm_\infty}(\theta, \rho_A)$. Noting
that $B(\infty)x_i\in X^\pm_\infty$ and $B(\infty)z_i=0$, $i=1,2$,
it follows from (\ref{e:1.2}) that
\begin{eqnarray}\label{e:2.1}
&&\quad\|S^\infty(z_1, x_1)-S^\infty(z_2, x_2)\|_{X^\pm_\infty}\\
&&\le C^\infty_1\cdot\|(I-P^0_\infty)A(z_1+x_1)- B(\infty)x_1 -
(I-P^0_\infty)A(z_2+x_2)+ B(\infty)x_2\|_{X^\pm_\infty}\nonumber\\
&&\le\frac{1}{\kappa}\|z_1+ x_1-z_2-x_2\|_X.\nonumber
\end{eqnarray}
In particular, for any $z\in \bar B_{H^0_\infty}(\infty, R_1)$ and
$x_1, x_2\in \bar B_{X^\pm_\infty}(\theta, \rho_A)$, we get
\begin{eqnarray}\label{e:2.2}
\|S^\infty(z, x_1)-S^\infty(z,
x_2)\|_{X^\pm_\infty}\le\frac{1}{\kappa}\|x_1-x_2\|_X.
\end{eqnarray}

$\bullet$ If $\rho_A<\infty$  in $({\rm
E_\infty})$, this means that  $M(A)<\infty$ and
 $\rho_A\in (\frac{\kappa}{\kappa-1}C_1^\infty M(A),
\infty)$.  By
increasing $R_1>\rho_A$ we may derive
$$
\sup\{\|(I-P^0_\infty)A(z)\|_X:\;z\in H^0_\infty, \|z\|_X\ge R_1\}
\le \frac{\kappa-1}{\kappa}\frac{\rho_A}{C_1^\infty}
$$
and hence
$$
\|S^\infty(z, \theta)\|_{X^\pm_\infty}\le
\|(B(\infty)|_{X^\pm_\infty})^{-1}\|_{L(X^\pm_\infty)}\cdot\|(I-P^0_\infty)A(z)\|_{X^\pm_\infty}\le
\frac{\kappa-1}{\kappa}\rho_A.
$$
It follows from this and (\ref{e:2.2}) that
\begin{eqnarray}\label{e:2.3}
\|S^\infty(z, x)\|_{X^\pm_\infty}&\le & \|S^\infty(z, x)-S^\infty(z,
\theta)\|_{X^\pm_\infty}+ \|S^\infty(z, \theta)\|_{X^\pm_\infty}\nonumber\\
&\le&\frac{1}{\kappa}\|x\|_X + \frac{\kappa-1}{\kappa}\rho_A
\le \frac{1}{\kappa}\rho_A+ \frac{\kappa-1}{\kappa}\rho_A \le\rho_A
\end{eqnarray}
for any $z\in \bar B_{H^0_\infty}(\infty, R_1)$ and $x\in \bar
B_{X^\pm_\infty}(\theta, \rho_A)$. Hence the Banach fixed point theorem gives a unique
map
$h^\infty:\bar B_{H^0_\infty}(\infty, R_1)\to \bar
B_{X^\pm_\infty}(\theta,\rho_A)$,
 which is also continuous,
such that $S^\infty(z, h^\infty(z))=h^\infty(z)$ or equivalently
\begin{equation}\label{e:2.4}
 (I-P^0_\infty)A(z+ h^\infty(z))=\theta\quad
\hbox{for all}\; z\in \bar B_{H^0_\infty}(\infty, R_1).
\end{equation}
This and (\ref{e:2.1}) imply
\begin{eqnarray*}
\|h^\infty(z_1)-h^\infty(z_2)\|_{X}&=&\|S^\infty(z_1, h^\infty(z_1))-
S^\infty(z_2, h^\infty(z_2))\|_{X^\pm_\infty}\\
&\le&\frac{1}{\kappa}\|z_1+ h^\infty(z_1)-z_2-h^\infty(z_2)\|_X
\end{eqnarray*}
and hence
\begin{equation}\label{e:2.5}
\|h^\infty(z_1)-h^\infty(z_2)\|_{X}\le
\frac{1}{\kappa-1}\|z_1-z_2\|_X\quad\hbox{for all}\; z_1, z_2\in \bar
B_{H^0_\infty}(\infty, R_1).
\end{equation}
That is, $h^\infty$ is Lipschitz  continuous.

$\bullet$ If $\rho_A=\infty$ in $({\rm E_\infty})$,  then
(\ref{e:2.1}) holds for any $z\in\bar B_{H^0_\infty}(\infty, R_1)$
and $x_1, x_2\in X^\pm_\infty$. The Banach fixed point theorem gives
a unique map $ h^\infty:\bar B_{H^0_\infty}(\infty, R_1)\to
X^\pm_\infty$,
  which is continuous, such that (\ref{e:2.4}) and (\ref{e:2.5}) also hold.

{\rm (b)} If $M(A)<\infty$ in (${\bf SE_\infty}$) we choose
$\kappa>1$ so large that  $\rho_A>\frac{\kappa}{\kappa-1}C_1^\infty
M(A)$. Then (\ref{e:1.2}) is satisfied by increasing $R_1>0$ (if
necessary). Hence (\ref{e:2.1})--(\ref{e:2.5}) are still effective
for these $\kappa$ and $R_1$. For $z_i\in \bar
B_{H^0_\infty}(\theta, R_1)$ set $x_i=h^\infty(z_i)$ in
(\ref{e:2.1}), $i=1,2$. We obtain
\begin{eqnarray}\label{e:2.6}
&&\|h^\infty(z_1)-h^\infty(z_2)\|_{X^\pm_\infty}=
\|S^\infty(z_1, h^\infty(z_1))-S^\infty(z_2, h^\infty(z_2))\|_{X^\pm_\infty}\nonumber\\
&\le& C^\infty_1\cdot \|(I-P^0_\infty)A(z_1 + h^\infty(z_1))- B(\infty)(z_1+h^\infty(z_1))\nonumber\\
&&\hspace{15mm}-(I-P^0_\infty)A( h^\infty(z_2))+ B(\infty)(z_2+
h^\infty(z_2))\|_{X}.
\end{eqnarray}
 For any given small $\varepsilon>0$, since
$$
\|z_i+ h^\infty(z_i)\|_X^2\ge\|z_i +
h^\infty(z_i)\|^2=\|z_i\|^2+\|h^\infty(z_i)\|^2\ge\|z_i\|^2,
$$
and $\|z_i\|\to\infty\Longleftrightarrow \|z_i\|_X\to\infty$ for
$z_i\in H^0_\infty$, $i=1,2$,  by (${\bf SE_\infty}$) there exists
$R>R_1$ such that for any $z_i\in \bar B_{H^0_\infty}(\infty, R)$,
$i=1,2$ we have
\begin{eqnarray*}
&&\|(I-P^0_\infty)A(z_1 + h^\infty(z_1)) - B(\infty)(z_1+
h^\infty(z_1))\nonumber\\
&&\hspace{20mm} -(I-P^0_\infty)A(z_2 +
h^\infty(z_2))+ B(\infty)(z_2+h^\infty(z_2))\|_{X}\nonumber\\
&&\le\varepsilon\|z_1 + h^\infty(z_1)-z_2 -
h^\infty(z_2)\|_X
\le \frac{\kappa}{\kappa-1}\varepsilon\|z_1-z_2\|_X
\end{eqnarray*}
by (\ref{e:2.5}).  From this and (\ref{e:2.6}) we derive that
 \begin{equation}\label{e:2.7}
\|h^\infty(z_2)- h^\infty(z_1)\|_{X}\le
\frac{\kappa}{\kappa-1}C^\infty_1\varepsilon \|z_2-z_1\|_X
\end{equation}
for any $z_i\in \bar B_{H^0_\infty}(\infty, R)$, $i=1,2$. This shows
that $h^\infty$ has the strict Fr\'{e}chet derivative zero  at
$\infty$.

{\rm (c)} Recall that $h^\infty(z)$ is a unique fixed point in
$\bar B_{X^\pm_\infty}(\theta, \rho_A)$ of the map
$$
x\mapsto
S^\infty(z,x)=-(B(\infty)|_{X^\pm_\infty})^{-1}(I-P^0_\infty)[A(z+x)-
B(\infty)x].
$$
Since $M(A)=0$, for any small $0<\epsilon<\rho_A$ there exists a
large $R>R_1$ such that
$$
\|(I-P^0_\infty)A(z)\|_{X^\pm}<\frac{(\kappa-1)\epsilon}{C_1^\infty\kappa}
$$
for any $z\in \bar B_{H^0_\infty}(\infty, R)$. By the deduction of
(\ref{e:2.3}), for any $z\in \bar B_{H^0_\infty}(\infty, R)$ and
$x\in \bar B_{X^\pm_\infty}(\theta, \epsilon)$ we have
\begin{eqnarray*}
\|S^\infty(z, x)\|_{X^\pm}&\le& \frac{1}{\kappa}\|x\|_X +
\|(B(\infty)|_{X^\pm_\infty})^{-1}(I-P^0_\infty)A(z)\|_{X^\pm}\\
&\le& \frac{1}{\kappa}\|x\|_X +
C_1^\infty\|(I-P^0_\infty)A(z)\|_{X^\pm_\infty}
\le\frac{\epsilon}{\kappa}+\frac{(\kappa-1)\epsilon}{\kappa}<\epsilon.
 \end{eqnarray*}
So the map
$\bar B_{X^\pm_\infty}(\theta, \epsilon)\to \bar B_{X^\pm_\infty}(\theta,
\epsilon),\;x\mapsto S^\infty(z, x)$
has a unique fixed point, which is, of course, contained in $\bar
B_{X^\pm_\infty}(\theta, \rho_A)$ and hence must be
$h^\infty(z)$. This shows $\|h^\infty(z)\|_X\le\epsilon$.

{\rm (d)} The proof is similar to  Step 2 of proof of
\cite[Lemma~2.13]{Lu2} or \cite[Lemma~3.1]{Lu3}. For any $z_0\in
\bar B_{H^0_\infty}(\infty, R_1)$, $z\in H^0_\infty$ and
$t\in\R\setminus\{0\}$ with $z_0+ tz\in\bar B_{H^0_\infty}(\infty,
R_1)$,  by the mean value theorem we have $s\in (0, 1)$ such that
\begin{eqnarray}\label{e:2.8}
&&\mathcal{ L}^\infty(z_0+ tz)-\mathcal{ L}^\infty(z_0)\nonumber\\
&=&D\mathcal{ L}(z_{s,t})(tz + h^\infty(z_0+ tz)-h^\infty(z_0))\nonumber\\
&=&(A(z_{s,t}), tz + h^\infty(z_0+ tz)-h^\infty(z_0))_H\nonumber\\
&=&(A(z_{s,t}), tz)_H + ((I-P^0_\infty)A(z_{s,t}), h^\infty(z_0+
tz)-h^\infty(z_0))_H
\end{eqnarray}
because $h^\infty(z_0+ tz)-h^\infty(z_0)\in X^\pm_\infty\subset
H^\pm_\infty$, where
$z_{s,t}=z_0+ h^\infty(z_0)+ s[tz+ h^\infty(z_0+
tz)-h^\infty(z_0)]$.
Note that (\ref{e:2.5}) implies
$$
\|h^\infty(z_0+ tz)-h^\infty(z_0)\|_H\le\|h^\infty(z_0+
tz)-h^\infty(z_0)\|_X\le \frac{1}{\kappa-1}|t|\cdot\|z\|_X.
$$
Let $t\to 0$, we have
\begin{eqnarray*}
&&\left|\frac{((I-P^0_\infty)A(z_{s,t}), h^\infty(z_0+
tz)-h^\infty(z_0))_H}{t}\right|\\
&\le&\frac{\|(I-P^0_\infty)A(z_{s,t})\|_H\cdot\|h^\infty(z_0+
tz)-h^\infty(z_0)\|_H}{|t|}\\
&\le& \frac{1}{\kappa-1}\|z\|_X\|(I-P^0_\infty)A(z_{s,t})\|_{X}\\
&\to& \frac{1}{\kappa-1}\|z\|_X\cdot\|(I-P^0_\infty)A(z_0+
h^\infty(z_0))\|_{X}=0
\end{eqnarray*}
because of (\ref{e:2.4}). From this and (\ref{e:2.8}) it follows
that
$$
D\mathcal{ L}^\infty(z_0)(z)=\lim_{t\to 0}\frac{\mathcal{
L}^\infty(z_0+ tz)-\mathcal{ L}^\infty(z_0) }{t}= (A(z_0+
h^\infty(z_0)), z)_H.
$$
That is, $\mathcal{ L}^\infty$ is G\^{a}teaux differentiable at
$z_0$. Clearly, $z\mapsto D\mathcal{ L}^\infty(z_0)(z)$ is linear
and continuous, i.e. $\mathcal{ L}^\infty$ has a linear bounded
G\^{a}teaux derivative at $z_0$, $D\mathcal{ L}^\infty(z_0)$, given
by
$$
D\mathcal{ L}^\infty(z_0)z=(A(z_0+ h^\infty(z_0)), z)_H=(P^0_\infty
A(z_0+ h(z_0)), z)_H\;\hbox{for all}\; z\in H^0_\infty.
$$
Note that  $B(\infty)|_{H^0_\infty}=0$,
$B(\infty)(H^\pm_\infty)\subset H^\pm_\infty$ and $h^\infty(z_0),
h^\infty(z_0')\in X^\pm_\infty\subset H^\pm_\infty$ for any $z_0,
z_0'\in \bar B_{H^0_\infty}(\infty, R_1)$. We have
\begin{eqnarray*}
(P^0_\infty B(\infty)(z_0+ h^\infty(z_0)), z)_H=(P^0_\infty
B(\infty)(z'_0+ h^\infty(z'_0)), z)_H=0
\end{eqnarray*}
for all $z\in H^0_\infty$.
From this  it easily follows that
\begin{eqnarray*}
&&|D\mathcal{ L}^\infty(z_0)z- D\mathcal{ L}^\infty(z'_0)z|\nonumber\\
&=&\left|\bigl(P^0_\infty A(z_0+ h^\infty(z_0))-P^0_\infty A(z'_0+ h^\infty(z'_0)), z\bigr)_H\right|\nonumber\\
&=&\bigl|\bigl(P^0_\infty A(z_0+ h^\infty(z_0))- P^0_\infty B(\infty)(z_0+ h^\infty(z_0)), z\bigr)_H\nonumber\\
&&-\bigl(P^0_\infty A(z'_0+ h^\infty(z'_0))- P^0_\infty B(\infty)(z'_0+ h^\infty(z'_0)), z\bigr)_H\bigr|\nonumber\\
&\le &\|P^0_\infty A(z_0+ h^\infty(z_0))- P^0_\infty B(\infty)(z_0+ h^\infty(z_0))\nonumber\\
&&-P^0_\infty A(z'_0+h^\infty(z'_0))+ P^0_\infty B(\infty)(z'_0+ h^\infty(z'_0))\|_H\cdot\|z\|_H\nonumber\\
&\le &\Big[\|A(z_0+ h^\infty(z_0))-  A(z'_0+h^\infty(z'_0))\|_X+
\nonumber\\
&&\|B(\infty)(z_0+ h^\infty(z_0))- B(\infty)(z'_0+
h^\infty(z'_0))\|_H\Big]\cdot\|z\|_X
\end{eqnarray*}
and hence
\begin{eqnarray*}
\|D\mathcal{ L}^\infty(z_0)- D\mathcal{
L}^\infty(z'_0)\|_{(X^0_\infty)^\ast} &\le &\|A(z_0+ h^\infty(z_0))-
A(z'_0+h^\infty(z'_0))\|_X
\nonumber\\
&+&\|B(\infty)(z_0+ h^\infty(z_0))- B(\infty)(z'_0+
h^\infty(z'_0))\|_H,
\end{eqnarray*}
where $(X^0_\infty)^\ast=(H^0_\infty)^\ast=L(X^0_\infty,\R)$.

Since
both $A:X\to X$ and $B(\infty):H\to H$ are continuous by (${\rm
F2}_\infty$), from (\ref{e:2.5})  we derive that $z_0\mapsto
D\mathcal{ L}^\infty(z_0)$ is continuous and therefore
 that $\mathcal{ L}^\infty$ is Fr\'echet differentiable at
$z_0$ and its Fr\'echet differential $d\mathcal{
L}^\infty(z_0)=D\mathcal{ L}^\infty(z_0)$. Moreover, the above
estimate also shows that $z_0\mapsto d\mathcal{ L}^\infty(z_0)$ is
continuous.

{\rm (e)} Since $P^0_\infty\circ A$ has the strict Fr\'{e}chet
derivative $S\in L(X, X^0_\infty)$  at $\infty$ then
\begin{equation}\label{e:2.9}
\|P^0_\infty\circ A(x_1)- P^0_\infty\circ A(x_2)-S(x_1-x_2)\|_X\le
\widehat K_R\|x_1-x_2\|_X
\end{equation}
for all $x_1, x_2\in B_X(\infty, R)$ with constant $\widehat K_R\to
0$ as $R\to \infty$.

Let $C>0$ be such that $\|z\|_X\le C\|z\|\;\forall z\in H^0_\infty$.
For $R>R_1$ and any $z_0, z_0'\in B_{H^0_\infty}(\infty, R)$, since
\begin{eqnarray*}
\|z+ h^\infty(z)\|^2_X\ge\|z+ h^\infty(z)\|^2=\|z\|^2+
\|h^\infty(z)\|^2\ge\|z\|^2\quad\hbox{for}\;z=z_0, z_0',
\end{eqnarray*}
it follows from the proof of (d), (\ref{e:2.9}) and (\ref{e:2.5})
that
\begin{eqnarray*}
&&|d\mathcal{ L}^\infty(z_0)z- d\mathcal{ L}^\infty(z'_0)z- (S(z_0+ h^\infty(z_0)- z'_0-
h^\infty(z'_0) ),z)_H|\nonumber\\
&=&\Bigl|\bigl(P^0_\infty A(z_0+ h^\infty(z_0))-P^0_\infty A(z'_0+ h^\infty(z'_0)), z\bigr)_H-\nonumber\\
&&\qquad (S(z_0+ h^\infty(z_0)- z'_0- h^\infty(z'_0) ),z)_H\Bigr|\nonumber\\
&\le &\|P^0_\infty A(z_0+ h^\infty(z_0))- P^0_\infty
A(z'_0+h^\infty(z'_0))\nonumber\\
&&\qquad - S(z_0+ h^\infty(z_0)- z'_0- h^\infty(z'_0) )
\|_H\cdot\|z\|_H\nonumber\\
&\le& \|P^0_\infty A(z_0+ h^\infty(z_0))-
P^0_\infty A(z'_0+h^\infty(z'_0))\nonumber\\
&&\qquad - S(z_0+ h^\infty(z_0)- z'_0- h^\infty(z'_0) )\|_X\cdot\|z\|_X\nonumber\\
 &&\hspace{-5mm}\le \widehat K_{R}\cdot\|z_0 + h^\infty(z_0)-z_0' -
h^\infty(z_0')\|_X\cdot\|z\|_X\nonumber\\
&\le& \frac{\kappa}{\kappa-1}\widehat K_{R}\cdot
\|z_0-z_0'\|_X\cdot\|z\|_X\\
&\le& \frac{\kappa}{\kappa-1}C^2\widehat K_{R}\cdot
\|z_0-z_0'\|_X\cdot\|z\|
\end{eqnarray*}
for any $z\in H^0_\infty$. Hence
\begin{eqnarray*}
&&\|d\mathcal{ L}^\infty(z_0)- d\mathcal{ L}^\infty(z'_0)\|_{L(H^0_\infty,\R)}\nonumber\\
&\le& \frac{\kappa}{\kappa-1}C^2\widehat K_{R}\cdot \|z_0-z_0'\|_X+
\|S(z_0+ h^\infty(z_0)- z'_0- h^\infty(z'_0)) \|_X\\
&\le& \frac{\kappa}{\kappa-1}(1+C^2\widehat K_{R})\cdot
\|z_0-z_0'\|_X,
\end{eqnarray*}
that is,  $\mathcal{ L}^\infty$ is $C^{2-0}$. Moreover, if
$S=P^0_\infty\circ B(\infty)|_{X}$, then
$(S(z_0+ h^\infty(z_0)-
z'_0- h^\infty(z'_0) ),z)_H=0$ for all $z\in H^0_\infty$,
and hence
\begin{eqnarray*}
&&|d\mathcal{ L}^\infty(z_0)z- d\mathcal{ L}^\infty(z'_0)z\|\\
&=&|d\mathcal{ L}^\infty(z_0)z- d\mathcal{ L}^\infty(z'_0)z- (S(z_0+ h^\infty(z_0)-
z'_0- h^\infty(z'_0) ),z)_H|\nonumber\\
&\le& \frac{\kappa}{\kappa-1}C^2\widehat K_{R}\cdot
\|z_0-z_0'\|_X\cdot\|z\|
\end{eqnarray*}
for any $z\in H^0_\infty$. This implies
$$
\frac{\|d\mathcal{ L}^\infty(z_0)- d\mathcal{
L}^\infty(z'_0)\|_{L(H^0_\infty,\R)}}{\|z_0-z'_0\|}\to 0
$$
as $(\|z_0\|, \|z_0'\|)\to (\infty, \infty)$ and $z_0\ne z_0'$.
Hence  $d\mathcal{ L}^\infty$ has the  strict Fr\'{e}chet derivative
zero at infinity.

{\rm (f)}  Since $A$ is $C^1$ the corresponding conclusions can be
obtained as in \cite[Remark~2.14]{Lu2} or \cite[Remark~3.2]{Lu3}.

{\rm (g)} If  $\mathcal{ L}$ is $C^2$ then $\nabla\mathcal{
L}(x)=A(x)\;\forall x\in X_\infty$. For $z_0\in \bar
B_{H^0_\infty}(\infty, R_1)$ we have $(I-P^0_\infty)\nabla\mathcal{
L}(z_0+ h^\infty(z_0))=\theta$. By the implicit function theorem
there exists a neighborhood $\mathcal{ O}(z_0)$ of $z_0$ in $\bar
B_{H^0_\infty}(\infty, R_1)$ and a unique $C^1$ map $h:\mathcal{
O}(z_0)\to H^\pm_\infty$ such that $(I-P^0_\infty)\nabla\mathcal{
L}(z+ h(z))=\theta$ for all $z\in\mathcal{ O}(z_0)$. Moreover,
$$
(I-P^0_\infty)\nabla\mathcal{ L}(z+ h^\infty(z))=(I-P^0_\infty)A(z+
h^\infty(z))=\theta
$$
for all $z\in \bar B_{H^0_\infty}(\infty, R_1)$, and $h^\infty$ is
also continuous as a map to $H^\pm_\infty$, by the implicit function
theorem (precisely its proof) we get $h(z)=h^\infty(z)\;\forall
z\in\mathcal{ O}(z_0)$. The desired conclusion is proved.
\end{proof}

\begin{lemma}\label{lem:2.3}
Under the above assumptions $({\rm S})$, $({\rm F1_\infty})$--$({\rm
F3_\infty})$ and $({\rm C1_\infty})$--$({\rm C2_\infty})$, and $({\rm
E'_\infty})$  there exist $R_1>0$ and  a unique map
$$
h^\infty:\bar B_{H^0_\infty}(\infty, R_1)\to \bar B_{X}(\theta,
\rho_A)\cap X^\pm_\infty,
$$
which is continuous, such
that
\begin{enumerate}
\item[{\rm (a)}] $(I-P^0_\infty)A(z+
h^\infty(z))=\theta$ for all $z\in \bar B_{H^0_\infty}(\infty,
R_1)$;

\item[{\rm (b)}] $\lim_{\|z\|_X\to\infty}\|h^\infty(z)\|_X=0$ provided
that $M(A)=0$ in (${\rm E'_\infty}$);

\item[{\rm (c)}] If $A$ is $C^1$, then $h^\infty$ is $C^1$
and
$$
dh^\infty(z)=-\bigl[(I-P^0_\infty)A'(z+
h^\infty(z))|_{X^\pm_\infty}\bigr]^{-1}(I-P^0_\infty)A'(z+
h^\infty(z))|_{H^0_\infty}.
$$
Moreover, the functional
$\mathcal{ L}^\infty: \bar
B_{H^0_\infty}(\infty, R_1)\to\R,\;z\mapsto \mathcal{ L}(z+
h^\infty(z))$
 is $C^{2}$ and
$d\mathcal{ L}^\infty(z_0)(z)=(A(z_0+ h^\infty(z_0)),
z)_H$ for all $z_0\in \bar B_{H^0_\infty}(\infty, R_1)$ and $z\in
H^0_\infty$;
 \item[{\rm (d)}] If $\mathcal{ L}$ is  $C^2$ then $h^\infty$ is also $C^1$ as a map to
$H^\pm_\infty$ (hence $X^\pm_\infty$).
\end{enumerate}
\end{lemma}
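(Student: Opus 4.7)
The overall plan is to follow the structure of Lemma~\ref{lem:2.2}, but since the weaker hypothesis $({\rm E}'_\infty)$ gives a contraction only in the $x$-variable with $z$ held fixed (see (\ref{e:1.4})), I will apply Banach's fixed point theorem pointwise for each $z$ and recover continuity in $z$ separately.

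For each $z \in \bar B_{H^0_\infty}(\infty, R_1)$, define
$$S^\infty_z: \bar B_{X^\pm_\infty}(\theta, \rho_A) \to X^\pm_\infty, \quad x \mapsto -(B(\infty)|_{X^\pm_\infty})^{-1}(I-P^0_\infty)A(z+x) + x.$$
By (\ref{e:1.4}) we get $\|S^\infty_z(x_1) - S^\infty_z(x_2)\|_{X^\pm_\infty} \le (1/\kappa)\|x_1 - x_2\|_X$. The self-mapping property $S^\infty_z(\bar B_{X^\pm_\infty}(\theta, \rho_A)) \subset \bar B_{X^\pm_\infty}(\theta, \rho_A)$ follows (by increasing $R_1$ if needed) from the estimate on $\|S^\infty_z(\theta)\|_{X^\pm_\infty}$ using $M(A) < \infty$ when $\rho_A < \infty$, exactly as in Lemma~\ref{lem:2.2}(i); the case $\rho_A = \infty$ is automatic. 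Banach's theorem then produces a unique fixed point $h^\infty(z)$, which is (i).

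To prove continuity, for $z_1, z_2 \in \bar B_{H^0_\infty}(\infty, R_1)$ I would split
\begin{align*}
\|h^\infty(z_1) - h^\infty(z_2)\|_X &\le \|S^\infty_{z_1}(h^\infty(z_1)) - S^\infty_{z_1}(h^\infty(z_2))\|_X \\
&\quad + \|S^\infty_{z_1}(h^\infty(z_2)) - S^\infty_{z_2}(h^\infty(z_2))\|_X \\
&\le \tfrac{1}{\kappa}\|h^\infty(z_1) - h^\infty(z_2)\|_X + C^\infty_1\|(I-P^0_\infty)[A(z_1 + h^\infty(z_2)) - A(z_2 + h^\infty(z_2))]\|_X,
\end{align*}
then rearrange and invoke continuity of $A: V^X_\infty \to X$ at the point $z_2 + h^\infty(z_2)$. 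The main obstacle is precisely this step: $({\rm E}'_\infty)$ provides no quantitative estimate for the $z$-variation of $A$, so we obtain only continuity and not the Lipschitz continuity enjoyed in Lemma~\ref{lem:2.2}.

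Property (ii) then follows as in Lemma~\ref{lem:2.2}(iii): when $M(A) = 0$, the smaller ball $\bar B_{X^\pm_\infty}(\theta, \epsilon)$ becomes invariant under $S^\infty_z$ for $\|z\|$ large, and uniqueness forces $\|h^\infty(z)\|_X \le \epsilon$. For (iii), when $A$ is $C^1$, passing to the limit in (\ref{e:1.4}) shows that $D_x[(I - P^0_\infty)A(z + x)]|_{X^\pm_\infty}$ differs from the isomorphism $(I - P^0_\infty)B(\infty)|_{X^\pm_\infty}$ by an operator of norm at most $1/(\kappa C^\infty_1)$, hence is itself a Banach space isomorphism; the implicit function theorem yields a local $C^1$ section which must agree with $h^\infty$ by uniqueness, and implicit differentiation of the identity $(I-P^0_\infty)A(z+ h^\infty(z)) = \theta$ gives the stated formula for $dh^\infty$. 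The identity for $d\mathcal{L}^\infty$ is established exactly as in Lemma~\ref{lem:2.2}(iv), and $\mathcal{L}^\infty \in C^2$ then follows by the chain rule from $A, h^\infty \in C^1$. Finally, (iv) is obtained by the analogous implicit function theorem argument applied to $\nabla \mathcal{L}: V_\infty \to H$ in the pure Hilbert space setting, using that $B(z + h^\infty(z))|_{H^\pm_\infty}$ is close to the isomorphism $B(\infty)|_{H^\pm_\infty}$ for large $\|z\|$, and appealing once more to uniqueness to identify the local $C^1$ solution with $h^\infty$.
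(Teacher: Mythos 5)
Your proposal is correct and follows essentially the same route as the paper: apply Banach's fixed point theorem pointwise in $z$ using the contraction (\ref{e:2.2}) (valid under $({\rm E}'_\infty)$ with $z_1=z_2$), then recover continuity of $h^\infty$ from the parametric contraction principle, and derive (ii)--(iv) by the same arguments as in Lemma~\ref{lem:2.2}. The one step the paper leaves entirely implicit --- how continuity of $h^\infty$ follows once the Lipschitz estimate (\ref{e:2.5}) is unavailable --- is exactly the two-term splitting and rearrangement you write out, so your version is simply a more explicit account of the same proof.
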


 \begin{proof} Recall the proof of
Lemma~\ref{lem:2.2}(a). Under the condition (${\rm E'_\infty}$), we
can only obtain (\ref{e:1.4}) and (\ref{e:2.1}) for $z_1=z_2$. Hence
(\ref{e:2.2}) still holds. Unless (\ref{e:2.1}) and (\ref{e:2.5})
the proof of Lemma~\ref{lem:2.2}(a) is valid.

The proof of (b) is
the same as that of Lemma~\ref{lem:2.2}(c). (c)--(d) can be
obtained by the implicit function theorem as usual.
\end{proof}

 Define a continuous map
\begin{equation}\label{e:2.10}
F^\infty:\bar B_{H^0_\infty}(\infty, R_1)\times H^\pm_\infty\to\R
\end{equation}
by $F^\infty(z, u)=\mathcal{ L}(z+ h^\infty(z)+ u)-\mathcal{ L}(z+
h^\infty(z))$.  Then for each $z\in \bar B_{H^0_\infty}(\infty,
R_1)$ the map $F^\infty(z,\cdot)$ is continuously directional
differentiable on $H^\pm_\infty$, and the directional derivative of
it at $u\in H^\pm_\infty$ in any direction $v\in H^\pm_\infty$ is
given by
\begin{eqnarray*}
D_2F^\infty(z,u)(v)&=&(A(z+ h^\infty(z)+ u), v)_H
=((I-P^0_\infty)A(z+ h^\infty(z)+ u), v)_H.
\end{eqnarray*}
 It follows from this and (\ref{e:2.4}) that
\begin{eqnarray}\label{e:2.11}
F^\infty(z, \theta)=0\quad\hbox{and}\quad D_2F^\infty(z,
\theta)(v)=0\quad\hbox{for all}\; v\in H^\pm_\infty.
\end{eqnarray}
Later on, if (\ref{e:1.5}) holds we shall assume (by increasing
$R_1>0$) that
\begin{eqnarray}\label{e:2.12}
-\frac{a_\infty}{8}\|z+u\|^2\le {\cal
L}(z+u)-\frac{1}{2}(B(\infty)u, u)_H\le \frac{a_\infty}{8}\|z+u\|^2
\end{eqnarray}
for any $(z, u)\in\bar B_{H^0_\infty}(\infty, R_1)\times
H^\pm_\infty$.

Under the assumptions $({\rm C1_\infty})$--$({\rm C2_\infty})$ and
$({\rm D_\infty})$, with the same proof methods we can obtain the
corresponding results with \cite[Lemma~2.15]{Lu2} and
\cite[Lemma~2.16]{Lu2} (or \cite[Lemma~3.3]{Lu3} and
\cite[Lemma~3.4]{Lu3}) as follows.

\begin{lemma}\label{lem:2.4}
 There exists a
function $\omega_\infty:V_\infty\cap X\to [0, \infty)$  with the
property that $\omega_\infty(x)\to 0$
  as $x\in V_\infty\cap X$ and $\|x\|\to
\infty$, such that
$$
|(B(x)u, v)_H- (B(\infty)u, v)_H |\le \omega_\infty(x)
\|u\|\cdot\|v\|
$$
for any $x\in V_\infty\cap X$,  $u\in H^0_\infty\oplus H^-_\infty$
and $v\in H$.
\end{lemma}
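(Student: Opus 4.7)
The plan is to split $B(x)-B(\infty)=(P(x)-P(\infty))+(Q(x)-Q(\infty))$ via the decomposition from $({\rm D_\infty})$, and bound each piece separately. The two bounds will be summed to form $\omega_\infty$.

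The compact part is handled immediately by $({\rm D3_\infty})$: for arbitrary $u,v\in H$ (no restriction on $u$ is needed here),
$$
|((Q(x)-Q(\infty))u,v)_H|\le \|Q(x)-Q(\infty)\|_{L(H)}\cdot\|u\|\cdot\|v\|,
$$
and the first factor tends to $0$ as $x\in V_\infty\cap X$ with $\|x\|\to\infty$.

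For the positive-definite part, the key observation is that by $({\rm D_\infty})$ the subspace $H^0_\infty\oplus H^-_\infty$ is finite-dimensional. Fix any basis $e_1,\ldots,e_m$ for it and define
$$
\eta(x):=\sum_{i=1}^m\|(P(x)-P(\infty))e_i\|.
$$
The sequential statement $({\rm D2_\infty})$ is equivalent to: for each fixed $u\in H$, $\|P(x)u-P(\infty)u\|\to 0$ as $\|x\|\to\infty$ through $V_\infty\cap X$ (if not, one could extract a sequence violating $({\rm D2_\infty})$). Applied to each $e_i$, this yields $\eta(x)\to 0$ as $\|x\|\to\infty$. Since $H^0_\infty\oplus H^-_\infty$ is finite-dimensional, all norms on it are equivalent, so any $u=\sum_{i=1}^m c_ie_i\in H^0_\infty\oplus H^-_\infty$ satisfies $|c_i|\le C\|u\|$ for some constant $C>0$ depending only on the basis. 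Hence
$$
\|(P(x)-P(\infty))u\|\le \sum_{i=1}^m|c_i|\cdot\|(P(x)-P(\infty))e_i\|\le C\|u\|\,\eta(x),
$$
and consequently $|((P(x)-P(\infty))u,v)_H|\le C\eta(x)\|u\|\|v\|$ for $u\in H^0_\infty\oplus H^-_\infty$ and arbitrary $v\in H$.

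Setting
$$
\omega_\infty(x):=\|Q(x)-Q(\infty)\|_{L(H)}+C\eta(x)
$$
gives a nonnegative function on $V_\infty\cap X$ with $\omega_\infty(x)\to 0$ as $\|x\|\to\infty$, and adding the two displayed estimates proves the inequality claimed in the lemma. There is no serious obstacle; the only point requiring care is upgrading the pointwise convergence supplied by $({\rm D2_\infty})$ to a uniform estimate on $H^0_\infty\oplus H^-_\infty$, which is handled cleanly by the basis expansion above thanks to finite-dimensionality.
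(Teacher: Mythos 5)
Your proof is correct and takes the natural route. The paper does not supply an explicit argument here (it simply refers to \cite[Lemma 2.15]{Lu2} and asserts ``the same proof methods'' apply), but the decomposition you use — split $B(x)-B(\infty)$ into $(P(x)-P(\infty))+(Q(x)-Q(\infty))$, control the compact part in operator norm via $({\rm D3_\infty})$, and upgrade the pointwise convergence in $({\rm D2_\infty})$ to a uniform bound on $H^0_\infty\oplus H^-_\infty$ by a basis expansion and finite-dimensionality — is exactly what the assumptions are designed for, and your conclusion that $H^0_\infty\oplus H^-_\infty$ is finite-dimensional does indeed follow from $({\rm D_\infty})$ together with $({\rm D1_\infty})$ (which forces $H^-_\infty=X^-_\infty$). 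No gaps.
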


\begin{lemma}\label{lem:2.5}
Let $a_\infty>0$ as in (\ref{e:1.1}). By increasing $R_1$ we may
find a number $a_1\in (0, 2a_\infty]$ such that for any $x\in \bar
B_H(\infty, R_1)\cap X$ one has
\begin{enumerate}
\item[{\rm (a)}] $(B(x)u, u)_H\ge a_1\|u\|^2\;\hbox{for all}\; u\in H^+_\infty$;
\item[{\rm (b)}] $|(B(x)u,v)_H|\le\omega_\infty(x)\|u\|\cdot\|v\|\;\hbox{for all}\; u\in H^+_\infty\;\hbox{and}\; \hbox{all}\; v\in
H^-_\infty\oplus H^0_\infty$;
\item[{\rm (c)}] $(B(x)u,u)_H\le-a_\infty\|u\|^2\;\hbox{for all}\; u\in H^-_\infty$.
\end{enumerate}
\end{lemma}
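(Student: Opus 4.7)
The plan is to handle parts (iii), (ii) and (i) separately, enlarging $R_1$ so that the error function $\omega_\infty(x)$ from Lemma~\ref{lem:2.4} satisfies $\omega_\infty(x) \le a_\infty$ and, by a standard diagonal argument applied to $({\rm D4_\infty})$, the uniform estimate $(P(x)u,u)_H \ge C_0\|u\|^2$ holds for all $x \in \bar B_H(\infty, R_1)\cap X$ and all $u \in H$.

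Parts (iii) and (ii) are short consequences of Lemma~\ref{lem:2.4}. For (iii), with $u \in H^-_\infty \subset H^0_\infty \oplus H^-_\infty$, Lemma~\ref{lem:2.4} applied with $v=u$ together with (\ref{e:1.1}) yields
\[
(B(x)u,u)_H \le (B(\infty)u,u)_H + \omega_\infty(x)\|u\|^2 \le (-2a_\infty + \omega_\infty(x))\|u\|^2 \le -a_\infty\|u\|^2.
\]
For (ii), the self-adjointness of $B(x)$ gives $(B(x)u,v)_H = (B(x)v,u)_H$; applying Lemma~\ref{lem:2.4} with the roles of $u$ and $v$ swapped, and observing that $B(\infty)(H^0_\infty \oplus H^-_\infty)\subset H^-_\infty$ is orthogonal to $u \in H^+_\infty$, so $(B(\infty)v,u)_H = 0$, gives the required bound.

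The main obstacle is (i): since $H^+_\infty$ is in general infinite dimensional and $({\rm D2_\infty})$ provides only strong (not norm) operator convergence of $P(x)$ at $\infty$, no uniform quadratic estimate on $P(x)-P(\infty)$ is directly available from the hypotheses. I argue by contradiction. Suppose sequences $\{x_n\}\subset V_\infty\cap X$ with $\|x_n\|\to\infty$ and $\{u_n\}\subset H^+_\infty$ with $\|u_n\|=1$ satisfy $(B(x_n)u_n,u_n)_H \to L < \min(2a_\infty, C_0)$. Extract a subsequence with $u_n\rightharpoonup u \in H^+_\infty$ and write $u_n = u + w_n$, so $w_n\rightharpoonup 0$ and $\|w_n\|^2 \to 1-\|u\|^2$. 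Decomposing $B(x_n) = P(x_n) + Q(x_n)$, the norm convergence $({\rm D3_\infty})$ combined with the compactness of $Q(\infty)$ yields $(Q(x_n)u_n, u_n)_H \to (Q(\infty)u,u)_H$. For the $P$-part, $({\rm D2_\infty})$ gives $(P(x_n)u,u)_H \to (P(\infty)u,u)_H$ and $(P(x_n)u, w_n)_H \to 0$ (since $P(x_n)u \to P(\infty)u$ strongly while $w_n\rightharpoonup 0$), whereas $({\rm D4_\infty})$ provides the uniform bound $(P(x_n)w_n, w_n)_H \ge C_0\|w_n\|^2$. Summing these contributions,
\[
\liminf_n (B(x_n)u_n, u_n)_H \ge (B(\infty)u,u)_H + C_0(1-\|u\|^2) \ge 2a_\infty\|u\|^2 + C_0(1-\|u\|^2) \ge \min(2a_\infty, C_0),
\]
contradicting the choice of $L$. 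Hence (i) holds with $a_1 = \min(2a_\infty, C_0) \in (0, 2a_\infty]$. The crucial device is the splitting $u_n = u + w_n$: it isolates the weakly vanishing remainder $w_n$, on which $({\rm D4_\infty})$ actually delivers a quadratic lower bound, while the strongly convergent ``mass'' $u$ is controlled by evaluating $P(x_n)$ at a fixed vector through $({\rm D2_\infty})$.
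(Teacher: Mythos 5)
Your proof is essentially correct and follows what is surely the same route as the paper's cited reference, namely a contradiction argument via weak compactness in $H$ combined with the decomposition $B(x)=P(x)+Q(x)$ and the uniform boundedness principle. Parts (ii) and (iii) are clean and exactly as one would do them: for (ii) the self-adjointness of $B(x)$ together with $B(\infty)(H^0_\infty\oplus H^-_\infty)\subset H^-_\infty\perp H^+_\infty$ kills the $B(\infty)$-term, and Lemma~\ref{lem:2.4} gives the bound; for (iii) one just takes $u=v\in H^-_\infty$ in Lemma~\ref{lem:2.4} and uses (\ref{e:1.1}). For (i), the decomposition $u_n=u+w_n$ with $u_n\rightharpoonup u\in H^+_\infty$, $w_n\rightharpoonup 0$, and the separate treatment of the $P$- and $Q$-parts is the standard device and you execute it correctly; the use of a diagonal argument to upgrade (${\rm D4_\infty}$) to a uniform positive lower bound $C_0$ on a neighborhood of infinity is also sound.

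Two small remarks. First, in expanding $(P(x_n)u_n,u_n)_H$ there are two cross terms, $(P(x_n)u,w_n)_H$ and $(P(x_n)w_n,u)_H$; you only mention one. The second vanishes by the same reasoning once you invoke the self-adjointness of $P(x_n)$ (which is implicit in ``positive definite'' here), so this is easily repaired, but you should say so. Second, your conclusion that (i) holds with $a_1=\min(2a_\infty,C_0)$ is a slight overreach: the contradiction argument only rules out $L<\min(2a_\infty,C_0)$, so it establishes (i) for any fixed $a_1<\min(2a_\infty,C_0)$ (e.g. $a_1=\min(a_\infty,C_0/2)$), not with equality. Since the lemma only asks for \emph{some} $a_1\in(0,2a_\infty]$, this does not affect the validity of the result, but the constant should be taken strictly below the threshold.
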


 Note: Actually, for the proof of Theorem~\ref{th:1.1} (resp.
Theorem~\ref{th:1.3}) we only need that Lemmas~\ref{lem:2.4} and
\ref{lem:2.5} hold in a set of form
$$
\bar B_{H^0_\infty}(\infty, R')\oplus X^\pm_\infty\quad\hbox{(resp.
$\bar B_{H^0_\infty}(\infty, R')\oplus (\bar B_H(\theta, r')\cap
X^\pm_\infty)$)}.
$$
 In this case we can only get the following
Lemma~\ref{lem:2.6} in such a set too.

As in the proof of \cite[Lemma~2.17]{Lu2} or \cite[Lemma~3.5]{Lu3}
we can use the above lemmas to prove:

\begin{lemma}\label{lem:2.6}
The  functional $F^\infty$ in (\ref{e:2.10}) satisfies (i)-(iv) in
Theorem~\ref{th:2.1}, i.e.
\begin{enumerate}
\item[{\rm (a)}] $F^\infty(z, \theta)=0$ and $D_2F^\infty(z, \theta)=0$ for any $z\in \bar B_{H^0_\infty}(\infty, R_1)$;
\item[{\rm (b)}] $[D_2F^\infty(z, u+ v_2)-D_2F^\infty(z, u+ v_1)](v_2-v_1)\le -a_\infty\|v_2-v_1\|^2<0$
for any $(z, u)\in \bar B_{H^0_\infty}(\infty, R_1)\times
H^+_\infty$, $v_1, v_2\in  H^-_\infty$ with $v_1\ne v_2$;

\item[{\rm (c)}] $D_2F^\infty(z, u+ v)(u-v)\ge a_1\|u\|^2+ a_\infty\|v\|^2>0$ for any
$(z, u, v)\in \bar B_{H^0_\infty}(\infty, R_1)\\
\times H^+_\infty\times H^-_\infty$ with $(u, v)\ne (\theta,
\theta)$;

\item[{\rm (d)}] $D_2F^\infty(z, u)u\ge a_1\|u\|^2> p(\|u\|)$ for any
$(z, u)\in \bar B_{H^0_\infty}(\infty, R_1)\times H^+_\infty$ with
$u\ne \theta$, where $p(t)=\frac{a_1}{2}t^2$.
\end{enumerate}
\end{lemma}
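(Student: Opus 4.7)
The plan is to reduce each of (i)--(iv) to an integral mean value computation involving $B$, then read off the needed inequalities from Lemma~\ref{lem:2.5}; self-adjointness of $B$ will make a number of cross terms cancel cleanly. Part (i) is just (\ref{e:2.11}). For (ii)--(iv), set $w_0=z+h^\infty(z)$. The key preliminary observation is that $(A(w_0),\xi)_H=0$ for every $\xi\in H^\pm_\infty$: indeed (\ref{e:2.4}) gives $(I-P^0_\infty)A(w_0)=\theta$, so $A(w_0)=P^0_\infty A(w_0)\in H^0_\infty\perp H^\pm_\infty$.

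For (iii), invoking the integral form of the mean value theorem for $A$ along the segment $t\mapsto w_0+t(u+v)$, $t\in[0,1]$ (using $DA=B|_X$ from $({\rm F3_\infty})$), I can write
\begin{align*}
D_2F^\infty(z,u+v)(u-v)&=(A(w_0+u+v)-A(w_0),u-v)_H\\
&=\int_0^1(B(w_0+t(u+v))(u+v),u-v)_H\,dt.
\end{align*}
Self-adjointness of $B$ cancels $(Bu,v)_H-(Bv,u)_H$, leaving the integrand equal to $(Bu,u)_H-(Bv,v)_H\ge a_1\|u\|^2+a_\infty\|v\|^2$ by Lemma~\ref{lem:2.5}(i) and (iii). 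Part (ii) follows analogously by the mean value computation at fixed $u$ along the segment from $v_1$ to $v_2$ in $H^-_\infty$:
\begin{align*}
&[D_2F^\infty(z,u+v_2)-D_2F^\infty(z,u+v_1)](v_2-v_1)\\
&\qquad=\int_0^1(B(w_0+u+v_1+t(v_2-v_1))(v_2-v_1),v_2-v_1)_H\,dt\le -a_\infty\|v_2-v_1\|^2,
\end{align*}
by Lemma~\ref{lem:2.5}(iii). For (iv), using $(A(w_0),u)_H=0$ once more, the mean value formula gives $D_2F^\infty(z,u)u=\int_0^1(B(w_0+su)u,u)_H\,ds\ge a_1\|u\|^2>p(\|u\|)=(a_1/2)\|u\|^2$ by Lemma~\ref{lem:2.5}(i), whenever $u\neq\theta$.

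The main obstacle is regularity bookkeeping rather than the algebra. The hypothesis $({\rm F3_\infty})$ delivers $DA(x)(u)=B(x)u$ only for $x\in V_\infty\cap X$ and $u\in X$, so the integral mean value step requires the segments in question to lie inside $X$ and the integrand to be continuous in $t$. Since $z\in H^0_\infty\subset X$, $h^\infty(z)\in X^\pm_\infty$, and $H^-_\infty\subset X$ by $({\rm D}_\infty)$, the only real issue arises when $u\in H^+_\infty\setminus X$; this I would handle by approximating $u$ by elements of $X^+_\infty$ (dense in $H^+_\infty$ via $({\rm S})$) and passing to the limit in $H$ using continuity of $A$ implicit in $({\rm F1_\infty})$ together with the uniform operator bounds from Lemma~\ref{lem:2.5}. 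One also needs $w_0+t(u+v)\in\bar B_H(\infty,R_1)$ so that Lemma~\ref{lem:2.5} applies there; this follows from $\|w_0+t(u+v)\|^2\ge\|z\|^2\ge R_1^2$, since the $H^0_\infty$--component of $w_0+t(u+v)$ is still $z$ and $H^0_\infty\perp H^\pm_\infty$.
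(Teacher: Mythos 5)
Your proposal is correct and follows essentially the same route as the paper: expand $D_2F^\infty$ via $({\rm F2_\infty})$, apply the mean value theorem along a segment using $DA=B|_X$ from $({\rm F3_\infty})$, exploit self-adjointness to make the cross terms cancel, invoke Lemma~\ref{lem:2.5}, and approximate by $X^+_\infty$ to remove the restriction to $X$. The only cosmetic difference is that you use the integral form of the mean value theorem while the paper uses the pointwise form (a single intermediate $t\in(0,1)$); both are available here since the integrand is continuous in $t$, and the observation that the $H^0_\infty$-component of $w_0+t(u+v)$ is exactly $z$ — so the segment stays in $\bar B_H(\infty,R_1)$ — is the same implicit fact the paper relies on.
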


\begin{proof}
 By (\ref{e:2.11}) it suffices to prove that
$F^\infty$ satisfies conditions (b)--(d).

{\bf Step 1}. For  any $z\in \bar B_{H^0_\infty}(\infty, R_1)$,
$u^+\in  X^+_\infty$ and $u^-_1, u^-_2\in H^-_\infty$, as in the
proof of \cite[Lemma~2.17]{Lu2} or \cite[Lemma~3.5]{Lu3}, since the
function
$$
u\mapsto (A(z+ h^\infty(z)+ u^++u), u^-_2-u^-_1)_H.
$$
is continuously directional differentiable, by the condition (${\rm
F2_\infty}$) and the mean value theorem we have a number $t\in (0,
1)$ such that
\begin{eqnarray*}
\hspace{-10mm}&&\hspace{-8mm}[D_2F^\infty(z, u^+ + u^-_2)-D_2F^\infty(z, u^++ u^-_1)](u^-_2-u^-_1)\\
=&&\hspace{-6mm}(A(z+ h^\infty(z)+ u^++u^-_2), u^-_2-u^-_1)_H -
(A(z+
h^\infty(z)+ u^++u^-_1), u^-_2-u^-_1)_H\\
=&&\hspace{-6mm}\left(DA(z+ h^\infty(z)+ u^++ u^-_1+
t(u^-_2-u^-_1))(u^-_2-u^-_1),
u^-_2-u^-_1\right)_H\\
=&&\hspace{-6mm}\left(B(z+ h(z)+ u^++ u^-_1+
t(u^-_2-u^-_1))(u^-_2-u^-_1),
u^-_2-u^-_1\right)_H\\
\le &&\hspace{-6mm} -a_\infty\|u^-_2-u^-_1\|^2,
\end{eqnarray*}
where the third equality comes from $({\rm F3_\infty})$, and the
final inequality is due to Lemma~\ref{lem:2.5}(c). Hence the
density of $X^+_\infty$ in $H^+_\infty$ leads to
$$
[D_2F^\infty(z, u^+ + u^-_2)-D_2F^\infty(z, u^++
u^-_1)](u^-_2-u^-_1)\le -a_0\|u^-_2-u^-_1\|^2
$$
for all $z\in \bar B_{H^0_\infty}(\infty, R_1)$, $u^+\in H^+$ and
$u^-_1, u^-_2\in H^-$. This implies the condition (b).

{\bf Step 2}.  For $z\in \bar B_{H^0_\infty}(\infty, R_1)$, $u^+\in
X^+_\infty$ and $u^-\in H^-_\infty$,  using (\ref{e:2.11}), the mean
value theorem and  (${\rm F2_\infty}$)--(${\rm F3_\infty}$), for some
$t\in (0, 1)$ we have
\begin{eqnarray*}
\hspace{-3mm}&&D_2F^\infty(z, u^++u^-)(u^+-u^-)\\
\hspace{-3mm}&=&\hspace{-3mm} D_2F^\infty(z, u^++u^-)(u^+-u^-)- D_2F^\infty(z, \theta)(u^+-u^-)\\
\hspace{-3mm}&=&\hspace{-3mm}(A(z+ h^\infty(z)+ u^++u^-), u^+-u^-)_H-(A(z+ h^\infty(z)+ \theta), u^+-u^-)_H\\
\hspace{-3mm}&=&\hspace{-3mm}\left(B(z+ h^\infty(z)+ t(u^++u^-))(u^++u^-), u^+-u^-\right)_H\\
\hspace{-3mm}&=&\hspace{-3mm}\left(B(z+ h^\infty(z)+ t(u^++u^-))u^+,
u^+\right)_H-\left(B(z+ h^\infty(z)+
t(u^++u^-))u^-, u^-\right)_H\\
\hspace{-3mm}&\ge &\hspace{-3mm} a_1\|u^+\|^2+ a_\infty\|u^-\|^2.
\end{eqnarray*}
The final inequality comes from Lemma~\ref{lem:2.5}(a) and (c).
 The condition (c) follows because
$X^+_\infty$ is dense in $H^+_\infty$.

{\bf Step 3}. For $z\in \bar B_{H^0_\infty}(\infty, R_1)$ and
$u^+\in X^+_\infty$, as above we may use the mean value theorem to
get a number $t\in (0, 1)$ such that
\begin{eqnarray*}
D_2F^\infty(z, u^+)u^+
&=&D_2F^\infty(z, u^+)u^+- D_2F^\infty(z, \theta)u^+\\
&=&(A(z+ h^\infty(z)+ u^+), u^+)_H-(A(z+ h^\infty(z)+ \theta), u^+)_H\\
&=&\left(B(z+ h^\infty(z)+ tu^+)u^+, u^+\right)_H
\ge a_1\|u^+\|^2.
\end{eqnarray*}
The final inequality is because of Lemma~\ref{lem:2.5}(a). The
condition (d) follows.
\end{proof}

{\bf [}  Note: The condition $\nu_\infty>0$ is
essentially used in the proofs of the above lemma. If $\nu_\infty=0$
the arguments before Lemma~\ref{lem:2.4} is not needed. In this case
Lemmas~\ref{lem:2.4},~\ref{lem:2.5} also hold with
$H^0_\infty=\{\theta\}$. When replaceing $F^\infty$ with ${\cal L}$
the corresponding conclusions in Lemma~\ref{lem:2.6} cannot be
proved if no further conditions are imposed  on ${\cal L}$. (See proof of
Lemma~\ref{lem:2.16}). {\bf ]}

Now $\bar B_{H^0_\infty}(\infty, R_1)$ is only locally compact, we
cannot directly apply Theorem~\ref{th:2.1} to the function
$F^\infty$. Recall that the compactness are only used in Steps 1 and
 6 of proof of \cite[Theorem~A.1]{Lu2,Lu3}. (See the proof of
more general \cite[Claim~A.3]{Lu2,Lu3}). We shall directly prove
these two steps in the present case. To this end we need the
following result.

\begin{lemma}\label{lem:2.7}
\begin{enumerate}
\item[{\rm (a)}]  Let $\{z_k\}\subset V_\infty\cap H^0_\infty$ and $\{u_k\}\subset
H^\pm_\infty$ such that $\|z_k\|\to\infty$ and that $\|u_k-u_0\|\to
0$ for some $u_0\in H$. Then
$$
F^\infty(z_k, u_k)\to\frac{1}{2}(B(\infty)u_0,
u_0)_H\quad\hbox{as}\;k\to\infty.
$$
\item[{\rm (b)}] If $\mathcal{ L}(u)=\frac{1}{2}(B(\infty)u,u)_H+
o(\|u\|^2)$ as $\|u\|\to\infty$, then
\begin{eqnarray*}
&&\frac{a_\infty}{4}\|u^+\|^2- 2\|B(\infty)\|\cdot\|u^-\|^2-
\frac{2\|B(\infty)\|^2}{a_\infty}\cdot\|h^\infty(z)\|^2
-\frac{a_\infty}{2}\|z\|^2\\
&\le&F^\infty(z, u^++u^-)\\
&\le&2\|B(\infty)\|\cdot\| u^+\|^2- \frac{a_\infty}{4}\| u^-\|^2
+\frac{a_\infty}{2}\|z\|^2+\frac{2\|B(\infty)\|^2}{a_\infty}\|h^\infty(z)\|^2.
\end{eqnarray*}
for any $(z, u^+, u^-)\in \bar B_{H^0_\infty}(\infty, R_1)\times H^+_\infty\times H^-_\infty$.
Consequently, for any given $(z_0, u^+_0)\in
\bar B_{H^0_\infty}(\infty, R_1)\times H^+_\infty$ there exists a
neighborhood $\mathcal{ U}$ of it in $\bar B_{H^0_\infty}(\infty,
R_1)\times H^+_\infty$ such that
$F^\infty(z, u^++ u^-)\to-\infty$ uniformly in $(z,
u^+)\in\mathcal{U}$
as $u^-\in H^-_\infty$ and $\|u^-\|\to\infty$.
\end{enumerate}
\end{lemma}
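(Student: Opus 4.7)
For part (i), the key plan is to reduce the difference
\[ F^\infty(z_k, u_k) = \mathcal{L}(w_k + u_k) - \mathcal{L}(w_k), \qquad w_k := z_k + h^\infty(z_k) \in X, \]
to a computable integral. I would first treat $u_k \in X^\pm_\infty$: the segment $[w_k, w_k + u_k]$ lies in $V_\infty \cap X$ for $k$ large, so the directional differentiability of $\mathcal{L}$ together with $({\rm F2}_\infty)$ gives $F^\infty(z_k, u_k) = \int_0^1 (A(w_k + t u_k), u_k)_H\, dt$. Applying $({\rm F3}_\infty)$ to $A$ inside this integral and using Fubini yields
\[ F^\infty(z_k, u_k) = (A(w_k), u_k)_H + \int_0^1 (1-s)(B(w_k + s u_k) u_k, u_k)_H\, ds, \]
and the first summand vanishes, because $(I-P^0_\infty) A(w_k) = \theta$ by Lemma~\ref{lem:2.2}(i) (or Lemma~\ref{lem:2.3}(i)) and $u_k \in H^\pm_\infty$. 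For general $u_k \in H^\pm_\infty$, I would approximate by $v_k \in X^\pm_\infty$ (dense in $H^\pm_\infty$ because $H^0_\infty \subset X$), chosen diagonally so that $v_k \to u_0$ in $H$ and $\mathcal{L}(w_k + v_k) - \mathcal{L}(w_k + u_k) \to 0$ (continuity of $\mathcal{L}$ from $({\rm F1}_\infty)$); applying the integral identity to the $v_k$ then transfers the conclusion to $u_k$.

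It remains to show that $(B(w_k + s v_k) v_k, v_k)_H \to (B(\infty) u_0, u_0)_H$ uniformly in $s \in [0,1]$. Using the decomposition $B = P + Q$ from $({\rm D}_\infty)$, the Banach--Steinhaus theorem applied to $({\rm D2}_\infty)$ together with the operator-norm convergence of $Q$ at infinity furnished by $({\rm D3}_\infty)$ yields $\sup_{k,s} \|B(w_k + s v_k)\|_{L(H)} < \infty$. For fixed $u \in H$, strong convergence $B(w_k + s v_k) u \to B(\infty) u$ uniform in $s$ follows by a subsequence/contradiction argument, using that $\|w_k + s v_k\| \to \infty$ uniformly in $s$ (since $\|w_k\| \to \infty$ while $\|v_k\|$ is bounded). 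A three-term triangle inequality then gives uniform convergence of the integrand, and the integral tends to $\tfrac{1}{2}(B(\infty) u_0, u_0)_H$.

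For part (ii), I apply the expansion (\ref{e:2.12}) to both $\mathcal{L}(z + h^\infty(z) + u^+ + u^-)$ and $\mathcal{L}(z + h^\infty(z))$ and subtract. Writing $h := h^\infty(z)$ and using $B(\infty) z = \theta$, the quadratic main term of the difference reduces to
\[ \tfrac{1}{2}(B(\infty) u^+, u^+)_H + \tfrac{1}{2}(B(\infty) u^-, u^-)_H + (B(\infty) h, u^+)_H + (B(\infty) h, u^-)_H, \]
with $(B(\infty) u^+, u^-)_H = 0$ by the $B(\infty)$-invariance of $H^\pm_\infty$. Using (\ref{e:1.1}) to control the diagonal terms, Young's inequality with appropriate tuning constants to split the two cross terms into $\|u^\pm\|^2$-pieces and $\|h\|^2$-pieces, and expanding the $\tfrac{a_\infty}{8}\|\cdot\|^2$ remainders from (\ref{e:2.12}) via orthogonality of $H^0_\infty$ to $H^\pm_\infty$ together with $\|h + u^+ + u^-\|^2 \le 2\|h\|^2 + 2\|u^+\|^2 + 2\|u^-\|^2$, I collect the claimed upper bound; the lower bound is obtained symmetrically by exchanging the roles of the $\pm$ signs. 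The final assertion that $F^\infty(z, u^+ + u^-) \to -\infty$ uniformly on $\mathcal{U}$ is immediate from the upper bound: on a small neighborhood $\mathcal{U}$ of $(z_0, u_0^+)$ the quantities $\|z\|$, $\|u^+\|$ and $\|h^\infty(z)\|$ stay bounded (the last via Lipschitz continuity of $h^\infty$, Lemma~\ref{lem:2.2}(i)), so every term on the right except $-\tfrac{a_\infty}{4}\|u^-\|^2$ is bounded.

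The main obstacle is the uniformity in $s \in [0,1]$ in part (i): $({\rm D2}_\infty)$ provides only pointwise strong convergence of $P(x_k)$ along sequences with $\|x_k\|\to\infty$, so upgrading to convergence uniform in the continuous parameter $s$ requires the extraction argument based on $\|w_k + s v_k\| \to \infty$ being uniform in $s$.
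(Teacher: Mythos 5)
Your proposal is correct and follows essentially the same route as the paper's own proof. For part (i) the paper also first replaces $u_k$ by a nearby sequence in $X^\pm_\infty$, writes $F^\infty(z_k,u_k)=\int_0^1\int_0^1 t(B(w_k+stu_k)u_k,u_k)_H\,ds\,dt$ using the vanishing of $(I-P^0_\infty)A(w_k)$, splits $B=P+Q$, and invokes the uniform boundedness principle together with $({\rm D2}_\infty)$, $({\rm D3}_\infty)$; your single-integral form $\int_0^1(1-s)(B(w_k+su_k)u_k,u_k)_H\,ds$ is the same after Fubini, and the ``subsequence extraction'' you flag for uniformity in $s$ is exactly the mechanism that justifies the paper's terse ``uniformly in $s,t\in[0,1]$'' (using that $\|w_k+su_k\|\ge\|z_k\|\to\infty$ uniformly in $s$). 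Part (ii) is likewise the paper's computation: apply (\ref{e:2.12}), expand the $B(\infty)$ quadratic form with the $B(\infty)$-invariance of $H^\pm_\infty$ and $B(\infty)z=\theta$, absorb cross terms by Young's inequality, and read off the decay in $\|u^-\|$ from the resulting upper bound.
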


\begin{proof}
{\rm (a)} Since $F^\infty$ is continuous and $X^\pm_\infty$ is dense in
$H^\pm_\infty$ we can choose $\{u'_k\}\subset X^\pm_\infty$ such
that $\|u'_k-u_0\|\to 0$ and $|F^\infty(z_k, u_k)-F^\infty(z_k,
u_k')|<1/k$ for $k=1,2,\cdots$. Hence we can assume that
$\{u_k\}\subset X^\pm_\infty$ in the sequel without loss of
generality.

Note that $h^\infty(z_k)+ stu_k\in X^\pm_\infty\subset H^\pm_\infty$
and
$$
\|z_k+ h^\infty(z_k)+ stu_k\|^2=\|z_k\|^2+ \|h^\infty(z_k)+
stu_k\|^2\ge\|z_k\|^2
$$
for all $s, t\in [0, 1]$ and $k=1,\cdots$. By (${\rm D2_\infty}$),
for any $u\in H$ we have
\begin{equation}\label{e:2.13}
 \lim_{k\to\infty}\|P(z_k +
h^\infty(z_k)+ stu_k)u-P(\infty)u\|=0
\end{equation}
uniformly in $s, t\in [0, 1]$. Then the principle of uniform
boundedness implies
\begin{equation}\label{e:2.14}
M(P):=\sup\{\|P(z_k + h^\infty(z_k)+ stu_k)\|_{L(H)}\,|\,
k\in\N,\;s, t\in [0, 1]\}<\infty
\end{equation}
Moreover, by (${\rm D3_\infty}$) we have also
\begin{equation}\label{e:2.15}
 \lim_{k\to\infty}\|Q(z_k +
h^\infty(z_k)+ stu_k)-Q(\infty)\|_{L(H)}=0
\end{equation}
uniformly in $s, t\in [0, 1]$. It follows from (\ref{e:2.13}) and
(\ref{e:2.14}) that
\begin{eqnarray*}
&&|(P(z_k + h^\infty(z_k)+ stu_k)u_k, u_k)_H-(P(\infty)u_0,
u_0)_H|\\
&=&|(P(z_k + h^\infty(z_k)+ stu_k)(u_k-u_0), u_k)_H \\
&&\qquad +(P(z_k + h^\infty(z_k)+ stu_k)u_0, u_k-u_0)_H\\
 && \qquad+ (P(z_k +
h^\infty(z_k)+ stu_k)u_0, u_0)_H- (P(\infty)u_0,
u_0)_H|\\
&\le&\|P(z_k + h^\infty(z_k)+ stu_k)\|_{L(H)}\|u_k-u_0\|\cdot\|u_k\| \\
&&\qquad +\|P(z_k + h^\infty(z_k)+ stu_k)u_0\|\cdot\|u_k-u_0\|\\
 && \qquad+ |(P(z_k +
h^\infty(z_k)+ stu_k)u_0, u_0)_H- (P(\infty)u_0, u_0)_H|\to 0
\end{eqnarray*}
uniformly in $(s, t)\in [0, 1]\times [0, 1]$ as $k\to\infty$.
Similarly, from (\ref{e:2.15}) we derive that
$$
|(Q(z_k + h^\infty(z_k)+ stu_k)u_k, u_k)_H-(Q(\infty)u_0, u_0)_H|\to
0
$$
uniformly in $(s, t)\in [0, 1]\times [0, 1]$ as $k\to\infty$. Since
$(I-P^0_\infty)A(z_k+ h^\infty(z_k))=0$ for all $k$,   by the mean
value theorem we obtain
\begin{eqnarray*}
F^\infty(z_k, u_k)&=&\int^1_0D\mathcal{ L}(z_k+ h^\infty(z_k)+ tu_k)(u_k)dt\\
&=&\int^1_0(A(z_k+ h^\infty(z_k)+ tu_k), u_k)_Hdt\\
&=&\int^1_0(A(z_k+ h^\infty(z_k)+ tu_k)-A(z_k+ h^\infty(z_k)), u_k)_Hdt\\
&=&\int^1_0\int^1_0(B(z_k+ h^\infty(z_k)+ stu_k)(tu_k), u_k)_Hdsdt\\
&=&\int^1_0\int^1_0t(P(z_k+ h^\infty(z_k)+ stu_k)u_k, u_k)_Hdsdt\\
&+&\int^1_0\int^1_0t(Q(z_k+ h^\infty(z_k)+ stu_k)u_k, u_k)_Hdsdt\\
&\to & \int^1_0\int^1_0t(P(\infty)u_0, u_0)_Hdsdt
+\int^1_0\int^1_0t(Q(\infty)u_0, u_0)_Hdsdt\\
&=&\int^1_0\int^1_0t(B(\infty)u_0,
u_0)_Hdsdt\\
&=&\frac{1}{2}(B(\infty)u_0, u_0)_H\hspace{10mm}\hbox{as
$k\to\infty$.}
\end{eqnarray*}

{\rm (b)} Since $a_\infty\le\|B(\infty)\|$ and
$$
\|B(\infty)\|\cdot\|h^\infty(z)\|\cdot\|u^+
+u^-\|\le\frac{\|B(\infty)\|^2}{2a_\infty}\|h^\infty(z)\|^2+
\frac{a_\infty}{2}\|u^+\|^2+ \frac{a_\infty}{2}\|u^-\|^2,
$$
from  (\ref{e:2.12}) and (\ref{e:1.1}) we derive
\begin{eqnarray*}
&&\mathcal{L}(z+ h^\infty(z)+ u^++u^-)\\
 &\le&\frac{1}{2}\bigl(B(\infty)(h^\infty(z)+ u^++u^-),
h^\infty(z)+
u^++u^-\bigr)_H\\
&+& \frac{a_\infty}{8}\|z+ h^\infty(z)+ u^++u^-\|^2\\
&= &\frac{1}{2}\bigl(B(\infty) u^+,
u^+\bigr)_H+\frac{1}{2}\bigl(B(\infty) u^-, u^-\bigr)_H\\
&+& \bigl(B(\infty)h^\infty(z), u^+
+u^-\bigr)_H+ \frac{a_\infty}{8}\|z+ h^\infty(z)+ u^++u^-\|^2\\
&\le &\frac{1}{2}\|B(\infty)\|\cdot\| u^+\|^2- a_\infty\| u^-\|^2+
\|B(\infty)\|\cdot\|h^\infty(z)\|\cdot\|u^+
+u^-\|\\
&+& \frac{a_\infty}{4}\|z\|^2+ \frac{a_\infty}{4}\|h^\infty(z)\|^2+
\frac{a_\infty}{4}\|u^+\|^2+ \frac{a_\infty}{4}\|u^-\|^2\\
&\le&2\|B(\infty)\|\cdot\| u^+\|^2- \frac{a_\infty}{4}\| u^-\|^2
+\frac{a_\infty}{4}\|z\|^2+\frac{\|B(\infty)\|^2}{a_\infty}\|h^\infty(z)\|^2.
\end{eqnarray*}
Similarly, we have
\begin{eqnarray*}
&&\mathcal{L}(z+ h^\infty(z)+ u^++u^-)\\
 &\ge&\frac{1}{2}\bigl(B(\infty)(h^\infty(z)+ u^++u^-),
h^\infty(z)+
u^++u^-\bigr)_H\\
&-& \frac{a_\infty}{8}\|z+ h^\infty(z)+ u^++u^-\|^2\\
&=&\frac{1}{2}\bigl(B(\infty) u^+,  u^+\bigr)_H+
\frac{1}{2}\bigl(B(\infty) u^-, u^-\bigr)_H \\
&+& \bigl(B(\infty)h^\infty(z), u^++u^-\bigr)_H- \frac{a_\infty}{8}\|z+ h^\infty(z)+ u^++u^-\|^2\\
&\ge &a_\infty\|u^+\|^2- \frac{1}{2}\|B(\infty)\|\cdot\|
u^-\|^2-\frac{\|B(\infty)\|^2}{2a_\infty}\|h^\infty(z)\|^2-
\frac{a_\infty}{2}\|u^+\|^2- \frac{a_\infty}{2}\|u^-\|^2\\
&-& \frac{a_\infty}{4}\|z\|^2- \frac{a_\infty}{4}\|h^\infty(z)\|^2-
\frac{a_\infty}{4}\|u^+\|^2- \frac{a_\infty}{4}\|u^-\|^2\\
&\ge&\frac{a_\infty}{4}\|u^+\|^2- 2\|B(\infty)\|\cdot\|u^-\|^2-
\frac{\|B(\infty)\|^2}{a_\infty}\cdot\|h^\infty(z)\|^2
-\frac{a_\infty}{4}\|z\|^2.
\end{eqnarray*}
Hence
\begin{eqnarray*}
&&\frac{a_\infty}{4}\|u^+\|^2- 2\|B(\infty)\|\cdot\|u^-\|^2-
\frac{\|B(\infty)\|^2}{a_\infty}\cdot\|h^\infty(z)\|^2
-\frac{a_\infty}{4}\|z\|^2\\
&\le&\mathcal{L}(z+ h^\infty(z)+ u^++u^-)\\
&\le&2\|B(\infty)\|\cdot\| u^+\|^2- \frac{a_\infty}{4}\| u^-\|^2
+\frac{a_\infty}{4}\|z\|^2+\frac{\|B(\infty)\|^2}{a_\infty}\|h^\infty(z)\|^2.
\end{eqnarray*}
In particular, we have
\begin{eqnarray*}
-\frac{\|B(\infty)\|^2}{a_\infty}\cdot\|h^\infty(z)\|^2
-\frac{a_\infty}{4}\|z\|^2 \le\mathcal{L}(z+ h^\infty(z)) \le
\frac{a_\infty}{4}\|z\|^2+\frac{\|B(\infty)\|^2}{a_\infty}\|h^\infty(z)\|^2.
\end{eqnarray*}
Since $F^\infty(z, u^++ u^-)=\mathcal{ L}(z+ h^\infty(z)+
u^++u^-)-\mathcal{L}(z+ h^\infty(z))$  by (\ref{e:2.10}), the
desired inequalities easily follow.
 \end{proof}

For $F^\infty$  we can directly prove the corresponding conclusions
with Step 1 in the proof of Theorem~\ref{th:2.1} (given in Appendix
A of \cite{Lu2},\cite{Lu3}) as follows.

\begin{lemma}\label{lem:2.8}
\begin{enumerate}
\item[{\rm (a)}] For any  $r\in (0, \infty)$ there exists a number
 $\varepsilon_r\in (0, r)$ such that
for each $(z, u)\in\bar B_{H^0_\infty}(\infty, R_1)\times \bar
B_{H^+_\infty}(\theta, \varepsilon_r)$ there exists a unique point
$\varphi_z(u)\in B_{H^-_\infty}(\theta, r)$ satisfying
$$
F^\infty(z, u+\varphi_z(u))=\max\{F^\infty(z, u+ v)\,|\, v\in
B_{H^-_\infty}(\theta, r)\}.
$$
One has also $\varphi_z(\theta)=\theta$.
 \item[{\rm (b)}] If $\mathcal{
L}(u)=\frac{1}{2}(B(\infty)u,u)_H+ o(\|u\|^2)$ as $\|u\|\to\infty$,
 for each $(z, u)\in\bar B_{H^0_\infty}(\infty, R_1)\times
H^+_\infty$ there exists a unique point $\varphi_z(u)\in
 H^-_\infty$ such that
$$
F^\infty(z, u+\varphi_z(u))=\max\{F^\infty(z, u+ v)\,|\, v\in
 H^-_\infty\}.
$$
Moreover, $\varphi_z(\theta)=\theta$, and
\begin{eqnarray*}
\| \varphi_z(u^+)\|^2\le \frac{8}{a_\infty}\|B(\infty)\|\cdot\| u^+\|^2
+ 4\|z\|^2+\frac{16\|B(\infty)\|^2}{a^2_\infty}\|h^\infty(z)\|^2
\end{eqnarray*}
Clearly, Lemma~\ref{lem:2.7}(b) implies that for any bounded subset $K\subset\bar B_{H^0_\infty}(R_1,\infty)$,
 $$
 F^\infty(z, u+\varphi_z(u))\ge
F^\infty(z,u)\to\infty\quad\hbox{uniformly in}\;z\in K
$$
as $u\in H^+_\infty$ and $\|u\|\to\infty$.
\end{enumerate}
\end{lemma}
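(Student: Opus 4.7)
The plan is to obtain $\varphi_z(u)$ as the unique maximizer, in each case, of the strictly concave function $v\mapsto F^\infty(z,u+v)$ on $H^-_\infty$, exploiting the finite-dimensionality of $H^-_\infty$ to get compactness and the concavity estimates from Lemma~\ref{lem:2.6}(ii) together with the quadratic bounds from Lemma~\ref{lem:2.7} to control the location/behavior of the maximizer.

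The common technical ingredient I would establish first is the inequality
\begin{equation}\label{e:prop-concave}
F^\infty(z, u+ v)- F^\infty(z,u)\;\le\;D_2F^\infty(z,u)(v)\;-\;\tfrac{a_\infty}{2}\|v\|^2
\end{equation}
for every $z\in\bar B_{H^0_\infty}(\infty,R_1)$, $u\in H^+_\infty$, $v\in H^-_\infty$. This is obtained in the standard way: set $f(t)=F^\infty(z,u+tv)$ for $t\in[0,1]$; Lemma~\ref{lem:2.6}(ii) applied with $v_1=0$, $v_2=tv$ yields $f'(t)-f'(0)\le -a_\infty t\|v\|^2$, and integrating $f(1)-f(0)=f'(0)+\int_0^1(f'(t)-f'(0))\,dt$ gives (\ref{e:prop-concave}).

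For (i), fix $r>0$. Since $\dim H^-_\infty<\infty$ the closed ball $\bar B_{H^-_\infty}(\theta,r)$ is compact, so the continuous function $v\mapsto F^\infty(z,u+v)$ attains its maximum there; strict concavity in $v$ (again Lemma~\ref{lem:2.6}(ii)) makes this maximizer unique. It remains to force it into the open ball when $\|u\|$ is small, \emph{uniformly} in $z$. To this end I would bound $|D_2F^\infty(z,u)(v)|$ uniformly: using $D_2F^\infty(z,\theta)=0$ and the mean value theorem together with (${\rm F3_\infty}$),
\[
D_2F^\infty(z,u)(v)=\int_0^1\bigl(B(z+h^\infty(z)+su)u,v\bigr)_H\,ds,
\]
which by Lemma~\ref{lem:2.5}(ii) is dominated by $C\|u\|\,\|v\|$ for some constant $C$ independent of $z$ (note $\|z+h^\infty(z)+su\|\ge\|z\|\ge R_1$, so $\omega_\infty$ is uniformly bounded along this segment). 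Plugging into (\ref{e:prop-concave}) with $\|v\|=r$ gives $F^\infty(z,u+v)-F^\infty(z,u)\le Cr\|u\|-\frac{a_\infty}{2}r^2$, which is strictly negative for $\|u\|<\frac{a_\infty r}{2C}$. Choosing $\varepsilon_r:=\min\{r/2,\;\frac{a_\infty r}{2C}\}$ yields the required interior maximizer $\varphi_z(u)\in B_{H^-_\infty}(\theta,r)$. The identity $\varphi_z(\theta)=\theta$ is then immediate: setting $u=\theta$ in (\ref{e:prop-concave}) shows $F^\infty(z,v)\le-\frac{a_\infty}{2}\|v\|^2<0=F^\infty(z,\theta)$ for every $v\neq\theta$.

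For (ii), with the extra hypothesis on ${\cal L}$ at infinity, Lemma~\ref{lem:2.7}(ii) provides
\[
F^\infty(z,u+v)\le 2\|B(\infty)\|\,\|u\|^2-\tfrac{a_\infty}{4}\|v\|^2+\tfrac{a_\infty}{2}\|z\|^2+\tfrac{2\|B(\infty)\|^2}{a_\infty}\|h^\infty(z)\|^2,
\]
so for each fixed $(z,u)$ the function $v\mapsto F^\infty(z,u+v)$ tends to $-\infty$ on the finite-dimensional space $H^-_\infty$ and hence attains a (unique, by strict concavity) maximum $\varphi_z(u)$. The announced estimate on $\|\varphi_z(u)\|^2$ I would then derive by chaining two uses of Lemma~\ref{lem:2.7}(ii): the upper bound above evaluated at $v=\varphi_z(u)$, combined with $F^\infty(z,u+\varphi_z(u))\ge F^\infty(z,u)$ and the lower bound $F^\infty(z,u)\ge\frac{a_\infty}{4}\|u\|^2-\frac{2\|B(\infty)\|^2}{a_\infty}\|h^\infty(z)\|^2-\frac{a_\infty}{2}\|z\|^2$ (Lemma~\ref{lem:2.7}(ii) with $u^-=\theta$), yields the stated inequality after discarding the nonnegative $\frac{a_\infty}{4}\|u\|^2$ term on the left. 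The identity $\varphi_z(\theta)=\theta$ again follows from (\ref{e:prop-concave}).

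The main obstacle I foresee is step (i): specifically, pinning $\varphi_z(u)$ into the \emph{open} ball $B_{H^-_\infty}(\theta,r)$ with an $\varepsilon_r$ that does not depend on $z$. Everything hinges on having a genuinely $z$-uniform bound on $D_2F^\infty(z,u)$, and the only leverage for this uniformity is Lemma~\ref{lem:2.5}(ii) (together with the fact that points of the form $z+h^\infty(z)+su$ with $z\in \bar B_{H^0_\infty}(\infty,R_1)$ are automatically ``near infinity'' in $H$). Once that uniform estimate is in hand the rest is routine compactness and strict concavity.
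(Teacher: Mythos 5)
Your proof is correct, and for part (i) it takes a genuinely different (and cleaner) route than the paper. The paper establishes (i) by a two-case contradiction argument: assuming sequences $(z_n,x_n,v_n)$ with $x_n\to\theta$, $v_n\in\partial\bar B_{H^-_\infty}(\theta,r)$ and $F^\infty(z_n,x_n+v_n)\ge F^\infty(z_n,x_n+u)$ for all $u$, it splits into the case $\{z_n\}$ bounded (handled by local compactness of $\bar B_{H^0_\infty}(\infty,R_1)$ plus the concavity estimates from Lemma~\ref{lem:2.6}) and the case $\|z_n\|\to\infty$ (handled by the limit computation of Lemma~\ref{lem:2.7}(i)). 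Your argument replaces this case analysis with the direct, $z$-uniform bound $|D_2F^\infty(z,u)(v)|\le C\|u\|\,\|v\|$ obtained from Lemma~\ref{lem:2.5}(ii), plugged into the concavity inequality $F^\infty(z,u+v)-F^\infty(z,u)\le D_2F^\infty(z,u)(v)-\tfrac{a_\infty}{2}\|v\|^2$. The worry you flag — whether such a bound can be $z$-uniform — is resolved by exactly the point you mention: since $\|z+h^\infty(z)+su\|\ge\|z\|\ge R_1$ and $\omega_\infty(x)\to0$ as $\|x\|\to\infty$, one may increase $R_1$ (as Lemma~\ref{lem:2.5} already permits) so that $\omega_\infty$ is, say, $\le1$ on $\bar B_H(\infty,R_1)\cap X$; your $C$ then exists. (One should also note that the display $D_2F^\infty(z,u)(v)=\int_0^1(B(z+h^\infty(z)+su)u,v)_H\,ds$ is first established for $u\in X^+_\infty$ and then extended to $u\in H^+_\infty$ by density and continuity of $D_2F^\infty$, just as the paper does in the proof of Lemma~\ref{lem:2.6}.) What your approach buys is that part (i) requires no appeal to Lemma~\ref{lem:2.7}(i) and no sequence-compactness dichotomy; what the paper's route buys is a uniform treatment across its various lemmas, all of which lean on Lemma~\ref{lem:2.7}. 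For part (ii) your argument — coercivity and strict concavity give existence/uniqueness, then chain the upper bound at $v=\varphi_z(u)$ with $F^\infty(z,u+\varphi_z(u))\ge F^\infty(z,u)$ and the lower bound at $u^-=\theta$ from Lemma~\ref{lem:2.7}(ii) — is the same derivation as in the paper, and your constants check out.
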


Later on  we shall understand $r=\infty$ and
$\varepsilon_\infty=\infty$ for conveniences in case (ii). Note
that the cases (a) and (b) of Lemma~\ref{lem:2.8} correspond to
Theorems~\ref{th:1.3} and \ref{th:1.1}, respectively. Moreover, if
Lemmas~\ref{lem:2.4}--\ref{lem:2.6} only hold in a set $\bar
B_{H^0_\infty}(\infty, R')\oplus (\bar B_H(\theta, r')\cap
X^\pm_\infty)$, then $z$ and $r$ in (a) are restricted in $\bar
B_{H^0_\infty}(\infty, R')$ and $(0, r')$, respectively.

\noindent{\bf Proof of Lemma~\ref{lem:2.8}}. As at the beginning of
proof of Theorem~\ref{th:2.1} (given in Appendix A of
\cite{Lu2},\cite{Lu3}) we only need to consider the case $\dim
H^-_\infty>0$.

(a)  Since the function $H^-_\infty\to\R,\;u^-\mapsto F^\infty(z,
u^++ u^-)$ is strictly concave  by Lemma~\ref{lem:2.6}(b), it has
a unique maximum point on a convex set if existing. Clearly, it
attains the maximum on the compact subset $\bar
B_{H^-_\infty}(\theta,r)$. Suppose by contradiction that there exist
sequences $\{(z_n, x_n)\}\in\bar B_{H^0_\infty}(\infty, R_1)\times
\bar B_{H^+_\infty}(\theta, r)$
 with $x_n\to 0$,
and $\{v_n\}\subset\partial \bar B_{H^-_\infty}(\theta, r)$ such
that
\begin{equation}\label{e:2.16}
F^\infty(z_n, x_n+ v_n)> F^\infty(z_n, x_n+ u)\quad\hbox{for all}\; u\in
B_{H^-_\infty}(\theta, r),\;\hbox{for all}\; n\in\N.
\end{equation}

  If $\{z_n\}$ is bounded  we may assume up to subsequences that $z_n\to z_0\in
\bar B_{H^0_\infty}(\infty, R_1)$ and $v_n\to v_0\in\partial\bar
B_{H^-_\infty}(\theta, r)$   since both $\bar B_{H^0_\infty}(\infty,
R_1)$ and $\partial\bar B_{H^-_\infty}(\theta, r)$ are compact. It follows from
these and (\ref{e:2.16}) that
$$
F^\infty(z_0,  v_0)\ge F^\infty(z_0,  u)\quad\hbox{for all}\; u\in
B_{H^-_\infty}(\theta, r).
$$
On the other hand,  the mean value theorem yields a number $s\in (0,1)$ such
that
\begin{eqnarray*}
F^\infty(z_0, v_0)&=&F^\infty(z_0, v_0)-F^\infty(z_0,
\theta)=D_2F(z_0, sv_0)v_0\\
&=&\frac{1}{s}[D_2F(z_0, sv_0)(sv_0)-D_2F(z_0, \theta)(sv_0)]\\
&\le&-\frac{a_\infty}{s}\|sv_0\|^2=-sa_\infty\|v_0\|^2<0=F^\infty(z_0,
\theta)
\end{eqnarray*}
 by Lemma~\ref{lem:2.6}(a)--(b). A contradiction is obtained in this case.

Up to  subsequences we assume that $\|z_n\|\to\infty$ and $v_n\to
v_0\in
\partial \bar B_{H^-_\infty}(\theta, r)$ in $H$. Then $H^\pm_\infty\ni u_n:=x_n+ v_n\to v_0$.
By Lemma~\ref{lem:2.7} we get
\begin{eqnarray*}
F^\infty(z_n, x_n+ v_n)\to\frac{1}{2}(B(\infty)v_0,
v_0)_H<0,\quad
 F^\infty(z_n, x_n)\to\frac{1}{2}(B(\infty)\theta, \theta)_H=0.
\end{eqnarray*}
Hence (\ref{e:2.16}) leads to $(B(\infty)v_0, v_0)_H\ge 0$, and
therefore  a contradiction is obtained again.

To see $\varphi_z(\theta)=\theta$, note that  $D_2F^\infty(z,
\varphi_z(\theta))=0$. If $\varphi_z(\theta)\ne\theta$ then
$$
0=[D_2F^\infty(z, \varphi_z(\theta))-D_2F^\infty(z,
\theta)](\varphi_z(\theta)-\theta)\le
-a_\infty\|\varphi_z(\theta)\|^2<0
$$
by Lemma~\ref{lem:2.6}(b), which is a contradiction.

(b) By Lemma~\ref{lem:2.6}(b)  the function
$H^-_\infty\to\R,\;u^-\mapsto  -F^\infty(z, u^++ u^-)$ is strictly
convex. The second claim of Lemma~\ref{lem:2.7} also shows that this
function is coercive. Hence it attains the minimum at some point
$\varphi_z(u^+)\in H^-_\infty$. That is, the function
$H^-_\infty\to\R,\;u^-\mapsto F^\infty(z, u^++u^-)$ takes the
maximum at $\varphi_z(u^+)$. As in the proof of Lemma~ 2.1 of
\cite{DHK} the uniqueness of $\varphi_z(u^+)$ follows from
Lemma~\ref{lem:2.6}(b) as well.

The proof that $\varphi_z(\theta)=\theta$ may be obtained as above.
To see the another claim, by Lemma~\ref{lem:2.7}(b),
\begin{eqnarray*}
&&2\|B(\infty)\|\cdot\| u^+\|^2- \frac{a_\infty}{4}\| \varphi_z(u^+)\|^2
+\frac{a_\infty}{2}\|z\|^2+\frac{2\|B(\infty)\|^2}{a_\infty}\|h^\infty(z)\|^2\\
&\ge&F^\infty(z, u^++ \varphi_z(u^+))\ge F^\infty(z, u^+)\\
&\ge&\frac{a_\infty}{4}\|u^+\|^2-
\frac{2\|B(\infty)\|^2}{a_\infty}\cdot\|h^\infty(z)\|^2
-\frac{a_\infty}{2}\|z\|^2.
\end{eqnarray*}
The conclusion follows immediately.
\hfill$\Box$\vspace{2mm}

\begin{remark}\label{rm:2.9}
{\rm Note that a local maximum of a concave function (with finite
values) on a normed linear space is also a global maximum.  From
Lemma~\ref{lem:2.8}(a) it follows that for any $r>0$ there exists
a number $\varepsilon_r\in (0, r)$ such that
for each $(z, u)\in\bar B_{H^0_\infty}(\infty, R_1)\times \bar
B_{H^+_\infty}(\theta, \varepsilon_r)$ there exists a unique point
$\varphi_z(u)\in B_{H^-_\infty}(\theta, r)$ satisfying
\begin{eqnarray}\label{e:2.17}
F^\infty(z, u+\varphi_z(u))&=&\max\{F^\infty(z, u+ v)\,|\, v\in
B_{H^-_\infty}(\theta, r)\}\nonumber\\
&=&\max\{F^\infty(z, u+ v)\,|\, v\in H^-_\infty\}.
\end{eqnarray}
Define
\begin{eqnarray}\label{e:2.17+}
r_\mathcal{ L}:=\sup\{\varepsilon_r\,|\, r>0\}.
\end{eqnarray}
Then for each $(z, u)\in\bar B_{H^0_\infty}(\infty, R_1)\times
B_{H^+_\infty}(\theta, r_\mathcal{ L})$ there exists a unique point
$\varphi_z(u)\in H^-_\infty$ with $\varphi_z(\theta^+)=\theta^-$,
such that
$$
F^\infty(z, u+\varphi_z(u))=\max\{F^\infty(z, u+ v)\,|\, v\in
 H^-_\infty\}.
$$
Clearly, under the assumption  (\ref{e:1.5}), i.e. $\mathcal{
L}(u)=\frac{1}{2}(B(\infty)u,u)_H+ o(\|u\|^2)$ as $\|u\|\to\infty$,
 we have $r_{\cal L}=\infty$   by Lemma~\ref{lem:2.8}(b)
(because $\varepsilon_\infty=\infty$). {\bf [}  Note: if
Lemmas~\ref{lem:2.4}-\ref{lem:2.6} only hold in a set $\bar
B_{H^0_\infty}(\infty, R')\oplus (\bar B_H(\theta, r')\cap
X^\pm_\infty)$, we define $r_\mathcal{ L}:=\sup\{\varepsilon_r\,|\,
0<r<r'\}$. Then for each $(z, u)\in\bar B_{H^0_\infty}(\infty,
R')\times B_{H^+_\infty}(\theta, r_\mathcal{ L})$ there exists a
unique $\varphi_z(u)\in B_{H^-_\infty}(\theta,r')$ with
$\varphi_z(\theta^+)=\theta^-$, such that $F^\infty(z,
u+\varphi_z(u))=\max\{F^\infty(z, u+ v)\,|\, v\in
 B_{H^-_\infty}(\theta,r')\}$. In this case the following map $j$ is only defined on
$\bar B_{H^0_\infty}(\infty, R')\times B_{H^+_\infty}(\theta,
r_\mathcal{ L})$. {\bf ]}}
\end{remark}

It is easily seen that  the following map
\begin{eqnarray}\label{e:2.18}
j: \bar B_{H^0_\infty}(\infty, R_1)\times B_{H^+_\infty}(\theta, r_\mathcal{ L})\to\R,
 \quad (z,u)\mapsto F^\infty(z,
u+\varphi_z(u)),
\end{eqnarray}
 is well-defined.

\begin{lemma}\label{lem:2.10}
The map $j$ is continuous, and for every $z\in \bar
B_{H^0_\infty}(\infty, R_1)$  the map
$$B_{H^+_\infty}(\theta, r_\mathcal{ L})\to\R,\; u\mapsto j(z, u)$$
is
continuously directional differentiable.
\end{lemma}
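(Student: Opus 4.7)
The plan is to reduce both claims to a single fact, namely that the selection $(z,u)\mapsto\varphi_z(u)$ is jointly continuous on $\bar B_{H^0_\infty}(\infty,R_1)\times B_{H^+_\infty}(\theta,r_{\cal L})$. Granted this, continuity of $j(z,u)=F^\infty(z,u+\varphi_z(u))$ is immediate from continuity of $F^\infty$, and continuous directional differentiability of $u\mapsto j(z,u)$ will follow from an envelope-type sandwich that uses only the continuity of $\varphi_z$, not its differentiability.

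To prove continuity of $\varphi$, I would fix $(z_0,u_0)$ in the product and let $(z_n,u_n)\to(z_0,u_0)$. Choose $r>0$ with $u_0\in B_{H^+_\infty}(\theta,\varepsilon_r)$; by Lemma~\ref{lem:2.8}(i) (or the a priori bound in Lemma~\ref{lem:2.8}(ii) under (\ref{e:1.5})), eventually $\varphi_{z_n}(u_n)\in\bar B_{H^-_\infty}(\theta,r)$. Since $\dim H^-_\infty<\infty$, extract a subsequence with $\varphi_{z_n}(u_n)\to v_\ast$; passing to the limit in
\[
F^\infty(z_n,u_n+\varphi_{z_n}(u_n))\ge F^\infty(z_n,u_n+v)\qquad\forall\, v\in B_{H^-_\infty}(\theta,r)
\]
via joint continuity of $F^\infty$ identifies $v_\ast$ as a maximizer of $F^\infty(z_0,u_0+\cdot)$ over that ball. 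The uniqueness part of Lemma~\ref{lem:2.8} then forces $v_\ast=\varphi_{z_0}(u_0)$, so the whole sequence converges and $\varphi$ is continuous.

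For the directional derivative, I would fix $z$, $u\in B_{H^+_\infty}(\theta,r_{\cal L})$ and $w\in H^+_\infty$. Optimality at the parameter values $u+tw$ and $u$ gives
$F^\infty(z,u+tw+\varphi_z(u+tw))\ge F^\infty(z,u+tw+\varphi_z(u))$
and
$F^\infty(z,u+\varphi_z(u))\ge F^\infty(z,u+\varphi_z(u+tw))$.
Subtracting these inside $j(z,u+tw)-j(z,u)$ and expanding along the $H^+_\infty$-direction by the mean value theorem applied to $F^\infty(z,\cdot)$ (continuously directionally differentiable on $H^\pm_\infty$) yields, for small $t>0$,
\[
\begin{aligned}
\int_0^1 D_2F^\infty(z,u+stw+\varphi_z(u))(w)\,ds &\le \frac{j(z,u+tw)-j(z,u)}{t}\\
&\le \int_0^1 D_2F^\infty(z,u+stw+\varphi_z(u+tw))(w)\,ds,
\end{aligned}
\]
with the roles of the bounds reversed for $t<0$. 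Letting $t\to 0$ and using both the continuity of $\varphi_z$ (just proved) and joint continuity of $(u',w)\mapsto D_2F^\infty(z,u')(w)$, both ends converge to the common value $D_2F^\infty(z,u+\varphi_z(u))(w)$. Hence $D_1j(z,u)(w)$ exists and equals $D_2F^\infty(z,u+\varphi_z(u))(w)$; the same formula together with continuity of $\varphi_z$ and of $D_2F^\infty$ yields continuity of $u\mapsto D_1j(z,u)(w)$, which is the asserted continuous directional differentiability.

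The hardest step will be the first one: ensuring that $\varphi_{z_n}(u_n)$ is confined to a compact set so that finite-dimensionality of $H^-_\infty$ can be used, and that the passage to the limit actually identifies the \emph{unique} maximizer. Both are exactly what the threshold $\varepsilon_r$ constructed in Lemma~\ref{lem:2.8} and the strict concavity in Lemma~\ref{lem:2.6}(ii) provide.
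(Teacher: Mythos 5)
Your proposal is correct and follows essentially the same two-step strategy as the paper: first establish joint continuity of the selection $(z,u)\mapsto\varphi_z(u)$, then derive the claimed regularity of $j$. The difference is one of self-containment rather than of substance. The paper treats the cases $r<\infty$ and $r=\infty$ separately, restricting $z$ to compact slices in the first case so that the parametrized Morse lemma machinery applies, and in both cases outsourcing the continuity of $\varphi$ to Steps~2/3 of the Appendix~A argument in \cite{Lu2} and the continuous directional differentiability of $j$ to Lemma~2.3 of \cite{DHK}. You instead give a direct subsequence-extraction argument for the continuity of $\varphi$ (using finite-dimensionality of $H^-_\infty$, the a priori confinement from Lemma~\ref{lem:2.8}, joint continuity of $F^\infty$, and uniqueness of the maximizer from the strict concavity in Lemma~\ref{lem:2.6}(ii)), and then an explicit Danskin/envelope sandwich for the one-sided difference quotients, with both bounds converging by continuity of $\varphi_z$ and of $(u',w)\mapsto D_2F^\infty(z,u')(w)$. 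This reproduces in the text what \cite[Lemma~2.3]{DHK} delivers, handles the two cases $r<\infty$ and $r=\infty$ uniformly, and additionally identifies the derivative formula $D_1j(z,u)(w)=D_2F^\infty(z,u+\varphi_z(u))(w)$, which the paper only records implicitly. The passage to the limit inside your integrals requires uniform convergence of the integrand in $s\in[0,1]$ as $t\to 0$; this follows because $u+stw$ ranges over a one-dimensional compact segment and $\varphi_z(u+tw)$ stays in a bounded, hence precompact, subset of the finite-dimensional $H^-_\infty$, so it would be worth a sentence making that explicit.
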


\begin{proof}
Clearly, it suffices to prove that the restriction of $j$ to $\bar
B_{H^0_\infty}(\infty, R_1)\times B_{H^+_\infty}(\theta,
\varepsilon_r)$ is continuously directional differentiable.

If $r<\infty$, since $\bar B_{H}(\infty, R_1)\cap\bar B_H(\theta,
R)\cap H^0_\infty$ is compact for any $R>R_1$, as in Step 3 of the
proof of Theorem~\ref{th:2.1} (given in Appendix A of
\cite{Lu2,Lu3}) we can get the desired conclusion from Lemma~2.3 of
\cite{DHK}.

If $r=\infty$, i.e. (\ref{e:1.5}) holds,  for any  $(z_0, u^+_0)\in
\bar B_{H^0_\infty}(\infty, R_1)\times H^+_\infty$, by
Lemma~\ref{lem:2.8}(b) there exists a bounded neighborhood
$\mathcal{ U}$ of it in $\bar B_{H^0_\infty}(\infty, R_1)\times
H^+_\infty$ and a positive number $R$ such that $\varphi_z(u)\in
B_{H^0_\infty}(\theta, R)$ for all $(z,u)\in\mathcal{U}$. Suppose
that $\{(z_n, u^+_n)\}$ converges to $(z_0, u^+_0)$. As in Step 2 of
the proof of Theorem~\ref{th:2.1} (given in Appendix A of
\cite{Lu2,Lu3}) it is easily proved that $\varphi_{z_n}(u^+_n)\to
\varphi_{z_0}(u^+_0)$ as $n\to\infty$. Hence $j$ is continuous in
this case. The second claim follows from Lemma~2.3 of \cite{DHK}.
\end{proof}

By (\ref{e:2.17}), for $(z, u)\in \bar B_{H^0_\infty}(\infty,
R_1)\times B_{H^+_\infty}(\theta, r_\mathcal{ L})$ we have
\begin{equation}\label{e:2.19}
F^\infty(z, u+ \varphi_z(u))\ge F^\infty(z, u+ v)\quad\hbox{for all}\; v\in
H^-_\infty.
\end{equation}
Moreover, for any $z\in \bar B_{H^0_\infty}(\infty, R_1)$ we have
also
\begin{eqnarray}
&&F^\infty(z, u)\ge\frac{a_1}{4}\|u\|^2\qquad\hbox{for all}\; u\in H^+_\infty,\label{e:2.20}\\
&&F^\infty(z, v)\le -\frac{a_\infty}{4}\|v\|^2\qquad\hbox{for all}\; v\in
H^-_\infty.\label{e:2.21}
\end{eqnarray}
In fact,  using the mean value theorem and Lemma~\ref{lem:2.6}(d)
we get
\begin{eqnarray*}
F^\infty(z, u)&=&F^\infty(z, u)-F^\infty(z, \theta)=D_2F^\infty(z,
su)(u)\nonumber\\
&=&\frac{1}{s}D_2F^\infty(z, su)(su)\ge a_1s\|u\|^2\ge 0
\end{eqnarray*}
for some $s\in (0, 1)$. If $u\ne\theta$, the same reason yields a number
$s_u\in (1/2, 1)$ such that
\begin{eqnarray*}
F^\infty(z, u)>F^\infty(z, u)-F^\infty(z, u/2) = D_2F^\infty(z,
s_uu)(u/2)\ge \frac{a_1}{4}\|u\|^2.
\end{eqnarray*}

Similarly,  we get a number $s\in (0, 1)$
such that
\begin{eqnarray*}
F^\infty(z, v)&=&F^\infty(z, v)-F^\infty(z, \theta)=D_2F^\infty(z,
sv)(v)\nonumber\\
&=&\frac{1}{s}D_2F^\infty(z, s v)(s v)\le -a_\infty s\|v\|^2\le 0
\end{eqnarray*}
by Lemma~\ref{lem:2.6}(c). Moreover, if $v\ne\theta$ we have also a number $s_v\in (1/2, 1)$ such that
\begin{eqnarray*}
F^\infty(z, v)<F^\infty(z, v)-F^\infty(z, v/2) = D_2F^\infty(z,
s_vv)(v/2)\le -\frac{a_\infty}{4}\|v\|^2.
\end{eqnarray*}
For $r\in (0, \infty]$, $z\in \bar B_{H^0_\infty}(\infty, R_1)$ and
$(u,v)\in \bar B_{H^+_\infty}(\theta, \varepsilon_r)\times
B_{H^-_\infty}(\theta, r)$,  define
\begin{eqnarray*}
&&\psi_1(z, u+ v)=\left\{\begin{array}{ll}
 \frac{\sqrt{F^\infty(z, u+ \varphi_z(u))}}{\|u\|}u &\;\hbox{if}\;u\ne \theta,\\
 \theta&\;\hbox{if}\;u=\theta,
 \end{array}\right.\nonumber\\
&&\psi_2(z, u+ v)=\left\{\begin{array}{ll}
 \frac{\sqrt{F^\infty(z, u+ \varphi_z(u))-F^\infty(z, u+ v)}}{\|v-\varphi_z(u)\|}(v-\varphi_z(u))
  &\;\hbox{if}\;v\ne\varphi_z(u),\\
 \theta&\;\hbox{if}\;v=\varphi_z(u).
 \end{array}\right.\nonumber
\end{eqnarray*}
By Lemma~\ref{lem:2.10}, the map
\begin{equation}\label{e:2.22}
\psi:\bar B_{H^0_\infty}(\infty, R_1)\times \bigl(\bar
B_{H^+_\infty}(\theta, \varepsilon_r)\oplus B_{H^-_\infty}(\theta,
r)\bigr)\to H^\pm_\infty
\end{equation}
given by $\psi(z, u+ v)=\psi_1(z, u+ v)+\psi_2(z, u+ v)$, is
continuous. Clearly,
$$
\hbox{ $\psi(z, u+ v)\in {\rm Im}(\psi)\cap H^-_\infty$ if and only
if $u=\theta$, and}
$$
$$
F^\infty(z, u+ v)=\|\psi_1(z, u+
v)\|^2-\|\psi_2(z, u+ v)\|^2.
$$

As in Step 5 in the proof of Theorem~\ref{th:2.1} (given in Appendix
A of \cite{Lu2,Lu3}) we can prove

\begin{lemma}\label{lem:2.11}
For each $z\in \bar B_{H^0_\infty}(\infty, R_1)$ the map
$$
\psi(z,\cdot): \bar B_{H^+_\infty}(\theta, \varepsilon_r)\oplus
B_{H^-_\infty}(\theta, r)\to H^\pm_\infty
$$
is injective whether $r$ is finite or infinite.
\end{lemma}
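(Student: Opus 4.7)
The plan is to fix $z \in \bar B_{H^0_\infty}(\infty, R_1)$ and suppose $\psi(z, u_1+v_1) = \psi(z, u_2+v_2)$ with $u_i \in \bar B_{H^+_\infty}(\theta, \varepsilon_r)$ and $v_i \in B_{H^-_\infty}(\theta, r)$. Since $\psi_1(z, \cdot)$ takes values in $H^+_\infty$ and $\psi_2(z, \cdot)$ in $H^-_\infty$, the direct sum decomposition $H^\pm_\infty = H^+_\infty \oplus H^-_\infty$ splits this hypothesis into
\[
\psi_1(z, u_1+v_1) = \psi_1(z, u_2+v_2) \quad\hbox{and}\quad \psi_2(z, u_1+v_1) = \psi_2(z, u_2+v_2),
\]
so I shall recover $u$ from the first equation and $v$ from the second.

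I will first handle $u$. If $u_1 = \theta$, then $\psi_1(z, u_1+v_1) = \theta$; but for any $u \ne \theta$, (\ref{e:2.19}) and (\ref{e:2.20}) give $\|\psi_1(z, u+v)\|^2 = F^\infty(z, u+\varphi_z(u)) \geq F^\infty(z, u) \geq (a_1/4)\|u\|^2 > 0$, forcing $u_2 = \theta$ as well. Otherwise both $u_i \ne \theta$ and $\psi_1(z, u_i+v_i) = t_i u_i$ with $t_i > 0$ forces $u_1, u_2$ to be positive multiples of a common unit $e \in H^+_\infty$; writing $u_i = s_i e$ and comparing norms yields $G(s_1) = G(s_2)$, where $G(s) := F^\infty(z, se + \varphi_z(se))$. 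The crux of the argument is to show that $G$ is strictly increasing on $[0, \varepsilon_r]$, which forces $s_1 = s_2$.

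For strict monotonicity of $G$, note first that $G(0) = F^\infty(z, \varphi_z(\theta)) = F^\infty(z, \theta) = 0$ while, by (\ref{e:2.19})--(\ref{e:2.20}), $G(s) \geq F^\infty(z, se) \geq (a_1/4)s^2\|e\|^2 > 0$ for $s > 0$. For $0 < s_1 < s_2$, set $v_1^\ast := \varphi_z(s_1 e) \in H^-_\infty \subset X$ (the inclusion from (${\rm D1_\infty}$)) and $h(s) := F^\infty(z, se + v_1^\ast)$; then $G(s_1) = h(s_1)$ and $G(s_2) \geq h(s_2)$ by (\ref{e:2.19}), so it suffices to prove $h(s_2) > h(s_1)$. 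An argument analogous to Step~1 of the proof of Lemma~\ref{lem:2.6} (the mean value theorem applied to $A$, condition (${\rm F3_\infty}$), Lemma~\ref{lem:2.5}(i) in place of (iii), and density of $X^+_\infty$ in $H^+_\infty$) yields
\[
[D_2F^\infty(z, s_2 e + v_1^\ast) - D_2F^\infty(z, s_1 e + v_1^\ast)]((s_2-s_1)e) \geq a_1(s_2-s_1)^2\|e\|^2,
\]
so $h'$ is strictly increasing on $[0, \varepsilon_r]$. Moreover, Lemma~\ref{lem:2.6}(iii) applied at $(u,v) = (s_1 e, v_1^\ast)$ gives $D_2F^\infty(z, s_1 e + v_1^\ast)(s_1 e - v_1^\ast) \geq a_1 s_1^2\|e\|^2 + a_\infty\|v_1^\ast\|^2 > 0$, while the first-order condition for the interior maximum $v_1^\ast$ (which lies in the open ball by Lemma~\ref{lem:2.8}) gives $D_2F^\infty(z, s_1 e + v_1^\ast)(v_1^\ast) = 0$; subtracting yields $h'(s_1) > 0$, so $h'(s) > 0$ for all $s \geq s_1$ and thus $h(s_2) > h(s_1)$.

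Finally, with $u_1 = u_2 =: u$ established, I turn to $v$. If $v_i = \varphi_z(u)$ for one index then $\psi_2(z, u+v_i) = \theta$, hence $F^\infty(z, u+v_j) = F^\infty(z, u+\varphi_z(u))$; the strict concavity of $F^\infty(z, u+\cdot)$ on $H^-_\infty$ from Lemma~\ref{lem:2.6}(ii) guarantees uniqueness of the maximum, forcing $v_j = \varphi_z(u)$ too. In the remaining case, $\psi_2(z, u+v_i) = \tau_i(v_i - \varphi_z(u))$ with $\tau_i > 0$ forces $v_i - \varphi_z(u)$ to be positive multiples of a common unit $f \in H^-_\infty$; writing $v_i = \varphi_z(u) + \sigma_i f$ with $\sigma_i > 0$ and equating norms yields $K(\sigma_1) = K(\sigma_2)$ for $K(\sigma) := F^\infty(z, u+\varphi_z(u)+\sigma f)$. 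Since $K$ is strictly concave with unique maximum at $\sigma = 0$, it is strictly decreasing on $[0, \infty)$, forcing $\sigma_1 = \sigma_2$ and hence $v_1 = v_2$. The main obstacle throughout is the strict monotonicity of $G$: unlike in the parameter-compact setting of \cite[Theorem~A.1]{Lu2}, it must be extracted from the second-order information encoded in Lemmas~\ref{lem:2.5}(i) and~\ref{lem:2.6}(iii) together with the first-order condition at the interior maximum $\varphi_z(s_1 e)$.
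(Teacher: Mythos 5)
Your proof is correct and reproduces the Duc--Hung--Khai-style monotonicity argument that the paper's cited Step~5 of \cite[Theorem~A.1]{Lu2} carries out: you split along the direct sum $H^\pm_\infty=H^+_\infty\oplus H^-_\infty$, reduce the $H^+_\infty$ component to strict monotonicity of $s\mapsto F^\infty(z,se+\varphi_z(se))$ along rays via the auxiliary function $h(s)=F^\infty(z,se+v_1^\ast)$ with $v_1^\ast=\varphi_z(s_1e)$ frozen (combining the $H^+$-analogue of Step~1 of Lemma~\ref{lem:2.6}'s proof, Lemma~\ref{lem:2.6}(iii), and the first-order condition at $\varphi_z(s_1e)$), and dispose of the $H^-_\infty$ component with the strict concavity from Lemma~\ref{lem:2.6}(ii). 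One small quibble: your closing remark that the monotonicity of $G$ ``must be extracted from the second-order information'' because the setting is not parameter-compact is misleading --- the paper explicitly notes that compactness of $\Lambda$ enters only in Steps~1 and~6 of \cite{Lu2}, so the per-$z$ injectivity of Step~5 transfers verbatim and already relies on the same monotonicity argument (also, ``subtracting'' in your derivation of $h'(s_1)>0$ is really adding the vanishing term $D_2F^\infty(z,s_1e+v_1^\ast)(v_1^\ast)=0$, though the computation is of course correct).
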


{\bf [}  Note: If Lemmas~\ref{lem:2.4}-\ref{lem:2.6} only
hold in a set $\bar B_{H^0_\infty}(\infty, R')\oplus (\bar
B_H(\theta, r')\cap X^\pm_\infty)$, we require $z$ and $r$ in this
lemma and the following Lemma~\ref{lem:2.12}(i) to sit in  $\bar
B_{H^0_\infty}(\infty, R')$ and $(0, r')$, respectively. {\bf ]}

Now we are a position to prove the corresponding conclusions with
 Step 6 in the proof of Theorem~\ref{th:2.1} (given in Appendix A of \cite{Lu2,Lu3}).

\begin{lemma}\label{lem:2.12}
\begin{enumerate}
\item[{\rm (a)}] For any $r\in (0, \infty)$  there is a number $\epsilon_r\in
(0, \varepsilon_r/4)$ such that
$$
B_{H^+_\infty}\bigl(\theta, \sqrt{a_1}\epsilon_r\bigr)\oplus
B_{H^-_\infty}\bigl(\theta,
\sqrt{a_1}\epsilon_r\bigr)\subset\psi\bigl(z, B_{H^+_\infty}(\theta,
2\epsilon_r)\oplus B_{H^-_\infty}(\theta, r)\bigr)
$$
for any $z\in \bar B_{H^0_\infty}(\infty, R_1)$.

\item[{\rm (b)}] If   $\mathcal{
L}(u)=\frac{1}{2}(B(\infty)u,u)_H+ o(\|u\|^2)$ as $\|u\|\to\infty$,
that is, $r=\infty$,  then for each $z\in\bar B_{H^0_\infty}(\infty,
R_1)$ the map
$$
\psi(z,\cdot):H^+_\infty\oplus H^-_\infty\to H^+_\infty\oplus
H^-_\infty
$$
is surjective, and hence bijective due to Lemma~\ref{lem:2.11}. As a
consequence we get
$$
\psi^{-1}(H^+_\infty\oplus H^-_\infty)=\bar B_{H^0_\infty}(\infty,
R_1)\times(H^+_\infty\oplus H^-_\infty).
$$
\end{enumerate}
\end{lemma}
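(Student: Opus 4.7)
The plan is to exploit the radial structure of $\psi$: by construction $\psi_1(z, u+v)$ depends only on $u$ and is a positive scalar multiple of $u$, while $\psi_2(z, u+v)$ is a positive scalar multiple of $v - \varphi_z(u)$. To hit a prescribed target $\xi^+ + \xi^-$ I would reduce to two sequential one-dimensional intermediate-value problems: first find $s > 0$ with $j(z, s\hat\xi^+) = \|\xi^+\|^2$ and set $u = s\hat\xi^+$; then find $t \ge 0$ with $j(z,u) - F^\infty(z, u + \varphi_z(u) + t\hat\xi^-) = \|\xi^-\|^2$ and set $v = \varphi_z(u) + t\hat\xi^-$. The ingredients that make this work are the lower bound (\ref{e:2.20}), the quadratic concavity estimate from Lemma~\ref{lem:2.6}(ii), and continuity of $j$ (Lemma~\ref{lem:2.10}).

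\textbf{Part (i).} Fix $r \in (0, \infty)$. First invoke Lemma~\ref{lem:2.8}(i) with $r/2$ in place of $r$ to obtain $\varepsilon_{r/2} \in (0, r/2)$ such that for every $z \in \bar B_{H^0_\infty}(\infty, R_1)$ and every $u \in \bar B_{H^+_\infty}(\theta, \varepsilon_{r/2})$ the maximizer $\varphi_z(u)$ lies in $B_{H^-_\infty}(\theta, r/2)$; strict concavity (Lemma~\ref{lem:2.6}(ii)) shows this is the same point as the maximizer over $B_{H^-_\infty}(\theta, r)$. Set
\[
\epsilon_r := \min\Bigl\{\tfrac{\varepsilon_r}{4},\; \tfrac{\varepsilon_{r/2}}{2},\; \tfrac{r}{4}\sqrt{\tfrac{a_\infty}{2a_1}}\Bigr\}.
\]
Given $\xi^+ + \xi^- \in B_{H^+_\infty}(\theta, \sqrt{a_1}\epsilon_r) \oplus B_{H^-_\infty}(\theta, \sqrt{a_1}\epsilon_r)$, solve for $u$: if $\xi^+ \ne \theta$ set $\hat\xi^+ = \xi^+/\|\xi^+\|$ and observe that the continuous function $s \mapsto j(z, s\hat\xi^+)$ vanishes at $0$ while $j(z, 2\epsilon_r\hat\xi^+) \ge F^\infty(z, 2\epsilon_r\hat\xi^+) \ge a_1\epsilon_r^2 > \|\xi^+\|^2$ by (\ref{e:2.20}); the intermediate value theorem delivers $s \in (0, 2\epsilon_r)$ with $j(z, s\hat\xi^+) = \|\xi^+\|^2$, and then $\psi_1(z, s\hat\xi^+ + v) = \xi^+$ for any $v$. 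Next, for the $\xi^-$-component, put $G(t) := F^\infty(z, u + \varphi_z(u) + t\hat\xi^-)$: since $\varphi_z(u)$ is an interior critical point, $G'(0) = 0$, and applying Lemma~\ref{lem:2.6}(ii) with $v_1 = \varphi_z(u)$, $v_2 = \varphi_z(u) + t\hat\xi^-$ yields $G'(t) \le -a_\infty t$ for $t > 0$; integrating gives $h(t) := j(z,u) - G(t) \ge a_\infty t^2/2$. At $t_\star := \sqrt{2a_1/a_\infty}\,\epsilon_r \le r/4$ we have $h(t_\star) \ge a_1\epsilon_r^2 > \|\xi^-\|^2$, and the intermediate value theorem produces $t \in (0, t_\star)$ with $h(t) = \|\xi^-\|^2$; the resulting $v := \varphi_z(u) + t\hat\xi^-$ obeys $\|v\| < r/2 + r/4 < r$, so $v \in B_{H^-_\infty}(\theta, r)$ and $\psi_2(z, u+v) = \xi^-$. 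Every constant is independent of $z$.

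\textbf{Part (ii).} Under (\ref{e:1.5}) the map $\varphi_z$ is globally defined on $H^+_\infty$ by Lemma~\ref{lem:2.8}(ii) and $r_\mathcal{L} = \infty$, so the same IVT strategy runs without any domain constraint: $j(z, s\hat\xi^+) \ge a_1 s^2/4 \to \infty$ shows the $\xi^+$-problem is solvable for every $\xi^+ \in H^+_\infty$, and Lemma~\ref{lem:2.7}(ii) gives $G(t) \to -\infty$ as $t \to \infty$, so $h(t) \to \infty$ and the $\xi^-$-problem is solvable for every $\xi^- \in H^-_\infty$. This proves $\psi(z,\cdot): H^+_\infty \oplus H^-_\infty \to H^+_\infty \oplus H^-_\infty$ is surjective; combined with injectivity (Lemma~\ref{lem:2.11}) it is bijective, and the final equality $\psi^{-1}(H^+_\infty \oplus H^-_\infty) = \bar B_{H^0_\infty}(\infty, R_1) \times (H^+_\infty \oplus H^-_\infty)$ is then automatic.

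The principal obstacle is arranging uniformity in $z$ for part (i): because $\bar B_{H^0_\infty}(\infty, R_1)$ is only locally compact, no compactness argument can be invoked to absorb the $z$-dependence. The argument succeeds only because the three crucial ingredients---the lower bound (\ref{e:2.20}), the quadratic concavity from Lemma~\ref{lem:2.6}(ii), and the uniform confinement $\|\varphi_z(u)\| < r/2$ coming from Lemma~\ref{lem:2.8}(i) applied with radius $r/2$---are all uniform in $z \in \bar B_{H^0_\infty}(\infty, R_1)$, and this uniformity is exactly what lets a single $\epsilon_r$ serve every $z$.
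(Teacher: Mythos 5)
Your proof is correct. Part~(ii) is essentially identical to the paper's: the same two sequential intermediate--value--theorem arguments, using Lemma~\ref{lem:2.8}(ii) for $j\to\infty$ and Lemma~\ref{lem:2.7}(ii) for coercivity in $H^-_\infty$.

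For part~(i), however, you take a genuinely different and more self-contained route. The paper first establishes two boundary estimates by compactness contradiction arguments --- Claim~2.12.1 ($F^\infty(z,u+v)\le 0$ on $\bar B_{H^+_\infty}(\theta,2\epsilon_r)\times\partial B_{H^-_\infty}(\theta,r)$, via Lemma~\ref{lem:2.7}(i)) and Claim~2.12.2 ($\varphi_z(\bar B_{H^+_\infty}(\theta, 2\epsilon_r))\subset B_{H^-_\infty}(\theta,r/2)$) --- and then invokes the full machinery of Step~6 of the Morse--Palais argument in Appendix A of~\cite{Lu2} to deduce the inclusion. You instead construct preimages explicitly by two sequential IVT arguments along the rays $\hat\xi^+$ and $\hat\xi^-$. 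The crucial new input replacing the paper's Claim~2.12.1 and appendix reference is the pointwise bound $G'(t)\le -a_\infty t$ obtained from Lemma~\ref{lem:2.6}(ii), which integrates to $h(t)\ge a_\infty t^2/2$; together with $j(z,u)\ge (a_1/4)\|u\|^2$ from~(\ref{e:2.20}) this gives quantitative control over both IVT ranges. You recover the content of Claim~2.12.2 more cleanly, by reapplying Lemma~\ref{lem:2.8}(i) at radius $r/2$ and using concavity to identify the local maximizer with the global one, rather than by a separate contradiction. The payoff is an elementary, fully explicit argument that does not import the appendix of~\cite{Lu2}; the cost is that your $\epsilon_r$ depends on the additional quantities $\varepsilon_{r/2}$ and $\sqrt{a_\infty/(2a_1)}$, which is harmless. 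One very small gap: you do not spell out the case $\xi^+=\theta$ (take $u=\theta$, $\varphi_z(\theta)=\theta$, and run the $\xi^-$-IVT with $h(t)=-F^\infty(z,t\hat\xi^-)$), but the same bound covers it. Uniformity in $z\in\bar B_{H^0_\infty}(\infty,R_1)$ is handled correctly, since every ingredient --- $\varepsilon_{r/2}$, $a_1$, $a_\infty$ --- is $z$-independent.
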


\begin{proof}
(a)  By (\ref{e:2.21}) there exists a number $C>0$ such that
\begin{equation}\label{e:2.23}
F^\infty(z, v)<-C\quad\forall (z, v)\in \bar B_{H^0_\infty}(\infty,
R_1)\times\partial B_{H^-_\infty}(\theta, r).
\end{equation}

{\bf Claim 2.12.1.} There exists a number $\epsilon_r\in
(0, \varepsilon_r/4)$ such that
\begin{equation}\label{e:2.24}
F^\infty(z, u+ v)\le 0
\end{equation}
for any $(z, u, v)\in\bar B_{H^0_\infty}(\infty, R_1)\times \bar
B_{H^+_\infty}(\theta, 2\epsilon_r)\times\partial
B_{H^-_\infty}(\theta, r)$.

Suppose by contradiction that there exists a sequence
$$
\{(z_n, u_n, v_n)\}\subset\bar B_{H^0_\infty}(\infty, R_1)\times
\bar B_{H^+_\infty}(\theta, \varepsilon_r)\times\partial
B_{H^-_\infty}(\theta, r)
$$
such that $u_n\to \theta$ and $F^\infty(z_n, u_n+ v_n)\ge 0$ for all
$n$. If $\{z_n\}$ has a bounded subsequence we can get a
contradiction as in Step 6 of proof of Theorem~\ref{th:2.1} (given
in Appendix A of \cite{Lu2,Lu3}). Otherwise, after passing to a
subsequence we may assume that $\|z_n\|\to\infty$ and $v_n\to v_0$.
Then using  Lemma~\ref{lem:2.7}(a) we derive
$$
F(z_k, u_k+ v_k)\to\frac{1}{2}(B(\infty)v_0,
v_0)_H<0\quad\hbox{as}\;k\to\infty.
$$
This leads to a contradiction again.  (\ref{e:2.24}) is proved.

{\bf Claim 2.12.2.} One can shrink the positive number $\epsilon_r$ in
(\ref{e:2.24}) such that
\begin{equation}\label{e:2.25}
\varphi_z(\bar B_{H^+_\infty}(\theta, 2\epsilon_r))\subset
B_{H^-_\infty}(\theta, r/2)\quad\forall z\in \bar
B_{H^0_\infty}(\infty, R_1).
\end{equation}

By a contradiction suppose that
 there exist sequences
$\{z_n\}\subset \bar B_{H^0_\infty}(\infty, R_1)$ and
$\{u_n\}\subset \bar B_{H^+_\infty}(\theta, \varepsilon_r)$ such
that
$$
\|u_n\|\to
0\quad\hbox{and}\;\varphi_{z_n}(u_n)\notin B_{H^-_\infty}(\theta,
r/2)\;\forall n=1,2,\cdots.
$$
By Lemma~\ref{lem:2.8} each $\varphi_{z_n}(u_n)$ is a unique point
in $B_{H^-_\infty}(\theta, r)$ such that
$$
F^\infty(z_n, u_n+\varphi_{z_n}(u_n))=\max\bigl\{F^\infty(z_n, u_n+
v)\,|\, v\in B_{H^-_\infty}(\theta, r)\bigr\}.
$$
Since $\bar B_{H^-_\infty}(\theta, r)$ is compact, after passing a
subsequence (if necessary) we may assume $\varphi_{z_n}(u_n)\to
v_0\in \bar B_{H^-_\infty}(\theta, r)\setminus
B_{H^-_\infty}(\theta, r/2)$.

$\bullet$ If $\{z_n\}$ has a bounded subsequence, passing to a subsequence we may assume
$z_n\to z_0\in \bar B_{H^0_\infty}(\infty, R_1)$. Then  by (\ref{e:2.21}) we get
$$
F^\infty(z_n, u_n+\varphi_{z_n}(u_n))\to F^\infty(z_0, v_0)\le -\frac{a_\infty}{4}\|v_0\|^2\le -\frac{r^2a_\infty}{16}<0
$$
as $n\to\infty$, and  $F^\infty(z_n, u_n)\to F^\infty(z_0, \theta)=0$ as $n\to\infty$.
This contradicts to the fact that
$F^\infty(z_n, u_n)\le F^\infty(z_n, u_n+\varphi_{z_n}(u_n))$ for all $n$.

$\bullet$ If $\{z_n\}$ has no bounded subsequences, passing to a subsequence we may assume
$\|z_n\|\to \infty$. In this case  Lemma~\ref{lem:2.7}(a) leads to
$$
F^\infty(z_n, u_n+ \varphi_{z_n}(u_n))\to\frac{1}{2}(B(\infty)v_0,
v_0)_H\le -a_\infty\|v_0\|^2\le -\frac{a_\infty r^2}{4}
$$
as $n\to\infty$, and  $F^\infty(z_n, u_n)\to
\frac{1}{2}(B(\infty)\theta, \theta)_H=0$ as $n\to\infty$.  This also yields a contradiction to the fact that
$F^\infty(z_n, u_n+ \varphi_{z_n}(u_n))\ge F^\infty(z_n, u_n)$ for all $n$.

Claim 2.12.2 is proved.

For $(z, u)\in \bar B_{H^0_\infty}(\infty, R_1)\times
B_{H^+_\infty}(\theta, 2\epsilon_r)$, by (\ref{e:2.19}) and
(\ref{e:2.20}) we get
\begin{eqnarray}\label{e:2.26}
F^\infty(z, u+ \varphi_z(u))\ge F^\infty(z, u)\ge \frac{a_1}{4}\|u\|^2.
\end{eqnarray}
This and (\ref{e:2.24}) imply that
\begin{eqnarray}\label{e:2.27}
F^\infty(z, u+\varphi_z(u))-F^\infty(z, u+ v)> a_1\epsilon_r^2
\end{eqnarray}
for any $(z, u, v)\in \bar B_{H^0_\infty}(\infty, R_1)\times\partial
B_{H^+_\infty}(\theta, 2\epsilon_r)\times\partial
B_{H^-_\infty}(\theta, r)$.

Note that (\ref{e:2.23})--(\ref{e:2.27}) correspond to (A.2), (A.3), (A.4),
(A.5) and (A.6) in  Step 6 in the proof of Theorem~\ref{th:2.1}
(given in Appendix A of \cite{Lu2,Lu3}), respectively. Using these
and repeating the remained arguments therein (i.e.,  Step 6 in the
proof of Theorem~\ref{th:2.1} given in Appendix A of \cite{Lu2,Lu3})
we may get
$$
\bar
B_{H^+_\infty}(\theta,\sqrt{a_1}\epsilon_r)\subset\psi_1\bigl(z,
B_{H^+_\infty}(\theta, 2\epsilon_r)\bigr)
$$
and the desired conclusion (a).

(b) For any given $(\bar u^+, \bar u^-)\in H^+_\infty\times
H^-_\infty$, without loss of generality, we assume $(\bar u^+, \bar
u^-)\ne(\theta, \theta)$ because $\psi(z,\theta)=\theta$.

$\bullet$ If $\bar u^+=\theta$ then $\bar u^-\ne\theta$.  Since
(\ref{e:2.17}) and  Lemma~\ref{lem:2.7}(b) imply
$$
0=F^\infty(z, \varphi_z(\theta)\ge F^\infty(z,
u)\to-\infty\quad\hbox{as}\quad u\in
H^-_\infty\;\hbox{and}\;\|u\|\to\infty,
$$
 the intermediate value theorem gives a number $t>0$ such that
$-F^\infty(z, t\bar u^-)=\|\bar u^-\|^2$. Set $u^-:=t\bar u^-$. Then
$\psi_1(z, \theta+ u^-)=\psi_1(z, \theta)=\theta$ and
$$
\psi_2(z, \theta+ u^-)=\frac{\sqrt{F^\infty(z,
\varphi_z(\theta))-F^\infty(z,
u^-)}}{\|u^--\varphi_z(\theta)\|}(u^--\varphi_z(\theta)=\bar u^-.
$$
Namely, $\psi(z, \theta+ u^-)=(\theta, \bar u^-)$.

$\bullet$ Let $\bar u^+\ne\theta$. By Lemma~\ref{lem:2.8}(b),
$\varphi_z(\theta)=\theta$ and $F^\infty(z,
u+\varphi_z(u))\to\infty$ as $u\in H^+_\infty$ and $\|u\|\to\infty$.
Lemma~\ref{lem:2.10} also tells us that $H^+_\infty\ni u\mapsto
F^\infty(z, u+\varphi_z(u))$ is continuous. By the intermediate
value theorem we have a number $t>0$ such that
$$
F^\infty(z, t\bar u^++\varphi_z(t\bar u^+))=\|\bar u^+\|^2.
$$
Set $u^+:=t\bar u^+$. Then $\psi_1(z, u^++ v)=\bar u^+$ for any
$v\in H^-_\infty$.  If $\bar u^-=\theta$, then
$$
\psi_2(z, u^++ u^-)=\theta=\bar u^-\quad\hbox{for}\quad
u^-=\varphi_z(u^+).
$$
If  $\bar u^-\ne\theta$, we define a function $g:[0, \infty)\to\R$
by
$$
g(s)= F^\infty(z, u^+ + \varphi_z(u^+))-F^\infty(z, u^+ +
\varphi_z(u^+)+ s\bar u^-).
$$
Then $g(s)\ge 0$, $g(0)=0$ and $g(s)\to \infty$ as $s\to\infty$ by
 Lemma~\ref{lem:2.7}(b). Using the intermediate
value theorem may yield a number $s_0>0$ such that $g(s_0)=\|\bar u^-\|^2$.
Hence for $u^-:=\varphi_z(u^+)+ s_0\bar u^-\in H^-_\infty$ we get
$$
\frac{\sqrt{F^\infty(z, u^+ + \varphi_z(u^+))-F^\infty(z, u^++
u^-)}}{\|u^--\varphi_z(u^+)\|}(u^--\varphi_z(u^+))=\bar u^-
$$
 This shows
$\psi(z, u^++u^-)=(\bar u^+, \bar u^-)$.

 Summarizing the above arguments we have proved that the map $
 \psi(z,\cdot)$ is surjective. The other conclusions of (ii) easily follows.
\end{proof}

The cases (a) and (b) of Lemma~\ref{lem:2.12} correspond to
Theorems~\ref{th:1.3} and \ref{th:1.1}, respectively. If
Lemmas~\ref{lem:2.4}-\ref{lem:2.6} only hold in a set $\bar
B_{H^0_\infty}(\infty, R')\oplus (\bar B_H(\theta, r')\cap
X^\pm_\infty)$, we require $z$ and $r$ in Lemma~\ref{lem:2.12}(a)  to sit in  $\bar
B_{H^0_\infty}(\infty, R')$ and $(0, r')$, respectively.

 The following
two lemmas give the corresponding conclusions with
 Step 7 of the proof of Theorem~\ref{th:2.1} (given in Appendix A of \cite{Lu2,Lu3}) in the cases of
Theorems~\ref{th:1.1} and \ref{th:1.3}, respectively. \

\begin{lemma}\label{lem:2.13}
 Let  $\mathcal{
L}(u)=\frac{1}{2}(B(\infty)u,u)_H+ o(\|u\|^2)$ as $\|u\|\to\infty$.
(That is, $r=\infty$). By Lemma~\ref{lem:2.11} and
Lemma~\ref{lem:2.12}(ii) we have a bijection
\begin{eqnarray*}
&&\bar B_{H^0_\infty}(\infty, R_1)\times( H^+_\infty\oplus
H^-_\infty)\to \bar B_{H^0_\infty}(\infty, R_1)\times(
H^+_\infty\oplus
H^-_\infty),\nonumber\\
&&\hspace{20mm} (z, u+ v)\mapsto (z, \psi(z, u+v)).
\end{eqnarray*}
Its inverse, denoted by $\phi$, has a form
$$
\phi(z, u+ v)= (z, \phi_z(u+v)):=(z, u'+ v'),
$$
where $(u', v')\in H^+_\infty\times H^-_\infty$ is a unique point
satisfying $u+ v=\psi(z, u'+v')$. Then $\phi$ is a homeomorphism and
$$
F^\infty(\phi(z, u+ v))=\|u\|^2-\|v\|^2
$$
for any $(z, u, v)\in\bar B_{H^0_\infty}(\infty, R_1)\times
H^+_\infty\times H^-_\infty$. In particular, for each $z\in\bar B_{H^0_\infty}(\infty, R_1)$, $\phi_z$
(and so $\psi_z=\psi(z,\cdot)$) is a homeomorphism from $H^+_\infty\oplus H^-_\infty$ onto
$H^+_\infty\oplus H^-_\infty$. Moreover, $\phi(z, u+ v)$ belongs to
${\rm Im}(\psi)\cap(\bar B_{H^0_\infty}(\infty, R_1)\times
H^-_\infty)$ if and only if $u=\theta$.
\end{lemma}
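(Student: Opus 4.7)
The bijectivity of the parameterized map $(z,u+v)\mapsto(z,\psi(z,u+v))$ is immediate from what has already been established: Lemma~\ref{lem:2.11} gives injectivity of $\psi(z,\cdot)$ for each fixed $z$, while Lemma~\ref{lem:2.12}(ii) gives surjectivity under the hypothesis $r=\infty$. So $\phi$ is well defined. Writing $(z,u'+v'):=\phi(z,u+v)$ means $\psi(z,u'+v')=u+v$ with $u=\psi_1(z,u'+v')\in H^+_\infty$ and $v=\psi_2(z,u'+v')\in H^-_\infty$, so the defining identity $F^\infty(z,u'+v')=\|\psi_1(z,u'+v')\|^2-\|\psi_2(z,u'+v')\|^2$ yields $F^\infty(\phi(z,u+v))=\|u\|^2-\|v\|^2$. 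The last characterization is equally quick: $\phi(z,u+v)$ lies in $\mathrm{Im}(\psi)\cap(\bar B_{H^0_\infty}(\infty,R_1)\times H^-_\infty)$ iff the $H^+_\infty$-component $u'$ of $\phi_z(u+v)$ vanishes, and by the very definition of $\psi_1$ this is equivalent to $u=\psi_1(z,u'+v')=\theta$.

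The substantive part is to verify continuity of $\phi$. Since $(z,\eta)\mapsto(z,\psi(z,\eta))$ is continuous (Lemma~\ref{lem:2.10}), I argue sequentially: fix $(z_n,\bar u_n+\bar v_n)\to(z_0,\bar u_0+\bar v_0)$, set $(u'_n,v'_n):=\phi_{z_n}(\bar u_n+\bar v_n)$, and aim to show $(u'_n,v'_n)\to(u'_0,v'_0)$. First, boundedness: the identity $\|\psi_1(z_n,u'_n+v'_n)\|^2=F^\infty(z_n,u'_n+\varphi_{z_n}(u'_n))=\|\bar u_n\|^2$ together with $\tfrac{a_1}{4}\|u'_n\|^2\le F^\infty(z_n,u'_n)\le F^\infty(z_n,u'_n+\varphi_{z_n}(u'_n))$ from \eqref{e:2.20} and \eqref{e:2.26} bounds $\|u'_n\|$. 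The estimate in Lemma~\ref{lem:2.8}(ii) (using that $\|z_n\|$ is bounded since $z_n$ converges in the finite-dimensional space $H^0_\infty$, and that $\|h^\infty(z_n)\|$ is bounded by Lemma~\ref{lem:2.2}) then bounds $\|\varphi_{z_n}(u'_n)\|$. Integrating the concavity estimate of Lemma~\ref{lem:2.6}(ii) via the mean value theorem yields $\|v'_n-\varphi_{z_n}(u'_n)\|^2\le(2/a_\infty)\|\bar v_n\|^2$, so $\|v'_n\|$ is bounded too.

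The main obstacle is that $H^+_\infty$ is typically infinite-dimensional, so boundedness of $\{u'_n\}$ does not give a convergent subsequence. To sidestep this I exploit the explicit form $\psi_1(z,u'+v')=c(z,u')\,u'$ with $c(z,u')=\sqrt{F^\infty(z,u'+\varphi_z(u'))}/\|u'\|$ for $u'\ne\theta$: this forces $u'_n$ to be collinear with $\bar u_n=\psi_1(z_n,u'_n+v'_n)$, namely $u'_n=(\|u'_n\|/\|\bar u_n\|)\bar u_n$ whenever $\bar u_n\ne\theta$. Convergence of $u'_n$ thus reduces to convergence of the scalar $t_n:=\|u'_n\|$. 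If $\bar u_0=\theta$, then $c(z_n,u'_n)\ge\sqrt{a_1}/2$ by \eqref{e:2.26} forces $t_n\le(2/\sqrt{a_1})\|\bar u_n\|\to 0$, giving $u'_n\to\theta=u'_0$. If $\bar u_0\ne\theta$, pass to any subsequence with $t_n\to t_\ast$; then $u'_n\to t_\ast\bar u_0/\|\bar u_0\|$ strongly, so by continuity of $\varphi$ (Lemma~\ref{lem:2.10}) and of $F^\infty$ the relation $F^\infty(z_n,u'_n+\varphi_{z_n}(u'_n))=\|\bar u_n\|^2$ passes to the limit, giving $\psi_1(z_0,t_\ast\bar u_0/\|\bar u_0\|+v')=\bar u_0$ for any $v'$. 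The injectivity of $\psi(z_0,\cdot)$ (Lemma~\ref{lem:2.11}) pins down $t_\ast=\|u'_0\|$, so every subsequential limit agrees and $u'_n\to u'_0$.

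Finally, with $u'_n\to u'_0$ strongly in $H^+_\infty$, continuity of $\varphi$ gives $\varphi_{z_n}(u'_n)\to\varphi_{z_0}(u'_0)$, and the estimate $\|v'_n-\varphi_{z_n}(u'_n)\|\le\sqrt{2/a_\infty}\,\|\bar v_n\|$ combined with uniqueness of the $v'$-solve for $\psi_2(z_0,u'_0+v')=\bar v_0$ (again from Lemma~\ref{lem:2.11}) yields $v'_n\to v'_0$. This proves continuity of $\phi$, hence the joint homeomorphism claim; the statement that each $\phi_z$ is a self-homeomorphism of $H^+_\infty\oplus H^-_\infty$ follows by specializing to a single $z$.
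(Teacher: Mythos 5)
Your proof is correct and follows essentially the same sequential‐continuity argument as the paper's: establish boundedness of $\{u'_n\}$ and $\{v'_n\}$, exploit collinearity of $\psi_1(z,\cdot)$ to reduce convergence of $u'_n$ to that of the scalar $\|u'_n\|$, and close via injectivity of $\psi(z_0,\cdot)$ together with finite-dimensionality of $H^-_\infty$. One small detail should be supplied in the step pinning down $t_\ast$: you only have equality of $\psi_1$-components, and Lemma~\ref{lem:2.11} concerns the full map $\psi$, so you should evaluate at the common second component $v'=\varphi_{z_0}(t_\ast u_0'/\|u_0'\|)$ (which makes both $\psi_2$-values vanish) before invoking injectivity, and also note $t_\ast>0$, which holds since $j(z_n,u'_n)\to\|\bar u_0\|^2>0$ whereas $j(z_0,\theta)=0$; the paper sidesteps this by passing to a common convergent subsequence of $(u'_n,v'_n)$ and applying injectivity of $\psi$ to the pair at once.
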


\begin{proof}
 By Lemma~\ref{lem:2.12}(b) it suffices to
prove that $\phi$ is continuous. Suppose that
\begin{eqnarray*}
&&(z_0, u_0', v_0')\in\bar B_{H^0_\infty}(\infty, R_1)\times
H^+_\infty\times H^-_\infty\quad\hbox{and}\\
&&\{(z_n, u_n', v_n')\}\subset \bar B_{H^0_\infty}(\infty,
R_1)\times H^+_\infty\times H^-_\infty
\end{eqnarray*}
satisfy: $z_n\to z_0$ and
\begin{eqnarray*}
&&u_n:=\psi_1(z_n, u_n'+ v_n')\to u_0=\psi_1(z_0, u_0'+
v_0'),\\
&&v_n:=\psi_2(z_n, u_n'+ v_n')\to v_0=\psi_2(z_0, u_0'+ v_0').
\end{eqnarray*}
Our goal is to prove that $u'_n\to u'_0$ and $v'_n\to v'_0$.

{\bf Step 1.} {\it Prove that $\{u_n'\}$ and $\{v_n'\}$ are
bounded.}

 For each $n$ either $u_n'=\theta$ or $u_n'\ne\theta$ and
$$
u_n=\frac{\sqrt{F^\infty(z_n, u_n' +
\varphi_{z_n}(u_n'))}}{\|u_n'\|}u_n'
$$
and hence
\begin{eqnarray*}
\|u_n\|^2=F^\infty(z_n, u_n'+ \varphi_{z_n}(u_n'))\ge F^\infty(z_n,
u_n')\ge\frac{a_1}{4}\|u_n'\|^2
\end{eqnarray*}
by  (\ref{e:2.19}) and (\ref{e:2.20}). Since $\|u_n\|\to\|u_0\|$  we
deduce that $\{u_n'\}$ is bounded and that $u_n'\to \theta=u_0'$ as
$n\to\infty$  if $u_0'=\theta$ (and so $u_0=\theta$ by the definition of $\psi_1$).

For each $n$ either $v_n'=\varphi_{z_n}(u_n')$ or $v_n'\ne
\varphi_{z_n}(u_n')$ and
$$
v_n=\frac{\sqrt{F^\infty(z_n, u_n' +
\varphi_{z_n}(u_n'))-F^\infty(z_n, u_n'+
v_n')}}{\|v_n'-\varphi_{z_n}(u_n')\|}(v_n'-\varphi_{z_n}(u_n')).
$$
In the latter case $F^\infty(z_n, u_n' +
\varphi_{z_n}(u_n'))-F^\infty(z_n, u_n'+ v_n')=\|v_n\|^2$. Since
$\{u_n'\}$, $\{z_n\}$ and thus $\{h^\infty(z_n)\}$ are bounded,  it follows from
Lemma~\ref{lem:2.8}(b) that
 $\{\varphi_{z_n}(u_n')\}$ is bounded, which implies by Lemma~\ref{lem:2.7}(b)
 that $\{F^\infty(z_n, u_n' + \varphi_{z_n}(u_n'))\}$
is bounded. Hence $\{F^\infty(z_n, u_n'+ v_n')\,|\, v_n'\ne
\varphi_{z_n}(u_n')\}$ is bounded. Using Lemma~\ref{lem:2.7}(b)
again we deduce that $\{v_n'\,|\, v_n'\ne \varphi_{z_n}(u_n')\}$ is
bounded. The claim is proved.

{\bf Step 2.} {\it Prove that $u_n'\to u_0'$ and $v_n'\to v_0'$.}

 The first claim has been proved if $u_0'=\theta$. Let us consider the case
$u_0'\ne\theta$. Since $\|\psi_1(z_0, u_0'+ v_0')\|=\sqrt{j(z_0,
u_0')}>0$, $\psi_1(z_n, u_n'+ v_n')\to \psi_1(z_0, u_0'+ v_0')$ and
hence $\|\psi_1(z_n, u_n'+ v_n')\|>0$ for large $n$, we deduce that
for large $n$, $u_n'\ne\theta$ and $j(z_n, u_n')=\|\psi_1(z_n, u_n'+
v_n')\|^2$  converges to $j(z_0, u_0')$. Now
$$
\frac{\sqrt{j(z_n,u_n')}}{\|u_n'\|}u_n'=u_n\to
u_0=\frac{\sqrt{j(z_0,u_0')}}{\|u_0'\|}u_0'\quad\Longrightarrow
\quad\frac{u_n'}{\|u_n'\|}\to \frac{u_0'}{\|u_0'\|}.
$$

Suppose by a contradiction that $\{u_n'\}$ does not converge to $u_0'$.
There exists a subsequence $\{u'_{n_k}\}$ and $\epsilon>0$ such that
$\|u'_{n_k}-u_0'\|\ge\epsilon$ for all $k$. We may assume that
$\|u'_{n_k}\|\to\alpha$ due to the boundedness of $\{u_n'\}$. Then
$\{u'_{n_k}\}$ converges to $\frac{\alpha}{\|u_0'\|}u_0'$ and hence
$j(z_{n_k}, u'_{n_k})\to j(z_0, \frac{\alpha}{\|u_0'\|}u_0')=j(z_0,
u_0')>0$. The latter implies
\begin{equation}\label{e:2.28}
\psi_1(z_0, \frac{\alpha}{\|u_0'\|}u_0')=\psi_1(z_0, u_0').
\end{equation}
Since  $\{v_n'\}$ is bounded, we may assume that $v'_{n_k}\to v'$ by
replacing $\{u'_{n_k}\}$ with a subsequence. Then
$$
\psi_2(z_0, \frac{\alpha}{\|u_0'\|}u_0'+ v')\longleftarrow
\psi_2(z_{n_k}, u'_{n_k}+ v'_{n_k})=v_{n_k}\to v_0=\psi_2(z_0, u_0'+
v_0').
$$
Obverse that $\psi_1$ is independent of elements in $H^-_\infty$. By
 (\ref{e:2.28}) we get
$$
\psi_1(z_0, \frac{\alpha}{\|u'_0\|}u'_0+ v')=\psi_1(z_0, u'_0+ v'_0)
$$
and hence
$$
\psi(z_0, \frac{\alpha}{\|u_0'\|}u_0'+ v')=\psi(z_0, u_0'+ v_0').
$$
The latter implies that $\frac{\alpha}{\|u_0'\|}u_0'=u_0'$ and
$v'=v_0'$ because $\psi(z_0, \cdot)$ is one-to-one. It follows that $\alpha=\|u_0'\|$
and $u'_{n_k}\to u_0'$. This contradiction shows that $u_n'\to
u_0'$.

Similarly,  suppose by a contradiction that $\{v_n'\}$ does not converge
to $v_0'$. There exists a subsequence $\{v'_{n_k}\}$ and
$\epsilon>0$ such that $\|v'_{n_k}-v_0'\|\ge\epsilon$ for all $k$.
Passing to a subsequence we may assume $v'_{n_k}\to v'$ as above.
Then we also obtain a contradiction because
$$
\psi_2(z_0, u_0'+ v')\longleftarrow \psi_2(z_{n_k}, u'_{n_k}+
v'_{n_k})=v_{n_k}\to v_0=\psi_2(z_0, u_0'+ v_0')
$$
and hence $\psi(z_0, u_0'+ v')=\psi(z_0, u_0'+ v_0')$ by
(\ref{e:2.28}), which implies $v'=v_0'$. It contradicts the
assumption that $\|v'-v_0'\|\ge\epsilon$.
\end{proof}

\begin{lemma}\label{lem:2.14}
 For any $r\in (0, \infty)$  there exists a number $\delta_r>0$
such that
$$
\bar B_{H^0_\infty}(\infty, R_1)\times \bigl(\bar
B_{H^+_\infty}(\theta, \delta_r)\oplus \bar B_{H^-_\infty}(\theta,
\delta_r)\bigr)
$$
is contained in
\begin{eqnarray}\label{e:2.29}
&& U(R_1, r):= \psi^{-1}\left(B_{H^+_\infty}(\theta,
\sqrt{a_1}\epsilon_r)\oplus B_{H^-_\infty}(\theta,
\sqrt{a_1}\epsilon_r)\right).
\end{eqnarray}
By Lemma~\ref{lem:2.11} and Lemma~\ref{lem:2.12}(a) we have a
bijection
\begin{eqnarray*}
&&\bar B_{H^0_\infty}(\infty, R_1)\times\Bigl(
B_{H^+_\infty}(\theta, \sqrt{a_1}\epsilon_r)\oplus
B_{H^-_\infty}(\theta, \sqrt{a_1}\epsilon_r)\Bigr)\to U(R_1,r),\nonumber\\
&&\hspace{20mm} (z, u+ v)\mapsto (z, \psi(z, u+v)),
\end{eqnarray*}
whose inverse,  denoted by $\phi$, has a form
$$
\phi(z, u+ v)= (z, \phi_z(u+v)):=(z, u'+ v'),
$$
where $(u', v')\in B_{H^+_\infty}(\theta,
\sqrt{a_1}\epsilon_r)\times B_{H^-_\infty}(\theta,
\sqrt{a_1}\epsilon_r)$ is a unique point satisfying $u+ v=\psi(z,
u'+v')$. This bijection $\phi$ is actually a homeomorphism and
$$
F^\infty(\phi(z, u+ v))=\|u\|^2-\|v\|^2
$$
for any $(z, u+ v)\in U(R_1, r)$.
 Moreover, $\phi(z, u+ v)\in{\rm
Im}(\psi)\cap(\bar B_{H^0_\infty}(\infty, R_1)\times H^-_\infty)$ if
and only if $u=\theta$.
\end{lemma}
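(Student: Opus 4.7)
The plan is to proceed in four steps that closely parallel the proof of Lemma~\ref{lem:2.13} but localized to bounded neighborhoods: (a) produce $\delta_r$ so the containment holds, (b) derive the bijectivity from Lemmas~\ref{lem:2.11} and~\ref{lem:2.12}(i) and check the energy identity by unwinding the definitions of $\psi_1,\psi_2$, (c) establish continuity of $\phi$ by adapting the argument of Lemma~\ref{lem:2.13}, and (d) read off the $H^-_\infty$-characterization from the explicit formula for $\psi_1$.

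For (a) I would argue by contradiction. Assuming no such $\delta_r$ exists, pick sequences $\{z_n\}\subset\bar B_{H^0_\infty}(\infty, R_1)$ and $u_n\in H^+_\infty$, $v_n\in H^-_\infty$ with $\|u_n\|,\|v_n\|\to 0$ and $\psi(z_n,u_n+v_n)\notin B_{H^+_\infty}(\theta,\sqrt{a_1}\epsilon_r)\oplus B_{H^-_\infty}(\theta,\sqrt{a_1}\epsilon_r)$. Since Lemma~\ref{lem:2.8}(i) places $\varphi_{z_n}(u_n)$ in the finite-dimensional compact set $\bar B_{H^-_\infty}(\theta,r)$, after passing to a subsequence $\varphi_{z_n}(u_n)\to w$. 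If $\{z_n\}$ has a bounded subsequence, continuity of $\psi$ (cf.\ Lemma~\ref{lem:2.10}) together with $\psi(z_0,\theta)=\theta$ contradicts the choice. If $\|z_n\|\to\infty$, Lemma~\ref{lem:2.7}(i) gives $F^\infty(z_n,u_n+\varphi_{z_n}(u_n))\to\frac12(B(\infty)w,w)_H$, which combined with $F^\infty(z_n,u_n+\varphi_{z_n}(u_n))\ge F^\infty(z_n,u_n)\to 0$ and the estimate (\ref{e:1.1}) on $H^-_\infty$ forces $w=\theta$; then $\|\psi_1(z_n,u_n+v_n)\|^2\to 0$ and, applying Lemma~\ref{lem:2.7}(i) once more to $u_n+v_n\to\theta$, also $\|\psi_2(z_n,u_n+v_n)\|^2\to 0$, a contradiction.

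For (b)--(d), once $\delta_r$ is in hand the bijection $(z,u'+v')\mapsto(z,\psi(z,u'+v'))$ onto $U(R_1,r)$ follows directly from Lemmas~\ref{lem:2.11} and~\ref{lem:2.12}(i); writing its inverse as $\phi(z,u+v)=(z,u'+v')$, the identity $F^\infty(\phi(z,u+v))=\|u\|^2-\|v\|^2$ is immediate from $\|\psi_1(z,u'+v')\|^2=F^\infty(z,u'+\varphi_z(u'))$ and $\|\psi_2(z,u'+v')\|^2=F^\infty(z,u'+\varphi_z(u'))-F^\infty(z,u'+v')$. The equivalence ``$\phi(z,u+v)\in{\rm Im}(\psi)\cap(\bar B_{H^0_\infty}(\infty,R_1)\times H^-_\infty)\Longleftrightarrow u=\theta$'' reduces to ``$u'=\theta\Longleftrightarrow u=\theta$'', which holds since $u=\psi_1(z,u'+v')$ is a strictly positive multiple of $u'$ whenever $u'\neq\theta$ (by (\ref{e:2.20})). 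Continuity of $\phi$ is then proved as in Lemma~\ref{lem:2.13}: if $(z_n,u_n+v_n)\to(z_0,u_0+v_0)$ in $U(R_1,r)$, the corresponding $(u'_n,v'_n)$ lie in fixed bounded balls, the inequality $\|u_n\|^2\ge(a_1/4)\|u'_n\|^2$ from (\ref{e:2.19})--(\ref{e:2.20}) controls $\{u'_n\}$, boundedness of $\varphi_{z_n}(u'_n)$ via Lemma~\ref{lem:2.8}(i) controls $\{v'_n\}$, and any subsequential limit $(u',v')$ must satisfy $\psi(z_0,u'+v')=\psi(z_0,u'_0+v'_0)$, so by the injectivity in Lemma~\ref{lem:2.11} equals $(u'_0,v'_0)$.

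The main obstacle is step (a): producing $\delta_r$ uniformly in $z$ despite the base $\bar B_{H^0_\infty}(\infty,R_1)$ being only locally compact. Ordinary compactness arguments (available in Step~1 of the proof of Theorem~\ref{th:2.1} in Appendix~A of \cite{Lu2}) have to be replaced by the ``continuity at infinity'' provided by Lemma~\ref{lem:2.7}(i), coupled with the maximizing characterization of $\varphi_z$; once the uniform smallness of $\psi$ near the origin is secured, the remaining parts are essentially book-keeping.
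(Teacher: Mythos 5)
Your proof is correct and follows essentially the same route as the paper: both argue by contradiction, handle the bounded-$z_n$ subcase via the continuity of $\psi$ and $\psi(z,\theta)=\theta$, and for $\|z_n\|\to\infty$ invoke Lemma~\ref{lem:2.7}(i) to control $F^\infty$ along the sequence. The only cosmetic difference is that the paper splits into the two cases $\|\psi_1\|\ge\sqrt{a_1}\epsilon_r$ and $\|\psi_2\|\ge\sqrt{a_1}\epsilon_r$ and contradicts each directly, whereas you show $w=\theta$ and then that both $\|\psi_1\|^2$ and $\|\psi_2\|^2$ tend to zero, contradicting the assumption at one stroke.
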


\begin{proof}
We only prove the first claim. The proofs of others are the same as
those of Lemma~\ref{lem:2.13}.

Let $r\in (0,\infty)$ be given. Since $\psi$ is continuous and $\psi(z, \theta)=\theta$ for any
$z\in \bar B_{H^0_\infty}(\infty, R_1)$, it is easily seen that for
a given large $R>R_1$ we have
$$
(\bar B_{H^0_\infty}(\infty, R_1)\cap B_{H^0_\infty}(\theta, R))\times
\bigl(B_{H^+_\infty}(\theta, \delta)\oplus B_{H^-_\infty}(\theta,
\delta)\bigr)\subset U(R_1, r)
$$
for sufficiently small $\delta>0$. So if the conclusion in
Lemma~\ref{lem:2.14} does not hold for this $r$ then there exist sequences
$\{z_n\}\subset \bar B_{H^0_\infty}(\infty, R_1)$ and $\{u_n^+ +
u^-_n\}\subset H^\pm_\infty\setminus\{\theta\}$ such that
$\|z_n\|\to\infty$, $\|u_n^+ + u^-_n\|\to 0$ (hence $\|u^+_n\|\to 0$
and $\|u^-_n\|\to 0$) and
$$
 \psi(z_n, u^+_n+ u^-_n)\notin B_{H^+_\infty}(\theta, \sqrt{a_1}\epsilon_r)\oplus
B_{H^-_\infty}(\theta, \sqrt{a_1}\epsilon_r)\quad\hbox{for all}\;
n=1,2,\cdots.
$$
The last relation implies that
$$
\hbox{either}\quad \|\psi_1(z_n, u^+_n+ u^-_n)\|\ge
\sqrt{a_1}\epsilon_r\quad\hbox{or}\quad \|\psi_2(z_n, u^+_n+
u^-_n)\|\ge \sqrt{a_1}\epsilon_r
$$
for each $n=1,2,\cdots$. After passing to a subsequence two cases
happen:
\begin{enumerate}
\item[$\bullet$] $\|\psi_1(z_n,
u^+_n+ u^-_n)\|\ge \sqrt{a_1}\epsilon_r\quad\hbox{for all}\; n=1,2,\cdots$.

\item[$\bullet$] $\|\psi_2(z_n,
u^+_n+ u^-_n)\|\ge \sqrt{a_1}\epsilon_r\quad\hbox{for all}\; n=1,2,\cdots$.
\end{enumerate}

In the first case, by the definition of $\psi_1$ we have
$u^+_n\ne\theta$ and
$$
F^\infty(z_n, u^+_n+ \varphi_{z_n}(u^+_n))\ge
a_1\epsilon_r^2\quad\hbox{for all}\; n=1,2,\cdots.
$$
Since $\|u_n^+\|\to 0$, we may assume that $u_n^+\in B_{H^+_\infty}(\theta,\varepsilon_r)$ and hence
 $\varphi_{z_n}(u_n^+)\in B_{H^-_\infty}(\theta,r)$ for all $n\in\N$. After passing to a subsequence we may assume
$\varphi_{z_n}(u^+_n)\to v_0\in H^-_\infty$. Then
Lemma~\ref{lem:2.7}(a) leads to
$$
F^\infty(z_n, u^+_n+ \varphi_{z_n}(u^+_n))\to
\frac{1}{2}(B(\infty)v_0, v_0)_H\le 0
$$
and hence a contradiction.

In the second case we have $u^-_n\ne\varphi_{z_n}(u^+_n)$ and
$$
F^\infty(z_n, u^+_n+ \varphi_{z_n}(u^+_n))-F^\infty(z_n, u^+_n+
u^-_n)\ge a_1\epsilon_r^2\quad\hbox{for all}\; n=1,2,\cdots.
$$
As above we may assume $\varphi_{z_n}(u^+_n)\to v_0\in H^-_\infty$
and use Lemma~\ref{lem:2.7}(a) to obtain
$$
F^\infty(z_n, u^+_n+ \varphi_{z_n}(u^+_n))-F^\infty(z_n, u^+_n+
u^-_n)\to \frac{1}{2}(B(\infty)v_0, v_0)_H\le 0.
$$
This also gives a contradiction. Lemma~\ref{lem:2.14} is proved.
\end{proof}


 Note: If Lemmas~\ref{lem:2.4}--\ref{lem:2.6} only hold in a set $\bar
B_{H^0_\infty}(\infty, R')\oplus (\bar B_H(\theta, r')\cap
X^\pm_\infty)$, we require $z$ and $r$ in Lemma~\ref{lem:2.14} to
sit in $\bar B_{H^0_\infty}(\infty, R')$ and $(0, r')$,
respectively.

\vspace{2mm}

\noindent{\bf Completion of proof of Theorem~\ref{th:1.1}}: For the
homeomorphism in Lemma~\ref{lem:2.13},
\begin{eqnarray*}
&&\phi:\bar B_{H^0_\infty}(\infty, R_1)\times (H^+_\infty\oplus
H^-_\infty)\to \bar
B_{H^0_\infty}(\infty, R_1)\times (H^+_\infty\oplus H^-_\infty),\nonumber\\
&&\hspace{20mm} (z, u^+ + u^-)\mapsto (z, \phi_z(u^+ + u^-)),
\end{eqnarray*}
by (\ref{e:2.10}) we have
\begin{eqnarray*}
\mathcal{ L}(z+ h^\infty(z)+ \phi_z(u^+ + u^-))-\mathcal{
L}(z+
h^\infty(z))&=&F^\infty(\phi(z, u^++ u^-))\\
&=&\|u^+\|^2-\|u^-\|^2
\end{eqnarray*}
for any $(z, u^+, u^-)\in\bar B_{H^0_\infty}(\infty, R_1)\times
H^+_\infty\times H^-_\infty$. Define
\begin{eqnarray*}
&&\Phi:\bar B_{H^0_\infty}(\infty, R_1)\times (H^+_\infty\oplus
H^-_\infty)\to H,\nonumber\\
&&\hspace{10mm} (z, u^+ + u^-)\mapsto z+ h^\infty(z)+ \phi_z(u^+ +
u^-).
\end{eqnarray*}
Since $h^\infty$ takes values in $H^\pm_\infty$, it is easy to check that
$\Phi$ is a homeomorphism from $\bar B_{H^0_\infty}(\infty, R_1)\times (H^+_\infty\oplus
H^-_\infty)$ onto $\bar B_{H^0_\infty}(\infty, R_1)\times (H^+_\infty\oplus
H^-_\infty)$ (by Lemma~\ref{lem:2.13}), and that
$$
\mathcal{ L}(\Phi(z, u^+ + u^-)) =\|u^+\|^2-\|u^-\|^2+ \mathcal{L}(z+ h^\infty(z))
$$
for any $(z, u^+, u^-)\in\bar B_{H^0_\infty}(\infty, R)\times H^+_\infty\times
H^-_\infty$.
The other conclusions  in Theorems~\ref{th:1.1}   directly
follow from Lemmas~\ref{lem:2.2}, \ref{lem:2.3}, \ref{lem:2.7},
\ref{lem:2.8}(b), \ref{lem:2.10}-\ref{lem:2.12}(b)  and
Lemma~\ref{lem:2.13}. \hfill$\Box$\vspace{2mm}

\noindent{\bf Completion of proof of Theorem~\ref{th:1.3}}: For the
homeomorphism in Lemma~\ref{lem:2.14},
\begin{eqnarray*}
&&\phi: U(R_1,r)\to\bar B_{H^0_\infty}(\infty, R_1)\times\Bigl(
B_{H^+_\infty}(\theta, \sqrt{a_1}\epsilon_r)+
B_{H^-_\infty}(\theta, \sqrt{a_1}\epsilon_r)\Bigr),\nonumber\\
&&\hspace{10mm} (z, u+ v)\mapsto (z, \phi_z(u+v)),
\end{eqnarray*}
as above we may use (\ref{e:2.10}) to get
$$
\mathcal{ L}(z+ h^\infty(z)+ \phi_z(u^+ + u^-))-\mathcal{
L}(z+ h^\infty(z))= \|u^+\|^2-\|u^-\|^2
$$
for any $(z, u^+ + u^-)\in U(R_1, r)$.

By Lemma~\ref{lem:2.14} and Lemma~\ref{lem:2.12}(a) we have
\begin{eqnarray*}
\overline{C_{R_1, \delta_r}}&=&\bar B_{H^0_\infty}(\infty,
R_1)\times \bigl( \bar B_{H^+_\infty}(\theta, \delta_r)\oplus \bar
B_{H^-_\infty}(\theta,
\delta_r)\bigr)\\
&\subset& U(R_1, r)= \psi^{-1}\left(B_{H^+_\infty}(\theta,
\sqrt{a_1}\epsilon_r)\oplus B_{H^-_\infty}(\theta,
\sqrt{a_1}\epsilon_r)\right)\\
&\subset&\bar B_{H^0_\infty}(\infty, R_1)\times \bigl(
B_{H^+_\infty}(\theta, 2\epsilon_r)\oplus B_{H^-_\infty}(\theta,
r)\bigr)\subset \overline{C_{R_1, r}}
\end{eqnarray*}
(because we may assume $2\epsilon_r<r$). Define
\begin{eqnarray*}
\Phi:C_{R_1, \delta_r}\to H,\; (z, u^+ + u^-)\mapsto z+ h^\infty(z)+
\phi_z(u^+ + u^-),
\end{eqnarray*}
and $V(R,r):=\Phi\bigl(C_{R, \delta_r}\bigr)$ for every $R\ge R_1$.
Note that $h^\infty$ is a map from $\bar B_{H^0_\infty}(\infty, R_1)$ to $\bar B_{X^\pm_\infty}(\theta, \rho_A)$ by
Lemma~\ref{lem:2.2}. One easily prove that
$$
V(R_1,r)=\Phi\bigl(C_{R_1, \delta_r}\bigr)\subset \overline{C_{R_1,
r+\rho_A}}.
$$
By Lemma~\ref{lem:2.14}, (as in the proof of \cite[Lemma~2.18]{Lu2} or \cite[Lemma~3.6]{Lu3}) one may prove:\\
(i) $V(R_1,r)$ is an open set of $H$, \\
(ii) $\Phi$ is a homeomorphism from $C_{R_1, \delta_r}$ onto
$V(R_1,r)$,\\
(iii) for any $(z, u^+, u^-)\equiv z+ u^+ + u^-\in C_{R_1,
\delta_r}$,
$$
\mathcal{ L}(\Phi(z, u^+ + u^-)) =\|u^+\|^2-\|u^-\|^2+ \mathcal{
L}(z+ h^\infty(z)).
$$

The other conclusions in Theorem~\ref{th:1.3} follow from Lemmas~\ref{lem:2.2},
\ref{lem:2.3}, \ref{lem:2.7}, \ref{lem:2.8}(a),
\ref{lem:2.10}-\ref{lem:2.12}(a) and Lemma~\ref{lem:2.14}.
 \hfill$\Box$\vspace{2mm}

By the Note in Remark~\ref{rm:2.9} and the Notes under
Lemmas~\ref{lem:2.5},~\ref{lem:2.8},~\ref{lem:2.11},~\ref{lem:2.13},~\ref{lem:2.14}
one may obtain the conclusions in Remark~\ref{rm:1.4}. Similarly,
that of Remark~\ref{rm:1.2} can be obtained.

\begin{remark}\label{rm:2.15}
{\rm (a) Under the assumptions that
\begin{equation}\label{e:2.30}
\hbox{$\mathcal{ L}$ is $C^2$ and $D^2\mathcal{ L}(w)=B(\infty)+
o(1)$ as $\|w\|\to\infty$},
\end{equation}
by increasing $R_1$ we may assure that the map
$$
\bar B_{H^0_\infty}(\infty, R_2)\times B_{H^+_\infty}(\theta,
r_\mathcal{ L})\to H^-_\infty,\;(z, u)\mapsto\varphi_z(u)
$$
is $C^1$. In particular, if (\ref{e:1.5}) holds then $(z,
u)\mapsto\varphi_z(u)$ gives a $C^1$  map from $\bar
B_{H^0_\infty}(\infty, R_1)\times H^+_\infty$ to $H^-_\infty$.
 As a consequence,  the map $j$ in (\ref{e:2.18}) is $C^1$ on $\bar
B_{H^0_\infty}(\infty, R_1)\times B_{H^+_\infty}(\theta, r_\mathcal{
L})$.

In fact,  since $\mathcal{ L}$ is $C^2$, $h^\infty$ is $C^1$ by the
final claim of Lemma~\ref{lem:2.2}. Moreover,  by
Remark~\ref{rm:2.9} $\varphi_z(u)\in
 H^-_\infty$ is the unique maximum point of the function
 $$
H^-_\infty\to\R,\;v\mapsto F^\infty(z, u+ v)=\mathcal{ L}(z+
h^\infty(z)+ u+  v)-\mathcal{ L}(z+ h^\infty(z)).
$$
We derive $(\nabla\mathcal{ L}(z+ h^\infty(z)+ u+
\varphi_z(u)),v)_H=0\;\forall v\in H^-_\infty$, that is,
$$
P^-_\infty\nabla\mathcal{ L}(z+ h^\infty(z)+
u+\varphi_z(u))=\theta.
$$
Consider the map
$$
\Xi:\bar B_{H^0_\infty}(\infty, R_1)\times B_{H^+_\infty}(\theta,
r_\mathcal{ L})\times H^-_\infty\to H^-_\infty
$$
given by $\Xi(z, u, v)= P^-_\infty\nabla\mathcal{ L}(z+ h^\infty(z)+
u+ v)$. It is $C^1$ and
$$
D_v\Xi(z, u, \varphi_z(u))=P^-_\infty D^2\mathcal{ L}(z+
h^\infty(z)+ u+ \varphi_z(u))|_{H^-_\infty}:H^-_\infty\to
H^-_\infty.
$$
 Since $\|z+
h^\infty(z)+ u+ \varphi_z(u)\|^2=\|z\|^2+ \|h^\infty(z)+ u+
\varphi_z(u)\|^2\ge \|z\|^2$   and $D^2\mathcal{ L}(w)=B(\infty)+
o(1)$ as $\|w\|\to\infty$ we can increase $R_1$ so that for any
$(z,u)\in \bar B_{H^0_\infty}(\infty, R_1)\times
B_{H^+_\infty}(\theta, r_\mathcal{ L})$ the operator $D_v\Xi(z, u,
\varphi_z(u))$ has a bounded inverse. Hence the desired conclusion
follows from the implicit function theorem.

{(b)} Under the assumption (\ref{e:2.30}),  the
homeomorphism
\begin{eqnarray*}
&&\phi^{-1}:\bar B_{H^0_\infty}(\infty, R_2)\times\bigl(
B_{H^+_\infty}(\theta, \sqrt{a_1}\epsilon_r)+
B_{H^-_\infty}(\theta, \sqrt{a_1}\epsilon_r)\bigr)\to U(R_2,r),\nonumber\\
&&\hspace{10mm} (z, u+ v)\mapsto (z, \psi(z, u+v)),
\end{eqnarray*}
 is $C^1$ on $\bar B_{H^0_\infty}(\infty,
R_1)\times\bigl( B_{H^+_\infty}(\theta, \sqrt{a_1}\epsilon_r)+
B_{H^-_\infty}(\theta,
\sqrt{a_1}\epsilon_r)\bigr)\setminus\triangle_r$, where
$$
\triangle_r:=\Bigl\{(z, u+ \varphi_z(u))\,|\, (z,u)\in\bar
B_{H^0_\infty}(\infty, R_1)\times B_{H^+_\infty}(\theta, r_\mathcal{
L})\Bigr\}
$$
is a $C^1$-submanifold of $\bar B_{H^0_\infty}(\infty, R_1)\times
H^\pm_\infty$ of codimension $\mu_\infty$.

Indeed, it has been proved that the map $j$ in (\ref{e:2.18}) is
$C^1$ on $\bar B_{H^0_\infty}(\infty, R_1)\times
B_{H^+_\infty}(\theta, r_\mathcal{ L})$ above. Then the construction
of $\psi$ directly gives the desired conclusion.

Let $V(R_1, r)$ be as in the proof of Theorem~\ref{th:1.3}. Write a
point of $V(R_1,r)$ as $(z, u^+ + u^-)$, where $z\in
B_{H^0_\infty}(\infty, R_1)$ and $u^\ast\in H^\ast_\infty$,
$\ast=+,-$. It is easily checked that $\Phi^{-1}: V(R_1,r)\to
C_{R_1,\delta_r}$ is given by
$$
\Phi^{-1}(z, u^++ u^-)=\phi^{-1}(z, u^++ u^-- h^\infty(z))=\bigr(z,
\psi(z, u^++ u^-- h^\infty(z))\bigl).
$$
Note that $h^\infty$ is $C^1$ (because $\mathcal{ L}$ is $C^2$).
Hence $\Phi^{-1}$ is $C^1$ outside the submanifold of codimension
$\mu_\infty$,
$$
\widetilde\triangle_r:=\Bigl\{(z, u+ \varphi_z(u)+ h^\infty(z))\,|\,
(z,u)\in\bar B_{H^0_\infty}(\infty, R_1)\times
B_{H^+_\infty}(\theta, r_\mathcal{ L})\Bigr\}
$$
Furthermore, if (\ref{e:1.5})  holds,  the restriction of
 $\phi^{-1}$ to $\bar B_{H^0_\infty}(\infty, R_1)\times(
H^+_\infty\oplus H^-_\infty)$,
\begin{eqnarray*}
&&\bar B_{H^0_\infty}(\infty, R_1)\times( H^+_\infty\oplus
H^-_\infty)\to \bar B_{H^0_\infty}(\infty, R_1)\times(
H^+_\infty\oplus
H^-_\infty),\nonumber\\
&&\hspace{20mm} (z, u+ v)\mapsto (z, \psi(z, u+v)),
\end{eqnarray*}
is $C^1$ outside $\triangle_\infty:=\bigl\{(z, u+ \varphi_z(u))\,|\,
(z,u)\in\bar B_{H^0_\infty}(\infty, R_1)\times H^+_\infty\bigr\}$.
Since
$$\Phi^{-1}: B_{H^0_\infty}(\infty, R_1)\times
(H^+_\infty\oplus H^-_\infty)\to B_{H^0_\infty}(\infty, R_1)\times
(H^+_\infty\oplus H^-_\infty)
$$
 is given by
$$
\Phi^{-1}(z, u^++ u^-)=\phi^{-1}(z, u^++ u^-- h^\infty(z))=\bigr(z,
\psi(z, u^++ u^-- h^\infty(z))\bigl),
$$
 we see that $\Phi^{-1}$ is $C^1$ outside the submanifold of codimension
$\mu_\infty$,
$$
\widetilde\triangle_\infty:=\bigl\{(z, u+ \varphi_z(u)+
h^\infty(z))\,|\, (z,u)\in\bar B_{H^0_\infty}(\infty, R_1)\times
H^+_\infty\bigr\}.
$$
 }
\end{remark}

\subsection{The proof of Theorem~\ref{th:1.8}}

\subsubsection{Case $\mu_\infty=0$, i.e.,
$H^-_\infty=\{\theta\}$}

 By (\ref{e:1.1}) and (\ref{e:1.12}), for any $u\in\bar B_H(\infty, R)\cap X$ we have
\begin{eqnarray*}
D{\cal L}(u)u
&=&D{\cal L}(u)u- (B(\infty)u, u)_H+ (B(\infty)u, u)_H\nonumber\\
&=&(A( u)-B(\infty)u, u)_H+ (B(\infty)u, u)_H\nonumber\\
&\ge &2a_\infty\|u\|^2-\|A( u)-B(\infty)u\|\cdot\|u\|\nonumber\\
&\ge& (2a_\infty-\lambda)\|u\|^2\ge a_\infty\|u\|^2.
\end{eqnarray*}
Since ${\cal L}$ is continuously directional differentiable and $X$
is dense in $H$ we get
\begin{equation}\label{e:2.31}
D{\cal L}(u)u\ge a_\infty\|u\|^2\quad\hbox{for all}\; u\in \bar B_H(\infty,
R).
\end{equation}
Define $\psi:\bar B_H(\infty, R)\to H$ by $\psi(u)=\frac{\sqrt{{\cal
L}(u)}}{\|u\|}u$.

{\bf Claim}. {\it $\psi$ is injective}.

In fact, if there exist $u_1,u_2\in \bar B_H(\infty, R)$, $u_1\ne
u_2$, such that $\psi(u_1)=\psi(u_2)$. Then ${\cal L}(u_1)={\cal
L}(u_2)$ and so $u_1/\|u_1\|=u_2/\|u_2\|$. This implies
$\|u_1\|\ne\|u_2\|$. We may assume $\|u_2\|>\|u_1\|$. Then
$u_2=ku_1$, $k>1$. Obverse that $tu_1+ (1-t)u_2=(t+
(1-t)k)u_1\in\bar B_H(\infty, R)$ for all $t\in [0,1]$. We derive
\begin{eqnarray*}
{\cal L}(u_2)-{\cal L}(u_1)&=&{\cal L}(tu_2+
(1-t)u_1)|^{t=1}_{t=0}\\
&=&D{\cal L}(tu_2+ (1-t)u_1)(u_2-u_1)\\
&=&D{\cal L}([tk+ (1-t)]u_1)((k-1)u_1)\\
&=&\frac{k-1}{tk+(1-t)}D{\cal L}([tk+ (1-t)]u_1)((tk+ 1-t)u_1)\\
&\ge& a_\infty\frac{k-1}{tk+(1-t)}\|(tk+ 1-t)u_1\|^2\\
&=&a_\infty (k-1)(tk+1-t)\|u_1\|^2>0
\end{eqnarray*}
because of (\ref{e:2.31}). This contradiction shows that $\psi$ is
injective.

By (\ref{e:1.11}), for any $u\in\bar B_H(\infty,R)=\bar
B_{H^+_\infty}(\infty,R)$ we get
$$
(a_\infty+\lambda)\|u\|^2\ge {\cal L}(u)\ge
(a_\infty-\lambda)\|u\|^2
$$
and hence
$$
\sqrt{2a_\infty}\ge \frac{\sqrt{{\cal L}(u)}}{\|u\|}\ge
\sqrt{a_\infty-\lambda}\quad\forall u\in\bar B_H(\infty,R).
$$
For $\zeta\in\bar B_H(\infty, \sqrt{2a_\infty}R)$ let
$\bar\zeta=\frac{R}{\|\zeta\|}\zeta$. Take $t_2>1$ such that
$$
\sqrt{{\cal
L}(t_2\bar\zeta)}\ge\sqrt{a_\infty-\lambda}t_2\|\bar\zeta\|>\|\zeta\|\ge\sqrt{2a_\infty}R=
\sqrt{2a_\infty}\|\bar\zeta\|\ge\sqrt{ {\cal L}(\bar\zeta)}.
$$
Since $t\mapsto{\cal L}(t\bar\zeta)$ is continuous, the intermediate
value theorem yields a number $t_1\in [1, t_2]$ such that
$\|\zeta\|=\sqrt{{\cal L}(t_1\bar\zeta)}$ and hence
$$
\psi(t_1\bar\zeta)=\sqrt{{\cal
L}(t_1\bar\zeta)}\cdot\frac{t_1\bar\zeta}{\|t_1\bar\zeta\|}=\|\zeta\|\cdot
\frac{\zeta}{\|\zeta\|}=\zeta.
$$
This shows that
$\bar B_H(\infty,
\sqrt{2a_\infty}R)\subset\psi(\bar B_H(\infty, R))$.
Hence for each $u\in \bar B_H(\infty, \sqrt{2a_\infty}R)$ it follows
from the above claim that there exists a unique $\phi(u)\in \bar
B_H(\infty, R)$ such that  $\psi(\phi(u))=u$. Clearly, the map
$\phi:\bar B_H(\infty, \sqrt{2a_\infty}R)\to \bar B_H(\infty, R)$ is
injective. By the definition of $\psi$,
$$
u=\psi(\phi(u))=\frac{\sqrt{{\cal
L}(\phi(u))}}{\|\phi(u)\|}\phi(u)\quad\hbox{and so}\quad{\cal
L}(\phi(u))=\|u\|^2
$$
for any $u\in \bar B_H(\infty, \sqrt{2a_\infty}R)$. Since
$$
\sqrt{2a_\infty}\ge \frac{\sqrt{{\cal L}(\phi(u))}}{\|\phi(u)\|}\ge
\sqrt{a_\infty-\lambda},
$$
we deduce that
$$
\frac{\|u\|}{\sqrt{2a_\infty}}\le\|\phi(u)\|\le\frac{1}{\sqrt{a_\infty-\lambda}}\|u\|\quad\hbox{for all}\;
u\in\bar B_H(\infty, \sqrt{2a_\infty}R).
$$
 Let $\{\zeta_k\}^\infty_{k=1}\subset
\bar B_H(\infty, \sqrt{2a_\infty}R)$ converge to $\zeta\in \bar
B_H(\infty, \sqrt{2a_\infty}R)$. Set $\eta_k=\phi(\zeta_k)$ and
$\eta=\phi(\zeta)$. Then $\psi(\eta_k)=\zeta_k$ and
$\psi(\eta)=\zeta$. So $\|\zeta_k\|\to\|\zeta\|$ implies ${\cal
L}(\eta_k)\to{\cal L}(\eta)$. Note that
$$
\psi(\eta_k)=\frac{\sqrt{{\cal L}(\eta_k)}}{\|\eta_k\|}\eta_k=
\frac{\sqrt{{\cal L}(\eta_k)}}{\|\zeta_k\|}\zeta_k\quad\hbox{and}\quad
\psi(\eta)=\frac{\sqrt{{\cal L}(\eta)}}{\|\eta\|}\eta=\frac{\sqrt{{\cal L}(\eta)}}{\|\zeta\|}\zeta.
$$
We deduce that $\eta_k\to\eta$. That is, $\phi$ is continuous. Hence
$\phi$ is a homeomorphism onto its image and satisfies: ${\cal
L}(\phi(u))=\|u\|^2$ for all $u\in \bar B_H(\infty,
\sqrt{2a_\infty}R)$. Taking $\mathfrak{R}=\sqrt{2a_\infty}R$ gives
the desired conclusion.

\subsubsection{Case $\mu_\infty>0$}

 Note that Lemmas~\ref{lem:2.4},~\ref{lem:2.5} still
hold with $H^0_\infty=\{\theta\}$ under the conditions $({\rm
C1_\infty})$--$({\rm C2_\infty})$ and $({\rm D_\infty})$. Let us give
the corresponding result with Lemma~\ref{lem:2.6}.

\begin{lemma}\label{lem:2.16}
Let $R_1>0$ be as Lemmas~\ref{lem:2.4},~\ref{lem:2.5}, and
$R_2=\max\{R, R_1\}$. Then
\begin{enumerate}
\item[{\rm (a)}] $[D{\cal L}(u+ v_2)-D{\cal L}(u+ v_1)](v_2-v_1)\le -a_\infty\|v_2-v_1\|^2<0$
for any $u\in  H^+_\infty$ with $\|u\|\ge R_2$, and $v_1, v_2\in
H^-_\infty$ with $v_1\ne v_2$;

\item[{\rm (b)}] $D{\cal L}(u+ v)(u-v)\ge a_1\|u\|^2+ a_\infty\|v\|^2>0$ for any
$(u, v)\in  H^+_\infty\times H^-_\infty$ with $(u, v)\ne (\theta,
\theta)$;

\item[{\rm (c)}] $D{\cal L}(u)u\ge a_\infty\|u\|^2> p(\|u\|)$ for any
$u\in H^+_\infty$ with $\|u\|\ge R$, where $p(t)=\frac{a_\infty}{2}t^2$.
\end{enumerate}
\end{lemma}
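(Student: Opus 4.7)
My plan is to mimic the proof of Lemma~\ref{lem:2.6}, but since the anchor point $(z,\theta)$ where $F^\infty$ and $D_2F^\infty$ vanish is unavailable (there is no $h^\infty$, because $\nu_\infty=0$), I will replace the identities used there by the asymptotic bounds (\ref{e:1.11})--(\ref{e:1.12}) combined with the spectral gap (\ref{e:1.1}). As in the original proof, I first argue on the dense subspace $X^+_\infty\subset H^+_\infty$; on $H^-_\infty$ no density step is needed, since $H^-_\infty=X^-_\infty$ is finite-dimensional and contained in $X$ by $({\rm D1_\infty})$. The final extension to arbitrary $u\in H^+_\infty$ will come from density and continuity of $D\mathcal{L}$.

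I begin with part (iii), which is the easiest. For $u\in X^+_\infty$ with $\|u\|\ge R$, (\ref{e:1.12}) gives $\|A(u)-B(\infty)u\|\le\lambda\|u\|$ while (\ref{e:1.1}) gives $(B(\infty)u,u)_H\ge 2a_\infty\|u\|^2$. Writing $D\mathcal{L}(u)u=(A(u),u)_H$ as $(A(u)-B(\infty)u,u)_H+(B(\infty)u,u)_H$ and using $\lambda<a_\infty$ yields $\ge (2a_\infty-\lambda)\|u\|^2\ge a_\infty\|u\|^2>p(\|u\|)=(a_\infty/2)\|u\|^2$.

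Part (i) parallels Step~1 of the proof of Lemma~\ref{lem:2.6}. For $u\in X^+_\infty$ with $\|u\|\ge R_2$ and distinct $v_1,v_2\in H^-_\infty$, the segment $u+v_1+t(v_2-v_1)$, $t\in[0,1]$, lies in $X$, and by orthogonality of the splitting its $H$-norm is $\ge\|u\|\ge R_2\ge R_1$. Applying the mean-value theorem to $t\mapsto(A(u+v_1+t(v_2-v_1)),v_2-v_1)_H$, using $({\rm F3_\infty})$ to replace $DA$ by $B$, and invoking Lemma~\ref{lem:2.5}(iii) pointwise in $t$ produces the bound $-a_\infty\|v_2-v_1\|^2$. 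Density of $X^+_\infty$ in $H^+_\infty$ then gives the claim for arbitrary $u\in H^+_\infty$.

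For part (ii) the implicit condition is $u+v\in V_\infty$; I will argue under $\|u+v\|\ge R$ (otherwise $D\mathcal{L}(u+v)$ need not be defined) and, as before, first for $u\in X^+_\infty$ so that $u+v\in X\cap V_\infty$. Split $D\mathcal{L}(u+v)(u-v)=(A(u+v)-B(\infty)(u+v),u-v)_H+(B(\infty)(u+v),u-v)_H$. The second summand equals $(B(\infty)u,u)_H-(B(\infty)v,v)_H\ge 2a_\infty(\|u\|^2+\|v\|^2)$, using invariance of $H^\pm_\infty$ under $B(\infty)$, the orthogonality $u\perp v$, and (\ref{e:1.1}). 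The first summand has absolute value at most $\lambda\|u+v\|\,\|u-v\|=\lambda(\|u\|^2+\|v\|^2)$ by (\ref{e:1.12}) and the identity $\|u+v\|=\|u-v\|=\sqrt{\|u\|^2+\|v\|^2}$. Hence $D\mathcal{L}(u+v)(u-v)\ge(2a_\infty-\lambda)(\|u\|^2+\|v\|^2)\ge a_\infty(\|u\|^2+\|v\|^2)\ge a_1\|u\|^2+a_\infty\|v\|^2$, where the last step uses that $a_1$ in Lemma~\ref{lem:2.5} may be shrunk to $\min(a_1,a_\infty)$ without affecting that lemma's conclusions; density in $u$ then completes the proof. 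The only mildly delicate point is this constant bookkeeping and recognizing that part (ii) has to be read on the subdomain of $V_\infty$ where $D\mathcal{L}(u+v)$ actually exists; there is no genuine analytic obstacle.
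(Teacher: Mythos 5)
Your proposal is correct and follows essentially the same route as the paper: part~(iii) is the argument for (\ref{e:2.31}), part~(i) is the mean-value/density argument from Step~1 of Lemma~\ref{lem:2.6} with $h^\infty$ and $z$ removed and the anchor at $u^+$ instead, and part~(ii) is the decomposition of $D\mathcal{L}(u+v)(u-v)$ into a $B(\infty)$ part estimated by (\ref{e:1.1}) and a remainder estimated by (\ref{e:1.12}) together with the orthogonality identity $\|u+v\|=\|u-v\|$. You also correctly flag the two small points the paper glosses over, namely that the stated inequality in~(ii) needs $a_1$ replaced by $\min(a_1,a_\infty)$ (harmless, since shrinking $a_1$ preserves Lemma~\ref{lem:2.5}), and that the domain restriction on $(u,v)$ ensuring $u+v\in V_\infty$ is implicit.
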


\begin{proof}
{\rm (a)} For  any $u^+\in  X^+_\infty$ with $\|u^+\|\ge R_1$
and $u^-_1, u^-_2\in H^-_\infty$,  since the function
$$
X\ni u\mapsto (A(u^++u), u^-_2-u^-_1)_H.
$$
is continuously directional differentiable, by the condition (${\rm
F2_\infty}$) and the mean value theorem we have  a number $t\in (0,
1)$ such that
\begin{eqnarray*}
\hspace{-10mm}&&\hspace{-8mm}[D{\cal L}(u^+ + u^-_2)-D{\cal L}(u^++ u^-_1)](u^-_2-u^-_1)\\
=&&\hspace{-6mm}(A( u^++u^-_2), u^-_2-u^-_1)_H - (A( u^++u^-_1), u^-_2-u^-_1)_H\\
=&&\hspace{-6mm}\left(DA(u^++ u^-_1+
t(u^-_2-u^-_1))(u^-_2-u^-_1),
u^-_2-u^-_1\right)_H\\
=&&\hspace{-6mm}\left(B( u^++ u^-_1+
t(u^-_2-u^-_1))(u^-_2-u^-_1),
u^-_2-u^-_1\right)_H\\
\le &&\hspace{-6mm} -a_\infty\|u^-_2-u^-_1\|^2,
\end{eqnarray*}
where the third equality comes from $({\rm F3_\infty})$, and the
final inequality is due to the fact that $\|u^++ u^-_1+
t(u^-_2-u^-_1)\|\ge \|u^+\|\ge R_1$ and Lemma~\ref{lem:2.5}(c).
Hence the desired conclusion follows from the density of
$X^+_\infty$ in $H^+_\infty$.

 (b)  By (\ref{e:1.12}), $\|A(u)-B(\infty)u\|\le
\lambda\|u\|$ for any $u\in X$ with $\|u\|\ge R_2$.
 Because
$X^+_\infty$ is dense in $H^+_\infty$, as above it suffices to prove
the conclusion for $u^+\in X^+_\infty$ with $\|u^+\|\ge R_2$, and
$u^-\in H^-_\infty$. Note that $\|u^++ u^-\|\ge R_2$. We have
\begin{eqnarray*}
\hspace{-3mm}&&D{\cal L}(u^++u^-)(u^+-u^-)\\
\hspace{-3mm}&=&\hspace{-3mm}(A( u^++u^-)-B(\infty)(u^++u^-), u^+-u^-)_H+(B(\infty)(u^++u^-), u^+-u^-)_H\\
\hspace{-3mm}&\ge&\hspace{-3mm}(B(\infty)u^+, u^+)-(B(\infty)u^-, u^-)_H-
\|A( u^++u^-)-B(\infty)(u^++u^-)\|\cdot\|u^+-u^-\|\\
\hspace{-3mm}&\ge&\hspace{-3mm}2a_\infty(\|u^+\|^2+ \|u^-\|^2)-\lambda\|u^++u^-\|\cdot\|u^+-u^-\|\\
\hspace{-3mm}&=&\hspace{-3mm}2a_\infty(\|u^+\|^2+ \|u^-\|^2)-\lambda\sqrt{\|u^++u^-\|^2}\cdot\sqrt{\|u^+-u^-\|^2}\\
\hspace{-3mm}&=&\hspace{-3mm}2a_\infty(\|u^+\|^2+ \|u^-\|^2)-
\lambda\sqrt{\|u^+\|^2+\|u^-\|^2}\cdot\sqrt{\|u^+\|^2+\|u^-\|^2}\\
\hspace{-3mm}&\ge &\hspace{-3mm} a_\infty(\|u^+\|^2+ \|u^-\|^2).
\end{eqnarray*}

 (c) can be proved as that of (\ref{e:2.31}).
\end{proof}

By (\ref{e:1.11}), for any $u^++u^-\in\bar B_H(\infty, R_2)$ we have
\begin{eqnarray}\label{e:2.32}
{\cal L}(u^++u^-)&\le &\frac{1}{2}(B(\infty)(u^++u^-), u^++u^-)+\lambda\|u^++u^-\|^2\nonumber\\
&=&\frac{1}{2}(B(\infty)u^+, u^+)+\frac{1}{2}(B(\infty)u^-, u^-)+\lambda\|u^++u^-\|^2\nonumber\\
&\le&\|B(\infty)\|\cdot\|u^+\|^2- a_\infty\|u^-\|^2+ \lambda\|u^+\|^2+\lambda\|u^-\|^2\nonumber\\
&\le& 2\|B(\infty)\|\|u^+\|^2-(a_\infty-\lambda)\|u^-\|^2
\end{eqnarray}
because (\ref{e:1.1}) implies the inequality $a_\infty\le\|B(\infty)\|$.
In particular, for any $u^+\in H^+_\infty$ it holds that
${\cal L}(u^++u^-)\to -\infty$ as $u^-\in H^-_\infty$ and $\|u^-\|\to\infty$.
By Lemma~\ref{lem:2.16}(a), for each $u^+\in H^+_\infty$ with $\|u^+\|\ge R_2$ the function
$H^-_\infty\ni u^-\mapsto -{\cal L}(u^++u^-)$ is strictly convex. Hence
$H^-_\infty\ni u^-\mapsto {\cal L}(u^++u^-)$ attains the maximum at a unique point
$\varphi(u^+)\in H^-_\infty$. Define
$$
j:H^+_\infty\to\R,\;u^+\mapsto {\cal L}(u^++\varphi(u^+)).
$$
Then $j(u^+)\to +\infty$ as $u^+\in H^+_\infty$ and
$\|u^+\|\to\infty$ because
\begin{equation}\label{e:2.33}
{\cal L}(u^++\varphi(u^+))\ge {\cal
L}(u^+)=\frac{1}{2}(B(\infty)u^+, u^+)-\lambda\|u^+\|^2\ge
(a_\infty-\lambda)\|u^+\|^2.
\end{equation}
As in the proof of
Lemma~\ref{lem:2.10} we may prove that $j$ is continuous, and
continuously directional differentiable. For $(u,v)\in \bar
B_{H^+_\infty}(\infty, R_2)\times H^-_\infty$ define
\begin{eqnarray*}
&&\psi_1(u+ v)= \frac{\sqrt{{\cal L}(u+ \varphi(u))}}{\|u\|}u, \nonumber\\
&&\psi_2(u+ v)=\left\{\begin{array}{ll}
 \frac{\sqrt{{\cal L}(u+ \varphi(u))-{\cal L}(u+ v)}}{\|v-\varphi(u)\|}(v-\varphi(u))
  &\;\hbox{if}\;v\ne\varphi(u),\\
 \theta&\;\hbox{if}\;v=\varphi(u).
 \end{array}\right.\nonumber
\end{eqnarray*}
Then  the map
\begin{equation}\label{e:2.34}
\psi=\psi_1+ \psi_2: \bar
B_{H^+_\infty}(\infty, R_2)\oplus H^-_\infty\to H^\pm_\infty
\end{equation}
is continuous, and satisfies: ${\cal L}(u+ v)=\|\psi_1(u+
v)\|^2-\|\psi_2(u+ v)\|^2$.

For $u\in \bar B_{H^+_\infty}(\infty, R_2)$, since
$\|u+\varphi(u)\|^2=\|u\|^2+\|\varphi(u)\|^2$, by
(\ref{e:2.32})--(\ref{e:2.33}) we have
\begin{equation}\label{e:2.35}
2\|B(\infty)\|\|u\|^2\ge{\cal L}(u+\varphi(u))\ge {\cal L}(u)\ge
(a_\infty-\lambda)\|u\|^2.
\end{equation}
For $\zeta\in\bar B_{H^+_\infty}(\infty, \sqrt{2\|B(\infty)\|}R_2)$
let $\bar\zeta=\frac{R_2}{\|\zeta\|}\zeta$. By (\ref{e:2.35}) we may
take $t_2>1$ such that
\begin{eqnarray*}
\sqrt{{\cal L}(t_2\bar\zeta+
\varphi(t_2\bar\zeta))}&\ge&\sqrt{a_\infty-\lambda}\cdot
t_2\|\bar\zeta\|\\&>&\|\zeta\|
\ge\sqrt{2\|B(\infty)\|}R_2\\
&=&\sqrt{2\|B(\infty)\|}\cdot\|\bar\zeta\|\ge\sqrt{ {\cal
L}(\bar\zeta+ \varphi(\bar\zeta))}.
\end{eqnarray*}
Since $t\mapsto{\cal L}(t\bar\zeta+ \varphi(t\bar\zeta))$ is
continuous, as above we have a number $t_1\in [1, t_2]$ such that
$\|\zeta\|=\sqrt{{\cal L}(t_1\bar\zeta+\varphi(t_1\bar\zeta))}$ and
hence
$$
\psi_1(t_1\bar\zeta)=\sqrt{{\cal
L}(t_1\bar\zeta+\varphi(t_1\bar\zeta))}\cdot\frac{t_1\bar\zeta
}{\|t_1\bar\zeta\|} =\|\zeta\|\cdot\frac{\zeta}{\|\zeta\|}=\zeta.
$$
Let $\xi\in H^-_\infty$ and
$\xi\ne 0$. Note that  the function
$$
[0, \infty)\ni s\mapsto {\cal L}(t_1\bar\zeta+ \varphi(t_1\bar\zeta))-{\cal L}(t_1\bar\zeta+ \varphi(t_1\bar\zeta)+ sv)
$$
takes over all values in $[0, \infty)$ for any  $v\in
H^-_\infty\setminus\{\theta\}$. Take $v=\xi$. We have a number $s>0$
such that
$$\sqrt{{\cal L}(t_1\bar\zeta+ \varphi(t_1\bar\zeta))-{\cal
L}(t_1\bar\zeta+ \varphi(t_1\bar\zeta)+ s\xi)}=\|\xi\|.
$$
Set
$v:=\varphi(t_1\bar\zeta)+ s\xi$. Then
$$
\psi_2(t_1\bar\zeta+v)=\frac{\sqrt{{\cal L}(t_1\bar\zeta+
\varphi(t_1\bar\zeta))-{\cal L}(t_1\bar\zeta+
v)}}{\|v-\varphi(t_1\bar\zeta)\|}(v-\varphi(t_1\bar\zeta))=
\frac{\|\xi\|}{\|s\xi\|}s\xi=\xi.
$$
Hence $\psi(t_1\bar\zeta+v)=\zeta+\xi$. This shows that
$$
\bar B_{H^+_\infty}(\infty, \sqrt{2\|B(\infty)\|}R_2)\oplus
H^-_\infty\subset\psi(\bar B_H(\infty, R_2))=\psi_1(\bar B_H(\infty,
R_2))\oplus H^-_\infty.
$$

As in the proofs of Lemma~\ref{lem:2.11}, Lemma~\ref{lem:2.12}(b) and Lemma~\ref{lem:2.13}
we can show that
$\psi$ is a homeomorphism onto its image (by increasing $R_2>0$ if necessary).
 Let $\phi$ denote the restriction of
$\psi^{-1}$ to $\bar B_{H^+_\infty}(\infty, \sqrt{2\|B(\infty)\|}R_2)\oplus H^-_\infty$.
Set $\mathfrak{R}=\sqrt{2\|B(\infty)}R_2$. We get
$$
{\cal L}(\phi(u+v))=\|u\|^2-\|v\|^2\quad\forall (u,v)\in \bar B_{H^+_\infty}(\infty, \mathfrak{R})\times H^-_\infty.
$$


\section{Relations to previous splitting lemmas at
infinity}\label{sec:3}

\subsection{Relations to the splitting lemma at infinity in
\cite{BaLi}} \label{sec:3.1}

We begin with the following elementary functional analysis fact.

\begin{lemma}\label{lem:3.1}
Let $A_0$ be a bounded linear self-adjoint operator on a Hilbert
space $H$ and let $0$ be an isolated point of $\sigma(A_0)$. Let
$H^0=N(A_0)={\rm Ker}(A_0)$ and $H^+$ (resp. $H^-$) be the positive
(resp. negative) definite subspace of $A_0$. Suppose that both $H^0$ and $H^-$ are
 finite dimensional and
that there exists a number $\alpha>0$ such that $\ast (Au^\ast, u^\ast)\ge
2\alpha\|u^\ast\|^2$ for all $u^\ast\in H^\ast$, $\ast=+,-$.  Then
$A_0$ can be expressed as a sum $P+ Q$, where $Q\in L_s(H)$ is
compact and $P\in L_s(H)$ satisfies: $(Pu,u)\ge 2\alpha\|u\|^2$ for
all $u\in H$.
\end{lemma}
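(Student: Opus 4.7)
The plan is to use the spectral-theoretic orthogonal splitting $H=H^0\oplus H^+\oplus H^-$ to write $A_0$ in block-diagonal form and then simply replace the finite-rank blocks by constants. Since $A_0$ is bounded self-adjoint and $0$ is isolated in $\sigma(A_0)$, the spectral projections onto the positive and negative parts of $\sigma(A_0)\setminus\{0\}$ are well defined, so $H^+$ and $H^-$ are indeed $A_0$-invariant and orthogonal to $H^0=\mathrm{Ker}(A_0)$. Let $P^0,P^+,P^-$ denote the orthogonal projections onto $H^0,H^+,H^-$.

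Next I would define
$$
P:=A_0P^+ + 2\alpha(P^0+P^-)\quad\text{and}\quad Q:=A_0-P.
$$
A short check using the invariance of $H^+$ under $A_0$ (so that $(A_0P^+u,v)=(A_0u^+,v^+)=(u,A_0P^+v)$) shows $A_0P^+$ is self-adjoint; hence $P$ and $Q$ are both self-adjoint. For the positivity bound, for any $u=u^0+u^++u^-$ the orthogonality of the decomposition gives
$$
(Pu,u)=(A_0u^+,u^+)+2\alpha\|u^0\|^2+2\alpha\|u^-\|^2
\ge 2\alpha(\|u^+\|^2+\|u^0\|^2+\|u^-\|^2)=2\alpha\|u\|^2,
$$
where the first summand was estimated by the hypothesis on $H^+$.

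For compactness of $Q$, observe that $A_0P^0=0$ (since $H^0=\mathrm{Ker}(A_0)$), so
$$
Q=A_0-A_0P^+-2\alpha P^0-2\alpha P^-=-2\alpha P^0+(A_0|_{H^-}-2\alpha I)P^-.
$$
Both terms have range inside the finite-dimensional space $H^0\oplus H^-$, so $Q$ is a finite-rank, hence compact, self-adjoint operator. This yields $A_0=P+Q$ with the required properties, completing the argument. I do not anticipate any serious obstacle here; the only point that needs a line of justification is the $A_0$-invariance of $H^\pm$, which follows immediately from the spectral theorem once $0$ is isolated in $\sigma(A_0)$.
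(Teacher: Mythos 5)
Your proof is correct and takes essentially the same approach as the paper: decompose $H=H^0\oplus H^+\oplus H^-$ via the spectral theorem, define $P$ block-diagonally so that it is bounded below by $2\alpha$ on each summand, and observe that $Q=A_0-P$ has range in the finite-dimensional space $H^0\oplus H^-$, hence is finite rank and compact. The only (inessential) difference is that on $H^-$ you take $P=2\alpha I$, whereas the paper takes $P=-A_0|_{H^-}$; both choices satisfy $(Pu^-,u^-)\ge 2\alpha\|u^-\|^2$ and lead to a finite-rank $Q$.
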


\begin{proof}
Since $A_0$ is self-adjoint and $0$ is an isolated point of
$\sigma(A_0)$, by Proposition 4.5 of \cite{Con} the range $R(A_0)$
is closed, and hence
$N(A_0)^\bot=N(A_0^\ast)^\bot=\overline{R(A_0)}=R(A_0)$. It follows
that $R(A)=H^+\oplus H^-$ and $H=H^0\oplus R(A)=H^0\oplus H^-\oplus
H^+$. Let $P^0:H\to H^0\oplus H^-$ be the orthogonal projection,
which is an operator of finite rank and hence compact. Define
operators $P, Q\in L_s(H)$ by
\begin{eqnarray*}
\begin{array}{clcc}
Pu=2\alpha u\;&\hbox{if}\;u\in H^0,& Pu=\ast
A_0u\;&\hbox{if}\;u\in H^\ast,\ast=+,-, \\
Qu=A_0u-Pu\;&\hbox{if}\;u\in H^0\oplus H^-,
&Qu=\theta\;&\hspace{-17mm}\hbox{if}\;u\in H^+.
\end{array}
\end{eqnarray*}
Then $A_0=P+ Q$,   $Q$ is of finite rank and hence compact,
and $P$ satisfies
\begin{eqnarray*}
(Pu,u)_H&=&(Pu^0,u^0)_H+ (Pu^-,u^-)_H+ (Pu^+, u^+)_H\\
&\ge& 2\alpha\|u^0\|^2+ 2\alpha\|u^-\|^2+
2\alpha\|u^+\|^2=2\alpha\|u\|^2
\end{eqnarray*}
for any $u=u^0+ u^-+ u^+\in H^0\oplus H^-\oplus H^+=H$.
\end{proof}

 Recall the following basic assumption in \cite[p. 425]{BaLi}:
\begin{enumerate}
\item[$({\rm A_\infty})$] $f(x)=\frac{1}{2}(A_0x, x)_H+ g(x)$ where $A_0:H\to H$
 is a self-adjoint linear operator such that $0$ is isolated in the spectrum of $A_0$.
The map $g\in C^1(H, \R)$ is of class $C^2$ in a neighborhood of
infinity and  satisfies $g''(x)\to 0$ as
$\|x\|\to\infty$. Moreover, $g$ and $g'$ map bounded sets to bounded
sets.
\end{enumerate}

({Note}: It was claimed below $({\rm A_\infty})$ in
\cite{BaLi} that $({\rm A_\infty})$ implies: $g(x)=o(\|x\|^2)$ and
$g'(x)=o(\|x\|)$ as $\|x\|\to\infty$, which are used in the proof of Lemma~4.2 of
\cite{BaLi}. The assumption $({\rm A_\infty})$ in \cite[p.226]{HiLiWa}
also required  $g'(x)\to 0$ as
$\|x\|\to\infty$.)

\begin{claim}\label{cl:3.2}
Under the assumption $({\rm A_\infty})$,
suppose that $A_0$ has the finite dimensional kernel and negative definite subspace.
Then the conditions of Corollary~\ref{cor:1.6} are satisfied.
\end{claim}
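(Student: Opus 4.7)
The plan is to match the abstract setup of Corollary~\ref{cor:1.6} to the data of $({\rm A}_\infty)$ by setting $B(x) := A_0 + g''(x)$ on the neighborhood of infinity where $g$ is $C^2$, so that $B(\infty) = A_0$, and $A(x) := \nabla\mathcal{L}(x) = A_0 x + g'(x)$. Then I would verify hypotheses (i)--(iv) of Corollary~\ref{cor:1.6} one by one.

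For (i), I need to upgrade the pointwise decay $g''(x) \to 0$ to the integrated bound $g(x) = o(\|x\|^2)$. Given $\varepsilon > 0$, pick $R$ large enough that $g \in C^2$ on $\{\|y\| \ge R\}$ and $\|g''(y)\|_{L(H)} < \varepsilon$ there. For $\|x\| \ge R$ take $x_0 := Rx/\|x\|$; the segment from $x_0$ to $x$ stays inside $\{\|y\| \ge R\}$, so Taylor's formula yields
\begin{equation*}
g(x) = g(x_0) + g'(x_0)(x - x_0) + \int_0^1 (1-s)\, g''(x_0 + s(x - x_0))(x - x_0, x - x_0)\, ds.
\end{equation*}
Bounding this via the finite quantities $C_R := \sup_{\|y\|=R}|g(y)|$ and $M_R := \sup_{\|y\|=R}\|g'(y)\|$ (which are finite since $g$ and $g'$ map bounded sets to bounded sets) gives $|g(x)| \le C_R + M_R\|x\| + \tfrac{\varepsilon}{2}\|x\|^2$, hence $\limsup_{\|x\|\to\infty}|g(x)|/\|x\|^2 \le \varepsilon/2$, and $\varepsilon$ is arbitrary.

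For (ii), since $0$ is isolated in $\sigma(A_0)$ there is $\alpha > 0$ with $\pm(A_0 u^\pm, u^\pm)_H \ge 2\alpha\|u^\pm\|^2$ on $H^\pm$ (take $2\alpha$ less than the distance from $0$ to the rest of $\sigma(A_0)$ and use the spectral theorem), and the finite-dimensionality of $\mathrm{Ker}(A_0)$ and $H^-$ lets me invoke Lemma~\ref{lem:3.1} to obtain $A_0 = P(\infty) + Q(\infty)$ with $P(\infty)$ positive definite and $Q(\infty)$ compact. For (iii), $B(x) - Q(\infty) = P(\infty) + g''(x)$, so $([B(x_n) - Q(\infty)]u, u)_H \ge (2\alpha - \|g''(x_n)\|_{L(H)})\|u\|^2 \ge \alpha\|u\|^2$ once $\|x_n\|$ is large. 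For (iv) I would verify the ``moreover'' alternative, which avoids controlling $M(A)$: since $B(z + y) - B(\infty) = g''(z + y)$ and orthogonality $z \perp y$ gives $\|z + y\| \ge \|z\|$, the convergence $g''(w) \to 0$ as $\|w\| \to \infty$ forces $\|g''(z+y)\|_{L(H)} \to 0$ uniformly in $y \in H^\pm_\infty$ as $\|z\| \to \infty$, so any prescribed bound $1/(\kappa C_1^\infty)$ is enforced by enlarging $R_1$.

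The only slightly nontrivial step is (i); the rest are direct consequences of the decay $g'' \to 0$ combined with the spectral structure of $A_0$. The main obstacle is therefore bookkeeping rather than anything conceptual, with the Taylor argument for (i) being the only substantive calculation.
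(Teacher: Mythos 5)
Your proof is correct and, for conditions (ii)--(iv), matches the paper's own argument: set $B(\infty)=A_0$, invoke Lemma~\ref{lem:3.1} for the decomposition, bound $([B(x)-Q(\infty)]u,u)_H\ge(2\alpha-\|g''(x)\|)\|u\|^2$ for (iii), and use $g''\to 0$ together with $\|z+y\|\ge\|z\|$ to enlarge $R_1$ for (iv). The one substantive difference is that you prove condition (i), $g(x)=o(\|x\|^2)$, via the Taylor expansion along the radial segment, whereas the paper simply invokes the corresponding observation from \cite{BaLi} recorded in the Note above the Claim; your version is therefore more self-contained. One point worth flagging: your Taylor bound needs $C_R=\sup_{\|y\|=R}|g(y)|$ and $M_R=\sup_{\|y\|=R}\|g'(y)\|$ to be finite, which on an infinite-dimensional sphere requires the part of $({\rm A}_\infty)$ saying $g$ and $g'$ map bounded sets to bounded sets. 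Using it is legitimate (it is part of $({\rm A}_\infty)$), but it sits slightly at odds with the paper's subsequent remark that this hypothesis is ``not used''; that remark really only applies to the proof of Corollary~\ref{cor:1.6} itself, not to the derivation of its hypothesis (i) from $({\rm A}_\infty)$.
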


\begin{proof}
Since $0\in \sigma(A_0)$ is isolated, there exists $\alpha>0$ such
that $\ast (Au^\ast, u^\ast)\ge 2\alpha\|u^\ast\|^2$ for all
$u^\ast\in H^\ast$, $\ast=+,-$. By Lemma~\ref{lem:3.1} we may write
$A_0=P(\infty)+ Q(\infty)$, where $Q(\infty)\in L_s(H)$ is compact
and $P(\infty)\in L_s(H)$ satisfies: $(P(\infty)u,u)\ge
2\alpha\|u\|^2$ for all $u\in H$. We take $B(\infty):=A_0$. Choose
$R>0$ so large that $\|g''(x)\|\le \alpha$ as $\|x\|\ge R$. Since
$B(x)=A_0+ g''(x)=P(\infty)+ Q(\infty)+ g''(x)$,  we derive that
$$
([B(x)-Q(\infty)]u, u)_H=(P(\infty)u,u)_H+ (g''(x)u,u)_H\ge
\alpha\|u\|^2
$$
for all $u\in H$
and $x\in \bar B_H(\infty,R)$.
Namely, the condition (c) of Corollary~\ref{cor:1.6} is satisfied.
Clearly, the condition (d) therein also holds since
$B(x)-B(\infty)=B(x)-A_0=g''(x)\to 0$ as $\|x\|\to\infty$.
\end{proof}

That $A_0$ has a finite dimensional negative definite subspace
corresponds to the finiteness of the Morse index at infinity, which
is needed for computations of critical groups. The finiteness of
$\dim{\rm Ker}(A_0)$ is naturally satisfied in the most actual
applications. In this sense Claim~\ref{cl:3.2} shows that
Corollary~\ref{cor:1.6} is a generalization of the splitting lemma
at infinity on the page 431 of \cite{BaLi}. Our homeomorphism is not
necessarily $C^1$-smooth, but we do not use the condition that
 $g$ and $g'$ map bounded sets to
bounded sets yet.

Consider the following weaker assumption than $({\rm A_\infty})$,
which was given in Remark~2.3 of \cite[p. 226]{HiLiWa}:
\begin{enumerate}
\item[$({\rm A'_\infty})$] $f(x)=\frac{1}{2}(A_0x, x)_H+ g(x)$ where $A_0:H\to H$
 is a self-adjoint linear operator such that $0$ is isolated in the spectrum of $A_0$.
The map $g\in C^1(H, \R)$ is of class $C^2$ in a neighborhood of
infinity and  satisfies: $\exists\;\alpha>0$ such that
\begin{eqnarray*}
&&\ast (A_0u, u)_H\ge 2\alpha\|u\|^2\quad\forall
u\in H^\ast,\;\ast=+, -\quad\hbox{and}\\
&&\|g''(u^0+ u^\pm)\|<\alpha,\quad g'(u^0+ u^\pm)\to 0\;\hbox{as}\;
\|u^0\|\to\infty
\end{eqnarray*}
where $H^0={\rm Ker}(A_0)$ and $H^+$ (resp. $H^-$) is the positive
(resp. negative) definite subspace of $A_0$.  Moreover, $g$ and $g'$
map bounded sets to bounded sets.
\end{enumerate}

Under this condition $({\rm A'_\infty})$, Proposition~3.3 in
\cite{HiLiWa} stated  the following slightly different version of
the splitting lemma of \cite{BaLi}.

\begin{theorem}[\hbox{\cite[Prop.3.3]{HiLiWa}}]\label{th:3.3}
For any $M>0$ there exist $R_0>0$, $\delta>0$, a
$C^1$-diffeomorphism
$$
\psi:C_{R_0,M}=\{u=u^0+ u^\pm\,|\, \|u^0\|>R_0,\;\|u^\pm\|<M\}\to
C_{R_0, 2M}
$$
and a $C^1$-map $w: B_{H^0}(\infty, R_0)\to W^\delta=\{u^\pm\in
H^\pm\,|\, \|u^\pm\|\le\delta\}$ such that
$$
f(\psi(u))=\frac{1}{2}(A_0w, w)_H+ h(u^0)\quad\forall u\in
C_{R_0,M},
$$
where $h(u^0)=f(u^0+ w(u^0))$, $\delta$ can be chosen as small as we
please if we choose $R_0$ large, and $w=w(u^0)$ is the unique
solution of $P^\pm f'(u^0+ w)=0$. Furthermore, $(h'(u^0),
\xi)=(g'(u^0+ w(u^0)), \xi)$ for any  $\xi\in H^0$.
\end{theorem}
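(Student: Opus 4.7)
The plan is to deduce Theorem~\ref{th:3.3} as a direct consequence of Corollary~\ref{cor:1.7} applied with $X=H$, by checking that the assumption $({\rm A}'_\infty)$ guarantees hypotheses (ii)--(iv) of Corollary~\ref{cor:1.6}. Throughout I set $B(\infty):=A_0$ and, for $x$ in the neighborhood of infinity on which $g$ is $C^2$, define $B(x):=A_0+g''(x)$. Then $\nabla{\cal L}(x)=A_0x+g'(x)=:A(x)$ and $D\nabla{\cal L}(x)=B(x)$, so the basic framework of Corollary~\ref{cor:1.6} is in place with $H^0_\infty={\rm Ker}(A_0)$ and $H^\pm_\infty=H^+\oplus H^-$.

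First I would verify (ii) and (iii) by reproducing the argument of Claim~\ref{cl:3.2}. Since $0$ is isolated in $\sigma(A_0)$ and the positive/negative definite subspaces of $A_0$ are uniformly separated from $0$ by $({\rm A}'_\infty)$, Lemma~\ref{lem:3.1} yields a decomposition $A_0=P(\infty)+Q(\infty)$ with $Q(\infty)\in L_s(H)$ compact and $(P(\infty)u,u)_H\ge 2\alpha\|u\|^2$. Using $\|g''(x)\|<\alpha$ in the relevant neighborhood, one obtains
\[
\bigl([B(x)-Q(\infty)]u,u\bigr)_H=(P(\infty)u,u)_H+(g''(x)u,u)_H\ge \alpha\|u\|^2
\]
for all $u\in H$, which is exactly (iii) of Corollary~\ref{cor:1.6} with $C_0=\alpha$.

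Next I would verify (iv). From $\ast(A_0 u^\ast,u^\ast)_H\ge 2\alpha\|u^\ast\|^2$ on $H^\ast$ ($\ast=\pm$) one gets $C_1^\infty=\|(A_0|_{H^\pm_\infty})^{-1}\|_{L(H^\pm_\infty)}\le 1/(2\alpha)$, hence $1/(\kappa C_1^\infty)\ge 2\alpha/\kappa$. Fix any $\kappa\in(1,2)$ (e.g.\ $\kappa=3/2$). Then for $z\in H^0_\infty$ with $\|z\|$ large and any $y\in H^\pm_\infty$,
\[
\bigl\|(I-P^0_\infty)[B(z+y)-B(\infty)]|_{H^\pm_\infty}\bigr\|_{L(H^\pm_\infty)}
\le \|g''(z+y)\|<\alpha\le \frac{1}{\kappa C_1^\infty},
\]
so the second (stronger) option in (iv) is satisfied with $\rho_A=\infty$, and therefore the hypothesis $M(A)<\infty$ is not needed. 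In any case the hypothesis $g'(u^0+u^\pm)\to 0$ as $\|u^0\|\to\infty$ in $({\rm A}'_\infty)$ together with $A_0 z=0$ for $z\in H^0_\infty$ yields $(I-P^0_\infty)A(z)=(I-P^0_\infty)g'(z)\to 0$, i.e.\ $M(A)=0$.

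With all hypotheses of Corollary~\ref{cor:1.7} in force, I would apply it to obtain $R\ge R_1$, $\delta_r>0$, the unique $h^\infty:B_{H^0_\infty}(\infty,R)\to H^\pm_\infty$ solving $(I-P^0_\infty)A(z+h^\infty(z))=0$, an open $V(R,r)$, and a homeomorphism $\Phi:C_{R,\delta_r}\to V(R,r)$ with
$\mathcal{L}\circ\Phi(z+u^++u^-)=\|u^+\|^2-\|u^-\|^2+\mathcal{L}(z+h^\infty(z))$. Identifying $h^\infty\leftrightarrow w$ and $\Phi\leftrightarrow\psi$ recovers Theorem~\ref{th:3.3}; the $C^1$-regularity of $w$ and the $C^2$-regularity of the reduced function $h(u^0)=\mathcal{L}(u^0+w(u^0))$, together with the formula $(h'(u^0),\xi)=(g'(u^0+w(u^0)),\xi)$ for $\xi\in H^0$, follow from the conclusions (iii) and (\ref{e:1.9}) of Theorem~\ref{th:1.1} inherited through Corollary~\ref{cor:1.7}. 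The main delicate point is matching the quantitative constants: one must verify that the bound $\|g''\|<\alpha$ is strict enough to beat $1/(\kappa C_1^\infty)$ for some admissible $\kappa>1$, which is precisely the purpose of the estimate $C_1^\infty\le 1/(2\alpha)$ obtained from the spectral gap of $A_0$.
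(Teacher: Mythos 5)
Your derivation follows essentially the same route as the paper's Claim~\ref{cl:3.4}: set $B(\infty)=A_0$, $B(x)=A_0+g''(x)$, invoke Lemma~\ref{lem:3.1} for the decomposition $A_0=P(\infty)+Q(\infty)$, use the spectral gap $\ast(A_0u^\ast,u^\ast)_H\ge 2\alpha\|u^\ast\|^2$ to bound $C_1^\infty\le 1/(2\alpha)$, and then observe that $\|g''\|<\alpha\le 1/(2C_1^\infty)$ makes condition (iv) hold with $\rho_A=\infty$. Your computation of $M(A)=0$ from $g'(u^0)\to 0$ and $A_0z=0$ for $z\in H^0_\infty$ likewise matches the paper. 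One thing worth knowing: in the paper Theorem~\ref{th:3.3} is \emph{quoted} from \cite{HiLiWa} rather than proved; the surrounding argument (Claim~\ref{cl:3.4} and the note below it) is meant to show that Corollary~\ref{cor:1.7} generalizes it, so your proposal is aimed at exactly that passage.

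There is one genuine gap, however. You write that identifying $\Phi\leftrightarrow\psi$ ``recovers Theorem~\ref{th:3.3}," but Corollary~\ref{cor:1.7} produces only a \emph{homeomorphism} $\Phi$, not a $C^1$-diffeomorphism $\psi$. Even when $\mathcal{L}$ is $C^2$, the paper's construction does not yield a $C^1$-diffeomorphism: by Remark~\ref{rm:2.15}, $\Phi^{-1}$ is $C^1$ only away from a codimension-$\mu_\infty$ submanifold. The paper is explicit about this limitation (``unless our homeomorphism is not necessarily $C^1$-smooth," and the statement just after Claim~\ref{cl:3.4}: ``Without the condition that $g'(u^0)\to 0$ as $\|u^0\|\to\infty$, we may also derive Theorem~\ref{th:3.3} except claims that $\psi$ is $C^1$ and $w$ takes values in $W^\delta$"). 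The conclusions about $w$ being $C^1$, the reduced function $h$ being $C^2$, and the formula $(h'(u^0),\xi)_H=(g'(u^0+w(u^0)),\xi)_H$ do indeed come through from Theorem~\ref{th:1.1}(ii)--(iii) and (\ref{e:1.9}) as you say; but the $C^1$-diffeomorphism assertion in the statement of Theorem~\ref{th:3.3} is not recovered by this route, and your proposal should say so rather than claim full recovery.
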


{ Note}: It was stated in \cite[p.235]{HiLiWa} that
one may refer to Lemma 4.3 and its proof in \cite{BaLi} for
the first part of this theorem.
Carefully checking the proof of its generalization in
\cite[Th.2.1]{ChenLi2} we believe that the diffeomorphism $\psi$ in this
theorem and Theorem~\ref{th:3.5} below should actually be from
$C_{R_0,M}$ onto an open subset $V$ of $C_{R_0, 2M}$ (possibly
satisfying $V\supseteq C_{R_0,r}$ for some $r>0$). In fact, the
equation (2.19) in \cite{ChenLi2} is solved on ball $B_{E_1}(0,2M)$
for each fixed $y\in Y$ with $\|y\|>R$. The condition that
$\|\chi^t_{x,y}\|_E\le\frac{1}{2}\|x\|_E$ implies that for each
$x\in B_{E_1}(0,M)$ the initial value problem
$\frac{d}{dt}\eta(t)=\chi^t_{\eta(t),y},\,\eta(t)=x$ has a unique
$C^1$-solution $\eta:[0,1]\to\eta(t,x,y)\in B_{E_1}(0,2M)$, which
depends $C^1$-smoothly  on the parameter $(t, y)$ and initial value
$x$. So $B_{E_1}(0,M)\ni x\mapsto\eta(1,x,y)\in B_{E_1}(0,2M)$ is a
$C^1$-diffeomorphism from $B_{E_1}(0,M)$ onto some open neighborhood
of $0$ in $B_{E_1}(0, 2M)$.  The desired $\psi$  given by
$\psi(x,y)=\eta(1,x,y)+ w(y)+ y$, is a $C^1$-diffeomorphism from
$C_{R_0,M}$ onto an open subset $V$ of $C_{R_0, 2M}$ containing
$\{y\in Y\,|\, \|y\|>R\}$. Since $\|w(y)\|\to 0$ as $\|y\|\to\infty$
it is possible to prove that for sufficiently large $R>0$ the image
of $\psi$ contains some $C_{R_0,r}$ for small $r>0$.

\begin{claim}\label{cl:3.4}
Under the assumption $({\rm A'_\infty})$, suppose that $A_0$ has the
finite dimensional kernel and negative definite subspace. Then the
conditions of Corollary~\ref{cor:1.7} are satisfied.
\end{claim}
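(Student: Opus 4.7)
The plan is to mirror the verification strategy of Claim~\ref{cl:3.2}, but taking care to use the weaker hypothesis $({\rm A'_\infty})$ (in which $g''$ is only bounded by $\alpha$ rather than vanishing at infinity, and $g'$ is only known to vanish along the kernel direction). First I would set $B(\infty):=A_0$ and, for $x$ in a neighborhood of infinity, $B(x):=A_0+g''(x)$. Since $A_0$ has isolated spectral point $0$ and finite-dimensional kernel $H^0_\infty$ and negative definite subspace $H^-_\infty$, Lemma~\ref{lem:3.1} supplies a decomposition $A_0=P(\infty)+Q(\infty)$ with $Q(\infty)$ compact and $(P(\infty)u,u)_H\ge 2\alpha\|u\|^2$. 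Condition~(ii) of Corollary~\ref{cor:1.6} is then immediate, and condition~(iii) follows because
$$
([B(x)-Q(\infty)]u,u)_H=(P(\infty)u,u)_H+(g''(x)u,u)_H\ge 2\alpha\|u\|^2-\alpha\|u\|^2=\alpha\|u\|^2
$$
for $\|x\|$ large, using $\|g''(x)\|<\alpha$ in the neighborhood of infinity.

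For condition~(iv), the key observation is that on $H^0_\infty$ we have $A_0z=\theta$, so $A(z)=\nabla\mathcal{L}(z)=g'(z)$; applying the hypothesis that $g'(u^0+u^\pm)\to\theta$ as $\|u^0\|\to\infty$ with the choice $u^\pm=\theta$ gives $M(A)=0$. Since $\|z+y\|^2=\|z\|^2+\|y\|^2\ge\|z\|^2$ for any $z\in H^0_\infty$ and $y\in H^\pm_\infty$, once $\|z\|\ge R_1$ is large enough, the point $z+y$ lies in the neighborhood of infinity for \emph{every} $y\in H^\pm_\infty$, so the ``Moreover'' clause of (iv) in Corollary~\ref{cor:1.6} is the appropriate form. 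The inequality to verify reduces to
$$
\|(I-P^0_\infty)g''(z+y)|_{H^\pm_\infty}\|_{L(H^\pm_\infty)}\le\|g''(z+y)\|<\alpha\le\frac{1}{\kappa C_1^\infty},
$$
valid for a suitable $\kappa\in(1,2)$ because $A_0$ preserves $H^\pm$ and $|(A_0u,u)_H|\ge 2\alpha\|u\|^2$ on $H^\pm_\infty$ yields $\|A_0u\|\ge 2\alpha\|u\|$ there, so $C_1^\infty\le 1/(2\alpha)$ and $\frac{1}{\kappa C_1^\infty}\ge 2\alpha/\kappa>\alpha$ whenever $\kappa<2$. Taking, say, $\kappa=3/2$ closes the estimate.

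The main obstacle is precisely this last step: the bound $\|g''\|<\alpha$ and the lower bound $|(A_0u,u)_H|\ge 2\alpha\|u\|^2$ must be coordinated through the choice of $\kappa$, and it is only the fact that $({\rm A'_\infty})$ uses the \emph{same} constant $\alpha$ to control both quantities that leaves the margin $\kappa<2$ available. Once (ii)--(iv) of Corollary~\ref{cor:1.6} are verified in this manner, Corollary~\ref{cor:1.7} applies directly and yields the desired splitting conclusion.
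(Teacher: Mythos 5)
Your proposal follows the same route as the paper's proof: decomposition via Lemma~\ref{lem:3.1}, verification of condition~(iii) from the uniform bound $\|g''\|<\alpha$, verification of~(iv) by combining $C_1^\infty\le 1/(2\alpha)$ with $\|g''\|<\alpha$, and $M(A)=0$ from $g'(u^0)\to\theta$. However, one intermediate justification is incorrect as written: the claim that ``$|(A_0u,u)_H|\ge 2\alpha\|u\|^2$ on $H^\pm_\infty$'' is false whenever $H^-_\infty\ne\{\theta\}$, because the positive and negative contributions can cancel (take $u=u^++u^-$ with $(A_0u^+,u^+)_H=2\alpha$, $(A_0u^-,u^-)_H=-2\alpha$, $\|u^+\|=\|u^-\|=1$; then $(A_0u,u)_H=0$ while $\|u\|^2=2$). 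The conclusion $\|A_0u\|\ge 2\alpha\|u\|$ on $H^\pm_\infty$, equivalently $C_1^\infty\le 1/(2\alpha)$, is still correct, but to get it you must apply the coercivity estimate \emph{separately} on each invariant component and then use orthogonality: $\|A_0 u^\ast\|\ge 2\alpha\|u^\ast\|$ for $u^\ast\in H^\ast_\infty$, $\ast=+,-$, whence
\begin{equation*}
\|A_0 u\|^2=\|A_0 u^+\|^2+\|A_0 u^-\|^2\ge (2\alpha)^2\bigl(\|u^+\|^2+\|u^-\|^2\bigr)=(2\alpha)^2\|u\|^2.
\end{equation*}
This componentwise computation is exactly what the paper's proof does. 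With this correction the argument closes, and your choice $\kappa=3/2$ (versus the paper's implicit $\kappa=2$, which also works since $\|g''\|<\alpha$ is strict) is a harmless variation.
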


\begin{proof}
Following the notations in the proof of Claim~\ref{cl:3.2}, since
$\|g''(u^0+ u^\pm)\|<\alpha$ for all $u^0+ u^\pm$, as in the proof
of Claim~\ref{cl:3.2} we may prove that the condition (c) is
satisfied. It remains to  prove that the condition (d) holds in the
present case. Now $B(\infty)=A_0$ and $H^\ast_\infty=H^\ast$,
$\ast=0, -, +$. Since $\ast (B(\infty)u, u)_H\ge
2\alpha\|u\|^2\;\forall u\in H^\ast_\infty$, $\ast=+,-$, the
restrictions $B(\infty)|_{H^\ast_\infty}:H^\ast_\infty\to
H^\ast_\infty$ are invertible and
$\|(B(\infty)|_{H^\ast_\infty})^{-1}\|\le\frac{1}{2\alpha}$. Write
$H^\pm_\infty=H^+_\infty\oplus H^-_\infty$ as before. Then
$B(\infty)|_{H^\pm_\infty}:H^\pm_\infty\to H^\pm_\infty$ is
invertible and
$$
(B(\infty)|_{H^\pm_\infty})^{-1}(u^++u^-)=
(B(\infty)|_{H^+_\infty})^{-1}u^+
+(B(\infty)|_{H^-_\infty})^{-1}u^-
$$
for any $u^++u^-\in H^+_\infty+ H^-_\infty$. This leads to
\begin{eqnarray*}
\|(B(\infty)|_{H^\pm_\infty})^{-1}(u^++u^-)\|^2&=&
\|(B(\infty)|_{H^+_\infty})^{-1}u^+\|^2
+\|(B(\infty)|_{H^-_\infty})^{-1}u^-\|^2\\
&\le&(\frac{1}{2\alpha})^2(\|u^+\|^2+\|u^-\|^2)
\end{eqnarray*}
and hence
$C_1^\infty=\|(B(\infty)|_{H^\pm_\infty})^{-1}\|_{L(H^\pm_\infty)}\le
\frac{1}{2\alpha}$. Since $B(x)-B(\infty)=g''(x)$,
 $$
\|B(z+y)|_{H^\pm_\infty}-B(\infty)|_{H^\pm_\infty}\|_{L(H^\pm_\infty)}=
\|g''(z+y)|_{H^\pm_\infty}\|_{L(H^\pm_\infty)}<\alpha\le\frac{1}{2C_1^\infty}
$$
for all $y\in H^\pm_\infty$ and $z\in H^0_\infty$. Hence the
condition (d) holds with $\rho_A=\infty$. But $M(A)=0$ because
$g'(u^0+ u^\pm)\to 0$ as $\|u^0\|\to\infty$ (we here only need
$g'(u^0)\to 0$ as $\|u^0\|\to\infty$). We can also take $\rho_A$ to
be any given $\delta>0$ so that the $C^1$-map $w$ in
Theorem~\ref{th:3.3} is assured to take values in $W^\delta=\{u^\pm\in H^\pm\,|\,
\|u^\pm\|\le\delta\}$. Without the condition that $g'(u^0)\to 0$ as
$\|u^0\|\to\infty$, we may also derive Theorem~\ref{th:3.3} except claims
that $\psi$ is $C^1$ and $w$ takes values in $W^\delta$.
\end{proof}

Hence Claim~\ref{cl:3.2} (and Note below Theorem~\ref{th:3.3}) shows
that Corollary~\ref{cor:1.7} is a generalization of
Theorem~\ref{th:3.3}. We only need that
$\sup\|g''(z+y)|_{H^\pm_\infty}\|_{L(H^\pm_\infty)}\le\frac{1}{\kappa
C_1^\infty}$ for some $1<\kappa\le 2$. This is better than the
condition that
$\sup\|g''(z+y)|_{H^\pm_\infty}\|_{L(H^\pm_\infty)}\le\alpha\le\frac{1}{2C_1^\infty}$.
Moreover, we  do not use the condition that
 $g$ and $g'$ map bounded sets to
bounded sets.

\subsection{Relations to the generalization version in
\cite{ChenLi1}} For convenience of comparison with ours we briefly
review it in our notations. Let $L:H\to H$ be a bounded self-adjoint
linear operator. Let $H^0_\infty={\rm Ker}(L)$ and
$H^\pm_\infty=(H^0_\infty)^\bot$. It was assumed in \cite{ChenLi1} that $L$
satisfies the condition
\begin{enumerate}
\item[(L)] The operator
$L|_{H^\pm_\infty}:H^\pm_\infty\to H^\pm_\infty$ is invertible and
its inverse operator $(L|_{H^\pm_\infty})^{-1}:H^\pm_\infty\to
H^\pm_\infty$ is bounded.
\end{enumerate}
By Proposition 4.5 of \cite{Con}   this condition is  equivalent to
our (${\rm C1_\infty}$), that is, {\it $0$ is at most an isolated
point of the spectrum $\sigma(L)$.} (See Proposition~B.3 in
\cite{Lu2},\cite{Lu3}.)

 Denote by $P^0_\infty$ the orthogonal projection onto
$H^0_\infty$. (Then $I-P^0_\infty$ is such a projection onto
$H^\pm_\infty$.) For a $C^2$ functional $\mathcal{
F}:H=H^0_\infty\oplus H^\pm_\infty\to\R$,  let $D^2\mathcal{ F}(x)$
be the Hessian operator of it at a critical point $x$. For $z+u\in
H$, where $z\in H^0_\infty$ and $u\in H^\pm_\infty$, let
$\nabla_2\mathcal{ F}(z,u)\in H^\pm_\infty$ be defined by
$(\nabla_2\mathcal{ F}(z,u), v)_H=d_u\mathcal{ F}(z,u)(v)$. Then
\begin{equation}\label{e:3.1}
\nabla_2\mathcal{ F}(z,u)= (I-P^0_\infty)\nabla\mathcal{ F}(z+u).
\end{equation}
There exists a unique operator $\mathcal{ J}(z,u)\in
L_s(H^\pm_\infty)$ such that
$$
d^2_u\mathcal{ F}(z,u)(v_1,v_2)=(\mathcal{ J}(z,u)v_1,
v_2)_H\;\forall v_1, v_2\in H^\pm_\infty.
$$
It is easily seen that
\begin{equation}\label{e:3.2}
\mathcal{ J}(z,u)=(I-P^0_\infty)D(\nabla \mathcal{
F})(z+u)|_{H^\pm_\infty}
\end{equation}
 because
\begin{eqnarray*}
d^2_u\mathcal{ F}(z,u)(v_1,v_2)&=&\frac{\partial^2}{\partial
s_1\partial
s_2}\mathcal{ F}(z,u+ s_1v_1+ s_2v_2)\Bigl|_{s_1=0, s_2=0}\\
&=&\frac{d}{ds_2}(\nabla_2\mathcal{ F}(z, u+ s_2v_2), v_1)_H\Bigl|_{s_2=0}\\
&=&\frac{d}{ds_2}\bigl((I-P^0_\infty)\nabla\mathcal{ F}(z+u+ s_2v_2), v_1\bigr)_H\Bigl|_{s_2=0}\\
&=&\left((I-P^0_\infty)D(\nabla\mathcal{ F})(z+u)(v_2),
v_1\right)_H.
\end{eqnarray*}

\begin{theorem}[{\rm (\cite[Theorem~2.1]{ChenLi1})}]\label{th:3.5}
For the above functional $\mathcal{ F}$ and operator $L$, suppose
that there exists some $M>0$ such that as $\|z\|\to\infty$ one has
\begin{enumerate}
\item[$({\rm L_1})$] $\|(I-P^0_\infty)\nabla\mathcal{ F}(z+u)-Lu\|\to 0$ uniformly for
$\|u\|\le M$,
\item[$({\rm L_2})$] $\|(I-P^0_\infty)D(\nabla \mathcal{F})(z+u)|_{H^\pm_\infty}
 -L|_{H^\pm_\infty}\|_{L(H^\pm_\infty)}\to
0$ uniformly for $\|u\|\le M$.
\end{enumerate}
Then there exist $R>0$, a $C^1$-homeomorphism
$$
\psi:{C_{R, M}}=\{z+ u\,|\, z\in H^0_\infty,\, u\in H^\pm_\infty,\,
\|z\|\ge R,\,\|u\|\le M\}\to {C_{R, 2M}}
$$
and a $C^1$-map $h^\infty: B_{H^0_\infty}(\infty, R)\to
B_{H^\pm_\infty}(\theta, M)$ such that
\begin{enumerate}
\item[{\rm (a)}] $\mathcal{ F}(\psi(z+ u))=\frac{1}{2}(Lu,u)_H+
\mathcal{ F}(z+ h^\infty(z))$ for all $z+ u\in {C_{R, M}}$,
\item[{\rm (b)}] $(I-P^0_\infty)\mathcal{ F}(z+ h^\infty(z))=0$ for all $z\in  B_{H^0_\infty}(\infty, R)$,
\item[{\rm (c)}] $\|h^\infty(z)\|\to 0$ as $\|z\|\to\infty$.
\end{enumerate}
\end{theorem}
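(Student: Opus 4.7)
The plan is to deduce Theorem~\ref{th:3.5} as a special case of Corollary~\ref{cor:1.7} (equivalently, Theorem~\ref{th:1.3} together with Remark~\ref{rm:1.4}) applied with $X=H$, $B(\infty)=L$, $A=\nabla\mathcal{F}$ and $B(x)=D(\nabla\mathcal{F})(x)$. Under this identification, condition $({\rm S})$ is trivial and $({\rm F1_\infty})$--$({\rm F3_\infty})$ follow from $\mathcal{F}\in C^2$, while $({\rm C1_\infty})$--$({\rm C2_\infty})$ follow from the implicit hypothesis (L) via Proposition~B.3 of \cite{Lu2}. Invoking the standard background assumption that $H^0_\infty$ and $H^-_\infty$ are finite dimensional, Lemma~\ref{lem:3.1} supplies a decomposition $L=P(\infty)+Q(\infty)$ with $P(\infty)$ positive definite and $Q(\infty)$ compact.

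First I would verify the cylindrical version $({\rm D}''_\infty)$ of Remark~\ref{rm:1.4} on $W_\infty=\bar B_{H^0_\infty}(\infty, R_0)\oplus(\bar B_H(\theta, M)\cap H^\pm_\infty)$: by $({\rm L}_2)$, $R_0$ may be chosen so large that $\|D(\nabla\mathcal{F})(x)|_{H^\pm_\infty}-L|_{H^\pm_\infty}\|<\alpha/2$ on $W_\infty$, where $\alpha>0$ is the constant from (\ref{e:1.1}) for $L$; this gives $({\rm D}''_{\infty2})$ and $({\rm D}''_{\infty4})$ with $c_\infty=\alpha/2$, while $({\rm D}''_{\infty3})$ follows from $({\rm L}_2)$ combined with the self-adjointness of $B(x)$ and the finite-dimensionality of $H^-_\infty\oplus H^0_\infty$. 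Next I would verify $({\rm E'_\infty})$ by the mean value theorem in inequality form, as in the proof of Corollary~\ref{cor:1.6}: for $x_1,x_2\in \bar B_{H^\pm_\infty}(\theta, M)$ and $z\in H^0_\infty$ with $\|z\|$ large,
\begin{eqnarray*}
&&\|(I-P^0_\infty)[A(z+x_1)-A(z+x_2)-L(x_1-x_2)]\|\\
&&\quad\le \sup_{t\in[0,1]}\bigl\|(I-P^0_\infty)[D(\nabla\mathcal{F})(z+tx_1+(1-t)x_2)-L]|_{H^\pm_\infty}\bigr\|_{L(H^\pm_\infty)}\cdot\|x_1-x_2\|,
\end{eqnarray*}
and by $({\rm L}_2)$ this sup can be made smaller than $1/(\kappa C_1^\infty)$ for some $\kappa>1$ by enlarging $R_1$; taking $u=0$ in $({\rm L}_1)$ yields $M(A)=0$, so $\rho_A=M$ is admissible. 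The same estimate, with $z$ replaced by an interpolation between $z_1$ and $z_2$, in fact delivers the stronger $({\rm E_\infty})$, so conclusions~(iv)-(v) of Theorem~\ref{th:1.1} also apply.

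Applying Theorem~\ref{th:1.3} (via Remark~\ref{rm:1.4}) then produces, for any $r>0$, positive numbers $R,\delta_r$, a unique continuous $h^\infty:B_{H^0_\infty}(\infty, R)\to \bar B_{H^\pm_\infty}(\theta, M)$ solving $(I-P^0_\infty)\nabla\mathcal{F}(z+h^\infty(z))=0$ (Theorem~\ref{th:3.5}(ii)), a homeomorphism $\Phi:C_{R,\delta_r}\to V(R,r)$ with the splitting identity, and (using $\mathcal{F}\in C^2$) $h^\infty\in C^1$ together with $\mathcal{L}^\infty\in C^2$ by conclusions~(ii)-(iii) of Theorem~\ref{th:1.1}. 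Since $M(A)=0$, conclusion~(i) of Theorem~\ref{th:1.1} gives Theorem~\ref{th:3.5}(iii). The main technical obstacle is matching the output domain: Chen--Li state a $C^1$-diffeomorphism from $C_{R,M}$ onto $C_{R,2M}$, whereas Theorem~\ref{th:1.3} gives a homeomorphism from $C_{R,\delta_r}$ onto some open $V(R,r)\subset \overline{C_{R,r+\rho_A}}$ with $\delta_r$ a priori strictly smaller than $M$. Reconciling these requires (a) choosing $r$ appropriately (taking $r=M$ so that $\rho_A=M$ forces $V(R,r)\subset\overline{C_{R,2M}}$), (b) verifying that $\delta_r$ may be upgraded to $M$ by a radial reparametrization along $H^\pm_\infty$ using the monotonicity encoded in Lemma~\ref{lem:2.6}, and (c) upgrading the homeomorphism $\Phi$ to a $C^1$-diffeomorphism by appealing to Remark~\ref{rm:2.15} (whose hypothesis $D^2\mathcal{F}(w)=L+o(1)$ on the cylinder follows directly from $({\rm L}_2)$), since $\Phi^{-1}$ is $C^1$ off the submanifold $\widetilde\Delta_r$ of codimension $\mu_\infty$.
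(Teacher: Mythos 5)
Theorem~\ref{th:3.5} is \emph{not} proved in this paper: as the attribution $[\hbox{Th.2.1}]\{\rm ChenLi1\}$ indicates, it is quoted verbatim from Chen--Li. What the paper actually establishes, through Lemmas~\ref{lem:3.6}--\ref{lem:3.7} and the discussion that follows them, is that a \emph{variant} of Theorem~\ref{th:3.5} follows from Theorem~\ref{th:1.3} (with $X=H$, via Remark~\ref{rm:1.4}) \emph{under the strengthened hypothesis $({\rm L'_2})$ in place of $({\rm L_2})$}, and with the conclusion weakened from $C^1$-diffeomorphism to homeomorphism. The paper is explicit on both points: it introduces $({\rm L'_2})$ precisely because $({\rm L_2})$ is not enough, and its summary sentence reads ``Theorem~\ref{th:3.5} follows from Theorem~\ref{th:1.3} \dots\ unless our homeomorphism is not necessarily $C^1$-smooth.''

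Your plan to deduce the theorem from $({\rm L_1})$ and $({\rm L_2})$ alone therefore has a genuine gap at the verification of $({\rm D}''_{\infty3})$. That condition requires $|(B(x)u,v)_H-(B(\infty)u,v)_H|\le\omega_\infty(x)\|u\|\,\|v\|$ for all $u\in H$ and $v\in H^-_\infty\oplus H^0_\infty$. By self-adjointness, for $v\in H^0_\infty$ this reduces to bounding $(u,[B(x)-L]v)_H$, i.e.\ $\|[B(x)-L]v\|$. Condition $({\rm L_2})$ controls only $(I-P^0_\infty)[B(x)-L]|_{H^\pm_\infty}$: it says nothing about the action of $B(x)$ on $H^0_\infty$, and in particular nothing about the $P^0_\infty$-component of $B(x)v$ for $v\in H^0_\infty$. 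Your invocation of ``finite-dimensionality of $H^-_\infty\oplus H^0_\infty$'' does not produce the missing bound --- equivalence of norms on that subspace does not constrain $B(x)$ there. This is exactly the gap the paper closes with $({\rm L'_2})$, whose extra content relative to $({\rm L_2})$ is isolated in equation~(\ref{e:3.6}). For the same reason your interpolation argument for $({\rm E_\infty})$ fails: the increment $z_1+x_1-z_2-x_2$ has an $H^0_\infty$-component, and bounding $(I-P^0_\infty)DA(\cdot)$ applied to that component is precisely what $({\rm L_2})$ alone cannot do; compare Lemma~\ref{lem:3.6}(ii), which requires $({\rm L'_2})$. Finally, your step (c) --- upgrading $\Phi$ to a $C^1$-diffeomorphism via Remark~\ref{rm:2.15} --- does not work: that remark yields $C^1$-smoothness of $\Phi^{-1}$ only \emph{off} the codimension-$\mu_\infty$ submanifold $\widetilde\triangle_r$, which is weaker than what Theorem~\ref{th:3.5} asserts, and is in fact the reason the paper stops at a homeomorphism and does not claim to recover the $C^1$-diffeomorphism of Chen--Li.
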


  The following condition is slightly stronger than $({\rm L_2})$.
\begin{enumerate}
\item[$({\rm L'_2})$] $\|(I-P^0_\infty)D(\nabla \mathcal{ F})(z+u) -L\|_{L(H, H^\pm_\infty)}\to
0\;$ uniformly for $\;\|u\|\le M$.
\end{enumerate}

Take $X=H$, $A(z+u)=\nabla\mathcal{ F}(z+u)$ and $B(\infty)=L$. By
$({\rm L_1})$ we get
$$
M(A)=\lim_{R\to\infty}\sup\{\|(I-P^0_\infty)A(z)\|:\;z\in H^0,
\|z\|\ge R\}=0.
$$

\begin{lemma}\label{lem:3.6}
\begin{enumerate}
\item[{\rm (a)}]  $({\rm L_2})$ implies that $({\rm SE'_\infty})$ holds for
$\rho_A=M>0=C_1^\infty M(A)$.
\item[{\rm (b)}] $({\rm L_1})$ and $({\rm L'_2})$ imply that $({\rm SE_\infty})$ holds for
$\rho_A=M>0=C_1^\infty M(A)$.
\end{enumerate}
\end{lemma}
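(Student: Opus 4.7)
My strategy is to rewrite both target conditions in terms of the single auxiliary map $h(w) := (I - P^0_\infty)[A(w) - L w]$. Since $L$ annihilates $H^0_\infty$ and preserves $H^\pm_\infty$, for $z \in H^0_\infty$ and $x \in H^\pm_\infty$ one has $h(z+x) = (I-P^0_\infty)A(z+x) - L x$, so the numerator of $({\rm SE'_\infty})$ is $\|h(z+x_1) - h(z+x_2)\|$ and the numerator of $({\rm SE_\infty})$ is $\|h(z_1+x_1) - h(z_2+x_2)\|$. Specializing $({\rm L_1})$ to $u = 0$ gives $\|(I - P^0_\infty)A(z)\| \to 0$ as $\|z\| \to \infty$, hence $M(A) = 0$, so the requirement $\rho_A = M > C_1^\infty M(A) = 0$ is automatic and only the quotient-to-zero statements remain. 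Since $\mathcal{F}$ is $C^2$, $h$ is $C^1$ with Fr\'echet derivative $Dh(w) = (I - P^0_\infty)[D(\nabla\mathcal{F})(w) - L] \in L(H, H^\pm_\infty)$.

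For (i) the element $z \in H^0_\infty$ stays fixed. The restriction of $Dh(z+x)$ to $H^\pm_\infty$ is exactly $(I - P^0_\infty) D(\nabla \mathcal{F})(z+x)|_{H^\pm_\infty} - L|_{H^\pm_\infty}$, whose $L(H^\pm_\infty)$-norm is the quantity that $({\rm L_2})$ forces to zero uniformly in $\|x\| \le M$ as $\|z\| \to \infty$. The segment $x(t) = (1-t)x_2 + tx_1$ lies in the convex set $B_H(\theta, M) \cap H^\pm_\infty$, so the Banach-valued mean value inequality yields
\[
\|h(z+x_1) - h(z+x_2)\| \le \Bigl( \sup_{t \in [0,1]} \bigl\| (I-P^0_\infty) D(\nabla\mathcal{F})(z+x(t))|_{H^\pm_\infty} - L|_{H^\pm_\infty} \bigr\|_{L(H^\pm_\infty)} \Bigr) \|x_1 - x_2\|,
\]
and the parenthesized supremum tends to $0$ uniformly in $(x_1, x_2)$ as $\|z\| \to \infty$, which is $({\rm SE'_\infty})$ with $\rho_A = M$.

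For (ii) the same one-segment argument breaks down, because the straight path $w(t) = (1-t)(z_2+x_2) + t(z_1+x_1) = z(t) + x(t)$ has $z(t) = (1-t)z_2 + tz_1$ whose norm need not stay large (consider $z_1 \approx -z_2$), and on the small-$\|z\|$ region neither $({\rm L_1})$ nor $({\rm L'_2})$ supplies any bound. I therefore use both hypotheses through the nonincreasing tails
\[
\eta(R) := \sup\bigl\{ \|h(z+x)\| : z \in H^0_\infty,\; \|z\| \ge R,\; x \in H^\pm_\infty,\; \|x\| \le M \bigr\},
\]
\[
\zeta(R) := \sup\bigl\{ \|(I - P^0_\infty)[D(\nabla \mathcal{F})(z+x) - L]\|_{L(H, H^\pm_\infty)} : \|z\| \ge R,\; \|x\| \le M \bigr\},
\]
which tend to $0$ as $R \to \infty$ by $({\rm L_1})$ (with $u = x$) and $({\rm L'_2})$ respectively. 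Given $\varepsilon > 0$, pick $R_0 \ge 4M$ with $\zeta(R_0) < \varepsilon/2$ and $\eta(R_0)/R_0 < \varepsilon/8$ and require $\|z_1\|, \|z_2\| \ge 2R_0$; the estimate then splits on $\|z_1 - z_2\|$. When $\|z_1 - z_2\| \le R_0$, one has $\|z(t)\| \ge 2R_0 - R_0 = R_0$ for every $t \in [0,1]$ and $\|x(t)\| \le M$, so the mean value inequality for $h$ along $w(t)$ gives $\|h(z_1+x_1) - h(z_2+x_2)\| \le \zeta(R_0)\|z_1 + x_1 - z_2 - x_2\| < (\varepsilon/2)\|z_1 + x_1 - z_2 - x_2\|$. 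When $\|z_1 - z_2\| > R_0$, the denominator is already controlled from below, $\|z_1 + x_1 - z_2 - x_2\| \ge R_0 - 2M \ge R_0/2$, while the triangle inequality together with the definition of $\eta$ bound the numerator by $2\eta(R_0) < \varepsilon R_0/4$, giving ratio at most $\varepsilon/2$. Both cases yield ratio below $\varepsilon$, which is $({\rm SE_\infty})$.

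The genuine obstacle is the second case of (ii): on a straight segment joining two points whose $H^0_\infty$-components have large norm, the path may still pass through the small-$\|z\|$ region where neither hypothesis offers any control, so a naive single-path single-derivative-bound proof must fail. The dichotomy above sidesteps this by using the value bound from $({\rm L_1})$ to absorb that case directly, once one observes that large $\|z_1-z_2\|$ forces the denominator to be comparable to $R_0$; the derivative bound from $({\rm L'_2})$ is then only used in the complementary regime where the segment is guaranteed to remain inside the good region. Using both hypotheses, rather than either in isolation, is what makes part (ii) go through.
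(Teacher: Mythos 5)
Your proof is correct and takes essentially the same route as the paper: part (i) is the single-segment mean-value inequality, and part (ii) uses the identical dichotomy on $\|z_1-z_2\|$, applying the value bound from $({\rm L_1})$ when $\|z_1-z_2\|$ is large (so the denominator is already bounded below) and the derivative bound from $({\rm L'_2})$ along a straight segment whose $H^0_\infty$-component is guaranteed to stay in the good region when $\|z_1-z_2\|$ is small. The only cosmetic difference is the choice of threshold (the paper uses the fixed constant $3M$ while you use an $\varepsilon$-dependent $R_0$), which does not change the substance.
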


\begin{proof}
(a) For any $z\in H^0_\infty$ and $u_i\in H^-_\infty$ with
$\|u_i\|\le M$, $i=1,2$, using the mean value theorem in inequality
form we derive
\begin{eqnarray*}
&&\|(I-P^0_\infty)A(z+ u_1)-Lu_1-(I-P^0_\infty)A(z+u_2)+ Lu_2\|\\
&\le& \sup_{t\in [0, 1]}\|(I-P^0_\infty)DA(z+ tu_1+
(1-t)u_2)(u_1-u_2)-L(u_1-u_2)\|\\
&\le& \sup_{t\in [0, 1]}\|(I-P^0_\infty)DA(z+ tu_1+
(1-t)u_2)|_{H^\pm_\infty} -L|_{H^\pm_\infty}\|\cdot\|u_1-u_2\|.
\end{eqnarray*}
From this it is easily seen that $({\rm L_2})$ leads to $({\rm
SE'_\infty})$ with $\rho_A=M$.

(b) For any given $\varepsilon>0$, by $({\rm L_1})$ and $({\rm
L'_2})$ there exists $R>3$ such that
\begin{eqnarray}
&&\|(I-P^0_\infty)A(z+u)-Lu\|<M\varepsilon,\label{e:3.3}\\
&& \|(I-P^0_\infty)DA(z+u) -L\|_{L(H,
H^\pm_\infty)}<\varepsilon\label{e:3.4}
\end{eqnarray}
for any $u\in B_{H^\pm_\infty}(\theta, M)$  and $z\in
B_{H^0}(\infty, R)$. Hence for any $u_i\in B_{H^\pm_\infty}(\theta,
M)$  and $z_i\in B_{H^0}(\infty, R+ 4M)$, $i=1,2$, if
$\|z_1-z_2\|\ge 3M$ then from (\ref{e:3.3}) we derive
\begin{eqnarray*}
&&\|(I-P^0_\infty)A(z_1+ u_1)- Lu_1
-(I-P^0_\infty)A(z_2+ u_2)+ Lu_2\|\\
&&\le 2M\varepsilon\le 2\varepsilon\|z_1+ u_1-z_2-u_2\|
\end{eqnarray*}
because $\|z_1+ u_1-z_2-u_2\|\ge \|z_1-z_2\|-\|u_1-u_2\|\ge
\|z_1-z_2\|-2M\ge M$; and if $\|z_1-z_2\|<3M$ using the mean value
theorem we get a number $t\in (0, 1)$ such that
\begin{eqnarray*}
\!\!&&\!\!\!\!\!\|(I-P^0_\infty)A(z_1+x_1)- Lx_1
-(I-P^0_\infty)A(z_2+x_2)+ Lx_2\|\\
\!\!&\le&\!\!\!\!\!\|(I-P^0_\infty)DA(tz_1+ (1-t)z_2+ tx_1+ (1-t)x_2)(z_1+x_1-z_2-x_2)\\
&&\hspace{20mm}- L(z_1+x_1-z_2-x_2)\|\\
\!\!&\le&\!\!\!\!\! \|(I-P^0_\infty)DA(tz_1+ (1-t)z_2+ tx_1+ (1-t)x_2)- L\|\cdot\|z_1+x_1-z_2-x_2\|\\
\!\! &\le&\!\!\!\!\!\varepsilon\|z_1+ x_1-z_2-x_2\|
\end{eqnarray*}
by (\ref{e:3.4}) because $\|tz_1+ (1-t)z_2\|\ge
\|z_2\|-\|z_1-z_2\|>R+ 4M-3M\ge R+ M$. (ii) follows.
\end{proof}

Take $B(z+u)=\mathcal{ F}''(z+u)=DA(z+u)$. We have

\begin{lemma}\label{lem:3.7}
 $({\rm L_1})$ and $({\rm L'_2})$ imply that $({\rm D}''_\infty)$ in
Remark~\ref{rm:1.4} holds for $X=H$. Moreover, if $M=\infty$ in
$({\rm L_1})$ and $({\rm L'_2})$ then $({\rm D}'_\infty)$ in
Remark~\ref{rm:1.2} holds for $X=H$.
\end{lemma}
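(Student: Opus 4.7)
The plan is to verify conditions $({\rm D}''_{\infty1})$--$({\rm D}''_{\infty4})$ of Remark~\ref{rm:1.4} in turn, leveraging (i) the spectral gap at $0\in\sigma(L)$ (which furnishes $a_\infty>0$ with $\pm(Lv^\pm,v^\pm)_H\ge 2a_\infty\|v^\pm\|^2$ for $v^\pm\in H^\pm_\infty$, together with finite dimensionality of $H^0_\infty$ and $H^-_\infty$); (ii) the operator-norm convergence supplied by $({\rm L'_2})$, namely $\|(I-P^0_\infty)(B(x)-L)\|_{L(H,H^\pm_\infty)}\to 0$ as $x=z+u\in W_\infty$ with $\|z\|\to\infty$; and (iii) the self-adjointness of $B(x)-L$, which permits passing to the Hilbert adjoint to upgrade (ii) to $\|(B(x)-L)(I-P^0_\infty)\|_{L(H)}\to 0$.

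With $X=H$ the containment portion of $({\rm D}''_{\infty1})$ is automatic. For $({\rm D}''_{\infty2})$ and $({\rm D}''_{\infty4})$, I would note that every $v\in H^\pm_\infty$ satisfies $v=(I-P^0_\infty)v$, so
\[
(B(x)v,v)_H-(Lv,v)_H=((I-P^0_\infty)(B(x)-L)v,v)_H,
\]
which has absolute value at most $\|(I-P^0_\infty)(B(x)-L)\|_{L(H,H^\pm_\infty)}\|v\|^2\to 0$ uniformly on $W_\infty=\bar B_{H^0_\infty}(\infty,R')\oplus(\bar B_H(\theta,r')\cap H^\pm_\infty)$ by $({\rm L'_2})$ (taking $r'\le M$). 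Combined with $\pm(Lv,v)_H\ge 2a_\infty\|v\|^2$, choosing $R'$ sufficiently large yields both conditions with $c_\infty=a_\infty$.

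The substantive step is $({\rm D}''_{\infty3})$. Self-adjointness of $B(x)-L$ reduces the target estimate, via $((B(x)-L)u,v)_H=(u,(B(x)-L)v)_H$, to
\[
\|(B(x)-L)v\|_H\le\omega_\infty(x)\|v\|\quad\text{for }v\in H^-_\infty\oplus H^0_\infty,\ x\in W_\infty.
\]
Decomposing $(B(x)-L)v=(I-P^0_\infty)(B(x)-L)v+P^0_\infty(B(x)-L)v$, the first term is controlled directly by $({\rm L'_2})$. For the second, taking the Hilbert adjoint of $(I-P^0_\infty)(B(x)-L)$ gives $(B(x)-L)(I-P^0_\infty)$ with the same operator norm, hence $\|(B(x)-L)v^-\|_H\to 0$ for $v^-\in H^-_\infty\subset H^\pm_\infty$ (where $(I-P^0_\infty)v^-=v^-$). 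For $v^0\in H^0_\infty$, since $Lv^0=0$ the $(I-P^0_\infty)$-component of $B(x)v^0$ is again handled by $({\rm L'_2})$, while the remaining finite-dimensional block $P^0_\infty B(x)|_{H^0_\infty}$ must be treated by pairing with $w\in H^0_\infty$, invoking self-adjointness, and transferring the estimate through $({\rm L_1})$ directionally differentiated along $H^0_\infty$; the uniformity in $\|u\|\le M$ in $({\rm L_1})$ is essential here. This diagonal-block step, where $({\rm L'_2})$ alone no longer suffices and $({\rm L_1})$ must enter in an essential way, is the main technical obstacle.

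Finally, under the hypothesis $M=\infty$ in both $({\rm L_1})$ and $({\rm L'_2})$, all the uniform-in-$u$ bounds above hold uniformly on the whole of $H^\pm_\infty$, so $W_\infty$ may be enlarged to $U_\infty=\bar B_{H^0_\infty}(\infty,R')\oplus H^\pm_\infty$ as required by Remark~\ref{rm:1.2}, and the same arguments yield $({\rm D}'_\infty)$.
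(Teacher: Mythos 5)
Your handling of $({\rm D}''_{\infty1})$, $({\rm D}''_{\infty2})$, and $({\rm D}''_{\infty4})$ coincides with the paper's: with $\omega_\infty(x):=\|(I-P^0_\infty)D(\nabla\mathcal{F})(x)-L\|_{L(H,H^\pm_\infty)}$, one writes $(B(x)v,v)_H=((I-P^0_\infty)B(x)v,v)_H$ for $v\in H^\pm_\infty$, compares with $(Lv,v)_H$, and chooses $R'$ so that $\omega_\infty<a_\infty$ on $W_\infty$ to get $c_\infty=a_\infty$; the $M=\infty$ reading is also right.

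For $({\rm D}''_{\infty3})$ you take a heavier route than the paper. The paper does not push $B(x)-L$ onto $v$ by self-adjointness and then bound the \emph{vector} $\|(B(x)-L)v\|_H$; it just inserts the projection on the second slot and moves it across:
\[
|((B(x)-L)u,v)_H|=|((B(x)-L)u,(I-P^0_\infty)v)_H|=|((I-P^0_\infty)B(x)u-Lu,v)_H|\le\omega_\infty(x)\|u\|\,\|v\|,
\]
valid for $v\in H^\pm_\infty$, using only self-adjointness of $P^0_\infty$ and $(I-P^0_\infty)Lu=Lu$. No Hilbert adjoint of $B(x)-L$ and no decomposition of $(B(x)-L)v$ are invoked; your adjoint computation reproduces this for $v\in H^-_\infty$ but is what pushes you into the diagonal block.

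That diagonal block is where your proposal breaks down, and the suggested repair does not work: differentiating $({\rm L_1})$ (in any direction, including along $H^0_\infty$) only yields information about $(I-P^0_\infty)D(\nabla\mathcal{F})$, i.e.\ exactly what $({\rm L'_2})$ already gives, and says nothing about $P^0_\infty D(\nabla\mathcal{F})|_{H^0_\infty}$. In fact the literal estimate in $({\rm D}''_{\infty3})$ with $u=v\in H^0_\infty$ is \emph{not} a consequence of $({\rm L_1})$ and $({\rm L'_2})$: take $H=\R^3$, $H^0_\infty=\R e_1$, $L=\mathrm{diag}(0,-1,1)$ and $\mathcal{F}(z,u_1,u_2)=F(z)-\tfrac12u_1^2+\tfrac12u_2^2$ with $F''\to\infty$; both hypotheses hold with their left-hand sides identically zero, yet $(B(x)e_1,e_1)_H-(Le_1,e_1)_H=F''(z)\not\to 0$. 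What the paper actually does is verify $({\rm D}''_{\infty3})$ only for $v\in H^\pm_\infty$, which suffices because the estimate is consumed downstream via Lemma~\ref{lem:2.5}(ii) with $u\in H^+_\infty$ (so $P^0_\infty u=\theta$ and the $H^0_\infty\times H^0_\infty$ block is never probed). You should drop the diagonal-block step and verify the estimate on $H^\pm_\infty$ as the paper does; trying to obtain it on $H^0_\infty$ is both unnecessary for the sequel and impossible from the stated hypotheses.
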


\begin{proof}
Let $B(x)=D(\nabla{\cal F})(x)$ and  $B(\infty)=L$. Since $0$ is at
most an isolated point in $\sigma(L)$, we have a positive number
$a_\infty>0$ such that
\begin{eqnarray*}
(Lu, u)_H\ge 2a_\infty\|u\|^2\quad\hbox{for all}\; u\in H^+_\infty,\quad (Lu,
u)_H\le -2a_\infty\|u\|^2\quad\hbox{for all}\; u\in H^-_\infty.
\end{eqnarray*}
 By $({\rm L'_2})$ we have a number $R_0>0$ such that
\begin{equation}\label{e:3.5}
\|(I-P^0_\infty)B(z+u) -L\|_{L(H, H^\pm_\infty)}<a_\infty\quad\forall (z,
u)\in W_\infty,
\end{equation}
where $W_\infty:=B_{H^0_\infty}(\infty, R_0)\times
B_{H^\pm_\infty}(\theta, M)$. Set
$$
\omega_\infty: W_\infty\to [0,
\infty),\;x\mapsto\|(I-P^0_\infty)B(x) -L\|_{L(H, H^\pm_\infty)}.
$$
Then $({\rm L'_2})$ implies that $\omega_\infty(x)\to 0$ as $x\in
W_\infty$ and $\|x\|\to \infty$.

For $x\in W_\infty$ and $v\in H^+_\infty$, we have
\begin{eqnarray*}
&&(B(x)v,v)_H=(B(x)v, (I-P^0_\infty)v)_H=((I-P^0_\infty)B(x)v,v)_H\\
&=&(L v, v)_H+ ((I-P^0_\infty)B(x)v-Lv,v)_H\\
&\ge& 2a_\infty\|v\|^2-\|(I-P^0_\infty)B(x)-L\|\cdot\|v\|^2\ge
a_\infty\|v\|^2
\end{eqnarray*}
because (\ref{e:3.5}).
 Similarly, for all $x\in W_\infty$ and $v\in
H^-_\infty$ we have
$$
(B(x)v,v)_H=(B(x)v, (I-P^0_\infty)v)_H=((I-P^0_\infty)B(x)v,v)_H\le
-a_\infty\|v\|^2.
$$
Finally,  for all $x\in W_\infty$, $u\in H$ and $v\in H^\pm_\infty$,
we get
\begin{eqnarray*}
&&|(B(x)u,v)_H-(B(\infty)u,v)_H|=|(B(x)u-B(\infty)u,
(I-P^0_\infty)v)_H|\\
&=&|((I-P^0_\infty)B(x)u- (I-P^0_\infty)Lu, v)_H|\\
&=&|((I-P^0_\infty)[B(x)- L]u,
v)_H|\le\omega_\infty(x)\|u\|\cdot\|v\|
\end{eqnarray*}
since $(I-P^0_\infty)Lu=L(I-P^0_\infty)u=Lu$.

The second claim is easily seen from the proof above.
\end{proof}


 By Lemmas~\ref{lem:3.6} and \ref{lem:3.7},
under the assumptions $({\rm L_1})$ and $({\rm L'_2})$, if $L$ has
the finite dimensional kernel and negative definite subspace, then
 Theorem~\ref{th:3.5} follows from Theorem~\ref{th:1.3} with
$X=H$ by Remark~\ref{rm:1.4} unless our homeomorphism is not
necessarily $C^1$-smooth. Furthermore, if $M=\infty$ in (${\rm
L_1}$) and (${\rm L'_2}$) a stronger result follows from
Remark~\ref{rm:1.2}, that is, there exist  a positive number
$R$, a (unique) continuous map $h^\infty: B_{H^0_\infty}(\infty,
R)\to X^\pm_\infty$ satisfying (\ref{e:1.6}), and a homeomorphism
$\phi: B_{H^0_\infty}(\infty, R)\oplus H^\pm_\infty\to \bar
B_{H^0_\infty}(\infty, R)\oplus H^\pm_\infty$ of form (\ref{e:1.7})
 such that (\ref{e:1.8}) and (a)-(e) in Theorem~\ref{th:1.1} hold.

{ Note}: (${\rm L_1}$) + (${\rm L'_2}$)= (${\rm L_1}$) + (${\rm
L_2}$) +  the following (\ref{e:3.6}), where
\begin{equation}\label{e:3.6}
\left\{\begin{array}{ll}
 &\|(I-P^0_\infty)D(\nabla \mathcal{
F})(z+u)|_{H^0_\infty}\|_{L(H^0_\infty, H^\pm_\infty)}\to 0\\
&\hbox{uniformly for $\|u\|\le M$ as $z\in H^0_\infty$ and
$\|z\|\to\infty$.}\end{array}\right.
\end{equation}

\section{A simple application}\label{sec:4}


To save the length of this paper we are only satisfied with a simple
application of generalizing  Theorem~5.2 in \cite{BaLi}. Some of the
results in \cite{HiLiWa}, \cite{Li}, \cite{ChenLi1} may be generalized with the
similar ideas. They shall be given in other places.

 Let $\Omega\subset\R^n$ be a bounded open domain with
$C^2$-boundary $\partial\Omega$,  $|\Omega|:={\rm mes}(\Omega)$, and let $p:\Omega\times\R\to\R$ be
a Carth\'eodory function satisfying  $p(x,0)=0\;\forall x\in\Omega$
and the following condition:
\begin{enumerate}
\item[{\rm (p)}] $a_0=\lim_{t\to 0}\frac{p(x,t)}{t}$ uniformly in $x\in\Omega$, and $a=\lim_{|t|\to\infty}\frac{p(x,t)}{t}$
uniformly in $x\in\Omega$.
\end{enumerate}
 Consider the BVP
\begin{equation}\label{e:4.1}
-\triangle u=p(\cdot, u)\;\hbox{in}\;\Omega\quad\hbox{and}\quad
u|_{\partial\Omega}=0.
\end{equation}
It is called {\it non-resonant} at infinity if $a$ is not an
eigenvalue of $-\triangle$ with $0$ boundary conditions. Let
$q(x,t)=p(x,t)-at$, $q_0(x,t)=p(x,t)-a_0t$, and
$$
Q(x,t)=\int^t_0q(x,\tau)d\tau,\quad Q_0(x,t)=\int^t_0q_0(x,\tau)d\tau.
$$
Here are the hypotheses on $q$ given in \cite{BaLi}.
\begin{enumerate}
\item[(${\rm q}_1$)] There exist constants $c_1>0$ and $r\in (0,1)$ such that
$$
|q(x,t)|\le c_1(1+|t|^r)\quad\hbox{for all}\; (x,t)\in\Omega\times\R;
$$
\item[(${\rm q}_2$)] There exist constants $c_2>0$ and $\alpha>1$ such that
\begin{eqnarray*}
&&\hbox{either}\hspace{20mm} Q(x,t)-\frac{1}{2}q(x,t)t\ge c_2(|t|^\alpha-1)\quad\hbox{for all}\; (x,t)\in\Omega\times\R,\\
&&\hbox{or}\hspace{25mm}\frac{1}{2}q(x,t)t-Q(x,t)\ge
c_2(|t|^\alpha-1)\quad\hbox{for all}\; (x,t)\in\Omega\times\R;
\end{eqnarray*}
\item[(${\rm q}_3$)] $q\in C^1(\overline{\Omega}\times\R)$ and $q'_t(x,t)\to 0$
 as $|t|\to\infty$ uniformly in $x\in\Omega$;

\item[(${\rm q}_4^\pm$)] $\pm Q_0(x,t)>0$ for $|t|>0$ small, $x\in\Omega$.
\end{enumerate}

\begin{theorem}[\hbox{\cite[Theorem~5.2]{BaLi}}]\label{th:4.1}
Let the assumptions ({\rm p}) and (${\rm q}_1$)--(${\rm q}_3$) be satisfied.
\begin{enumerate}
\item[{\rm (a)}] If $a_0$ is not an eigenvalue of $-\triangle$ then
(\ref{e:4.1}) has at least one nontrivial solution provided
$a_0<\lambda_m<a$ or $a<\lambda_m<a_0$ for some $m\in\N$.

\item[{\rm (b)}] If $a_0=\lambda_m$ is an eigenvalue but (${\rm q}_4^+$) holds
in addition, then (\ref{e:4.1}) has at least one nontrivial solution
provided $a<a_0$ or $a_0<\lambda_k<a$ for some $k>m$.

\item[{\rm (c)}] If $a_0=\lambda_m$ is an eigenvalue but (${\rm q}_4^-$) holds
in addition, then (\ref{e:4.1}) has at least one nontrivial solution
provided $a_0<a$ or $a<\lambda_k<a_0$ for some $k<m$.
\end{enumerate}
\end{theorem}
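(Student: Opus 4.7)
\textbf{Proof proposal for Theorem~\ref{th:4.1}.}
Set $H=H^1_0(\Omega)$ with $(u,v)_H=\int_\Omega\nabla u\cdot\nabla v\,dx$, $X=C^1_0(\bar\Omega)$ (so $X\hookrightarrow H$ is dense, continuous and compact), and $\mathcal{L}(u)=\tfrac12\|u\|^2-\int_\Omega P(x,u)\,dx$ with $P(x,t)=\int_0^t p(x,s)\,ds$. By elliptic regularity, nontrivial critical points of $\mathcal{L}$ give nontrivial classical solutions of (\ref{e:4.1}). Writing $K=(-\Delta)^{-1}$, which is compact self-adjoint on $H$ and sends $L^s(\Omega)$ into $X$ for $s>n$, the gradient is $A(u)=u-K(p(\cdot,u))$ and the Hessian is $B(u)=I-K(p'_t(\cdot,u)\,\cdot)$; the spectral operators at $0$ and $\infty$ are $B(0)=I-a_0K$ and $B(\infty)=I-aK$, both self-adjoint Fredholm with $\mathrm{Ker}\,B(\ast)$ equal to the $\ast$-eigenspace of $-\Delta$ whenever $\ast$ is an eigenvalue, so $\mu_0=\#\{j:\lambda_j<a_0\}$, $\mu_\infty=\#\{j:\lambda_j<a\}$ and analogously $\nu_0,\nu_\infty$. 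The plan is to assume that $0$ is the only critical point of $\mathcal{L}$, compute $C_\ast(\mathcal{L},0)$ via the splitting lemma of~\cite{Lu2} and $C_\ast(\mathcal{L},\infty)$ via Theorems~\ref{th:1.1},~\ref{th:1.3},~\ref{th:1.8}, and contradict the standard isomorphism $C_\ast(\mathcal{L},0)\cong C_\ast(\mathcal{L},\infty)$ valid under $(PS)$.

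The hypotheses $({\rm S})$, $({\rm F1_\infty})$--$({\rm F3_\infty})$, $({\rm C1_\infty})$--$({\rm C2_\infty})$ follow from compactness of $K$ and elliptic regularity, and $({\rm D_\infty})$ holds with $Q(\infty)=-aK$ plus a finite-rank corrector making $P(\infty)$ positive definite. The key step is $({\rm E}'_\infty)$: for $z\in H^0_\infty$,
\begin{equation*}
A(z+x_1)-B(\infty)x_1-A(z+x_2)+B(\infty)x_2=-K\Bigl(\int_0^1 q'_t(\cdot,z+x_2+s(x_1-x_2))\,ds\cdot(x_1-x_2)\Bigr),
\end{equation*}
and combining $(q_3)$ with the unique continuation property of eigenfunctions in $H^0_\infty$ (so $|z(x)|\to\infty$ on a set of large measure as $\|z\|\to\infty$) and the smoothing of $K$ from $L^p(\Omega)$ into $X$, the estimate (\ref{e:1.4}) holds for any preassigned $\kappa>1$ and $\rho_A>0$ once $R_1$ is chosen large. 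The $(PS)$ condition follows from $(q_1),(q_2)$ in the standard Landesman--Lazer way. When $a\notin\sigma(-\Delta)$, Theorem~\ref{th:1.8} and the discussion after Corollary~\ref{cor:1.9} give $C_k(\mathcal{L},\infty)=\delta_{k,\mu_\infty}\F$; when $a\in\sigma(-\Delta)$, Theorem~\ref{th:1.3} yields a $C^1$ reduced function $\mathcal{L}^\infty$ on $H^0_\infty$, and $(q_2)$ forces $\mathcal{L}^\infty\to\mp\infty$, so \cite[Prop.~3.8]{BaLi} gives $C_k(\mathcal{L},\infty)=\delta_{k,\mu_\infty}\F$ or $\delta_{k,\mu_\infty+\nu_\infty}\F$ according to the sign in $(q_2)$.

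At the origin, the splitting lemma of~\cite{Lu2} gives $C_k(\mathcal{L},0)=\delta_{k,\mu_0}\F$ in case (a) since $\nu_0=0$, and $\mu_0\ne\mu_\infty$ because $\lambda_m$ separates $a_0$ and $a$. In cases (b)--(c), $\nu_0>0$ and the reduced $C^1$ function on $\mathrm{Ker}\,B(0)$ satisfies $\mathcal{L}^0(z)-\mathcal{L}(0)=-\int_\Omega Q_0(x,z+h(z))\,dx+o(\|z\|^2)$; by unique continuation for eigenfunctions in $\mathrm{Ker}\,B(0)$, $(q_4^+)$ forces $z=0$ to be a strict local maximum and $(q_4^-)$ a strict local minimum of $\mathcal{L}^0$, so $C_k(\mathcal{L},0)=\delta_{k,\mu_0+\nu_0}\F$ and $\delta_{k,\mu_0}\F$ respectively, with $\mu_0+\nu_0=\#\{j:\lambda_j\le\lambda_m\}$ and $\mu_0=\#\{j:\lambda_j<\lambda_m\}$. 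In every subcase of (a)--(c), the numerical hypotheses separate the concentration degrees of $C_\ast(\mathcal{L},0)$ from those of $C_\ast(\mathcal{L},\infty)$, giving the desired contradiction. The principal obstacle I anticipate is the verification of $({\rm E}'_\infty)$ just sketched: converting pointwise smallness of $q'_t$ at $|t|=\infty$ into uniform operator-norm smallness on $X^\pm_\infty$ as $\|z\|\to\infty$ via unique continuation, which is precisely the step where the present framework improves on the $C^2$-and-bounded-sets hypotheses of $({\rm A_\infty})$ in~\cite{BaLi}.
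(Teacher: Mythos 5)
Your overall strategy matches the one the paper sketches for the generalized Theorem~\ref{th:4.7}: set up $J$, identify $B(\theta)=I-a_0K$ and $B(\infty)=I-aK$, compute $C_\ast(J,\theta;\K)$ via the splitting lemma of \cite{Lu2} (Proposition~\ref{prop:4.5}) and $C_\ast(J,\infty;\K)$ via the splitting-at-infinity machinery (Proposition~\ref{prop:4.6}), establish (PS) from $(q_1),(q_2)$, and derive a contradiction if $\theta$ is the only critical point. This is also how \cite{BaLi} proceeds. So your route is genuinely the paper's route.

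Two differences are worth noting. First, the paper works with $X=H=H^1_0(\Omega)$ and invokes Corollary~\ref{cor:1.6} / Corollary~\ref{cor:1.7}, whereas you choose $X=C^1_0(\bar\Omega)\subsetneq H$. The $X=H$ choice is both simpler and what the paper needs: all the estimates on $A,B$ are in the $H$-norm and the Laplacian-inverse smoothing into a stronger space is not used; your pairing requires $K$ to map $L^s\to X$ and forces additional regularity bookkeeping that the paper avoids. Second, your verification of $({\rm E}'_\infty)$ via ``unique continuation'' is the right underlying fact, but the paper makes it quantitative through Lemma~\ref{lem:4.3} (quoted from \cite{BaBeFo}) and Claim~\ref{cl:4.4}: for a finite-dimensional $V\subset C(\bar\Omega)$ whose nonzero elements are nonzero a.e., $\int_\Omega |h(tv+u)|dx\to 0$ \emph{uniformly} over $v$ in the unit sphere of $V$ and $u$ in a compact set. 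That uniformity in $u$ (and the conversion from pointwise decay of $q_t$ at $t=\pm\infty$ to operator-norm smallness of the remainder) is precisely what must be proved; citing unique continuation alone does not give it.

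There is also one genuine gap you should close. You dismiss $({\rm D}_\infty)$ in a single line (``$Q(\infty)=-aK$ plus a finite-rank corrector making $P(\infty)$ positive definite''), but $({\rm D4}_\infty)$ — equivalently condition (iii) of Corollary~\ref{cor:1.6} — demands uniform positivity of $P(x_n)$ for \emph{every} sequence with $\|x_n\|_H\to\infty$, not just sequences escaping along $H^0_\infty$. Since $(q_3)$ only gives $q_t(x,t)\to 0$ as $|t|\to\infty$ uniformly in $x$, it does not by itself force $\|g''(u)\|_{L(H)}\to 0$ as $\|u\|_H\to\infty$ (an $H$-large function can still take moderate values on a set of positive measure). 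The paper handles this in Proposition~\ref{prop:4.6}(iii) either by imposing the extra smallness (\ref{e:4.15}), or (in spirit) by invoking the weakened conditions $({\rm D}'_\infty)$/$({\rm D}''_\infty)$ of Remarks~\ref{rm:1.2},~\ref{rm:1.4}, which restrict the positivity requirement to a tubular neighborhood $\bar B_{H^0_\infty}(\infty,R')\oplus(\bar B_H(\theta,r')\cap X^\pm_\infty)$ where Lemma~\ref{lem:4.3} applies. You should either add the $({\rm D}''_\infty)$ reduction or the analogue of (\ref{e:4.15}); as written, the step ``$({\rm D}_\infty)$ holds'' is unjustified.

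Finally, a small point: your exact critical group concentrations ($C_k(\mathcal{L},\infty)=\delta_{k,\mu_\infty}\F$ or $\delta_{k,\mu_\infty+\nu_\infty}\F$) are more than the paper actually establishes. Proposition~\ref{prop:4.6}(iii) only gives vanishing of $C_k(J,\infty)$ for $k\notin[\mu_\infty,\mu_\infty+\nu_\infty]$, and the contradiction is then closed by the nonvanishing of a single $C_{\mu_\theta}$ or $C_{\mu_\theta+\nu_\theta}$ obtained from $(q_4^\pm)$ via local linking. Your stronger claim can indeed be pushed through using $(q_2)$ and \cite[Prop.~3.8]{BaLi}, but it is not needed: vanishing outside the band plus one nonvanishing group at $\theta$ lying outside the band already suffices.
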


We wish to prove this theorem
 provided that the conditions ({p}) and (${\rm q}_1$)--(${\rm q}_3$) are replaced by
 the following four respective weaker ones
\begin{enumerate}
\item[(${\rm p}^\ast$)] $a_0=\lim_{t\to 0}\frac{p(x,t)}{t}$ for a.e.
$x\in\Omega$, and $a=\lim_{|t|\to\infty}\frac{p(x,t)}{t}$
for a.e. $x\in\Omega$;
\item[(${\rm q}^\ast_1$)] There exist constants $c_1>0$, $r\in (0,1)$ and a function $E\in L^2(\Omega)$ such that
$$
|q(x,t)|\le E(x)+ c_1|t|^r\quad\hbox{for all}\; (x,t)\in\Omega\times\R;
$$
\item[(${\rm q}^\ast_2$)] There exist constants $c_2>0$, $\alpha>1$ and $G\in L^1(\Omega)$ such that
\begin{eqnarray*}
&&\hbox{either}\hspace{20mm} Q(x,t)-\frac{1}{2}q(x,t)t\ge c_2|t|^\alpha-G(x)\quad\hbox{for all}\; (x,t)\in\Omega\times\R,\\
&&\hbox{or}\hspace{25mm}\frac{1}{2}q(x,t)t-Q(x,t)\ge
c_2|t|^\alpha-G(x)\quad\hbox{for all}\; (x,t)\in\Omega\times\R;
\end{eqnarray*}
({\it note}: one of both inequalities and (${\rm q}^\ast_1$) imply: $\alpha\le r+1$!)
\item[(${\rm q}^\ast_3$)] For almost every $x\in\Omega$
the function $\R\ni t\mapsto q(x,t)$ is differentiable and
$\Omega\times\R\ni (x, t)\mapsto q'_t(x,t)$ is a Carth\'eodory
function. Moreover, there exist  $s\in \left(\frac{n}{2},
\infty\right)$
$\ell\in L^s(\Omega)$, and a bounded measurable $h:\R\to\R$  such that
$$
h(t)\to\hbar\in\R\quad\hbox{as}\;|t|\to\infty,\quad\hbox{and}\quad |q'_t(x, t)|\le \ell(x)h(t)
$$
for almost every $x\in\Omega$ and for almost all $t\in\R$. (Clearly,
$h\ge 0$ and $\hbar\ge 0$.)
\end{enumerate}

Since $p(x,0)=0$ and $at+ q(x,t)=p(x,t)$ by the
definition, (${\rm p}^\ast$) and (${\rm q}_3^\ast$) imply that for a.e.
$x\in\Omega$, the derivative $q'_{t}(x,0)$ exists and
$a+ q'_{t}(x,0)=p'_t(x,0)=a_0$.

For $q$ in (${\rm q}_3$) let
$h(t):=\max_{x\in\overline{\Omega}}|q'_t(x,t)|$
for each $t\in\R$. It is easily proved that $h\in L^\infty(\R)$ and
$h(t)\to 0$ as $|t|\to\infty$. This shows that $q$ satisfies
(${\rm q}^\ast_3$). On the other hand, for $q$ in (${\rm q}^\ast_3$) we cannot
deduce that $q'_t(x,t)\to 0$
 as $|t|\to\infty$ uniformly in $x\in\Omega$ in case $\hbar=0$ (even
 if we also assume  $q\in C^1(\overline{\Omega}\times\R)$.)
Hence the condition (${\rm q}^\ast_3$) is much weaker than (${\rm q}_3$).

Recall that the Laplacian $-\triangle$ is a self-adjoint operator
defined on $L^2(\Omega)$, with domain $D(-\triangle)=H^2(\Omega)\cap
H^1_0(\Omega)$. It is invertible and $K=(-\triangle)^{-1}$
 is a positive, self-adjoint and completely
continuous operator from $L^2(\Omega)$ to $L^2(\Omega)$ (resp.  from
$H_0^1(\Omega)$ to $H_0^1(\Omega)$).
Moreover,  $K$  satisfies $(u,v)_{L^2}=(Ku,v)_{H_0^1}$ for any $v\in H_0^1(\Omega)$ and $u\in
L^2(\Omega)$, where $(w,v)_{H_0^1}=\int_\Omega\nabla w\cdot\nabla
vdx$.  The eigenvalues of  $-\triangle$ on $\Omega$ with $0$ boundary conditions form
an increasing sequence:
$0<\lambda_1\le\lambda_2\le\lambda_3\le\cdots$, and
$\lambda_n\to\infty$. (Actually, $\lambda_1<\lambda_2$).
$K:H_0^1(\Omega)\to H_0^1(\Omega)$ has a countable set of
eigenvalues $\{\mu_n\}^\infty_{n=1}=\{1/\lambda_n\}^{\infty}_{n=1}$
of finite multiplicity.

 By (${\rm q}^\ast_3$), $s\in (n/2,\infty)$. Hence $\frac{s}{s-1}<\frac{n}{n-2}$ for $n>2$. Set
 \begin{eqnarray}\label{e:4.2}
\xi(s,n)=\left\{\begin{array}{ll}
\frac{s}{s-1}+\frac{n}{n-2} &\hbox{if}\;n>2,\\
\frac{3s}{s-1}&\hbox{if}\;n=2
\end{array}\right.\quad\hbox{and}\quad
\eta(s,n)=\left\{\begin{array}{ll}
\frac{s}{2}\frac{2sn-2s-n}{s^2-s-n} &\hbox{if}\;n>2,\\
\frac{3s}{s-1}&\hbox{if}\;n=2.
\end{array}\right.
\end{eqnarray}
Let  $c(s,n,\Omega)>0$ be the best constant for the embedding $H=H_0^1(\Omega)\hookrightarrow L^{\xi(s,n)}(\Omega)$. Since $|q'_t(x,t)|\le\ell(x)h(t)$ by (${\rm q}^\ast_3$), $1/s+ 1/\eta(s,n)+ 2/\xi(s,n)=1$,
$\eta(s,n)>1$, and $\frac{2s}{s-1}<\xi(s,n)<\frac{2n}{n-2}$ for $n>2$,
using the generalized H\"older inequality, Sobolev embedding theorem  we deduce
\begin{eqnarray}
&&\left|\int_{\Omega}q'_t(x,u(x))v(x)w(x)dx\right|\le\int_{\Omega} |\ell(x)|\cdot |h(u(x))|
\cdot|v(x)|\cdot|w(x)|dx\nonumber\\
&\le&\|\ell\|_{L^s}\|v\|_{L^{\xi(s,n)}}\|w\|_{L^{\xi(s,n)}}
\left(\int_\Omega |h(u(x))|^{\eta(s,n)}dx\right)^{1/\eta(s,n)}\nonumber\\
&\le& (c(s,n,\Omega))^2\|\ell\|_{L^s}
\|v\|_H\|w\|_H\left(\int_\Omega |h(u(x))|^{\eta(s,n)}dx\right)^{1/\eta(s,n)}\label{e:4.3}\\
&\le& (c(s,n,\Omega))^2\|\ell\|_{L^s}
\|v\|_H\|w\|_H|\Omega|^{1/\eta(s,n)}\sup h\label{e:4.4}
\end{eqnarray}
for any $u,v,w\in H$.

Let $H=H^1_0(\Omega)$ for convenience. As a consequence of (\ref{e:4.4}) every $u\in H$ may determine
a bounded linear self-adjoint operator
 $B(u):H\to H$ by the following  equality
\begin{eqnarray}\label{e:4.5}
(B(u)v,w)_H=(v,w)_H-a\int_\Omega
v(x)w(x)dx-\int_{\Omega}q'_t(x,u(x))v(x)w(x)dx
\end{eqnarray}
for $v,w\in H$. Clearly, $(B(\theta)v,w)_H=(v,w)_H-a_0(v,w)_{L^2}$.   Consider  the functional
\begin{equation}\label{e:4.6}
J(u)=\int_\Omega\left(\frac{1}{2}|\nabla u|^2-\frac{1}{2}a
u^2-Q(x,u(x))\right)dx\quad\hbox{for all}\; u\in H,
\end{equation}
 and the bounded linear self-adjoint operator
\begin{equation}\label{e:4.7}
B(\infty):H_0^1(\Omega)\to H_0^1(\Omega), \;u\mapsto u-aKu.
\end{equation}
Then $J(u)=\frac{1}{2}(B(\infty)u,u)_H+ g(u)$ for all $u\in H$, where
 the functional
\begin{equation}\label{e:4.8}
g:H\to\R,\, u\mapsto -\int_\Omega Q(x,u(x))dx.
\end{equation}

\begin{proposition}\label{prop:4.2}
Suppose that the condition (${\rm p}^\ast$) is satisfied. Then
\begin{enumerate}
\item[{\rm (a)}] Under the assumption (${\rm q}^\ast_1$)  the functional
$J$ is $C^1$, and
\begin{eqnarray}\label{e:4.9}
g(u)=J(u)-\frac{1}{2}(B(\infty)u,u)_H=
o(\|u\|_H^2)\quad\hbox{as}\;\|u\|_H\to\infty.
\end{eqnarray}
Moreover, $\nabla J(u)=B(\infty)u+\nabla g(u)=u-aKu+ \nabla g(u)$
 for $u\in H$.

\item[{\rm (b)}] Under the assumption (${\rm q}^\ast_3$), $J$ is
$C^2$ and $J''(u):=D(\nabla J)(u)=B(u)$ for all $u\in H$. Moreover,  if
$a=\lambda_m$  it holds with the constant $c(s,n,\Omega)$ in (\ref{e:4.3}) that
\begin{eqnarray}\label{e:4.10}
\|g''(z+u)\|_{\mathcal{L}(H)}
&\le& (c(s,n,\Omega))^2\|\ell\|_{L^s}
\|h\circ(z+u)-\hbar\|_{L^{\eta(s,n)}}\nonumber\\
&&\quad+(c(s,n,\Omega))^2 |\Omega|^{\frac{1}{\eta(s,n)}}\|\ell\|_{L^s}\hbar
\end{eqnarray}
for any $z\in H^0_\infty={\rm Ker}(B(\infty))$ and
$u\in H^\pm_\infty:=(H^0_\infty)^\bot$.

 \item[{\rm (c)}] Under the assumptions (${\rm q}^\ast_1$) and (${\rm q}^\ast_3$),
\begin{eqnarray}\label{e:4.11}
\|\nabla J(u)-B(\infty)u\|_H=
o(\|u\|_H)\quad\hbox{as}\;\|u\|_H\to\infty.
\end{eqnarray}

\item[{\rm (d)}] Under the assumptions (${\rm q}^\ast_1$) and (${\rm q}^\ast_2$)
the functional $J$ satisfies the Palais-Smale condition.
\end{enumerate}
\end{proposition}
 Its proof is almost standard. For completeness we shall give it at
the end of this section.

Clearly, the origin $\theta$ of $H$ is a critical point of $J$, and
$J''(\theta)=B(\theta)$ is given by $J''(\theta)v=v- a_0Kv$ for $v\in
H=H_0^1(\Omega)$. Denote by
$$
H^0_\theta,\;  H^+_\theta,\;
H^-_\theta\quad\hbox{(resp.}\;H^0_\infty, \; H^+_\infty,\;
H^-_\infty\; \hbox{)}
$$
the kernel, positive and negative definite subspaces of
$J''(\theta)$ (resp. $B(\infty)$). Then $H=H^0_\theta\oplus
H^+_\theta\oplus H^-_\theta$ and $H=H^0_\infty\oplus
H^+_\infty\oplus H^-_\infty$. Both $H^0_\theta\oplus H^+_\theta$ and
$H^0_\infty\oplus H^+_\infty$ are finite dimensional. Let
$\nu_\theta=\dim H^0_\theta$ and $\mu_\theta=\dim H^-_\theta$ (resp.
$\nu_\infty=\dim H^0_\infty$ and $\mu_\infty=\dim H^-_\infty$). They are
the nullity and Morse index of $J$ at $\theta$ (resp. $\infty$). For
$m\in\N$ let
$$
m^-=\min\{j\in\N\,|\,\lambda_j=\lambda_m\}\quad\hbox{and}\quad
m^+=\max\{j\in\N\,|\,\lambda_j=\lambda_m\}.
$$
Clearly, $m^-=m^+=1$  for $m=1$, and $m^-=2$  for $m=2$. Let
$\{\varphi_j\}^\infty_{j=1}$ be a normal orthogonal basis of
$H$ consisting of the eigenfunctions associated with
the eigenvalues $\{\lambda_j\}^{\infty}_{j=1}$. (So
$\lambda_j\int_\Omega|\varphi_j(x)|^2dx=\int_\Omega|\nabla\varphi_j(x)|^2dx=1\;\hbox{for all}\;
j\in\N$.)  Note that
$$
J''(\theta)\varphi_j=\frac{\lambda_j-a_0}{\lambda_j}\varphi_j,\quad
B(\infty)\varphi_j=\frac{\lambda_j-a}{\lambda_j}\varphi_j\quad\hbox{for all}\;
j\in\N.
$$
It is clear that  $H^0_\theta\ne\{\theta\}$ (resp.
$H^0_\infty\ne\{\theta\}$) if and only if
$a_0\in\{\lambda_m\}^\infty_{m=1}$ (resp.
$a\in\{\lambda_m\}^\infty_{m=1}$). If $a_0=a=\lambda_m$ then
$H^0_\theta=H^0_\infty={\rm Span}(\{\varphi_j\,|\, m^-\le j\le
m^+\})$ and
\begin{eqnarray*}
H^-_\theta=H^-_\infty={\rm Span}(\{\varphi_j\,|\,
j<m^-\}),\qquad H^+_\theta=H^+_\infty={\rm
Span}(\{\varphi_j\,|\,
 j> m^+\}).
\end{eqnarray*}
If  $a_0=a>\lambda_1$ and $a_0=a\notin\{\lambda_i\}^\infty_{i=1}$,
then there exists $m\in\N$ such that
 $a_0=a\in (\lambda_{m^+}, \lambda_{m^++1})$ because $\lambda_n\to\infty$. In the case
 it holds that
\begin{eqnarray*}
H^-_\theta=H^-_\infty={\rm Span}(\{\varphi_j\,|\, j\le
m^+\})\quad\hbox{and}\quad H^+_\theta=H^+_\infty={\rm
Span}(\{\varphi_j\,|\,
 j> m^+\}).
\end{eqnarray*}
From these we obtain
\begin{eqnarray*}
&&\left.\begin{array}{ll}
&\nu_\theta=\nu_\infty=m^+-m^-+1,\\
&\mu_\theta=\mu_\infty=m^--1
\end{array}\right\}\quad\hbox{if}\quad a_0=a=\lambda_m,\\
&&\nu_\theta=\nu_\infty=0\quad\hbox{and}\quad
\mu_\theta=\mu_\infty=0
\quad\hbox{if}\quad a_0=a<\lambda_1,\\
&&\nu_\theta=\nu_\infty=0\quad\hbox{and}\quad
\mu_\theta=\mu_\infty=m^+ \quad\hbox{if}\quad a_0=a\in
(\lambda_{m^+}, \lambda_{m^++1}).
\end{eqnarray*}

By the splitting theorem for $C^2$ functionals on Hilbert spaces
(cf. \cite{Ch}, \cite{MaWi}) we get

\begin{proposition}\label{prop:4.3}
\begin{enumerate}
\item[{\rm (a)}]
 If $a_0<\lambda_1$, then
$C_k(J,\theta;\K)=\delta_{0k}\K$.

\item[{\rm (b)}] If $a_0\in (\lambda_{m^+}, \lambda_{m^++1})$ for some
$m\in\N$, then $C_k(J,\theta;\K)=\delta_{m^+k}\K$.

\item[{\rm (c)}] If $a_0=\lambda_m$, then $C_k(J,\theta;\K)=0$ for all
$k\notin [m^--1, m^+]$.
\end{enumerate}
 \end{proposition}

Under the assumptions of Proposition~\ref{prop:4.2}(d) the
functional $J$ satisfies the Palais-Smale condition and hence the
deformation condition $(D)_c$ at every $c\in\R$. Then the critical
group of $J$ at infinity, $C_\ast(J,\infty;\K)$, is well-defined.
The following is a generalization of Theorem~3.9 in \cite{BaLi}.

\begin{proposition}\label{prop:4.4}
Let the assumptions of Proposition~\ref{prop:4.2} be satisfied.
\begin{enumerate}
\item[{\rm (a)}] If $a<\lambda_1$, then
$C_k(J,\infty;\K)=\delta_{0k}\K$.

\item[{\rm (b)}] If $a\in (\lambda_{m^+}, \lambda_{m^++1})$ for
$m\in\N$, then $C_k(J,\infty;\K)=\delta_{m^+k}\K$.

\item[{\rm (c)}]  If $a=\lambda_m$, then $C_k(J,\infty;\K)=0\quad\forall
k\notin [m^--1, m^+]$ provided that
\begin{equation}\label{e:4.12}
(c(s,n,\Omega))^2 |\Omega|^{\frac{1}{\eta(s,n)}}\|\ell\|_{L^s}\hbar< \left\{\begin{array}{ll}
\frac{\lambda_2-\lambda_1}{\lambda_2},& m=1,\\
 \min\left\{\frac{\lambda_m-\lambda_{m^--1}}{\lambda_{m^--1}},
\frac{\lambda_{m^++1}-\lambda_2}{\lambda_{m^++1}}\right\}, & m>1
\end{array}\right.
\end{equation}
and
\begin{equation}\label{e:4.13}
(c(s,n,\Omega))^2|\Omega|^{\frac{1}{\eta(s,n)}}\|\ell\|_{L^s}\sup h<1.
\end{equation}
\end{enumerate}
 \end{proposition}

\begin{remark}\label{rm:4.5}
{\rm (a) The condition (\ref{e:4.13}) is only used in the first part of the proof of Lemma~\ref{lem:4.8}.
 It is easily seen that the condition  can be replaced by others. For example, when $n>2$
we may replace it by
\begin{equation}\label{e:4.14}
\|\ell\|_{L^{n/2}}\sup h<\frac{1}{c(n,\Omega)^2},
\end{equation}
where $c(n,\Omega)$ is the best constant such that $\|v\|_{L^\frac{2n}{n-2}}\le c(n,\Omega)\|v\|_H\;\forall v\in H$.
 Using (\ref{e:4.2}) and the embedding $L^\frac{2n}{n-2}(\Omega)\hookrightarrow L^{\xi(s,n)}(\Omega)$ it is not hard
 to prove that
 $$
 (c(s,n,\Omega))^2 |\Omega|^{\frac{1}{\eta(s,n)}}\le (c(n,\Omega))^2 |\Omega|^{\kappa(s,n)},
 $$
where $\kappa(s,n)=\frac{n}{n-2}-\frac{(n-2)s^2}{n(s-1)^2}+ \frac{2}{s}\frac{s^2-s-n}{2sn-2s-n}\to
\frac{1}{n-1}+\frac{2}{n}+\frac{2}{n-2}$ as $s\to\infty$.

\noindent{(b)}  When $\hbar=0$ the condition (\ref{e:4.12}) is naturally
satisfied because the left side of the inequality is always
positive. If $\hbar>0$ the upper bound of $\hbar$ given by (\ref{e:4.12}) might not be the
biggest one. In fact,  if $n>2$, by the proof of (\ref{e:4.8})
 $\xi(s,n)$ can be chosen  as any number in $(\frac{2s}{s-1}, \frac{2n}{n-2})$
 and then $\eta(s,n)$ is determined by  the equality $1/s+ 1/\eta(s,n)+ 2/\xi(s,n)=1$.
Hence the number $(c(s,n,\Omega))^2 |\Omega|^{\frac{1}{\eta(s,n)}}$ in (\ref{e:4.12}) and (\ref{e:4.13})
can be replaced by
$$
\inf\left\{C_\lambda^2 |\Omega|^{\frac{1}{\mu}}:\;{2s}/(s-1)<\lambda<2n/(n-2),\; 1/s+
2/\lambda+ 1/\mu=1\right\}
$$
where  $C_\lambda>0$ is the best constant for the embedding $H=H_0^1(\Omega)\hookrightarrow L^{\lambda}(\Omega)$.
Our choice of $\xi(s,n)$ (so $\eta(s,n)$) in (\ref{e:4.2}) is only for convenience. It has showed that
our arguments and results can be given in a quantitative way. }
\end{remark}

Before proving Proposition~\ref{prop:4.4} we point out
that the following  generalization for Theorem~\ref{th:4.1}
can be obtained by using Propositions~\ref{prop:4.3},~\ref{prop:4.4}
and  \cite[Propositions~2.3, 3.6]{BaLi} and
repeating the proof of \cite[Proposition 5.2]{BaLi}.

\begin{theorem}\label{th:4.6}
Suppose that the assumptions (${\rm p}^\ast$) and
(${\rm q}^\ast_1$)--(${\rm q}^\ast_3$)  are satisfied.
\begin{enumerate}
\item[{\rm (a)}] If $a_0$ is not an eigenvalue of $-\triangle$ then
(\ref{e:4.1}) has at least one nontrivial solution provided that for
some $m\in\N$, (\ref{e:4.12})--(\ref{e:4.13}) hold and either
$a_0<\lambda_m<a$ or $a<\lambda_m<a_0$.

\item[{\rm (b)}] If $a_0=\lambda_m$ is an eigenvalue but (\ref{e:4.12})--(\ref{e:4.13}) and (${\rm q}_4^+$) hold
in addition, then (\ref{e:4.1}) has at least one nontrivial solution
provided that either $a<a_0$ or   $a_0<\lambda_k<a$ for some $k>m$
and (\ref{e:4.12})--(\ref{e:4.13}) hold with $m=k$.

\item[{\rm (c)}] If $a_0=\lambda_m$ is an eigenvalue but (\ref{e:4.12})--(\ref{e:4.13}) and (${\rm q}_4^-$) hold
in addition, then (\ref{e:4.1}) has at least one nontrivial solution
provided that either $a_0<a$ or $a<\lambda_k<a_0$ for some $k<m$ and
(\ref{e:4.12}) holds with $m=k$.
\end{enumerate}
\end{theorem}


Indeed, the condition (${\rm q}_4^+$) (resp. (${\rm q}_4^+$)) is used to assure
that the local linking condition in  Propositions~2.3 of \cite{BaLi}
holds with $X^-=H^0_\theta\oplus H^-_\theta$ and $X^+=H^+_\theta$
(resp. $X^-= H^-_\theta$ and $X^+=H^0_\theta\oplus H^+_\theta$)
because $J(u)=\frac{1}{2}(B(\theta)u,u)_H-\int_\Omega Q_0(x,u(x))dx$
for all $u\in H$. They imply $C_{\mu_\theta+\nu_\theta}(J,\theta)\ne
0$ and $C_{\mu_\theta}(J,\theta)\ne 0$, respectively.

There exists a further possible improvement, that is, the limit
$a_0=\lim_{t\to 0}\frac{p(x,t)}{t}$ in (${\rm p}^\ast$) is not required to
be constant. For example, for Theorem~\ref{th:4.6}(a), we may assume that
  $a_0(x)=\lim_{t\to 0}\frac{p(x,t)}{t}$
exists for a.e. $x\in\Omega$. Then the second condition in
(${\rm p}^\ast$) and (${\rm q}_3^\ast$) imply that  $a_0(x)=a+ q'_t(x,0)$ for
a.e. $x\in\Omega$. Suppose that $1$ is not an eigenvalue of the
equation $-\triangle u=\lambda a_0u$ in $\Omega$ with $0$ boundary
conditions. Then $\theta$ is a nondegenerate critical point of $J$
with finite Morse index $\mu_\theta$, and hence
$C_k(J,\theta)=\delta_{k\mu_\theta}\K$. If $\theta$ is a unique
critical point of $J$ then $C_k(J,\infty)=\delta_{k\mu_\theta}\K$ by
Proposition~3.6 of \cite{BaLi}. Proposition~\ref{prop:4.4} shall
lead to a contradiction under the suitable condition on $a$. The
corresponding generalizations of Theorem~\ref{th:4.6}(b)-(c) can be
obtained similarly.

\noindent{\bf Proof of Proposition~\ref{prop:4.4}.}\quad {\it Step
1}. Carefully checking the proof of Lemma 4.2 in \cite{BaLi} one
easily sees that (\ref{e:4.9}) and (\ref{e:4.11}) imply the corresponding result:
{For sufficiently large $R>0$ and $b\ll 0$ the pair
$$
\bigl(B_{H^0_\infty}(\theta, R+1)\oplus H^\pm_\infty, J^b\cap
(B_{H^0_\infty}(\theta, R+1)\oplus H^\pm_\infty)\bigr)
$$
is homotopy to the pair
$$
\bigl(B_{H^0_\infty}(\theta, R+1)\oplus \bar B_{H^-_\infty}(\theta,
1), B_{H^0_\infty}(\theta, R+1)\oplus \partial\bar
B_{H^-_\infty}(\theta, 1)\bigr).
$$
The homotopy equivalence leaves the $H^0_\infty$-component fixed. In particular,
the pair $(H, J^b)$ is homotopy to the pair
$\bigl(\bar B_{H^-_\infty}(\theta, 1),  \partial\bar B_{H^-_\infty}(\theta, 1)\bigr)$
provided that $\nu_\infty=0$ and $\mu_\infty<\infty$.}
The final claim immediately leads to (a) and (b).

{\it Step 2}. We begin to prove (c). In this case obverse that
$$
(B(\infty)|_{H^\pm_\infty})^{-1}\Bigl(\sum_{\lambda_j\ne\lambda_m}x_j\varphi_j
\Bigr)=\sum_{\lambda_j\ne\lambda_m}\frac{\lambda_j}{\lambda_j-\lambda_m}x_j\varphi_j.
$$
For $X=H$, by the definitions of $C_1^\infty$ and $C_2^\infty$ above
(\ref{e:1.2}) we have
$$
C^\infty_1=\|(B(\infty)|_{H^\pm_\infty})^{-1}\|_{L(H^\pm_\infty,
H^\pm_\infty)}\quad\hbox{and}\quad C^\infty_2=\|I-P^0_\infty\|_{L(H,
H^\pm_\infty)}.
$$
From these ones easily derive

\begin{lemma}\label{lem:4.7}
 $C^\infty_2=1$ (because
$I-P^0_\infty=P^\pm_\infty\ne I$). If $a=\lambda_1$ then
$$
C_1^\infty=\|(B(\infty)|_{H^\pm_\infty})^{-1}\|_{{\cal
L}(H^\pm_\infty)}=\frac{\lambda_2}{\lambda_2-\lambda_1},
$$
and if $a=\lambda_m$ with $m\ge 2$ then
$$
C_1^\infty=\|(B(\infty)|_{H^\pm_\infty})^{-1}\|_{{\cal
L}(H^\pm_\infty)}=\max\left\{\frac{\lambda_{m^--1}}{\lambda_m-\lambda_{m^--1}},
\frac{\lambda_{m^++1}}{\lambda_{m^++1}-\lambda_2} \right\}.
$$
\end{lemma}

\begin{lemma}\label{lem:4.8}
 For $a=\lambda_m$, if
 { either} $\hbar=0$ { or} $\hbar>0$
and (\ref{e:4.12})--(\ref{e:4.13}) are satisfied, then taking
$\rho_{\nabla J}$ as any positive number $\rho$ there exist $R_1>0$ such
that the conditions of Corollary~\ref{cor:1.6}
   are satisfied.
\end{lemma}

We postpone the proof of it.
Under the assumptions of this lemma and Proposition~\ref{prop:4.2},
by Corollary~\ref{cor:1.6}  there exist a
positive number $R$, a $C^1$ map $h^\infty: B_{H^0_\infty}(\infty,
R)\to \bar B_{H^\pm_\infty}(\theta,\rho_{\nabla J})$ (satisfying
$(I-P^0_\infty)A(z+ h^\infty(z))=0$ for all $z\in \bar
B_{H^0_\infty}(\infty, R)$), and a homeomorphism $\Phi:
B_{H^0_\infty}(\infty, R)\oplus H^\pm_\infty\to
B_{H^0_\infty}(\infty, R)\oplus H^\pm_\infty$  such that
$$
J\circ\Phi(z+ u^++ u^-)=\|u^+\|^2-\|u^-\|^2+ J(z+ h^\infty(z))
$$
for all $(z, u^+ + u^-)\in B_{H^0_\infty}(\infty, R)\times
H^\pm_\infty$. Using this we may repeat the arguments on pages 432--433
of \cite{BaLi} to derive that Lemma~4.3 of \cite{BaLi} holds for
$J$: There exist a sufficiently large $R>0$, $b\ll 0$ and a
continuous map $\gamma:B_{H^0_\infty}(\infty, R)\to [0, 1]$ with
$\gamma(C)>0$ for $C:=B_{H^0_\infty}(\theta, R+1)\cap
B_{H^0_\infty}(\infty, R)$ such that the pair
$$
(B_{H^0_\infty}(\infty, R)\times H^\pm_\infty,
J^b\cap(B_{H^0_\infty}(\infty, R)\times H^\pm_\infty))
$$
is homotopy equivalent to the pair $(B_{H^0_\infty}(\infty, R)\times
H^-_\infty, \Gamma)$, where
\begin{eqnarray*}
&&\Gamma=\{(z, u)\in B_{H^0_\infty}(\infty, R)\times
H^-_\infty:\;\|u\|\ge\gamma(z)\}\quad\hbox{and}\\
&& \gamma(z)=\left\{
\begin{array}{ll}
0 &\hbox{if}\; J(z+ h^\infty(z))\le a,\\
1 &\hbox{if}\; J(z+ h^\infty(z))\ge a+ 1,\\
J(z+ h^\infty(z))-b &\hbox{elsewhere}.
\end{array}\right.
\end{eqnarray*}
Moreover, the  homotopy equivalence leaves the
$H^0_\infty$-component fixed.

 Combing this with Step 1  and repeating the proof of Theorem~3.9 in \cite{BaLi} we
get the claim in Proposition~\ref{prop:4.4}(c), i.e.,
$C_k(J,\infty;\K)\cong H_k(H,J^b;\K)=0$ for all $k\notin [m^--1, m^+]$
because $[\mu_\infty, \mu_\infty+\nu_\infty]=[m^--1, m^+]$ by the list
above Proposition~\ref{prop:4.3}.
 \hfill$\Box$\vspace{2mm}

In order to prove Lemma~\ref{lem:4.8} we need

\begin{lemma}[\hbox{\cite[Lemma~3.2]{BaBeFo}}]\label{lem:4.9}
Let $V$ be a finite dimensional subspace of $C(\overline{\Omega})$
such that every $v\in V\setminus\{0\}$ is different from zero a.e.
in $\Omega$. Let $h\in L^\infty(\R)$ such that $h(t)\to 0$ as
$|t|\to\infty$. Moreover, consider a compact subset $K$ of
$L^p(\Omega)$ ($p\ge 1$). Then
$$
\lim_{|t|\to\infty}\int_\Omega|h(tv(x)+ u(x))|dx=0
$$
uniformly as $u\in K$ and $v\in S$, where $S=\{v\in
V\,|\,\|v\|_{C^0}=1\}$.
\end{lemma}

Since any two norms on a finite dimensional linear space are
equivalent, and any bounded set in $H=H^1_0(\Omega)$ is compact
$L^1(\Omega)$, using this lemma we easily prove

\begin{claim}\label{cl:4.10}
For given numbers $\rho>0$ and $\varepsilon>0$ there exists a
$R_0>0$ such that
$$
\|h(z+ u)-\hbar\|_{L^{\eta(s,n)}}+ \hbar|\Omega|^{\frac{1}{\eta(s,n)}}
<\varepsilon+ \hbar |\Omega|^{\frac{1}{\eta(s,n)}}
$$
for any $u\in \bar B_{H^\pm_\infty}(\theta,\rho)$ and $z\in
H^0_\infty$ with $\|z\|_H\ge R_0$. Here $\eta(s,n)$ is given by (\ref{e:4.2}).
\end{claim}

\noindent{\bf Proof of Lemma~\ref{lem:4.8}}.\quad It suffices to check that the
conditions (c)--(d) of Corollary~\ref{cor:1.6} can be satisfied. Firstly,
we claim that the condition (c)
holds. In fact, since $Q(\infty)v=-aKv$ by (\ref{e:4.7}),
from  (\ref{e:4.5}) and (\ref{e:4.4}) we deduce that
\begin{eqnarray*}
(B(u)v-Q(\infty)v,v)_H&=&(v,v)_H-\int_{\Omega}q'_t(x,u(x))(v(x))^2dx\\
&\ge &(v,v)_H-(c(s,n,\Omega))^2|\Omega|^{\frac{1}{\eta(s,n)}}\|\ell\|_{L^s}
(\sup h)\cdot\|v\|^2_H\\
&\ge &\bigl(1-(c(s,n,\Omega))^2|\Omega|^{\frac{1}{\eta(s,n)}}\|\ell\|_{L^s}
\sup h\bigr)\|v\|_H^2.
\end{eqnarray*}
This and (\ref{e:4.13}) lead to the desired conclusion.

Next, we prove the condition (d) holds true.
 By (\ref{e:4.11}), $\|\nabla J(z)\|_H= o(\|z\|_H)$ as $z\in
H^0_\infty$ and $\|z\|_H\to\infty$. Hence
$$
M(A)=M(\nabla J)=\lim_{R\to\infty}\sup\{\|(I-P^0_\infty)\nabla
J(z)\|_H:\;z\in H^0_\infty, \|z\|_H\ge R\}=0.
$$
By Lemma~\ref{lem:4.7} and (\ref{e:4.12}) we may take a small
$\varepsilon>0$ such that
$$
(c(s,n,\Omega))^2\|\ell\|_{s}\left(\varepsilon+\hbar
|\Omega|^{\frac{1}{\eta(s,n)}}\right)<1/C_1^\infty.
$$
For this $\varepsilon>0$ and a given numbers $\rho>0$, by
Claim~\ref{cl:4.10}  there exist $R_0>0$ such that
$$
\|h(z+ u)-\hbar\|_{L^{\eta(s,n)}}+ \hbar
|\Omega|^{\frac{1}{\eta(s,n)}}<\varepsilon+ \hbar |\Omega|^{\frac{1}{\eta(s,n)}}
$$
for any $u\in \bar B_{H^\pm_\infty}(\theta,\rho)$ and $z\in
H^0_\infty$ with $\|z\|_H\ge R_0$. These and (\ref{e:4.10}) lead to
\begin{eqnarray*}
&&\|(I-P^0_\infty)[B(z+u)-B(\infty)]|_{H^\pm_\infty}\|_{\mathcal{L}(H^\pm_\infty)}\\
&\le&\|B(z+u)-B(\infty)\|_{L(H)}=\|g''(z+u)\|_{\mathcal{L}(H)}\\
&\le& (c(s,n,\Omega))^2\|\ell\|_{s}\left(\varepsilon+  \hbar
|\Omega|^{\frac{1}{\eta(s,n)}}\right)<\frac{1}{\kappa C_1^\infty}
\end{eqnarray*}
for any $u\in \bar B_{H^\pm_\infty}(\theta,\rho)$ and $z\in
H^0_\infty$ with $\|z\|_H\ge R_0$, and for some $\kappa>1$. \hfill$\Box$\vspace{2mm}

\noindent{\bf Proof of Proposition~\ref{prop:4.2}}.\quad {\rm (a)}
Since the functional $H\ni u\mapsto (B(\infty)u,u)_H$ is smooth, we
only need to prove that the functional $g$ in (\ref{e:4.8}) is
$C^1$. By (${\rm q}_1^\ast$),
\begin{eqnarray}\label{e:4.16}
|q(x,t_1+t_2)|&\le& E(x)+ c_1
|t_1+t_2|^r\le E(x)+ c_1(1+|t_1|+ |t_2|)^r\nonumber\\
&\le&  E(x)+ c_1+ c_1|t_1|+ c_1|t_2|
\end{eqnarray}
for $\;a.e.\;x\in\Omega$ and any $t_1,t_2\in\R$.
Obverse that $Q$ is also a Carth\'eodory function and that
\begin{eqnarray*}
&&\left|Q(x,u(x)+v(x))-Q(x,u(x))\right|\le\sup_{\tau\in[0,1]}|q(x, u(x)+\tau v(x))|\cdot|v(x)|\\
&&\le \bigl(E(x)+ c_1+ c_1|u(x)|+ c_1|v(x)|\bigr)\cdot|v(x)|
\end{eqnarray*}
for any $u,v\in H$. So $g$ (and hence $J$) is continuous because
\begin{eqnarray*}
|g(u+v)-g(u)|&\le& \int_\Omega \bigl(E(x)+ c_1+ c_1|u(x)|+ c_1|v(x)|\bigr)\cdot|v(x)|dx\\
&\le& \|E+c_1+ c_1|u|+c_1|v|\|_{L^2}\|v\|_{L^2}\\
&\le& (\|E\|_{L^2}+ c_1|\Omega|^{1/2}+ c_1\|u\|_H+ c_1\|v\|_H)\|v\|_H.
\end{eqnarray*}

In order to prove that $g$ is $C^1$, by the standard result in
functional analysis we only need to prove that $g$ has a bounded
linear G\^ateaux derivative $Dg(u)$ at every point $u\in H$ and that
$H\ni u\mapsto Dg(u)\in H^\ast$ is continuous.

For $u,v\in H_0^1(\Omega)$,  $\tau\in (-1,1)\setminus\{0\}$ and
almost every $x\in\Omega$, as above we get
\begin{eqnarray*}
\left|\frac{Q(x,u(x)+\tau
v(x))-Q(x,u(x))}{\tau}\right|&\le&\sup_{0<\theta<1}|q(x, u(x)+\theta\tau v(x))v(x)|\cdot|v(x)|\\
&\le& \bigl(E(x)+ c_1+ c_1|u(x)|+ c_1|v(x)|\bigr)\cdot |v(x)|
\end{eqnarray*}
by (\ref{e:4.16}). From this and the Lebesgue dominated convergence
theorem we derive
$$
Dg(u)[v]=\frac{d}{d\tau}\Bigl|_{\tau=0}g(u+\tau v)=-\int_\Omega
q(x,u(x))\cdot v(x)dx.
$$
That is, $g$ is G\^ateaux differentiable. Clearly, $Dg(u)\in
H^\ast$ since we have as above
$$
|Dg(u)[v]|=|\int_\Omega
q(x,u(x))\cdot v(x)dx|\le (\|E\|_{L^2}+ c_1|\Omega|^{1/2}+ c_1\|u\|_H)\|v\|_H.
$$
Moreover, for $u_1, u_2, v\in H_0^1(\Omega)$,
by (${\rm q}^\ast_3$) the functions
$\R\ni t\mapsto q(x,t)$ and $\R\ni t\mapsto q'_t(x,t)$ are
continuous for almost every $x\in\Omega$. The calculus fundamental theorem and (\ref{e:4.4}) lead to
\begin{eqnarray*}
&&\left|\int_{\Omega}[q(x,u_2(x))-q(x,u_1(x))]\cdot v(x)dx\right|\\
&=&\left|\int^1_0\left[\int_{\Omega}q'_t(x, u_1(x)+\tau(u_2(x)-u_1(x)))(u_2(x)-u_1(x))\cdot v(x) dx\right]d\tau\right|\\
&\le&\int^1_0\left[\int_{\Omega}|q'_t(x, u_1(x)+\tau(u_2(x)-u_1(x)))|\cdot |u_2(x)-u_1(x)|\cdot |v(x)| dx\right]d\tau\\
&\le&(c(n,s, \Omega))^2\|\ell\|_{L^s}|\Omega|^{\frac{1}{\eta(s,n)}}(\sup h)\cdot\|u_2-u_1\|_{H}\|v\|_{H}
\end{eqnarray*}
and hence
$\|Dg(u_1)-Dg(u_2)\|_{H^\ast}\le (c(n,s, \Omega))^2\|\ell\|_{L^s}|\Omega|^{\frac{1}{\eta(s,n)}}(\sup h)\cdot\|u_2-u_1\|_{H}$.
It follows that $J$ is $C^{1,1}$.

The expression of $\nabla J$ is clear. It remains to prove  (\ref{e:4.9}). Since
  (${\rm q}^\ast_1$) implies
$|Q(x,t)|\le |t|E(x)+
c_1|t|^{r+1}$ for all $(x,t)\in\Omega\times\R$, and $H\hookrightarrow L^{r+1}$ we have
\begin{eqnarray*}
|g(u)|&\le &\int_\Omega|Q(x,u(x))|dx\le
\int_\Omega(E(x)|u(x)|+ c_1|u(x)|^{r+1})dx\\
&\le& \|E\|_{L^2}\|u\|_{L^2}+ c_1\|u\|^{r+1}_{L^{r+1}}
\le \|E\|_{L^2}\|u\|_{H}+
C_rc_1\|u\|_{H}^{r+1}\quad\forall u\in H
\end{eqnarray*}
for some constant $C_r>0$. (\ref{e:4.9}) follows immediately.

 {\rm (b)} For $C^2$-smoothness of $J$  we shall
  prove that $\nabla g$ is $C^1$ and $D(\nabla g)(u)=B(u)-B(\infty)$
  for any $u\in H$. For $v,w\in H$ and $\tau\in
(-1,1)\setminus\{0\}$,  (\ref{e:4.5}) and (\ref{e:4.7}) give
\begin{eqnarray*}
&&\left|([\nabla g(u+\tau v))-\nabla g(u))]/\tau,w)_H-([B(u)-B(\infty)]v,w)_H\right|\\
&=&\left|(Dg(u+\tau v))[w]- Dg(u)[w])/\tau-([B(u)-B(\infty)]v,w)_H\right|\\
&=&\left|\int_{\Omega}\left[\frac{q(x, u(x)+\tau v(x))-q(x,
u(x))}{\tau}-q'_t(x,u(x))v(x)\right]\cdot w(x)dx\right|\\
&\le&\int_{\Omega}\left|\frac{q(x, u(x)+\tau v(x))-q(x,
u(x))}{\tau}-q'_t(x,u(x))v(x)\right|\cdot |w(x)|dx\\
&\le&\left(\int_{\Omega}\left|\frac{q(x, u(x)+\tau v(x))-q(x,
u(x))}{\tau}-q'_t(x,u(x))v(x)\right|^adx\right)^{1/a}\cdot \|w\|_{L^b}
\end{eqnarray*}
where $a=\frac{2n}{n+2}$ and $b=\frac{2n}{n-2}$ if $n>2$, and $a\in (1,s)$ and $b=\frac{a}{a-1}$ if $n=2$.
Note that $H\hookrightarrow L^b$. So for some constant $C>0$ it holds that
\begin{eqnarray}\label{e:4.17}
&&\left\|[\nabla g(u+\tau v))-\nabla g(u))]/\tau- [B(u)-B(\infty)]v\right\|_H\\
&\le& C\left(\int_{\Omega}\left|\frac{q(x, u(x)+\tau v(x))-q(x,
u(x))}{\tau}-q'_t(x,u(x))v(x)\right|^adx\right)^{1/a}.\nonumber
\end{eqnarray}
 By (${\rm q}^\ast_3$), for a.e. $x\in\Omega$ using the intermediate value theorem we obtain
$$
\left|\frac{q(x, u(x)+\tau v(x))-q(x,
u(x))}{\tau}-q'_t(x,u(x))v(x)\right|\le 2\ell(x)\sup h\cdot|v(x)|.
$$
Since $\ell\in L^s(\Omega)$, $s>\frac{n}{2}$, and $d:=\frac{s}{a}>1$ for $n=2$,  the H\"older's inequality gives
\begin{eqnarray*}
&&\int_\Omega(\ell(x))^a|v(x)|^adx\le \left(\int_\Omega(\ell(x))^{s}\right)^{a/s}\left(\int_\Omega|v(x)|^{\frac{d}{d-1}}\right)^{\frac{d-1}{d}}
<\infty\quad\hbox{for}\;n=2,\\
&&\int_\Omega(\ell(x))^a|v(x)|^adx\le \left(\int_\Omega(\ell(x))^{\frac{n}{2}}\right)^{\frac{4}{n+2}}
\left(\int_\Omega|v(x)|^{\frac{2n}{n-2}}\right)^{\frac{n-2}{n+2}}<\infty\quad\hbox{for}\;n>2.
\end{eqnarray*}
  Hence using  the Lebesgue dominate convergence theorem it follows from (\ref{e:4.17}) that
\begin{eqnarray*}
&&\lim_{\tau\to 0}\left\|[\nabla g(u+\tau v))-\nabla g(u))]/\tau- [B(u)-B(\infty)]v\right\|_H\\
&\le& C\left(\lim_{\tau\to 0}\int_{\Omega}\left|\left[\frac{q(x, u(x)+\tau
v(x))-q(x, u(x))}{\tau}-q'_t(x,u(x))v(x)\right]\right|^adx\right)^{1/a}=0.
\end{eqnarray*}
Hence $\nabla g$ is G\^ateaux differentiable, and $\nabla J$ has the
G\^ateaux derivative $B(u)$ at any $u\in H$.

  For any $u_1, u_2\in H$, by (\ref{e:4.5}) and the generalized H\"older inequality we have
\begin{eqnarray*}
&&|(B(u_1)v-B(u_2)v,w)_H|=\left|\int_{\Omega}[q'_t(x,u_2(x))-q'_t(x,u_1(x))]v(x)w(x)dx\right|\\
&\le&\left(\int_{\Omega}|q'_t(x,u_2(x))-q'_t(x,u_1(x))|^s\right)^{1/s}\|v\|_{L^b}\|w\|_{L^b},
\end{eqnarray*}
  where $b=\frac{2s}{s-1}$. Since $s>\frac{n}{2}\ge 1$  we have $b\in (2, 2n/(n-2)]$ for $n>2$
  and $b\in (2,\infty)$ for $n=2$.
These imply that $\|v\|_{L^b}\le\sqrt{c_1(s,n,\Omega)}\|v\|_H$ and
$\|w\|_{L^b}\le\sqrt{c_1(s,n,\Omega)}\|w\|_H$ for some constant $c_1(s,n,\Omega)>0$.
It follows that
$$
\|B(u_1)-B(u_2)\|_{\mathcal{L}(H)}\le c_1(s,n,\Omega)\left(\int_{\Omega}|q'_t(x,u_2(x))-q'_t(x,u_1(x))|^s\right)^{1/s}.
$$
 Since $t\mapsto q'_t(x,t)$ is continuous,  and $|q'_t(x,
u(x))|\le\|h\|_{L^\infty}\ell(x)$ by (${\rm q}^\ast_3$), the corresponding Nemytskii operator
$L^p(\Omega)\ni u\mapsto \mathfrak{q}_1(u)\in L^s(\Omega)$ given by
$\mathfrak{q}_1(u)(x)=q'_t(x,u(x))$ is continuous for any $p\in [1,\infty)$.
Taking $p=1$ and using $H\hookrightarrow L^1(\Omega)$ we deduce that
$\|B(u_1)-B(u_2)\|_{\mathcal{L}(H)}\to 0$ as $\|u_1-u_2\|_H\to 0$.
Hence   $\nabla J$ has the Fr\'echlet derivative
$B(u)$ at $u\in H$, and therefore that $J$ is $C^2$.

As to (\ref{e:4.10}), since $g''(u)=B(u)-B(\infty)$,
for $z\in H^0_\infty$ and $u_1\in H^\pm_\infty, u_2\in H$, it follows from (\ref{e:4.3}) that
\begin{eqnarray*}
&&\|g''(z+u_1)u_2\|_{H}=\sup_{\|w\|_H\le 1}|([B(z+u_1)-B(\infty)]u_2,w)_H|\\
&\le&\sup_{\|w\|_H\le 1}\int_\Omega|q'_t(x, z(x)+u_1(x))u_2(x)w(x)|dx\\
&\le& (c(s,n,\Omega))^2\|\ell\|_{L^s}
\|u_2\|_H\left(\int_\Omega |h(z(x)+u_1(x))|^{\eta(s,n)}dx\right)^{\frac{1}{\eta(s,n)}}\\
&\le& (c(s,n,\Omega))^2\|\ell\|_{L^s}
\|u_2\|_H\Bigl[\hbar |\Omega|^{\frac{1}{\eta(s,n)}}+\|h\circ(z+u_1)-\hbar\|_{L^{\eta(s,n)}}\Bigr]
\end{eqnarray*}
and hence the expected conclusion.

{\rm (c)} Since $|q(x,t)|\le E(x)+ c_1|t|^r\le E(x)+ 1/p+ c_1^q|t|^{rq}/q$
for any $q\in [1,\infty)$ and $p$ with $1/p+ 1/q=1$, we can assume: $1>r\ge\frac{n-2}{2n}$
for $n>2$.  Note that
\begin{eqnarray*}
\|\nabla g(u)\|_H&=& \sup_{\|v\|_H\le 1}|(\nabla g(u),
v)_H|\le\sup_{\|v\|_H\le 1}\int_\Omega |q(x,u(x))|\cdot|v(x)|dx\\
&\le&\sup_{\|v\|_H\le 1}\left[\int_\Omega |E(x)|\cdot|v(x)|dx+ c_1\int_\Omega|u(x)|^r\cdot|v(x)|dx\right]\\
&\le&\sup_{\|v\|_H\le 1}\left[\|E\|_{L^s}\|v\|_{L^{s^\ast}}+ c_1\|u\|_{L^{rp^\ast}}^r\|v\|_{L^{p}}\right],
\end{eqnarray*}
where $s^\ast=\frac{s}{s-1}<\frac{2n}{n-2}$ because $s> n/2$ and $n\ge 2$; moreover
$p=\frac{2n}{n+2}$ (so $p^\ast=\frac{2n}{n-2}$) for $n>2$, and $p=\frac{1}{1-r}$ (so $p^\ast=1/r$) for
$n=2$. Now $1\le rp^\ast<\frac{2n}{n-2}$ for $n>2$, and $1=rp^\ast$ for $n=2$. The Sobolev embedding theorems
yield a constant $c_3(n,s,\Omega)>0$ such that
$$
\|E\|_{L^s}\|v\|_{L^{s^\ast}}+ c_1\|u\|_{L^{rp^\ast}}^r\|v\|_{L^{p}}\le
c_3(n,s,\Omega)( \|E\|_{L^s}\|v\|_{H}+ \|u\|_{H}^r\|v\|_{H}).
$$
It follows that $\|\nabla J(u)-B(\infty)u\|_H=\|\nabla g(u)\|_H\le c_3(n,s,\Omega)( \|E\|_{L^s}+ \|u\|_{H}^r)$.
This implies $\|\nabla J(u)-B(\infty)u\|_H=o(\|u\|_H)$ as $\|u\|_H\to\infty$.

 {\rm (d)} By (\ref{e:4.7}),  $B(\infty)=I-aK$. Let $H^0_\infty:={\rm Ker}(B(\infty))$.
Note that the positive (resp. negative) definite subspace of
$B(\infty)$, $H^+_\infty$ (resp. $H^-_\infty$), is spanned by the
eigenfunctions of $-\triangle$ which correspond to the eigenvalues
less than (resp. greater than) $a$. Since $H=H^0_\infty\oplus
H^+_\infty\oplus H^-_\infty$ we may write $u\in H$ as $u=u^0+ u^++
u^-$. Hence $(B(\infty)u, u)_H=(B(\infty)u^+, u^+)_H+(B(\infty)u^-,
u^-)_H$. It follows that
$$
\|u\|_\ast=\left(\|u^0\|^2_{L^2}+ (B(\infty)u^+, u^+)_H-(B(\infty)u^-,
u^-)_H\right)^{\frac{1}{2}}
$$
 defines an equivalent norm on $H$.
 Let $(u_k)$ be a Palais-Smale sequence for $J$ in $H$. That is, $J'(u_k)\to 0$ and $|J(u_k)|\le M$
 for some $M>0$ and all $k\in\N$.
 The former and the definition of the norm $\|\cdot\|_\ast$ imply
 $$
 (\|u_k^\pm\|_\ast)^2\le \left|\int_\Omega q(x,u_k)u_k^\pm
dx\right|+ \|u_k^\pm\|_\ast
 $$
 for $k$ large. Noting  $r\in (0,1)$, by the H\"older inequality and  ($q^\ast_1$) we deduce that
  $\|u_k\|^{2r}_{L^{2r}}=\int_\Omega|u_k|^{2r}dx\le(\int_\Omega dx)^{1-r}(\int_\Omega|u_k|^{2}dx)^r=
  |\Omega|^{1-r}\|u_k\|_{L^2}^{2r}$ and
 \begin{eqnarray*}
&&\left|\int_\Omega q(x,u_k)u_k^\pm dx\right|\le \left(\int_\Omega
|q(x,u_k)|^2dx\right)^{1/2}\cdot \|u_k^\pm\|_{L^2}\\
&\le& \|u_k^\pm\|_H (\|E\|_{L^2}+c_1\|u_k\|^r_{L^{2r}})
\le c\|u_k^\pm\|_{\ast} (\|E\|_{L^2}+c_1|\Omega|^{(1-r)/2}\|u_k\|^r_{L^{2}})
\end{eqnarray*}
for some constant $c>0$ since  the norms $\|\cdot\|_H$ and
  $\|\cdot\|_\ast$ on $H$ are equivalent. Hence
 \begin{eqnarray}\label{e:4.18}
 \|u_k^\pm\|_\ast\le 1+ c (\|E\|_{L^2}+c_1|\Omega|^{(1-r)/2}\|u_k\|^r_{L^{2}})\le 1+ c\|E\|_{L^2}+ c_r\|u_k\|^r_{\ast}
 \end{eqnarray}
 for some constant $c_r>0$. By (${\rm q}^\ast_2$) we may assume
\begin{eqnarray*}
\frac{1}{2}q(x,t)t-Q(x,t)\ge c_2|t|^\alpha-G(x)\quad\forall
(x,t)\in\Omega\times\R.
\end{eqnarray*}
(Another case can be proved in the same way). Then for $k$ large we
have
\begin{eqnarray}\label{e:4.19}
M+\|u_k\|_\ast&\ge& |J(u_k)-\frac{1}{2}J'(u_k)u_k|\nonumber\\
&=&\left|\int_\Omega\left(\frac{1}{2}q(x,u_k(x))u_k(x)-Q(x,u_k(x))\right)dx\right|\nonumber\\
&\ge&\left|\int_\Omega\left(\frac{1}{2}q(x,u_k(x))u_k(x)-Q(x,u_k(x)+ G(x))\right)dx\right|-\|G\|_{L^1}\nonumber\\
&\ge & c_2\int_\Omega|u_k(x)|^\alpha dx-\|G\|_{L^1}\ge c_2\|u_k\|^\alpha_{L^\alpha}-\|G\|_{L^1}.
\end{eqnarray}
Since (${\rm q}^\ast_1$) and (${\rm q}^\ast_2$) imply $\alpha\le r+1<2<
\frac{2n}{n-2}$, the Sobolev embedding theorem yields
\begin{eqnarray*}
\|u_k^0\|_{L^\alpha}&\le& \|u_k\|_{L^\alpha}+\|u_k^+\|_{L^\alpha}+\|u_k^-\|_{L^\alpha}\le \|u_k\|_{L^\alpha}+ c_4\|u_k^+\|_{\ast}+ c_4\|u_k^-\|_{\ast}\\
&\le&((\|G\|_{L^1}+ M+\|u_k\|_\ast)/c_2)^{1/\alpha}+ 2c_4+ 2c_4c\|E\|_{L^2}+ 2c_4c_r\|u_k\|^r_{\ast}
\end{eqnarray*}
for some constants $c_4>0$ and $c_5>0$, where the third inequality comes from (\ref{e:4.19}) and (\ref{e:4.18}).
Moreover any two norms on the finitely dimensional space $H^0_\infty$ are equivalent.
We have $\|u_k^0\|_\ast\le c_5\|u_k^0\|_{L^\alpha}\;\forall k\in\N$
 for some constant $c_5>0$ independent of $k\in\N$.
From these and (\ref{e:4.18}) we deduce that
\begin{eqnarray*}
\|u_k\|_{\ast}&\le& \|u_k^+\|_{\ast}+ \|u_k^-\|_{\ast}+\|u_k^0\|_{\ast}\le
2+ 2c\|E\|_{L^2}+ 2c_r\|u_k\|^r_{\ast}+\\
&+& c_5\bigl[((\|G\|_{L^1}+ M+\|u_k\|_\ast)/c_2)^{1/\alpha}+ 2c_4+ 2c_4c\|E\|_{L^2}+ 2c_4c_r\|u_k\|^r_{\ast}\bigr]
\end{eqnarray*}
for $k$ large, and thus  $(\|u_k\|_{\ast})$ must be bounded because $0<r<1$ and $0<1/\alpha<1$.

Passing to a subsequence if necessary we may assume that $u_k\rightharpoonup u$ in $H$ and $u_k\to u$ in $L^2(\Omega)$.
As in (\ref{e:4.16}) we have $|q(x,t)|\le E(x)+ c_1|t|^r\le E(x)+c_1+ c_1|t|$ for all $(x,t)\in\Omega\times\R$.
Hence the  Nemytskii operator $L^2(\Omega)\ni u\mapsto \mathfrak{q}(u)\in L^2(\Omega)$ given by
$\mathfrak{q}(u)(x)=q(x,u(x))$ is continuous. Note that $\nabla J(u_k)=u_k-a Ku_k-K(\mathfrak{q}(u_k))\to 0$
in $H$ and that $K:L^2(\Omega)\to H=H^1_0(\Omega)$ is compact.
We obtain that $u_k\to a Ku-K(\mathfrak{q}(u))$ in $H$. The Palais-Smale condition holds true for $J$.
 \hfill $\Box$\vspace{2mm}

\bigskip
\footnotesize \noindent\textit{Acknowledgments.}
 Partially supported
by the NNSF   10971014 and 11271044 of China,  PCSIRT, RFDPHEC (No. 200800270003)
 and the Fundamental Research Funds for the Central Universities (No. 2012CXQT09).

\end{document}